\documentclass[11pt,a4paper]{article}
\usepackage{macro}
\usepackage[a4paper,left=30mm,top=30mm,right=30mm,bottom=30mm]{geometry}

\allowdisplaybreaks[4]   
\title{Density measures and applications}
\author{Friedemann Schuricht \\ \small TU Dresden -
	Fakultät Mathematik\\ \small 01062
Dresden, Germany}

\date{}

\begin{document}

\maketitle

\vspace{10mm}  

\begin{abstract}
The paper, that continuous some previous work of Sch\"onherr \& Schuricht, 
treats density measures on ${\mathbb R}^n$ that concentrate in any
neighborhood of a Lebesgue null set. Such measures are typical for 
purely finitely additive measures. We study their basic properties 
and investigate related integrals. 
Measures taking only the values~0 and~1 are considered as special case.
The results are first applied to weak convergence in
$\cL^\infty(\dom)$. Then we derive integral representations by means of
such measures for several notions of differentiability for 
integrable functions and we show a kind of mean value theorem for
some class of Sobolev functions.  
Finally we provide a new approach to the generalized
Jacobians in the sense of Clarke. 
\end{abstract}

\vspace{10mm}

\section{Introduction}
\label{int}

We study a class of purely finitely additive measures and related integrals in
$\R^n$. Examples of such measures suitable for relevant
applications had hardly been available in the past. The density of a
set $A\subs\R^n$ at point $x$, a standard tool in
geometric measure theory for decades, is given by 
\begin{equation}\label{int-e1}
  \dens_x A := \lim_{\de\dto 0} \frac{\lem(A\cap B_\de(x))}{\lem(B_\de(x))}
\end{equation}
if the limit exists ($\lem$ - Lebesgue measure, $B_\de(x)$ -
ball of radius $\de$ around $x$). Apparently, for fixed $x$, this density 
has not been considered as a set function for a long time. 
But in Schuricht \& Schönherr \cite{trace} it is shown that $\dens_xA$ can be
extended to an additive set function for all Borel sets $A\subs\R^n$.  
It has some similarity to the Dirac measure $\de_x$ concentrated at $x$, but
it also essentially differs. We obviously have for any $A$
\begin{equation*}
  \dens_x(A\setminus B_\de(x)) = \de_x(A\setminus B_\de(x)) = 0
  \qmq{for all}  \de>0\,,
\end{equation*}
and
\begin{equation*}
  \dens_x(B_\de(x)) = \de_x(B_\de(x)) = 1  \qmq{for all} \de>0\,.
\end{equation*}
However, we have the fundamental difference that 
\begin{equation*}
  0 = \dens_x\{x\} \ne \de_x\{x\} = 1\,.
\end{equation*}
While the Dirac measure $\de_x$ is concentrated at $x$, we have that 
$\dens_x$ ``lives'' in any vicinity of $x$ that does not must
contain $x$ itself.  
Taking the limit $\de\dto 0$ we get
\begin{equation*}
  1 = \lim_{\de\dto 0} \dens_x(B_\de(x)) \ne \dens_x(\{x\}) = 0 
\end{equation*}
and, if $\dens_x A>0$, 
\begin{equation*}
  0 = \lim_{\de\dto 0} \dens_x(A\setminus B_\de(x)) \ne
  \dens_xA = \dens_x(A\setminus\{x\})\,. 
\end{equation*}
Any of these limits prevents $\dens_x$ from being $\si$-additive. 
It turns out that $\dens_x$ is a typical example of a purely finitely
additive measure. In particular, it can be identified with a functional 
$f^*$ in the dual of $\cL^\infty(\dom)$, the space of essentially bounded
functions on $\dom\subs\R^n$, 
and it can be considered as an extension of $\de_x$, taken as
functional on the continuous functions, to $\cL^\infty(\dom)$. 
Based on the integration theory for finitely additive measures, we have that
\begin{equation*}
  \df{f^*}{f} = \I{\R^n}{f}{\dens_x} \qmq{for all} f\in\cL^\infty(\dom)\,.
\end{equation*}
We can now ask what the integral gives for general possibly discontinuous 
$f$. According to our previous arguments, the integral uses only 
values of $f$ in any small neighborhood of~$x$, but it does not need a value
$f(x)$. It turns out that it agrees with the precise representative 
of $f$ at $x$ given by
\begin{equation*} \label{int-e4}
  f^\star(x)=\lim_{\de\dto 0} \tfrac{1}{\lem(B_\de(x))}\I{B_\de(x)}{f}{\lem}
\end{equation*}
if the limit exists. Thus the integral with $\dens_x$ gives somehow a
limit of averages near~$x$, even if the limit for $f^\star(x)$ does not exist.  
This is a special and also typical feature for integrals related to finitely
additive measures, that essentially differs from the properties of the usually
considered integrals. One can construct more general finitely additive
{\it density measures}, that are concentrated near an $\lem$-null set 
$\C\subset\cl\dom$, by replacing $B_\de(x)$ with the $\de$-neighborhood $\C_\de$
of $\C$ in \reff{int-e1}. Moreover it can be shown that, up to exceptional
cases, also functions in $\cL^1(\dom)$ (space of $\lem$-integrable functions)
are integrable with respect to such measures. 
In Schuricht \& Schönherr \cite{trace} that kind of measures play a
central role in a general theory of traces, that extends 
the Gauss-Green formula both for $\lem$-integrable vector fields, 
with divergence being a measure, and for $BV$ and Sobolev
functions. In particular it is shown that 
\begin{equation*}
  f^\star(x) = \I{\R^n}{f}{\dens_x}
\end{equation*}
is valid for all $\cL^1$-functions $f$ at $\lem$-a.e. $x\in\dom$. 
This provides an integral representation for the precise representative. 
Overall it turns out that these finitely additive measures 
are applicable in the theory of partial differential equations
and that they develop their potential by properties that are different from
those we are used to. 

A more comprehensive investigation of finitely additive density measures
concentrating near an $\lem$-null set $\C$ and of related integrals 
was started in Schönherr \& Schuricht~\cite{dens}. In particular the
existence of such measures, basic properties, and estimates for the related
integrals are shown.
Moreover, so called \zo measures $\mu$, where 
\begin{equation*}
  \mu(A)\in\{0,1\} \qmq{for all Borel sets $A$\,,} 
\end{equation*}
are studied. Similar as $\dens_x$ they concentrate near a single point $x$, 
but they also concentrate near a direction (i.e. near a ray emanating 
from~$x$). It turns out that the \zo measures are the extreme points of the 
considered set of density measures. 
As application the precise representative is analyzed in more detail 
for BV and Sobolev functions and a new approach to Clarke's generalized
gradients in finite dimension is given in \cite{dens}.  

The present paper continues the investigations from \cite{dens}. 
We extend several results to unbounded sets $\C$ and to unbounded functions. 
But we also derive new integral estimates and we are expanding the
investigation about \zo measures. This is used to extend some results of
Toland \cite{toland} about weak convergence in $\cL^\infty(\dom)$, to provide
integral conditions for several integrability notions for $\cL^1$-functions,
and to extend the new approach from \cite{dens} to generalized Jacobians in
the sense of Clarke. 

In more detail, the paper is organized as follows. 
Since finitely additive measures and related integrals are not widely used, 
in Section~\ref{pl} we provide some brief introduction as needed for our
analysis. Here we focus on differences from classical theory. 
This way we also fix notation and terminology. 
Section~\ref{dm} is devoted to density measures and related integrals. 
Here we extend several results from \cite{dens} for later applications. 
This concerns the expansion to unbounded sets $\C$ and to unbounded
functions $f\in\cL^1(\dom)$. Furthermore we study density measures that
concentrate in a prescribed set $\E\subs\dom$ near $\C$ and
we show the existence of measures that concentrate near infinity.
The integral estimates and related characterizations of the integral are
essentially extended. Some assertion about the existence of a weak$^*$
accumulation point of certain density measures turns out to be very useful.
Though the extension of some results from \cite{dens} is not
difficult, we have to state it for later application. 
In Section~\ref{ep} we consider \zo measures as density measures.
Beyond the results from \cite{dens}, we transfer the characterization 
of \zo measures by ultra filters from Toland \cite{toland} to our setting. 
This gives a precise description where such measures live and, in particular, 
one can verify the existence of a huge amount of \zo measures. For later use
we extend some integral conditions from \cite{toland} to $\cL^1$-functions. 
Section~\ref{app} provides several applications. In Section~\ref{app-wc} 
we start with some treatment of weak convergence in $\cL^\infty(\dom)$. Here
we supplement former results of Toland~\cite{toland} with statements containing
density measures. In several examples we demonstrate that density measures
similar to $\dens_x$ from above are very well suited for practical use. 
In Section~\ref{app-der} we treat several notions of differentiability for
$\cL^1$-functions, as e.g. approximate differentiability. For each case 
we derive integral characterizations by means of density measures
and discuss them for BV and Sobolev functions.
We also introduce some new notion of differentiability for 
(classes of) $\cL^1$-functions that is equivalent to the classical
differentiability of a representative. Some kind of mean value theorem for
certain Sobolev functions concludes this section. 
Finally, Section~\ref{gd} extends the new approach to
Clarke's generalized gradients from \cite{dens} to generalized Jacobians in
the sense of Clarke. More precisely, we characterize these set-valued
derivatives by means of density measures. We then demonstrate that the 
new characterization leads to essentially simplified proofs of the
standard calculus rules.

\bsk

{\it Notation.} 
We use $\cl\R=[-\infty,\infty]$, $\R_\ge=[0,\infty)$, and
$\cl\R_{\ge 0}=[0,\infty]$. $\ccl{\R^n}=\R^n\cup\{\infty\}$
is the usual one point compactification. For a set $A$ we write $\als^c$, 
$\cl A$, $\bd A$, $\op{conv}A$, and $\ch_A$
for its complement, closure, boundary, convex hull, and 
characteristic function, respectively.
For $A\subs\R^n$ we write $\ccl A$ for the closure within $\ccl{\R^n}$.
The $\de$-neighborhood of $A$ is denoted by $A_\de$ and 
$B_\delta(x)$ is the open $\de$-ball around $x$ while $B_\delta^A(x)$ is its
intersection with $A$. The open $\de$-ball $B_\delta(\infty)$
around $\infty$ is given by $\ccl{\R^n}\setminus \cl{B_{1/\de}(0)}$.
By $[x,y]$ we denote the closed line segment between points $x$ and $y$.
For $a\in\R^n$ we use $|a|$ for the Euclidean norm and $a\cdot b$ is the
scalar product of two vectors. $\bor{\dom}$ stands for the Borel sets and  
$\cP(\dom)$ for the power set of $\dom$. If $\mu$ is a measure, then 
$\op{supp}\mu$ is its support and $\reme{\me}{\als}$ its restriction to~$A$. 
We use $\lem$ for the Lebesgue measure on $\R^n$ and
$\ham^k$ for the $k$-dimensional Hausdorff measure. We write 
$\lem$-a.e. for $\lem$-almost everywhere. 
$L^p(\dom)$ is the Lebesgue space of $p$-integrable functions and 
$W^{1,p}(\dom)$ the Sobolev space of $L^p$-functions with $p$-integrable weak
derivative. $\cL^p(\dom)$ and $\cW^{1,p}(\dom)$ stand for the related
spaces of equivalence classes. $\cB\cV(\dom)$  denotes the space of  
(equivalence classes of) functions of bounded variation.
$X^*$ is the dual of the normed space $X$ and $\df{\cdot}{\cdot}$ the 
related duality form. The symbol $\mIsymb$ stands for the 
mean value integral.

\section{Preliminaries about measures and integration}
\label{pl}

The integration theory for finitely additive measures is not so well known. 
Since it is a central tool for our analysis, we provide a short introduction 
for the convenience of the reader, adapted to our needs. This way we also fix  
notation and terminology, which is not uniform in the literature. 
For more comprehensive presentations about finitely additive measures 
we refer to Bhaskara Rao \& Bhaskara Rao~\cite{rao} and 
Dunford \& Schwartz \cite{dunford} or the brief summary in 
Schuricht \& Schönherr \cite{trace}. Let us emphasize that 
{\it we call any additive set function a measure, in contrast to the usual 
standard.}

For a Borel set $\dom\subs\R^n$ we call $\mu:\cB(\dom)\to\ol\R$
a {\it measure} on $\dom$ if $\mu(\emptyset)=0$ and if $\mu$ is {\it
  additive}, i.e. for pairwise disjoint 
$A_k\in\cB(\dom)$ and a finite index set $I\subset\N$ we have
\begin{equation*}
  \mu\Big(\, {\textstyle \bigcup\limits_{k\in I}} A_k\Big) = 
  \sum_{k\in I}\mu(A_k) \,.
\end{equation*}
If $\mu$ is {\it $\si$-additive}, i.e. the previous equality is true with
$I=\N$, then $\mu$ is called {\it $\si$-measure}. 
That additivity is properly defined, a measure cannot attain both values
$\pm\infty$. We say that $\mu$ is {\it positive} if $\mu(A)\ge 0$ 
and it is {\it finite} if $\mu(A)\in\R$ for all $A\in\cB(\dom)$. 
The usual restriction $\reme{\mu}{A}$ of $\mu$ to $A\in\cB(\dom)$ is again a
measure. The {\it positive} and {\it negative part}
$\mu^\pm:\cB(\dom)\to\ol\R_{\ge 0}$ of measure $\mu$ given by
\begin{equation*}
  \mu^\pm(A) := \sup\{ \pm\mu(B) \mid B\subset A\,,\: B\in\cB(\dom)\}\,
\end{equation*}
and the {\it total variation} $|\mu|:\cB(\dom)\to\ol\R_{\ge 0}$ of $\mu$ 
given by
\begin{equation*}
  |\mu|(A) := \sup \Big\{ \sum_{k=1}^l |\mu(A_k)|\:\Big|\: 
  \bigcup_{k=1}^l A_k=A\,,\text{ $A_k\in\cB(\dom)$ 
  pairwise disjoint} \Big\} \,
\end{equation*}
are positive measures with
\begin{equation*}
  |\mu|=\mu^++\mu^-\,.
\end{equation*}
We say that $\mu$ is {\it bounded} if $|\mu(A)|<c$ for all $A$ and some $c>0$,
which is equivalent to $|\mu|(\dom)<\infty$. Measure $\mu$ is called 
{\it pure} if we have for any $\sigma$-measure $\sigma:\cB(\dom)\to\ol\R$ that 
\begin{equation*}
  0\le\sigma\le|\mu| \qmq{implies} \sigma =0\,.
\end{equation*}
This means that a pure measure 
does not contain a $\si$-additive part and, roughly speaking, it is 
purely finitely additive. If $\mu$ is pure, then
also $\mu^\pm$ and $|\mu|$. For a positive measure $\mu$ the 
{\it outer measure} $\mu^*:\cP(\dom)\to\ol\R_{\ge 0}$ given by
\begin{equation*}
  \mu^*(A):=\inf\limits_{\substack{B\in\cB(\dom)\\A\subset B}}\mu(B) \,
\end{equation*}
is {\it subadditive}, i.e. for $A_k\in\cP(\dom)$ and a finite index set
$I\subset\N$ one has 
\begin{equation*}
  \mu^*\Big(\, {\textstyle \bigcup\limits_{k\in I}} A_k\Big) \le 
  \sum_{k\in I}\mu^*(A_k) \,.
\end{equation*}
We say that $A\subset\dom$ is a {\it $\mu$-null} set if $|\mu|^*(A)=0$.  
The space 
\begin{equation*}
  \op{ba}(\dom,\cB(\dom)) := 
  \{\mu:\cB(\dom)\to\R\mid \text{$\mu$ is a bounded measure}\}\,
\end{equation*}
is a Banach space with norm $\|\mu\|=|\mu|(\dom)$. The $\si$-additive and pure
measures are closed subspaces and any 
$\mu\in\op{ba}(\dom,\cB(\dom))$ can be uniquely decomposed
into a $\si$-additive and a pure measure. A measure 
$\me$ is called {\it absolutely continuous} with respect to  
measure $\lme$ ($\me\ac\lme$), 
if for any $\eps>0$ there is some 
$\delta>0$ such that for $A\in\cB(\dom)$
\begin{equation*}
  |\lme(A)|<\delta \qmq{implies} |\me(A)|<\eps \,.
\end{equation*}
We call $\me$ {\it weakly absolutely continuous}
with respect to $\lme$ ($\me\wac\lme$), if for $A\in\cB(\dom)$ 
\begin{equation*}
  |\lme|(A)=0 \qmq{implies} \me(A)=0\,
\end{equation*}
and, then, also $|\mu|\wac\lme$. 
For a (not necessarily bounded) measure $\lme$ on $\dom$, the space
\begin{equation*}
  \op{ba}(\dom,\cB(\dom),\la) :=
  \{ \mu\in\op{ba}(\dom,\cB(\dom))\mid \mu\wac\lme\}\,
\end{equation*}
is a closed subspace of $\op{ba}(\dom,\cB(\dom))$ and, thus, also a Banach
space. It turns out that  measures $\mu\in\bawl{\dom}$, 
that do not charge sets of $\lem$-measure zero, are an appropriate tool for
the investigation of Lebesgue integrable functions. 

Many examples of pure measure in the literature are measures on $\N$.
It turns out that the density of a set $A$ at a point $x$, as used in geometric
measure theory, leads to some typical examples in $\R^n$. 

\begin{example}  \label{pl-s2}
For a Borel set $\dom\subs\R^n$ and for $x\in\cl\dom$ we assume that
\begin{equation*}
  \lem(\dom\cap B_\delta(x))>0 \qmq{for all} \delta>0\,.
\end{equation*}
Then the density of $A\in\cB(\dom)$ at $x$ within $\dom$ is defined by
\begin{equation} \label{pl-s2-1} 
\dens_x^\dom(A) := \lim_{\delta \downarrow 0} 
\frac{\lem(A\cap\dom\cap B_\delta(x))}{\lem(\dom\cap B_\delta(x))}
\end{equation}
as long as the limit exists. This quantity is usually considered for
$\dom=\R^n$ in geometric measure theory. As set function,
$\dens_x^\dom(\cdot)$ is disjointly additive. Using the Hahn-Banach theorem,
we can extend $\dens_x^\dom$ to a measure on $\cB(\dom)$ such that for all $A$
\begin{equation*} 
  \liminf_{\delta \downarrow 0} 
  \frac{\lem(A\cap\dom\cap B_\delta(x))}{\lem(\dom\cap B_\delta(x))}
  \le \dens_x^\dom(A) \le 
  \limsup_{\delta\downarrow 0} 
  \frac{\lem(A\cap\dom\cap B_\delta(x))}{\lem(\dom\cap B_\delta(x))} \,
\end{equation*} 
(cf. Corollary~\ref{dm-s4b} below with $\E=\dom$ and $C=\{x\}$ and notice that
the extension is not unique).
Clearly, $\dens_x^\dom(A)\in[0,1]$ and $\dens_x^\dom(A)=0$ if $\lem(A)=0$. Thus
\begin{equation*}
  \dens_x^\dom\in\bawl{\dom} \,.
\end{equation*}
Moreover, for all $\de>0$,
\begin{eqnarray*}     
  1 
&=& 
  \dens_x^\dom(B_\delta(x)\cap\dom) \: = \:
  \dens_x^\dom (\dom\setminus\{x\})  \,, \\ 
  0
&=&
  \dens_x^\dom(\dom\setminus B_\delta(x)) \hspace{4pt} = \: 
  \dens_x^\dom(\dom\cap\{x\}) \,.
\end{eqnarray*}
Though $\dens_x^\dom$ has some similarity to the Dirac measure $\de_x$ 
concentrated at $x$, it differs substantially. It ``lives'' in any
neighborhood of $x$ while it vanishes at $x$. This behavior   
easily implies that $\dens_x^\dom$ cannot be a $\sigma$-measure, since 
otherwise the limit $\delta\downarrow 0$ would lead to 
$\dens_x^\dom(\dom\setminus\{x\})=0$. Furthermore, for a $\si$-measure $\si$
with $0\le\si\le\dens_x^\dom$, we readily get
\begin{equation*}
  \si(\{x\}) = \si(\dom\setminus\{x\}) = 0 \,.
\end{equation*}
Hence $\si=0$ and $\dens_x^\dom$ is pure by definition.

For an extension of the previous construction we choose  
$\la,\ti\la\in\op{ba}(\dom,\cB(\dom))$ and some Borel set 
$\C\subs\cl\dom$ with 
\begin{equation} \label{pl-s2-4a}
  \la,\ti\la\ge 0\,, \quad \la\wac\ti\la\,, \quad 
  \ti\la(\dom\cap\C)=0\,, \qmq{and}
\end{equation}
\begin{equation}\label{pl-s2-4b}
  \la(\dom\cap\C_\de)>0 \qmq{for all} \de>0\,
\end{equation}
($\C_\de$ is the $\delta$-neighborhood of $C$).
Then, analogously as above, we can define a density $\la_\C^\dom(A)$
of $A\in\cB(\dom)$ at $\C$ within
$\dom$ related to $\la$ 
and we can extend it to a positive measure 
satisfying
\begin{equation*} 
  \liminf_{\delta \downarrow 0} 
  \frac{\la(A\cap\dom\cap\C_\de)}{\la(\dom\cap\C_\de)}
  \le \la_\C^\dom(A) \le 
  \limsup_{\delta\downarrow 0} 
  \frac{\la(A\cap\dom\cap\C_\de)}{\la(\dom\cap\C_\de)} \,.
\end{equation*} 
Then $\la_\C^\dom\in\op{\ba}(\dom,\cB(\dom),\la)$ and, for all $\de>0$,
\begin{equation*}
  1 = \la_\C^\dom(\C_\delta\cap\dom) = \la_\C^\dom (\dom\setminus\C)  \,, \quad
  0 = \la_\C^\dom(\dom\setminus\C_\de) = \la_\C^\dom(\dom\cap\C) \,.
\end{equation*}
Again we obtain that $\la_C^\dom$ is pure. For $0<k<n$ we can e.g. 
consider a $k$-dimensional set $S$ in $\dom=\R^n$ (i.e. $0<\cH^k(S)<\infty$)
and we choose some $\C\subset S$ such that \reff{pl-s2-4a},
\reff{pl-s2-4b} are satisfied with 
$\la=\ti\la=\reme{\cH^k}{S}$. This way we get a 
related pure density measure. In particular, we can 
take $\C=\{x\}$ for some $x\in S$ and obtain some $k$-dimensional density 
at $x$ within $S$. 

Below we study measures $\la_\C^\dom$ for the special case that
$\la\in\bawl{\dom}$ and $\ti\la=\lem$. 
Notice that the previous construction is also possible for $\C\subset\cl\dom$ 
with $\lem(\dom\cap\C)>0$. But then $\la_\C^\dom$ is not pure anymore (cf. 
\cite[Ex. 2.1]{dens}), i.e. it contains a $\si$-additive part. Since
$\si$-measures are well understood, we focus on cases where  
$\lem(\dom\cap\C)=0$.
\end{example}

The special behavior of measures that we have observed in the
previous example is typical for pure measures in $\bawl{\dom}$. 
More precisely, in the case $\lem(\dom)<\infty$ the measure 
$\mu\in\bawl{\dom}$ is pure if and only if there is a decreasing
sequence of sets $\{A_k\}\subset\cB(\dom)$ such that
\begin{equation} \label{pl-pure}
  \lem(A_k)\to 0 \qmq{and} |\mu|(A_k^c)=0 \zmz{for all} k\in\N\,
\end{equation}
(cf. \cite[p. 244]{rao}). If \reff{pl-pure} is satisfied, then 
$\lem(A)=0$ for $A=\bigcap_kA_k$ and, thus, $\mu(A)=0$. Hence, 
a pure measure is concentrated on any neighborhood of $A$. This leads to the
question where a pure measure lives. For a $\si$-measure $\si$ 
we know that it is concentrated on its {\it support} given by
\begin{equation*}
  \supp\sme := \big\{x\in\dom\:\big|\: 
  |\sme|(U\cap\dom)>0 \text{ for all open }U\subset X \text{ with }
  x\in U   \big\} \,.
\end{equation*}
However, \reff{pl-pure} indicates that pure measures $\mu\in\bawl{\dom}$ 
typically vanish on its support and, if we choose $x\in\bd\dom$ for an open
$\dom$ in the previous example, we even have an empty support. To account for
this behavior, we need a more careful description of where the measure lives. 
First we slightly modified define  
the {\it core} of measure $\mu\in\op{ba}(\dom,\cB(\dom))$ by
\begin{equation}\label{pl-core}
  \cor{\me} := \big\{ x\in\R^n\:\big|\: 
  |\me|(U\cap\dom)>0 \text{ for all open }U\subset X \text{ with }
  x\in U   \big\} \,.
\end{equation}
Then, both support and core are closed with
\begin{equation*}
   \supp\mu = \dom\cap \cor\mu \,.
\end{equation*}
Moreover $\dom\cap \cor\mu$ can be empty for a pure measure $\mu$ and 
$\cor\si\setminus\dom$ must not be empty for a $\si$-measure $\si$ 
(e.g. $\si=\lem$ on a bounded open $\dom$). 
For unbounded $\dom$ it is possible that a pure
measure ``lives near $\infty$'', which leads to an empty core 
by lack of compactness
of $\R^n$. For a proper treatment of such cases we (tacitly) use the
compactification
\begin{equation} \label{e-comp}
  \ccl{\R^n}:=\R^n\cup\{\infty\} \,
\end{equation}
instead of $\R^n$ in the definition \reff{pl-core} of core. Here the topology
has to be supplemented by the open balls 
\begin{equation*}
  B_\de(\infty) := \ccl{\R^n}\setminus\cl{B_{1/\de}(0)}\,, \quad \de>0
\end{equation*}
around $\infty$. For some arguments we have to distinguish between the
(usual) closure $\cl A$ of $A$ in $\R^n$ and 
\begin{equation*}
  \ccl A := \mbox{(compactified) closure of $A$ in $\ccl{\R^n}$}\,.
\end{equation*}
In this setting we obtain for $\mu\in\op{ba}(\dom,\cB(\dom))$ that
\begin{equation*} \label{pl-e0}
  \cor\mu\ne\emptyset \zmz{and} \cor\mu\subs\ccl\dom \qmq{if} \me\ne 0\,.
\end{equation*}
Moreover, for all open $U\subs\ccl{\R^n}$ with $\cor\mu\subs U$,
\begin{equation} \label{pl-e1}
  |\me|(U\cap\dom) = |\me|(\dom)\,, \quad
  |\me|(\dom\setminus U)=0
\end{equation}
(cf. \cite[Prop. 2.11]{trace}). Thus, each measure in 
$\op{ba}(\dom,\cB(\dom))$ has its full mass on any neighborhood of the core
while pure measures can vanish on its core. For an appropriate description of
that behavior we call $A\in\cB(\dom)$ an {\it aura} of 
$\mu\in\op{ba}(\dom,\cB(\dom))$ if 
\begin{equation*}
  |\mu|(A^c)=0\,.
\end{equation*}
Though an aura is not unique, combined with the core it allows a proper
description of where a pure measure is concentrated. Using the core, we 
have in addition to \reff{pl-pure}) that, for any $\dom\in \cB(\R^n)$,
\begin{equation*}    
  \mu\in\bawl{\dom} \zmz{is pure} \qmq{if}\z
  \lem(\dom\cap\cor\mu)=0\,.
\end{equation*}
(cf. \cite[p. 24]{trace}).

For an integration theory that covers integrals with pure measures, we need
convergence and equality in measure instead almost every.
For functions $f_k,f:\dom\to\R$ and a measure $\mu$ on $\dom$ 
we have that $f_k$ {\it converges in
measure} $\mu$ to $f$ ($f_k\to f$ i.m. $\mu$) if  
\begin{equation*}
\lim_{k \to \infty} 
|\mu|^*\big(\big\{x\in\dom \;\big|\; |f_k(x) - f(x)| > \eps \big\}\big) = 0 
\qmq{for all} \eps>0\,.
\end{equation*} 
The limit is unique if we identify $f,g:\dom\to\R$ that   
{\it agree in measure} $\mu$ ($f=g$ i.m.), i.e.
\begin{equation} \label{pl-e3}
  |\mu|^*\big(\big\{x\in\dom \:\big|\: |f(x)-g(x)|>\eps \big\}\big) = 0
  \qmq{for all} \eps>0\,.
\end{equation}
For estimates we define that $f\le g$ i.m. $\mu$ if
\begin{equation*}
  |\mu|^*\big(\big\{x\in\dom \:\big|\: g(x)-f(x)<-\eps \big\}\big) = 0
  \qmq{for all} \eps>0\,.
\end{equation*}
A function is called {\it step function} if its range has only finitely many
values.  
We say that $f$ is {\it $\mu$-measurable} if there are Borel measurable step 
functions $f_k$ such that $f_k\to f$ i.m. $\mu$. 

The integral $\I{\dom}{f}{\mu}$ of Borel measurable step functions is just the
usual sum and $\I{\dom}{f}{\mu}$ for a $\mu$-measurable function $f$ is
defined by the usual approximation with step functions if we take
convergence in measure instead of almost everywhere. 
Then integrability of $f$ coincides with that of $|f|$ and $\mu$-integrability
is the same as $|\mu|$-integrability. We have the standard calculus rules
and dominated convergence if we use convergence in measure instead of almost
everywhere. In particular, 
\begin{equation} \label{pl-e5}
  f=g \zmz{i.m. $\mu$} \qmq{if and only if}
  \I{A}{f}{\mu} = \I{A}{g}{\mu} \qmq{for all} A\in\cB(\dom)\,.
\end{equation}
and the integral vanishes if $f=0$ i.m. $\mu$.
Moreover, the integral agrees with the usual one for a $\si$-measure
$\mu$.  
 
While it is sufficient for an integral with a $\si$-measure to integrate over
the support, for general integrals we have to integrate over an aura. For a
more precise indication of the domain of integration we define
\begin{equation*}
  \sI{C}{f}{\mu} := \I{\dom\cap U}{f}{\mu} \qmq{for any open $U$ with}
  \cor\mu\subset C \subset U 
\end{equation*}
which is justified by \reff{pl-e1}.

For $1\le p\le\infty$ we write 
\begin{equation*}
  L^p(\dom) \z \big(L^p_{\rm loc}(\dom)\big)  \qmq{and} \cL^p(\dom) \z
  \big(\cL^p_{\rm loc}(\dom)\big)
\end{equation*}
for the usual space of (locally) Lebesgue $p$-integrable functions 
and the related space of equivalence classes of functions that agree
$\lem$-a.e., respectively. 
For vector-valued functions we use 
\begin{equation*}
  L^p(\dom,\R^m):=L^p(\dom)^m  \qmq{etc. }
\end{equation*}
For the dual of $\cL^\infty(\dom)$ we have that
\begin{equation*}
  \cL^\infty(\dom)^* = \bawl{\dom}
\end{equation*}
if we identify $f^*\in\cL^\infty(\dom)^*$ with $\mu\in\bawl{\dom}$ such that
\begin{equation} \label{pl-dual}
  \df{f^*}{f} = \I{\dom}{f}{\mu} \qmq{for all} f\in\cL^\infty(\dom)\,. 
\end{equation}
Then $\|f^*\|=\|\mu\|=|\mu|(\dom)$. Consequently, for $\mu\in\bawl{\dom}$
all $f\in\cL^\infty(\dom)$ are $\mu$-integrable. But also
unbounded $f\in\cL^1(\dom)$ can be $\mu$-integrable.

\section{Density measures}
\label{dm}

We consider a certain class of pure measures in $\bawl\dom$ as density
measures, which includes the density of a set at a point and some  
extensions as discussed in Example~\ref{pl-s2}. Such measures 
are particularly suited for the treatment of Lebesgue integrable functions,
since they do not charge $\lem$-null sets. Here we continue the investigations
that started in Schönherr \& Schuricht \cite{dens}. In particular we extend
several results to unbounded sets and we provide integral estimates for 
$\cL^1$-functions instead for $\cL^\infty$-functions. Moreover, for later
application, we consider densities as in Example~\ref{pl-s2} but with
respect to a subset $\E\subs\dom$ and we formulate conditions for the 
existence of a weak$^*$ accumulation point of density measures.
Our definition of density measures requires them to be normalized. 
Alternatively, one could drop this condition. Then the set of density measures
would be a convex cone rather than a bounded convex set and it would lack
compactness, which is however important for some of our arguments. 
We partially use the compactification of $\R^n$ and the related
notation  
\begin{equation*}
  \ccl{\R^n}=\R^n\cup\{\infty\}\,, \quad 
  \ccl B = \tx{closure of $B$ in $\ccl{\R^n}$}\,, \quad
  \{\infty\}_\delta = B_\delta(\infty)=\ccl{\R^n}\setminus \cl{B_{1/\delta}(0)} \,
\end{equation*}
in the case of an unbounded $\dom$. 

We always assume that $\dom\in\bor{\R^n}$ and let 
\begin{equation} \label{dm-e0a}
\begin{split}  
  \tx{either} & \hspace{1.2em} \C \subset \cl{\dom} 
  \qmq{be closed with}  \lem(\C\cap\dom) = 0 
  \qmq{and} \C\ne\emptyset\, \\
  \tx{or}  & \hspace{1.2em}   C=\{\infty\} \,. 
\end{split}
\end{equation}
We call $\me\in\bawl{\dom}$  
{\it density measure} (short {\it density}) {\it at} $\C$ (on $\dom$) if 
\begin{equation} \label{dm-e0}
  \me\ge 0 \qmq{and}  \me(\Cd\cap\dom) = \me(\dom) = 1
  \qmq{for all} \delta>0\,.
\end{equation}
The set of all density measures at $\C$ (on $\dom$) is denoted by
\begin{equation*}
\Dens_\C \qquad (\text{briefly also } \Dens_x \text{ if } C={\{x\}})\,.		
\end{equation*}
Notice that $\Dens_\C=\emptyset$ if $\lem(\dom \cap \dnhd{\C}{\delta}) = 0$ 
for some $\delta >0$. Thus $\Dens_\C\ne\emptyset$ implies that 
\begin{equation} \label{dm-e00}
  \lem(\Cd\cap\dom)>0 \zmz{for all} \delta>0\,.
\end{equation}
We call $\C$ a {\it density set} (of $\dom$) if it satisfies \reff{dm-e0a} and
\reff{dm-e00}. 
In Theorem~\ref{dm-s4d} below we see that \reff{dm-e00} is also sufficient
for $\Dens_\C\ne\emptyset$.

\subsection{Basic properties and existence}
\label{dm-bp}

Let us start with the formulation of basic properties of density measures.

\begin{theorem} \label{dm-s1}   
Let $\dom \in \bor{\R^n}$ and let $\C$ be a density set.
\bgl
\item
If $\me\in\Dens_\C$, then
\begin{equation} \label{dm-s1-1}
  \cor\mu\ne\emptyset\,, \quad \cor{\me} \subset \ccl\C\,, \quad
  \|\me\|=|\me|(\dom)=\me(\dom)=1\,,
\end{equation}
and $\me$ is pure. Moreover
\begin{equation} \label{dm-e1}
  \I{\dom}{f}{\me} = \sI{\ccl\C}{f}{\me} = \lim_{\delta \downarrow 0}
  \I{\Cd\cap\dom}{f}{\me} = \lim_{\delta\downarrow 0} 
  \mI{\Cd\cap\dom}{f}{\me} \,
\end{equation}
for all $f\in\cL^\infty(\dom)$.

\item
$\Dens_\C$ is a weak$^*$ compact convex subset of $\bawl{\dom}$. 
\el
\end{theorem}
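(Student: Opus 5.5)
For part (a), I would start from the definition \reff{dm-e0}. Since $\me\ge 0$, we have $|\me|=\me$, hence $\|\me\|=|\me|(\dom)=\me(\dom)=1$. In particular $\me\ne 0$, so the remark after \reff{e-comp} gives $\cor\me\ne\emptyset$. For $\cor\me\subset\ccl\C$ I argue by contraposition. Let $y\in\ccl{\R^n}\setminus\ccl\C$. Then there are an open neighborhood $U$ of $y$ and some $\de>0$ with $U\cap\Cd=\emptyset$; for $y$ or $\C$ equal to $\infty$ one uses the balls $B_\de(\infty)$, and this is where the compactification is needed when $\C$ is unbounded. Because $\me(\Cd\cap\dom)=\me(\dom)=1$, we get $\me(U\cap\dom)\le\me(\dom\setminus\Cd)=0$, so $y\notin\cor\me$.

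Purity comes next. If $\C=\{\infty\}$, or if $\C$ is bounded, I use the characterization that $\mu\in\bawl\dom$ is pure whenever $\lem(\dom\cap\cor\mu)=0$. Here $\dom\cap\cor\me\subset\dom\cap\C$ (since $\infty\notin\dom$), and this set is $\lem$-null by \reff{dm-e0a}. When $\C$ is an unbounded closed set, $\ccl\C=\C\cup\{\infty\}$ and still $\dom\cap\ccl\C=\dom\cap\C$, so the same argument works. I would double-check that the cited criterion (\cite[p.~24]{trace}) is stated with the compactified core. If it is not, I would argue directly, as in Example~\ref{pl-s2}. Take a $\si$-measure $\si$ with $0\le\si\le\me$. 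Then $\si(\dom\setminus\Cd)=0$ for all $\de$. Letting $\de\downarrow0$ and using $\si$-additivity gives $\si(\dom\setminus\C)=0$ (for $\C=\{\infty\}$ the intersection of the $\Cd$ is empty). Finally $\si(\dom\cap\C)\le\me(\dom\cap\C)=0$, because $\me\wac\lem$. Hence $\si=0$. This direct argument is clean and I would probably just use it.

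For \reff{dm-e1} with $f\in\cL^\infty(\dom)$: the sets $\Cd$, together with $\infty$ in the compactified sense, are open neighborhoods of $\ccl\C\supset\cor\me$. So the definition of $\sI{\ccl\C}{f}{\me}$ and \reff{pl-e1} give $\I{\dom}{f}{\me}=\I{\Cd\cap\dom}{f}{\me}$ for every $\de>0$. The limit is therefore that of a constant family. Since $\me(\Cd\cap\dom)=1$, the mean value integral coincides with the integral. Alternatively, $|\I{\dom\setminus\Cd}{f}{\me}|\le\|f\|_\infty\,\me(\dom\setminus\Cd)=0$ gives the equality directly.

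For part (b), convexity is immediate because the conditions in \reff{dm-e0} are linear equalities and inequalities. For weak$^*$ compactness I identify $\bawl\dom$ with $\cL^\infty(\dom)^*$ via \reff{pl-dual}. $\Dens_\C$ lies in the closed unit ball, which is weak$^*$ compact by Banach–Alaoglu. It remains to show that $\Dens_\C$ is weak$^*$ closed. Each defining condition is of the form $\df{\mu}{\ch_A}\ge0$ or $\df{\mu}{\ch_A}=1$ with $\ch_A\in\cL^\infty(\dom)$: namely $\mu(A)\ge 0$ for all $A\in\cB(\dom)$, $\mu(\dom)=1$, and $\mu(\Cd\cap\dom)=1$. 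Each such condition defines a weak$^*$ closed set, so their intersection is weak$^*$ closed. Membership in $\bawl\dom$ is automatic in the dual. The main point to watch throughout is the handling of $\infty$ in the core inclusion, but no step seems genuinely hard.
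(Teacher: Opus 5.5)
Your proof is correct. The paper gives no argument of its own here: it refers to \cite{dens} for $\C\subset\cl\dom$ and says the case $\C=\{\infty\}$ follows analogously. Your self-contained proof supplies exactly that content:
the core inclusion from $\me(\dom\setminus\Cd)=0$; purity via continuity from below of a $\si$-measure $0\le\si\le\me$, as in Example~\ref{pl-s2}; and weak$^*$ compactness via Alaoglu together with the weak$^*$ closedness of the conditions $\df{\me}{\ch_A}\ge 0$ and $\df{\me}{\ch_{\Cd\cap\dom}}=\df{\me}{\ch_\dom}=1$.
Your direct purity argument is preferable to invoking the criterion from \cite{trace}. It covers bounded $\C$, unbounded $\C$ and $\C=\{\infty\}$ verbatim, without any question of how that criterion treats the compactified core.

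One claim needs repair. For an unbounded closed $\C$ (e.g.\ a line in $\R^2$), $\Cd\cup\{\infty\}$ is not an open neighborhood of $\infty$ in $\ccl{\R^n}$, since such neighborhoods must contain some $B_r(\infty)$. So $\Cd$ cannot serve as the set $U$ in the definition of $\sI{\ccl\C}{f}{\me}$. This does not break the proof:
\begin{itemize}
\item for the equalities involving $\Cd$, use your alternative estimate $|\I{\dom\setminus\Cd}{f}{\me}|\le\|f\|_\infty\,\me(\dom\setminus\Cd)=0$;
\item obtain $\I{\dom}{f}{\me}=\sI{\ccl\C}{f}{\me}$ from \reff{pl-e1}, applied to a genuine open $U\supset\ccl\C\supset\cor\me$.
\end{itemize}
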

\noi
Notice that we have $\ccl\C=\C$ if $\C\subset\cl\dom$ is bounded
and $\ccl\C=\C\cup\{\infty\}$ otherwise. More generally, one could consider 
closed density sets $\C\subset\ccl\dom$. Then the closure $\ccl\dom$ of an
unbounded set always contains $\infty$. But, for applications it is more
convenient to take $\cl\dom$. Thus one would have to treat the two
different closures, which increases technicalities. We refrain from doing so
and just include $C=\{\infty\}$ to account for measures that concentrate near
infinity. This approach is sufficient for the applications we have in mind. 

\begin{proof}
For $C\subset\cl\dom$ the statement can be found in \cite{dens} and  
the remaining case $\C=\{\infty\}$ follows analogously. 
\end{proof}

Extending the idea of Example~\ref{pl-s2}, we now 
consider special measures that are important for our subsequent
treatment. 

\begin{proposition} \label{dm-s2}  
Let $\dom\in\bor{\R^n}$, let $\C$   
satisfy \reff{dm-e0a},
let $\E\in\cB(\dom)$, and let the measure 
$\lme\in\bawl{\dom}$ satisfy
\begin{equation} \label{dm-s2-1} 
  \lme\ge 0 \qmq{and}  \lme(\Cd\cap\E) > 0 \qmq{for all} \delta>0\,.
\end{equation}
If $\me\in\bawl{\dom}$ satisfies
\begin{equation} \label{dm-s2-2}
   \liminf_{\delta \downarrow 0} 
  \frac{\lme(A\cap\E\cap\C_\delta)}{\lme(\E\cap\C_\delta)}
  \le \me(A) \le 
  \limsup_{\delta\downarrow 0} 
  \frac{\lme(A\cap\E\cap\C_\delta)}{\lme(\E\cap\C_\delta)} \qmq{for all}
  A\in\cB(\dom)\,,
\end{equation}
then $\mu\in\Dens_\C$, $\E$ is an aura of $\me$, and one has
\begin{equation} \label{dm-s2-3}
   \liminf_{\delta \downarrow 0}
  \mI{\Cd\cap\E}{f}{\lme} \le \sI{\ccl\C}{f}{\me} 
\le \limsup_{\delta\downarrow 0} 
  \mI{\Cd\cap\E}{f}{\lme} 
 \qmq{for all} f\in\cL^\infty(\dom)\,.
\end{equation}

\end{proposition}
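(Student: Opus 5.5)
The plan is to read off the density-measure property from \reff{dm-s2-2} with suitable choices of $A$, and then to pass from sets to bounded functions by approximating with step functions.

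\emph{Step 1: positivity and normalization.} For $A\in\cB(\dom)$ the quotients in \reff{dm-s2-2} lie in $[0,1]$, because $\lme\ge0$ and the denominators are positive by \reff{dm-s2-1}. Hence $0\le\me(A)\le1$, so $\me\ge0$.

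We then insert specific sets:
\begin{itemize}
\item Taking $A=\dom$ gives quotients identically $1$, so $\me(\dom)=1$.
\item For fixed $\de_0>0$, take $A=\C_{\de_0}\cap\dom$. For $\de<\de_0$ we have $A\cap\E\cap\C_\de=\E\cap\C_\de$, so the quotient equals $1$ and $\me(\C_{\de_0}\cap\dom)=1$. This holds for every $\de_0>0$, including the case $\C=\{\infty\}$ with $\C_\de=B_\de(\infty)$.
\item Taking $A=\dom\setminus\E$ gives quotients identically $0$, so $\me(\E^c)=0$. Since $\me\ge0$ means $|\me|=\me$, $\E$ is an aura.
\end{itemize}
Together with $\me\in\bawl{\dom}$ this yields \reff{dm-e0}, i.e. $\me\in\Dens_\C$. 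In particular $\C$ is a density set. Indeed, \reff{dm-e0a} is assumed, and \reff{dm-e00} follows because $\Dens_\C\ne\emptyset$ (alternatively, $\lem(\C_\de\cap\dom)=0$ would force $\lme(\C_\de\cap\E)=0$ by $\lme\wac\lem$). Theorem~\ref{dm-s1} is therefore available.

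\emph{Step 2: the estimate \reff{dm-s2-3}.} By \reff{dm-e1}, $\sI{\ccl\C}{f}{\me}=\I{\dom}{f}{\me}$.

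First let $f=\sum_{i=1}^k a_i\ch_{A_i}$ be a Borel step function with $A_i$ a disjoint partition of $\dom$. Then
\[
\mI{\C_\de\cap\E}{f}{\lme}=\sum_i a_i q_i(\de),\qquad q_i(\de)=\frac{\lme(A_i\cap\E\cap\C_\de)}{\lme(\E\cap\C_\de)}.
\]
Here the expected obstacle appears: the $\liminf$ of a sum is not the sum of the $\liminf$s, so the bounds on the individual $\me(A_i)$ from \reff{dm-s2-2} cannot simply be combined. I would avoid this by a layer-cake argument instead of a termwise one. Write $f=c_0+\sum_j b_j\ch_{\{f\ge t_j\}}$ with $b_j\ge0$, where $t_j$ runs through the increasing values of $f$. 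Since $b_j\ge0$, superadditivity of $\liminf$ gives
\[
\liminf_{\de}\mI{\C_\de\cap\E}{f}{\lme}\ \ge\ c_0+\sum_j b_j\liminf_\de q_{\{f\ge t_j\}}(\de)\ \ge\ c_0+\sum_j b_j\,\me(\{f\ge t_j\}),
\]
but this is the wrong direction: it only shows $\liminf\ge$ a lower bound for $\I{\dom}{f}{\me}$.

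So I would pass to sets and use the sandwich differently. The sequences of sums are only controlled up to the discrepancy between $\liminf$ of a sum and the sum of the $\liminf$s. To handle this properly, I would view $\me$ through the functional bounds. For $f\in\cL^\infty$ set
\[
p(f)=\limsup_\de\mI{\C_\de\cap\E}{f}{\lme},
\]
which is sublinear. The claim is then $-p(-f)\le\I{\dom}{f}{\me}\le p(f)$. The hypothesis \reff{dm-s2-2} gives only $\me(A)\le p(\ch_A)$. For general step functions I would apply the layer-cake decomposition to get $\I{\dom}{f}{\me}\le c_0+\sum_j b_j\,p(\ch_{\{f\ge t_j\}})$. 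However, sublinearity runs the wrong way to bound this sum by $p(f)$ when the sets are nested. Consequently, the route I would commit to is a Hahn--Banach/extreme-point representation: the inequality with $p$ for indicators is likely insufficient in general. I would therefore try to verify it via nestedness, where the quotients for nested sets are ordered, hoping to exploit that monotone structure. This is the step I expect to be the real difficulty, and it possibly needs \reff{dm-s2-2} to be understood as coming from a Banach limit. Finally, for general $f\in\cL^\infty(\dom)$ I would approximate uniformly by step functions; both sides are $1$-Lipschitz in $\|\cdot\|_\infty$ because $\me(\dom)=1$ and the averages are normalized.
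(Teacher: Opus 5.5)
Your Step~1 is correct and matches the paper's argument ($\mu\ge0$, $\mu(\Omega)=\mu(C_\delta\cap\Omega)=1$, $\mu(\Omega\setminus E)=0$). Step~2 does not prove \reff{dm-s2-3}, and your diagnosis of why is correct. Hypothesis \reff{dm-s2-2} bounds each $\mu(A)$ separately, but $\liminf$ and $\limsup$ of the averages are not additive. So neither the layer-cake decomposition nor the sublinear functional $p(f)=\limsup_{\delta}\frac{1}{\lambda(C_\delta\cap E)}\int_{C_\delta\cap E}f\,d\lambda$ gets you from indicators to step functions. The paper's proof makes exactly this passage ``by linearity'', and that step is unjustified. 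The gap also cannot be closed. For general $\lambda$ the implication \reff{dm-s2-2}$\Rightarrow$\reff{dm-s2-3} is false, so your suspicion that a functional (Hahn--Banach/Banach-limit) bound is needed is right.

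\textbf{Counterexample.} Take $\Omega=B_1(0)$, $C=\{0\}$, $E=\Omega$, $r_k=2^{-k}$ and $w_k=2^{-k^2}$.
\begin{itemize}
\item Using Proposition~\ref{ep-s8} and Corollary~\ref{ep-s7}, pick 0-1 measures $\theta_{2j}$ (level $k=2j$) and $\alpha_j,\beta_j$ (level $k=2j+1$). Each has core a point of $\partial B_{r_k}(0)$ and aura inside $B_{r_k}(0)$, and the auras $P_j,Q_j,R_j$ are pairwise disjoint. Put $\theta_{2j+1}=\frac12(\alpha_j+\beta_j)$.
\item Then $\theta_k(B_\delta(0))=1$ for $\delta\ge r_k$ and $=0$ for $\delta<r_k$. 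Hence $\lambda=\sum_k w_k\theta_k$ (purely finitely additive, which the proposition allows) satisfies \reff{dm-s2-1}.
\item For $\delta\in[r_k,r_{k-1})$, the quotient $\lambda(A\cap B_\delta(0))/\lambda(B_\delta(0))$ differs from $\theta_k(A)$ by at most $w_k^{-1}\sum_{l>k}w_l\to0$, uniformly in $A$. So its $\liminf$ and $\limsup$ as $\delta\downarrow0$ equal $\liminf_k\theta_k(A)$ and $\limsup_k\theta_k(A)$.
\item Fix a free ultrafilter $\mathcal V$ on $\mathbb N$ and set $\mu(A)=\frac12\lim_{\mathcal V}\theta_{2j}(A)+\frac12\lim_{\mathcal V}\alpha_j(A)$. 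All values involved are $0$ or $1$.
\item If $\lim_{\mathcal V}\theta_{2j}(A)=1$, then $\theta_{2j}(A)=1$ infinitely often. If it is $0$ but $\lim_{\mathcal V}\alpha_j(A)=1$, then $\theta_{2j+1}(A)\ge\frac12$ infinitely often. Hence $\mu(A)\le\limsup_k\theta_k(A)$ in every case.
\item Applying this to $\Omega\setminus A$ gives the lower bound, so \reff{dm-s2-2} holds.
\item Now take $f=\chi_U-\chi_V$ with $U=\bigcup_jQ_j$ and $V=\bigcup_jR_j$. Then $\int f\,d\theta_k=0$ for every $k$, so every average in \reff{dm-s2-3} is exactly $0$. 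But $\int_\Omega f\,d\mu=\frac12$.
\end{itemize}

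\textbf{What does hold.} The first half of the proposition (your Step~1) is fine. Also, \reff{dm-s2-3} follows if you assume $\int_\Omega f\,d\mu\le p(f)$ for all $f\in L^\infty$ and apply it to $\pm f$. This is exactly what the Hahn--Banach construction in Proposition~\ref{dm-s4a} delivers. So $\lambda$-densities should be defined through \reff{dm-s2-3}, not through the set condition \reff{dm-s2-2}.
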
 
\noi
A measure $\me\in\bawl{\dom}$ that meets \reff{dm-s2-2} 
for some $\lme\in\bawl{\dom}$ satisfying \reff{dm-s2-1} is called 
{\it $\lme$-density (on $\dom$) at $\C$ within $\E$}.
The set of all such measures is denoted by 
\begin{equation*}
  \lDens_\C^\E\,.
\end{equation*}
Notice that always
\begin{equation*}
   \lDens_\C^\E \subset \Dens_\C\,.
\end{equation*}
In particular we have that $\dens_x^\dom$ from Example~\ref{pl-s2} belongs to 
$\LDens_x^\dom$ while $\la_\C^\dom$ belongs to $\lDens_\C^\dom$ if
$\ti\la=\lem$. Subsequently, $\dens_\C^\E$ usually stands for a measure in 
$\LDens_\C^\E$. For $\E=\dom$ the previous result is already contained in
\cite{dens}. 

\begin{proof}
Obviously $\mu\ge 0$ and $\mu(\C_\de\cap\dom)=\mu(\E)=\mu(\dom)=1$. Thus 
$\mu\in\Dens_\C$ and $\E$ is an aura for $\mu$. Now, \reff{dm-s2-2} directly
implies \reff{dm-s2-3} for $f=\ch_A$ and, by multiplication with $-1$, also
for $f=-\ch_A$. Hence, by linearity, \reff{dm-s2-3} is valid for all 
step functions. Uniform approximation by step functions implies it for all 
$f\in\cL^\infty(\dom)$. 
\end{proof}

\noi
It turns out that already one inequality in 
\reff{dm-s2-3} is sufficient for the characterization of density measures.

\begin{theorem} \label{dm-s3}
Let $\dom\in\bor{\R^n}$, let $\C$ satisfy \reff{dm-e0a}, and let
$\me \in \bawl{\dom}$. Then measure $\mu$ is a density measure in
$\Dens_\C$ if and only if 
there is some $\lme\in\bawl{\dom}$ satisfying \reff{dm-s2-1} and
\begin{equation} \label{dm-s3-1}
\sI{\ccl\C}{f}{\me} \le \limsup_{\delta\downarrow 0} 
\mI{\Cd\cap\dom}{f}{\lme} \qmq{for all}
f\in\cL^\infty(\dom)\,.
\end{equation}
Moreover \reff{dm-s3-1} implies
\begin{equation} \label{dm-s3-2}
\liminf_{\delta\downarrow 0} \mI{\Cd\cap\dom}{f}{\lme}
\le \sI{\ccl\C}{f}{\me} \qmq{for all}
f\in\cL^\infty(\dom)\,.
\end{equation}
\end{theorem}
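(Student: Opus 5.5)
Both implications rest on elementary manipulations. The direction "density measure $\Rightarrow$ existence of $\lme$" is essentially a choice of $\lme:=\me$. The converse extracts the defining properties \reff{dm-e0} of a density measure from the single inequality \reff{dm-s3-1} by testing it with suitable step functions.

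\emph{Necessity.} Let $\me\in\Dens_\C$ and put $\lme:=\me$. Then $\lme\ge 0$, $\lme\in\bawl{\dom}$, and $\lme(\Cd\cap\dom)=1>0$, so \reff{dm-s2-1} holds with $\E=\dom$. By \reff{dm-e1} in Theorem~\ref{dm-s1}, for every $f\in\cL^\infty(\dom)$,
\begin{equation*}
  \sI{\ccl\C}{f}{\me}=\lim_{\delta\downarrow 0}\mI{\Cd\cap\dom}{f}{\me}\,.
\end{equation*}
Hence \reff{dm-s3-1} holds even with equality and a genuine limit.

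\emph{The lower bound \reff{dm-s3-2}.} Apply \reff{dm-s3-1} to $-f$ and use linearity of both integrals:
\begin{equation*}
 -\sI{\ccl\C}{f}{\me}\le\limsup_{\delta\downarrow 0}\Big(-\mI{\Cd\cap\dom}{f}{\lme}\Big)
 =-\liminf_{\delta\downarrow 0}\mI{\Cd\cap\dom}{f}{\lme}\,.
\end{equation*}
This is exactly \reff{dm-s3-2}. It also gives the sandwich between $\liminf$ and $\limsup$ for every $f\in\cL^\infty(\dom)$.

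\emph{Sufficiency.} Assume $\lme$ satisfies \reff{dm-s2-1} and \reff{dm-s3-1}, and hence also \reff{dm-s3-2}. For $\C\subs\cl\dom$ we have $\ccl\C=\C$ or $\C\cup\{\infty\}$, and we interpret $\sI{\ccl\C}{f}{\me}$ as $\I{\dom}{f}{\me}$ via \reff{pl-e1}. We need three facts.
\begin{itemize}
\item \emph{Positivity.} Take $f=\ch_A$. The mean values $\mI{\Cd\cap\dom}{\ch_A}{\lme}$ lie in $[0,1]$, so the sandwich gives $0\le\me(A)\le 1$, i.e. $\me\ge 0$.
\item \emph{Normalization.} Take $f=\ch_\dom$. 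The mean values equal $1$, so $\me(\dom)=1$.
\item \emph{Concentration.} Fix $\delta_0>0$ and take $f=\ch_{\dom\setminus\C_{\delta_0}}$. For $\delta<\delta_0$ the set $\Cd\cap\dom$ does not meet $\dom\setminus\C_{\delta_0}$, so the mean value vanishes and the $\limsup$ is $0$. Thus $\me(\dom\setminus\C_{\delta_0})\le 0$, and together with positivity $\me(\C_{\delta_0}\cap\dom)=\me(\dom)=1$.
\end{itemize}
Since $\me\in\bawl{\dom}$ is given, these facts mean $\me\in\Dens_\C$ by \reff{dm-e0}.

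\emph{Main obstacle.} The only delicate point is bookkeeping. The integral $\sI{\ccl\C}{f}{\me}$ is defined through the core of $\me$. Before knowing that $\me$ is a density measure, we cannot yet use $\cor\me\subs\ccl\C$, so it must be read as $\I{\dom}{f}{\me}$ (or $\I{\dom\cap U}{f}{\me}$ for neighborhoods $U$ of $\ccl\C$). One should then check, via \reff{pl-e1} and the concentration step, that this reading is consistent. A second point is the case $\C=\{\infty\}$: there $\Cd$ means $B_\delta(\infty)\cap\R^n$, and the same disjointness argument works verbatim.
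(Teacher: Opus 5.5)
Your proposal is correct and follows the paper's proof. Necessity uses $\lme:=\me$ via \reff{dm-e1}, which applies because $\me\in\Dens_\C$ forces \reff{dm-e00}; the lower bound \reff{dm-s3-2} comes from applying \reff{dm-s3-1} to $-f$; and sufficiency comes from testing with indicator functions. The paper phrases the last step as deriving \reff{dm-s2-2} with $\E=\dom$ from $f=\ch_A$ and then citing Proposition~\ref{dm-s2}, whose proof consists of exactly your positivity, normalization and concentration checks, so you have just inlined it.
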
 
\noi
The statement remains true if we interchange \reff{dm-s3-1} and
\reff{dm-s3-2}. If $\me\in\Dens_\C$ is given, then, by \reff{dm-e1}, we can
choose $\lme=\me$ for the implication in the proposition. Notice that 
either $\Dens_\C\ne\emptyset$ or \reff{dm-s2-1} implies \reff{dm-e00}
and, thus, $\C$ has to be a density set. For $\C\subset\cl\dom$ the result is
already contained in \cite{dens}. Let us provide a shortened version of that 
proof, that also covers our more general case.

\begin{proof}
If $\la\in\bawl{\dom}$ with \reff{dm-s2-1}
satisfies \reff{dm-s3-1} for some $f$, then also for $-f$. Using
\begin{equation*}
  \limsup_{\delta\downarrow 0} \mI{\Cd\cap\dom}{-f}{\lme} =
  - \liminf_{\delta\downarrow 0} \mI{\Cd\cap\dom}{f}{\lme}
\end{equation*}
we readily get \reff{dm-s3-2}. 

Now we assume that $\mu\in\Dens_\C$. Then \reff{dm-e1} implies
\reff{dm-s3-1} with $\la=\mu$ while \reff{dm-s2-1} follows from the
necessary condition \reff{dm-e00}. For the opposite direction 
we use that \reff{dm-s3-1} implies \reff{dm-s3-2}. With $f=\ch_A$ we
get \reff{dm-s2-2}, which gives $\mu\in\Dens_\C$ by Proposition~\ref{dm-s2}. 
\end{proof}

\noi
As $\Dens_C$, also the set $\lDens_\C^\E$
of $\la$-densities at $\C$ within $\E$ 
is convex and weak$^*$ compact. 

\begin{corollary} \label{dm-s4}
Let $\dom\in\bor{\R^n}$, let $\C$ satisfy \reff{dm-e0a},
let $\E\in\cB(\dom)$, and let the measure 
$\lme\in\bawl{\dom}$ satisfy \reff{dm-s2-1}.
Then $\lDens_\C^\E$
is a weak$^*$-compact and convex subset of $\Dens_C$. 
\end{corollary}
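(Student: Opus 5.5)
The inclusion $\lDens_\C^\E\subset\Dens_\C$ is already provided by Proposition~\ref{dm-s2}. Since $\Dens_\C$ is weak$^*$ compact by Theorem~\ref{dm-s1}(2), and weak$^*$ topology is Hausdorff, it suffices to show that $\lDens_\C^\E$ is convex and weak$^*$ closed in $\bawl{\dom}=\cL^\infty(\dom)^*$. A closed subset of a compact set is then compact.

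For both properties I will rewrite the defining condition \reff{dm-s2-2} as a family of constraints, one for each $A\in\cB(\dom)$. Set
\[
a_A:=\liminf_{\de\dto0}\frac{\lme(A\cap\E\cap\C_\de)}{\lme(\E\cap\C_\de)}\,,\qquad
b_A:=\limsup_{\de\dto0}\frac{\lme(A\cap\E\cap\C_\de)}{\lme(\E\cap\C_\de)}\,.
\]
By \reff{dm-s2-1} these are well-defined numbers in $[0,1]$, and they depend only on $\lme$, $\E$, $\C$ and $A$, not on $\me$. Hence
\[
\lDens_\C^\E=\bigcap_{A\in\cB(\dom)}\big\{\me\in\bawl{\dom}\:\big|\: a_A\le \me(A)\le b_A\big\}\,.
\]
Under the identification \reff{pl-dual}, $\me(A)=\df{\me}{\ch_A}$ with $\ch_A\in\cL^\infty(\dom)$. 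Thus $\me\mapsto\me(A)$ is weak$^*$ continuous and linear. Each set in the intersection is therefore a weak$^*$ closed convex set (a preimage of the closed interval $[a_A,b_A]$ under a continuous linear functional). An intersection of closed convex sets is closed and convex, which gives both properties at once.

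Alternatively, convexity can be checked directly: for $\me_1,\me_2\in\lDens_\C^\E$ and $t\in[0,1]$ we have $a_A\le t\me_1(A)+(1-t)\me_2(A)\le b_A$.

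There is no real obstacle here. The only point needing care is that the bounds $a_A,b_A$ are independent of $\me$, so each constraint is a fixed interval condition, and that compactness is inherited from $\Dens_\C$ rather than proved anew. As a remark, one could also invoke Banach–Alaoglu directly, since the unit ball of $\bawl{\dom}$ is weak$^*$ compact and $\lDens_\C^\E$ lies in it by \reff{dm-s1-1}.
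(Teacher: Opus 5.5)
Your proposal is correct and follows the paper's proof: the paper shows that the complement of $\lDens_\C^\E$ is weak$^*$ open via the functional $\mu\mapsto\df{\mu}{\ch_A}$, which is your intersection of closed preimages viewed from the complement. Convexity is checked directly, and compactness is inherited from $\Dens_\C$ as a weak$^*$ closed subset, exactly as you do.
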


\begin{proof}
We have $\lDens_\C^\E\subset\Dens_\C$ by Proposition~\ref{dm-s2}.
With $\me_1,\me_2$ also a convex combination satisfies \reff{dm-s2-2},
which implies convexity of $\lDens_\C^\E$. Let us now consider any
$\me\not\in \lDens_\C^\E$. 
Then there is some $A\in\cB(\dom)$ such that at least one inequality in
\reff{dm-s2-2} is violated. Hence there is some $\eps>0$ such that it is 
also violated for all $\tilde\me\in\bawl{\dom}$ with 
$|\df{\tilde\me-\me}{\chi_A}|<\eps$. 
Therefore $\lDens_\C^\E$ is the complement of a
weak$^*$ open set. Thus it is a weak$^*$ closed subset of the weak$^*$ compact
set $\Dens_\C$, which gives the assertion.
\end{proof}

It turns out that any measure $\lme\in\bawl{\dom}$, which does not vanish
near $\C$, induces the existence of a density measure. 

\begin{proposition} \label{dm-s4a}     
Let $\dom\in\bor{\R^n}$, let $\C$ 
satisfy \reff{dm-e0a},
let $\E\in\cB(\dom)$, and let the measure 
$\lme\in\bawl{\dom}$ satisfy
\begin{equation} \label{dm-s4a-0} 
  \lme\ge 0 \qmq{and}  \lme(\Cd\cap\E) > 0 \qmq{for all} \delta>0\,.
\end{equation}
Then there exists a density measure $\me\in\Dens_\C$ on $\dom$
with $\cor{\me}\subset\ccl\C\cap\ccl\E$ and
\begin{equation}  \label{dm-s4a-1}
  \liminf_{\delta \downarrow 0}
  \mI{\Cd\cap\E}{f}{\lme} \le \sI{\ccl\C}{f}{\me} 
\le \limsup_{\delta\downarrow 0} 
  \mI{\Cd\cap\E}{f}{\lme} 
 \qmq{for all} f\in\cL^\infty(\dom)\,.
\end{equation}
Moreover  
$\E$ is an aura of $\me$ and $\me$ is a $\lme$-density (on $\dom$) 
at $\C$ within $\E$. 
\end{proposition}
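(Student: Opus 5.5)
The plan is to construct $\me$ by the Hahn--Banach theorem and then read off every remaining assertion from Proposition~\ref{dm-s2}. On $\cL^\infty(\dom)$ we define the sublinear functional
\begin{equation*}
  p(f) := \limsup_{\delta\downarrow 0} \mI{\Cd\cap\E}{f}{\lme}\,.
\end{equation*}
It is well defined and finite. The denominator $\lme(\Cd\cap\E)$ is positive by \reff{dm-s4a-0}, and since $\lme\wac\lem$ the mean value integrals depend only on the $\lem$-class of $f$ and are bounded by $\|f\|_\infty$. Positive homogeneity and subadditivity hold because a $\limsup$ of a sum is at most the sum of the $\limsup$s, and the mean value integral is linear in $f$. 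Moreover $|p(f)|\le\|f\|_\infty$.

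Starting from the zero functional on the trivial subspace, the Hahn--Banach theorem gives a linear functional $f^*$ on $\cL^\infty(\dom)$ with $f^*\le p$. Replacing $f$ by $-f$ yields
\begin{equation*}
  -p(-f)\le f^*(f)\le p(f)\,,
\end{equation*}
and $-p(-f)$ is exactly the $\liminf$ of the means. Hence $f^*$ is bounded with norm at most $1$. By the duality \reff{pl-dual} there is some $\me\in\bawl{\dom}$ with $f^*(f)=\I{\dom}{f}{\me}$. Taking $f=\ch_A$ gives precisely \reff{dm-s2-2}. So $\me$ is a $\lme$-density at $\C$ within $\E$, and Proposition~\ref{dm-s2} yields $\me\in\Dens_\C$, that $\E$ is an aura, and the estimate \reff{dm-s2-3}. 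The latter is \reff{dm-s4a-1}, since by \reff{dm-e1} we have $\sI{\ccl\C}{f}{\me}=\I{\dom}{f}{\me}$.

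It remains to show $\cor\me\subset\ccl\C\cap\ccl\E$, which I expect to be the only point needing some care. The inclusion $\cor\me\subset\ccl\C$ comes from Theorem~\ref{dm-s1}. For the other inclusion, let $x\notin\ccl\E$. Then there is an open $U\ni x$ in $\ccl{\R^n}$ with $U\cap\E=\emptyset$, and this also covers $x=\infty$ using the balls $B_\de(\infty)$. Since $\E$ is an aura and $\me\ge 0$, we get $\me(U\cap\dom)\le\me(\dom\setminus\E)=0$. Therefore $x\notin\cor\me$, and hence $\cor\me\subset\ccl\E$.

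The case $\C=\{\infty\}$ needs no separate treatment. The construction uses only the sets $\Cd$ and the positivity \reff{dm-s4a-0}, and Proposition~\ref{dm-s2} is stated for $\C$ satisfying \reff{dm-e0a} in both alternatives.
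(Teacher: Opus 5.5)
Your proof is correct and follows essentially the same route as the paper: a Hahn--Banach extension dominated by the sublinear functional given by the $\limsup$ of the $\lme$-averages over $\Cd\cap\E$ (the paper takes this argument from its reference), then Proposition~\ref{dm-s2} for the density and aura properties, and finally the aura argument for $\cor{\me}\subset\ccl\E$. The one step you leave implicit is that $\C$ must be a density set before Theorem~\ref{dm-s1} can be applied; this follows at once from \reff{dm-s4a-0} and $\lme\wac\lem$, or simply from $\Dens_\C\neq\emptyset$.
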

\noi
The existence proof relies on the Hahn-Banach theorem where \reff{dm-s4a-1}
does not imply uniqueness of measure $\me$ in general. In \cite{dens}
the result is shown for the special case $\C\subset\cl\dom$ and $\E=\dom$.

\begin{proof}
We can argue analogously as in \cite{dens} (also for the case $\C=\{\infty\}$)
to get a measure $\mu\in\Dens_\C$ with $\cor{\mu}\subset\ccl\C$ and
\reff{dm-s4a-1}. This implies \reff{dm-s2-2} and, by Proposition~\ref{dm-s2},
we obtain that $\mu\in\lDens_\C^\E$ and that $\E$ is an aura of $\mu$. 
This readily implies $\cor{\mu}\subset\ccl\E$. 
\end{proof}

By Proposition~\ref{dm-s4a} we basically need a bounded measure $\la$
and a set $\E\in\cB(\dom)$ to ensure the existence of a density measure $\mu$
in $\Dens_\C$. Since the necessary condition \reff{dm-e00} is needed anyway, 
one can basically consider $\la=\lem$ for that task. 
The essential point is the boundedness of $\la$. But, if $\dom$ is bounded, 
then $\lme=\reme{\lem}{\dom}$ satisfies \reff{dm-s4a-0} with $\E=\dom$.
The next corollary provides a more general version in the case
$\C\subset\cl\dom$ and follows directly from Proposition~\ref{dm-s4a}.

\begin{corollary} \label{dm-s4b}
Let $\dom\in\bor{\R^n}$, let $\C$ satisfy \reff{dm-e0a},
let $\E\in\cB(\dom)$, and let
\begin{equation}\label{dm-s2c-0}
  \lem(\E\cap\C_\delta)>0\zmz{for all}\delta>0 \qmq{and}
  \lem(\E\cap\C_{\tilde\delta}) <\infty
  \qmq{for some} \tilde\delta>0\,.
\end{equation}
Then there exists an $\lem$-density $\dens_\C^\E\in\LDens_\C^\E$ 
at $\C$ within $\E$. Moreover $\dens_\C^\E$ has its core in 
$\ccl\C\cap\ccl\E$ and $\E$ is an aura of it.
\end{corollary}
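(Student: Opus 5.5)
The corollary should follow from Proposition~\ref{dm-s4a} once we pick a suitable bounded measure $\lme$. I would take the restriction of Lebesgue measure to $\E\cap\C_{\tilde\delta}$,
\begin{equation*}
  \lme(A) := \lem(A\cap\E\cap\C_{\tilde\delta}) \qquad (A\in\cB(\dom))\,,
\end{equation*}
with $\tilde\delta$ from \reff{dm-s2c-0}.

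First I check that $\lme$ is admissible. By the second condition in \reff{dm-s2c-0}, $\lme$ is finite and bounded by $\lem(\E\cap\C_{\tilde\delta})<\infty$. It is clearly additive and nonnegative, and $\lem(A)=0$ implies $\lme(A)=0$, so $\lme\in\bawl{\dom}$.

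Next I verify \reff{dm-s4a-0}. For $\delta\le\tilde\delta$ we have $\C_\delta\subset\C_{\tilde\delta}$, hence
\[
\lme(\C_\delta\cap\E)=\lem(\E\cap\C_\delta)>0
\]
by the first condition in \reff{dm-s2c-0}. For $\delta>\tilde\delta$ the value is $\lem(\E\cap\C_{\tilde\delta})>0$, again by that condition. (In the case $\C=\{\infty\}$ the same monotonicity of the neighborhoods $\{\infty\}_\delta$ applies.)

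Proposition~\ref{dm-s4a} then yields $\me\in\Dens_\C$ with core in $\ccl\C\cap\ccl\E$, with $\E$ an aura, and with $\me$ a $\lme$-density at $\C$ within $\E$, i.e.\ satisfying \reff{dm-s2-2} for this $\lme$. It remains to see that this is the same as being a $\lem$-density at $\C$ within $\E$. For $\delta\le\tilde\delta$,
\begin{equation*}
  \frac{\lme(A\cap\E\cap\C_\delta)}{\lme(\E\cap\C_\delta)}
  = \frac{\lem(A\cap\E\cap\C_\delta)}{\lem(\E\cap\C_\delta)}\,.
\end{equation*}
Since the $\liminf$ and $\limsup$ as $\delta\downarrow 0$ only involve small $\delta$, inequality \reff{dm-s2-2} holds with $\lem$ in place of $\lme$. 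So $\me\in\LDens_\C^\E$, and we may call it $\dens_\C^\E$.

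The only subtlety is formal: $\lem$ itself need not be bounded on $\dom$, so strictly it is not in $\bawl{\dom}$. I would therefore read $\LDens_\C^\E$ through the quotients above, which are finite for small $\delta$ by \reff{dm-s2c-0}. Equivalently, the notation means $\lme$-densities for the truncation $\lme$, and the identity above shows these do not depend on the choice of $\tilde\delta$. Everything else is immediate.
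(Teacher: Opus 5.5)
Your proof is correct and takes the same route as the paper, which simply says the corollary follows directly from Proposition~\ref{dm-s4a}. Your bounded truncation $A\mapsto\lem(A\cap\E\cap\C_{\tilde\delta})$, together with your check that its ratios coincide with those of $\lem$ for $\delta\le\tilde\delta$, fills in exactly the details that this direct application leaves implicit, including the interpretation of $\LDens_\C^\E$ when $\lem$ is unbounded.
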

\noi
Notice that, as in the previous proposition, the verified density 
$\dens_\C^\E$ is not unique.
For $\E=\dom$ and $\C=\{x\}$, the corollary verifies the existence 
of a density measure $\dens_x^\dom$ as considered in Example~\ref{pl-s2}. 

Let us discuss under which condition $\Dens_\C$ is not empty. 
If $\C\subset\cl\dom$ is a bounded density set, i.e. it satisfies
\reff{dm-e0a} and \reff{dm-e00}, then $\Dens_\C\ne\emptyset$ by
Corollary~\ref{dm-s4b} with $E=\dom$.  
If $\C\subset\cl\dom$ is an unbounded density set,
then the second condition in \reff{dm-s2c-0} may fail for $E=\dom$.
But if there is some bounded $E\in\cB(\dom)$ satisfying the first condition in 
\reff{dm-s2c-0}, then the second condition follows and Corollary~\ref{dm-s4b}
implies $\Dens_\C\ne\emptyset$ also in that case. 
The next result covers the remaining case where $\C$ is an unbounded density
set and there is no bounded $\E$ such that \reff{dm-s2c-0} is met
(which includes the case $\C=\{\infty\}$). 

\begin{proposition}\label{dm-s4c}
Let $\dom\in\bor{\R^n}$, let $\C$ 
satisfy \reff{dm-e0a},
let $\E\in\cB(\dom)$, and assume that we have for each 
$r>0$ that  
\begin{equation}\label{dm-s4c-1}
  \lem(\Cd\cap\E\cap B_r(\infty))>0 \qmq{for all} \delta>0\,.
\end{equation} 
Then there exists a density measure $\me\in\Dens_\C$ on $\dom$ such that 
$\cor\me=\{\infty\}$ and $\E$ is an aura of $\me$. 
\end{proposition}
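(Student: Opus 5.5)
The plan is to produce $\me$ directly by a Hahn--Banach construction, in the spirit of Example~\ref{pl-s2} and Proposition~\ref{dm-s4a}. The point of departure is that the density is taken not along the neighborhoods $\Cd\cap\E$ alone but along the shrinking family
\begin{equation*}
  A_\de := \Cd\cap\E\cap B_\de(\infty)\,, \qquad \de>0\,,
\end{equation*}
which is decreasing as $\de\dto 0$. By \reff{dm-s4c-1}, applied with $r=\de$, we have $\lem(A_\de)>0$ for all $\de>0$.

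Since $\lem(A_\de)$ may be infinite, I first replace $\lem$ by a bounded weight. Let
\begin{equation*}
  \lme := \reme{\big(w\,\lem\big)}{\E}\,, \qquad w(x)=e^{-|x|^2}\,.
\end{equation*}
Then $\lme\in\bawl{\dom}$, $\lme\ge 0$, and $\lme(A_\de)>0$ for all $\de>0$, because $w>0$ everywhere. On $\cL^\infty(\dom)$ I consider the sublinear functional
\begin{equation*}
  p(f) := \limsup_{\de\dto 0}\mI{A_\de}{f}{\lme}\,,
\end{equation*}
which satisfies $|p(f)|\le\|f\|_\infty$. Hahn--Banach, starting from the zero functional on $\{0\}$, gives a linear $\ell\le p$. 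Applying $\ell\le p$ to $-f$ yields $\liminf\le\ell\le\limsup$ of the same averages. By \reff{pl-dual}, $\ell$ is a measure $\me\in\bawl{\dom}$ satisfying
\begin{equation*}
  \liminf_{\de\dto0}\frac{\lme(A\cap A_\de)}{\lme(A_\de)}\le\me(A)\le\limsup_{\de\dto0}\frac{\lme(A\cap A_\de)}{\lme(A_\de)}
  \qmq{for all} A\in\cB(\dom)\,.
\end{equation*}
This is the same argument as in Proposition~\ref{dm-s4a}, only with the nested sets $A_\de$ in place of $\Cd\cap\E$. I expect this to be the only step needing care: one must check that nestedness of $A_\de$ is all that the construction uses, and that weak absolute continuity holds. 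The latter is immediate, since $\lem(A)=0$ gives $\lme(A)=0$ and hence $\me(A)=0$. Alternatively, I could view this as Proposition~\ref{dm-s2} with the family $A_\de$ replacing $\Cd\cap\E$.

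It remains to read off the properties from the displayed inequality; each follows by choosing $A$ suitably.
\begin{itemize}
\item Taking $A=\dom$ gives $\me\ge0$ and $\me(\dom)=1$.
\item For fixed $\de_0>0$ and all $\de<\de_0$ we have $A_\de\subset\C_{\de_0}\cap\E\cap B_{\de_0}(\infty)$. Hence $\me(\C_{\de_0}\cap\dom)=\me(\E)=\me(B_{\de_0}(\infty)\cap\dom)=1$.
\item The equality $\me(\C_{\de_0}\cap\dom)=1$ for every $\de_0>0$ gives $\me\in\Dens_\C$.
\item The equality $\me(\E)=1$ shows that $\E$ is an aura of $\me$.
\end{itemize}

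For the core, take any $x\in\R^n$ and a bounded open $U\ni x$. For small $\de$ we have $U\cap B_\de(\infty)=\emptyset$, so $\me(U\cap\dom)=0$ and $x\notin\cor\me$. Thus $\cor\me\subset\{\infty\}$. Since $\me\ne0$, we have $\cor\me\ne\emptyset$, and therefore $\cor\me=\{\infty\}$.
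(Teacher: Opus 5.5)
Your proof is correct. It follows the same Hahn--Banach skeleton as the paper: a sublinear functional localized on the sets $\C_\de\cap\E\cap B_r(\infty)$, then testing with characteristic functions. This yields $\me\ge0$, $\me(\C_{\de_0}\cap\dom)=\me(\E)=\me(\dom)=1$, and $\me(B)=0$ for bounded $B$, whence $\cor\me=\{\infty\}$.

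The difference lies in the functional.
\begin{itemize}
\item The paper takes $p(f)=\limsup_{\de,r\dto 0}\mathrm{ess\,sup}_{\C_\de\cap\E\cap B_r(\infty)}f$ and extends $f\mapsto f(\infty)$ from functions that are $\lem$-a.e.\ constant on some such set. An essential supremum does not see the size of the sets. So the paper needs neither finiteness of $\lem(\C_\de\cap\E\cap B_r(\infty))$ nor any auxiliary measure.
\item You use $\lme$-averages over the diagonal family $A_\de$. This is why you must introduce the finite weight $e^{-|x|^2}$.
\end{itemize}

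In return, your $\me$ satisfies a finer sandwich by weighted averages. This implies the paper's essential-supremum sandwich. The averages lie between essential infimum and supremum, and by monotonicity the two-parameter limits in $(\de,r)$ agree with your diagonal ones. It also exhibits $\me$ as a genuine weighted density along a nested family, in the spirit of Propositions~\ref{dm-s2} and~\ref{dm-s4a}. Indeed, for $\C=\{\infty\}$ your construction amounts to applying Proposition~\ref{dm-s4a} to the finite measure $e^{-|x|^2}\lem$ restricted to $\E$. The unboundedness of $\lem$ is thus removed simply by reweighting. The price is an arbitrary choice of weight, while the paper's functional involves no choice.

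A minor wording point: $\me\ge0$ follows because the lower bound is nonnegative for every $A$, not from the choice $A=\dom$ alone.
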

\noi
Notice that \reff{dm-s4c-1} implies \reff{dm-e00}, i.e. $\C$ is a density set,
but we do not need some boundedness as in \reff{dm-s2c-0}.
Let us mention that $\me$ from the previous proposition is not an
$\lem$-density at $\C$.

\begin{proof}
The functional $p:X:=\cL^\infty(\dom)\to\R$ given by
\begin{equation*}
  p(f) := \limsup_{\delta,r\downarrow 0} \: 
  \essup{\Cd\cap\E\cap B_r(\infty)}{f}  
\end{equation*}
is positively 1-homogeneous and subadditive. Let 
$X_0\subset\cL^\infty(\dom)$ be the
linear subspace containing all functions $f$ of the form
\begin{equation*}
  f=c \text{ \:$\lem$-a.e. on } \Cd\cap\E\cap B_r(\infty) \text{ for some } 
  c\in\R\,,\;\delta>0\,,\;r>0 \, 
\end{equation*}
and we write $f(\infty):=c$ for $f\in X_0$. Then $f_0^*:X_0\to\R$ given by
\begin{equation*}
  f_0^*(f) = f(\infty)
\end{equation*}
is a linear functional on $X_0$ majorized by $p(f)$. 
The Hahn-Banach theorem provides a linear extension $f^*$ of $f_0^*$ onto $X$ 
such that
\begin{equation*}
  f^*(f) \le p(f) \le \|f\|_\infty \qmq{for all} f\in X\,.
\end{equation*}
Therefore $f^*\in\cL^\infty(\dom)^*$ and we identify $f^*$ with measure 
$\me\in\bawl{\dom}$ according to \reff{pl-dual}.
Obviously, $\|f^*\|=\|\me\|\le 1$.
For $f\in\cL^\infty(\dom)$
\begin{equation*}
  \I{\dom}{-f}{\me} \le p(-f) = 
  \limsup_{\delta,r \downarrow 0}\: \essup{\Cd\cap\E\cap B_r(\infty)}{-f}  
\end{equation*}
and, thus,
\begin{equation*}
  \liminf_{\delta,r \downarrow 0}\: \essinf{\Cd\cap\E\cap B_r(\infty)}{f}
  \le  \I{\dom}{f}{\me} \,.
\end{equation*}
Using $f=\chi_\dom$ we get $\|\me\|=1$. We have $\me\ge 0$, since
\begin{equation*}
  0 \le 
  \liminf_{\delta,r \downarrow 0}\: \essinf{\Cd\cap\E\cap B_r(\infty)}{\chi_B}
  \le \I{\dom}{\chi_B}{\me} = \me(B) \qmq{for all} B\in\cB(\dom)\,.
\end{equation*}
Moreover,
\begin{equation*}
  1 = \liminf_{\delta,r \downarrow 0}\: 
  \essinf{\Cd\cap\E\cap B_r(\infty)}{\chi_\dom} \le \me(\dom) \le
  \limsup_{\delta,r \downarrow 0}\:\essup{\Cd\cap\E\cap B_r(\infty)}{\chi_\dom}  
  = 1
\end{equation*}
and, for each $\tilde\delta>0$, 
\begin{equation*}
 1 = \liminf_{\delta,r \downarrow 0}\: 
  \essinf{\Cd\cap\E\cap B_r(\infty)}{\chi_{\C_{\tilde\delta}\cap\E}} 
  \le \me(\C_{\tilde\delta}\cap\E) \le
  \limsup_{\delta,r \downarrow 0}\: 
  \essup{\Cd\cap\E\cap B_r(\infty)}{\chi_{\C_{\ti\de}\cap\E}}  
  = 1\,.
\end{equation*}
By $\E\subset\dom$ we get $\mu\in\Dens_\C$ and $\E$ is aura of $\me$.
For any bounded $B\in\cB(\dom)$ we readily obtain
\begin{equation*}
 0 = \liminf_{\delta,r \downarrow 0}\: 
  \essinf{\Cd\cap\E\cap B_r(\infty)}{\chi_B} 
  \le \me(B) \le
  \limsup_{\delta,r \downarrow 0}\: 
  \essup{\Cd\cap\E\cap B_r(\infty)} = 0\,.
\end{equation*}
Therefore $\cor\me=\{\infty\}$, since $\cor\me\ne\emptyset$ by \reff{dm-s1-1}.
\end{proof}

From Corollary~\ref{dm-s4b} and Proposition~\ref{dm-s4c}
we obtain that the necessary condition \reff{dm-e00} is 
also sufficient for the existence of density measures. 

\begin{theorem} \label{dm-s4d}
Let $\dom\in\bor{\R^n}$ and let $\C$ 
satisfy \reff{dm-e0a}. Then
\begin{equation*}
  \Dens_\C\ne\emptyset \qmq{if and only if} 
  \text{\reff{dm-e00} is satisfied}.
\end{equation*}
Moreover, if there is some $\E\in\cB(\dom)$ with
\begin{equation}\label{dm-s4d-1}
  \lem(\C_\delta\cap\E)>0 \qmq{for all} \delta>0
\end{equation}
(i.e. \reff{dm-e00} is satisfied with $\E$ instead of $\dom$), then
there is some $\me\in\Dens_\C$ such that $\E$ is an aura of $\me$.
\end{theorem}
\noi
Notice that $\Dens_\C\ne\emptyset$ if $\C$ is a density set.
Since the special case $\C=\{x\}$ for some $x\in\ccl\dom$ plays an important 
role in our subsequent treatment, we use the notation 
\begin{equation} \label{cdom}
  \ccdom := \{y\in\ccl\dom\mid \lem(\dom\cap B_\delta(y))>0
              \zmz{for all} \delta>0\}\,, \qquad
  \cdom := \ccdom\cap\cl\dom  \,.
\end{equation}
By the previous result we then have that
\begin{equation*} \label{cdom1a}
  \Dens_x\ne\emptyset \qmq{if and only if}
  x\in \ccdom \,.
\end{equation*}
Notice that 
\begin{equation}\label{cdom1}
  \lem(\dom\setminus\cdom)=0
\end{equation}
and, if $\lem(\pa\dom)>0$, then 
$\lem(\cl\dom\setminus\cdom)>0$ is possible
(cf. \cite[p. 45]{evans}). 

\begin{proof}
First let $\Dens_\C\ne\emptyset$. Then the arguments around \reff{dm-e00}
imply that \reff{dm-e00} has to be fulfilled. For the reverse statement 
we apply Corollary~\ref{dm-s4b} if 
$\lem(\dom\cap\C_{\tilde\delta}) <\infty$ for some $\tilde\delta>0$.
Otherwise \reff{dm-e00} implies \reff{dm-s4c-1} for $\E=\dom$ and all $r>0$
and we can apply Proposition~\ref{dm-s4c}. 
For the proof of the last statement we assume that \reff{dm-s4d-1} is satisfied.
Then the previous result with $\E$ instead of $\dom$ gives some density 
$\mu\in\Dens_\C$ on $\E$ with aura $\E$. Extending $\mu$ with zero on $\dom$
we get the assertion.
\end{proof}

The next example indicates how Corollary~\ref{dm-s4b} provides a large
variety of density measures in $\Dens_\C$ by suitable selections of
$E\subset\dom$.

\begin{example} \label{dm-s4e}
Let $\dom\in\bor{\R^n}$ and let $\C\subset \cl{\dom}$ satisfy \reff{dm-e0a}.
We choose $x\in C$ and $E\subset\dom$ such that
\begin{equation} \label{dm-s4e-1}
  \ol E\cap C = \{x\} \qmq{and}
  \lem(B_\delta(x) \cap E) > 0 \zmz{for all} \delta>0\,.
\end{equation}
Figure~\ref{fig:cusp_dens} shows an example where $\C\subset\bd\dom$ and
$E$ has a cusp at $x$.
\begin{figure}[H]
	\centering
	\begin{tikzpicture}[scale=1.2]
		\draw (2,0) to[out=180,in=0] (-0.5,1);
		\draw (-0.5,1) to[out=180,in=90] (-1.5,0);
		\draw (-1.5,0) to[out=-90,in=180] (-0.5,-1);
		\draw (-0.5,-1) to[out=0,in=180] (2,0);
		\draw (-0.2,0) node {$E$};
		\draw[line width=1pt] (2,1.4) to (2.0,-1.4);
                \draw (2.2,1.25) node {$C$};
                \draw (1.1,1.1) node {$\dom$};
                \draw[line width=2pt] (2,0) circle [radius = 0.03];  
		\draw (2.2,0) node {$x$};
		\draw[dashed] (2,0) circle [radius = 1.0];
		\draw[dashed] (2,0) circle [radius = 0.7];
		\draw[dashed] (2,0) circle [radius = 0.4];
\end{tikzpicture}
\caption{The set $\dom$ is bounded on the right by $C$, the boundary $\bd\dom$
contains the point $x$, and $\dom$ contains the cuspidate set
$E$.}\label{fig:cusp_dens} 
\end{figure}
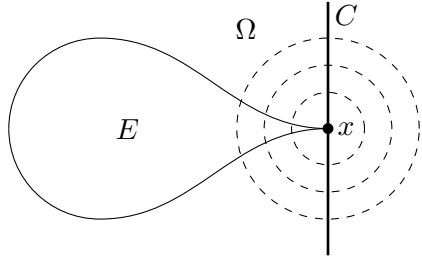
\noi
Corollary~\ref{dm-s4b} ensures the existence of an $\lem$-density   
$\me\in\LDens_x^\E\subs\Dens_\C$.
Provided \reff{dm-s4e-1} remains satisfied, 
we can continuously change the direction of the cusp of $E$ at $x$ to get a
continuum of different density measures in $\Dens_\C$ with core $\{x\}$. 
Alternatively we can move $x$ along $\C$ to get again a continuum of density
measures 
in $\Dens_\C$. This gives a first idea about the huge variety of densities at
a set $\C$.  
\end{example}

The next result about weak$^*$ accumulation points of density measures 
turns out to be a useful tool for our applications.

\begin{proposition}  \label{dm-s4f}  
Let $\dom\in\bor{\R^n}$.
\bgl
\item
Let $\C\subs\ol\dom$ be closed, then 
\begin{equation*}
  M_C := \{ \mu \in\bawl{\dom} \mid \cor{\mu}\subs\C\,,\;
  \tx{$\mu$ satisfies \reff{dm-e0}} \} 
\end{equation*}
is a weak$^*$ compact convex subset of $\bawl{\dom}$. 

\item
Let $\C\subset\cl\dom$ satisfy \reff{dm-e0a}
and let $\mu_k\in\bawl{\dom}$ be density measures with
\begin{equation*}
  \cor{\mu_k} \subs \C_{\de_k}=:\C_k \qmq{for some}  \delta_k\downarrow 0\,.
\end{equation*}
Then $\{\mu_k\}$ has a
weak$^*$ accumulation point $\mu\in\Dens_\C$ and for any 
$f\in\cL^\infty(\dom,\R^m)$
there is a subsequence, depending on $f$, such that 
\begin{equation} \label{dm-s4f-1}  
  \sI{\C}{f}{\mu} = \lim_{k'\to\infty} \sI{\C_{k'}}{f}{\mu_{k'}}\,.
\end{equation}
\el
\end{proposition}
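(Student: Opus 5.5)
For (a) I will argue exactly as for Corollary~\ref{dm-s4}, showing that $M_\C$ is convex and that its complement is weak$^*$ open inside the weak$^*$ compact unit ball of $\bawl{\dom}=\cL^\infty(\dom)^*$.

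Convexity is immediate. For $\mu_1,\mu_2\in M_\C$ and $t\in[0,1]$, the measure $t\mu_1+(1-t)\mu_2$ is positive, has total mass $1$, and satisfies $\mu(\C_\de\cap\dom)=1$. Its core is contained in $\cor{\mu_1}\cup\cor{\mu_2}\subs\C$: if a point has an open neighborhood $U$ with $\mu_1(U\cap\dom)=\mu_2(U\cap\dom)=0$, then the combination also vanishes there.

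For closedness, the conditions $\mu(B)\ge 0$, $\mu(\dom)=1$ and $\mu(\C_\de\cap\dom)=1$ are each preserved under weak$^*$ limits, since they are conditions on $\df{\mu}{\chi_B}$ for fixed $B$. For the core condition, take $\mu$ in the weak$^*$ closure and $x\notin\C$ (with $x\in\ccl{\R^n}$, using the compactification if $\dom$ is unbounded). Choose an open $U\ni x$ with $\ccl U\cap\C=\emptyset$. Every $\nu\in M_\C$ satisfies $\cor\nu\subs\ccl{\R^n}\setminus\ccl U$, which is an open set, so \reff{pl-e1} gives $\nu(U\cap\dom)=0$. This equation passes to the limit, hence $\mu(U\cap\dom)=0$ and $x\notin\cor\mu$. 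Thus $M_\C$ is weak$^*$ closed in the unit ball, and Banach--Alaoglu gives compactness. The one delicate point is to use a neighborhood whose closure avoids $\C$, so that \reff{pl-e1} applies to every element of $M_\C$.

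For (b), all $\mu_k$ lie in the weak$^*$ compact unit ball. Moreover, $\cor{\mu_k}\subs\ccl{\C_k}$, and for $k\ge j$ we have $\cor{\mu_k}\subs\ccl{\C_{\de_j}}$ as well as $\mu_k(\C_{\de_j}\cap\dom)=\mu_k(\dom)=1$, by \reff{pl-e1} applied to the open set $\C_{\de_j}$, which contains the core once $\de_k<\de_j$. (I would enlarge slightly to $\C_{2\de_j}$ to handle the closure of the core.)

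Let $\mu$ be a weak$^*$ accumulation point, which exists by compactness. Evaluating on $\chi_{\C_{2\de_j}\cap\dom}$ and $\chi_\dom$ gives $\mu\ge0$, $\mu(\dom)=1$ and $\mu(\C_{2\de_j}\cap\dom)=1$ for every $j$, so $\mu\in\Dens_\C$. By Theorem~\ref{dm-s1}, $\cor\mu\subs\ccl\C$.

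For \reff{dm-s4f-1}, fix $f\in\cL^\infty(\dom,\R^m)$. Since $\mu$ is a weak$^*$ accumulation point, every weak$^*$ neighborhood defined by the finitely many functionals $\nu\mapsto\I{\dom}{f_i}{\nu}$, $i=1,\dots,m$, contains infinitely many $\mu_k$. This yields a subsequence along which $\I{\dom}{f}{\mu_{k'}}\to\I{\dom}{f}{\mu}$. Finally, rewrite both sides using the definition of $\sI{\cdot}{\cdot}{\cdot}$ and \reff{pl-e1}: $\I{\dom}{f}{\mu}=\sI{\C}{f}{\mu}$ and $\I{\dom}{f}{\mu_{k'}}=\sI{\C_{k'}}{f}{\mu_{k'}}$. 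The main subtlety is that the subsequence must depend on $f$: the unit ball need not be sequentially compact, so only accumulation along nets is available, and extracting a sequence works because only a single fixed $f$ is tested.
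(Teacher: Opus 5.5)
Your argument is correct. For (a) it is the argument the paper imports for Theorem~\ref{dm-s1}\,(2): $M_\C$ is a weak$^*$ closed subset of the unit ball, and closedness is tested on characteristic functions. One caveat concerns your step of choosing an open $U\ni\infty$ with $\ccl U\cap\C=\emptyset$. Such a $U$ exists only when $\C$ is bounded, and this cannot be repaired, because for unbounded $\C$ assertion (a) can fail with the compactified core. Take $\dom=\R^2$, $\C=\R\times\{0\}$, and $\mu_k$ a density at $(k,0)$. Then every $\mu_k$ lies in $M_\C$. However, every weak$^*$ accumulation point $\mu$ vanishes on bounded sets while $\mu(\dom)=1$, so $\cor\mu=\{\infty\}\not\subset\C$. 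Hence $M_\C$ is not weak$^*$ closed. Your proof of (a) therefore covers bounded $\C$, and in general no argument can do better.

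For (b) you take a different and cleaner route than the paper. The paper applies (a) to $\cl{\C_1}$ to obtain the accumulation point. It then shows that $\mu$ vanishes away from each $\C_k$, concludes $\cor\mu\subset\C$, and only then deduces \reff{dm-e0} from this core inclusion. You bypass both (a) and the core:

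\begin{itemize}
\item Since $\mu_k(\C_{\de_j}\cap\dom)=1$ for all $k\ge j$, testing a Banach--Alaoglu accumulation point against $\chi_{\C_{\de_j}\cap\dom}$ gives \reff{dm-e0} directly. So $\mu\in\Dens_\C$ by definition.
\item The inclusion $\cor\mu\subset\ccl\C$ then follows afterwards from Theorem~\ref{dm-s1}.
\end{itemize}

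This buys robustness. In the example above, the hypotheses of (b) hold for any $\de_k\downarrow 0$. There the paper's intermediate claim $\cor\mu\subset\C$ breaks down, as does its appeal to (a) for the unbounded set $\cl{\C_1}$. Your argument still yields $\mu\in\Dens_\C$; in that case the left side of \reff{dm-s4f-1} must simply be read as $\I{\dom}{f}{\mu}$.

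Finally, your enlargement to $\C_{2\de_j}$ is unnecessary. The hypothesis places $\cor{\mu_k}$ in the open set $\C_{\de_k}\subset\C_{\de_j}$ itself, not merely in its closure.
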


\begin{proof}
For (1) we can argue as in the proof of Theorem~\ref{dm-s1} (2) 
in \cite{dens}. For (2) we notice that 
$\cor{\mu_k}\in\ol{\C_1}$ for all $k$. Then, by (1), 
$\{\mu_k\}$ has a weak$^*$ accumulation point $\mu$ satisfying \reff{dm-e0}
with $\ol{C_1}$ and having core in $\cl{\C_1}$. 
Now we fix $f=(f_1,\dots,f_m)$ in $\cL^\infty(\dom,\R^m)$.
Since $\mu$ is weak$^*$ accumulation point, 
each weak$^*$ open neighborhood of $\mu$ of the form
\begin{equation*}
  \big\{ \mu'\in\bawl{\dom}\:\big|\: 
          \max_{j=1,\dots,m}\big|\df{\mu-\mu'}{f_j}\big| < \ep \big\}
  \qmq{with} \ep>0
\end{equation*}
contains infinitely many $\mu_k$. Thus there is a subsequence $\{\mu_{k'}\}$ 
with \reff{dm-s4f-1} if $\cor\mu$ belongs to $\C$. Now, for fixed $k$ we have 
$\cor{\mu_l}\subs\C_k$ for all $l\ge k$ by assumption. 
Hence, as above, for any $A\subs\dom\setminus(\C_k)_\de$ with $\de>0$ 
there is some subsequence $\{\mu_{l'}\}$ such that
\begin{equation*}
  0\overset{l'>k}{=}\mu_{l'}(A) =\sI{\C_{l'}}{\ch_A}{\mu_{l'}} \to 
  \sI{\cl{\C_1}}{\ch_A}{\mu} = \me(A)=0 \,.
\end{equation*}
Then $\cor\mu$ must belong to $\C_k$ for all $k$. 
Therefore $\cor\mu\subs\C$ and $\mu$ satisfies \reff{dm-e0} with~$\C$.   
But this implies $\mu\in\Dens_\C$. 
\end{proof}

\subsection{Essential and approximate limit}
\label{dm-lim}

We are now interested in further estimates for integrals with density measures. 
For measures $\mu\in\Dens_\C$ and $f\in\cL^\infty(\dom)$ we have the standard
estimate 
\begin{equation*}
  \I{\C_\de}{f}{\mu} \le |\me|(C_\de) \: \essup{\C_\de}{f} =
  \essup{\C_\de}{f} \qmq{for all} \de>0\,.
\end{equation*}
Let us look for a refined version independent of $\de$. Moreover, 
we want to cover $\mu$-integrable functions 
beyond $\cL^\infty(\dom)$.
As an appropriate tool for some $\de$-independent sharper estimate we 
consider the {\it essential supremum} and the 
{\it essential infimum} of  
$f\in\cL^1_{\rm loc}(\dom)$ {\it near} $\C$ given by
\begin{eqnarray*}
  \sC f 
&:=&
  \lim_{\delta\downarrow 0}\, \essup{\Cd\cap\dom}{f} =
  \inf_{\delta>0}\, \essup{\Cd\cap\dom}{f}  \qmq{and}  \\
  \iC f 
&:=&
  \lim_{\delta\downarrow 0}\, \essinf{\Cd\cap\dom}{f} =
  \sup_{\delta>0}\, \essinf{\Cd\cap\dom}{f}   \,
\end{eqnarray*}
where we allow the values $\pm\infty$.
Since $\tx{ess\,sup}$ and $\tx{ess\,inf}$ on the right hand side are monotone
in $\de$, the values on the left are well defined. Clearly,
\begin{equation} \label{ess-bd-c}
  \zmz{if} \sC f\,,\;\iC f\ne \pm\infty \qmq{then}
  f\in\cL^\infty(\C_\de\cap\dom) \zmz{for some} \de>0\,.
\end{equation}
For $f\in\cL^1_{\rm loc}(\dom,\R^m)$ and $x\in\ccdom$ we call
$\alpha\in\R^m$ {\it essential limit} of $f$ at $x$ if
\begin{equation}\label{ess-con}
   \sx |f-\alpha| = 0\,
\end{equation}
and we write
\begin{equation}\label{ess-lim}
  \esslim_{y\to x}f(y) := \alpha \,.
\end{equation}
Analogously to \reff{ess-bd-c},
\begin{equation} \label{ess-bd}
  \esslim_{y\to x}f(y) = \alpha   \qmq{implies}
  f\in\cL^\infty(B_\de(x)\cap\dom)  \zmz{for some} \de>0\,. 
\end{equation}
In the scalar case $m=1$ with $x\in\ccdom$, the values 
$\ix f$ and $\sx f$ can be considered as essential limit inferior and 
superior, respectively, of $f$ for $y\to x$ and, clearly, 
\begin{equation*}
   \ix f = \sx f = \alpha \qmq{if and only if} \esslim_{y\to x}f(y) := \alpha\,.
\end{equation*}
We are now able to provide refined estimates for density integrals. 

\begin{proposition} \label{dm-s4g} 
Let $\dom\in \bor{\R^n}$, let $\C$ be a density set,
let $\me\in\Dens_\C$, and let the function 
$f\in\cL^1_{\rm loc}(\dom)$ be $\mu$-integrable.
Then 
\begin{equation} \label{dm-s4g-1}
\iC f \, \le \, \sI{\ccl\C}{f}{\me} \, \le \, \sC f \,.
\end{equation}
and
\begin{equation}  \label{dm-s4g-3}
  \iC f \le \liminf_{\de\dto 0} \mI{\C_\de\cap\dom}{f}{\lem} \le
  \limsup_{\de\dto 0} \mI{\C_\de\cap\dom}{f}{\lem} \le \sC f\,.
\end{equation}
Moreover, the mapping $\omega:\cL^\infty(\dom)\to\R$ given by
\begin{equation} \label{dm-s4g-2}
  \omega(f) := \sC f
\end{equation}
is positively 1-homogeneous, subadditive, convex, and continuous. 
\end{proposition}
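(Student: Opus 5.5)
To prove \reff{dm-s4g-1}, I would first dispose of the trivial cases. If $\sC f=+\infty$ the upper bound is empty, and similarly for the lower bound if $\iC f=-\infty$. By symmetry (replace $f$ by $-f$, using $\iC f=-\,\sC(-f)$) it suffices to prove $\sI{\ccl\C}{f}{\me}\le\sC f$ when $\sC f<\infty$.

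Fix $s>\sC f$ and choose $\de>0$ with $\essup{\Cd\cap\dom}{f}<s$. Since $\mu\in\bawl{\dom}$ does not charge $\lem$-null sets, the set $\{x\in\Cd\cap\dom\mid f(x)>s\}$ is a $\mu$-null set. Hence $f\ch_{\Cd\cap\dom}\le s$ i.m. $\mu$. Because $\mu(\dom\setminus\Cd)=0$ by \reff{dm-e0}, we get, by \reff{pl-e1} and the definition of $\sI{\ccl\C}{\cdot}{\mu}$,
\[
\sI{\ccl\C}{f}{\me}=\I{\Cd\cap\dom}{f}{\me}\le s\,\me(\Cd\cap\dom)=s.
\]
Monotonicity of the integral is available for $\mu$-integrable functions. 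Letting $s\downarrow\sC f$ gives the claim.

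The one subtlety concerns $\ccl\C$. For unbounded $\C$ or $\C=\{\infty\}$, $\Cd$ must be read with $\{\infty\}_\de=B_\de(\infty)$, and the open neighbourhood used in $\sI{\ccl\C}{\cdot}{\mu}$ is $\Cd$ itself, which contains the core by Theorem~\ref{dm-s1}. This is where I expect the main bookkeeping.

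For \reff{dm-s4g-3}, the middle inequality is trivial. For the outer ones, again assume $\sC f<\infty$. For small $\de$ we have $f\le\essup{\C_\de\cap\dom}{f}$ $\lem$-a.e. on $\C_\de\cap\dom$, so the mean value satisfies
\[
\mI{\C_\de\cap\dom}{f}{\lem}\le\essup{\C_\de\cap\dom}{f},
\]
and taking $\limsup$ gives the bound. The mean value is defined because $\lem(\C_\de\cap\dom)>0$ by \reff{dm-e00}. If $\lem(\C_\de\cap\dom)=\infty$, the average has to be interpreted as whatever convention the paper uses; in that case I would restrict to $\de$ where it is finite, or note that the estimate is vacuous. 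This needs checking. The lower bound follows by applying the upper bound to $-f$.

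For $\omega$, note that on $\cL^\infty(\dom)$ the quantity $\omega(f)$ is finite, with $|\omega(f)|\le\|f\|_\infty$. Positive homogeneity follows from $\essup{}{(tf)}=t\,\essup{}{f}$ for $t\ge0$. Subadditivity follows from
\[
\essup{\Cd\cap\dom}{(f+g)}\le\essup{\Cd\cap\dom}{f}+\essup{\Cd\cap\dom}{g},
\]
passing to the limit $\de\downarrow0$. Convexity follows from these two properties. For continuity, subadditivity gives $\omega(f)\le\omega(g)+\omega(f-g)\le\omega(g)+\|f-g\|_\infty$, and symmetrically, so $|\omega(f)-\omega(g)|\le\|f-g\|_\infty$. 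Thus $\omega$ is Lipschitz and in particular continuous.
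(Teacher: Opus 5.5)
Your proof is correct and follows the paper's argument: you bound $\I{\Cd\cap\dom}{f}{\me}$ by the essential supremum on $\Cd\cap\dom$ times $\me(\Cd\cap\dom)=1$, repeat this with $\lem$ for the averages (the paper is equally silent on the case $\lem(\Cd\cap\dom)=\infty$), and read off the properties of $\omega$ from those of the essential supremum; your estimate $|\omega(f)-\omega(g)|\le\|f-g\|_\infty$ is a more direct route to continuity than the paper's appeal to continuity of locally bounded convex functions. The $\ccl\C$ bookkeeping you worried about disappears if you take $U=\ccl{\R^n}$ in the definition of $\sI{\ccl\C}{f}{\me}$ and then use $\me(\dom\setminus\Cd)=0$; for unbounded $\C\subset\cl\dom$ the set $\Cd$ is not a neighbourhood of $\infty\in\ccl\C$, so it cannot itself serve as $U$ and need not contain $\cor\me$, but this does not affect your argument.
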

\noi
By \reff{ess-bd-c}, the integrability assumption for $f$ can be dropped if 
both the most left and most right value in \reff{dm-s4g-1} are finite.  
\reff{dm-s4g-1} is already stated in \cite{dens} for $\C\subset\cl\dom$
and $\f\in\cL^\infty(\dom)$. 

\begin{proof}   
We obviously have that 
\begin{equation*}
  \sI{\ccl\C}{f}{\me} = \lim_{\delta\downarrow 0}\I{\Cd\cap\dom}{f}{\me} \le
  \limsup_{\delta\downarrow 0}
  \I{\Cd\cap\dom}{\essup{\Cd\cap\dom} f}{\me} 
  = \lim_{\delta\downarrow 0}\, \essup{\Cd\cap\dom} f \,
\end{equation*}
and an analogous inequality with $\tx{ess\,inf}$, which implies 
\reff{dm-s4g-1}. We argue the same way with $\lem$ instead of $\mu$ to get 
\reff{dm-s4g-3}. For the remaining statement we have that 
$\omega$ is certainly positively 1-homogeneous and subadditive and, thus, 
convex on $\cL^\infty(\dom)$. In addition 
$|\omega(f)|\le\|f\|_{\cL^\infty(\dom)}$.
Hence it is bounded on bounded sets and, as convex function, continuous
(cf. \cite[p. 34]{clarke-fa}).
\end{proof}

The inequalities in \reff{dm-s4g-1} are sharp. But, before
stating that precisely, we introduce an important notion from convex analysis. 
For a normed space $X$ the {\it support function} of a set $M\subset X$ is
given by
\begin{equation} \label{dm-supp}
  \omega_M(f^*) := \sup_{f\in M} \df{f^*}{f}  \qmq{for all} f^*\in X^*\,
\end{equation}
and the support function of a set $M^*\subset X^*$ is
\begin{equation} \label{dm-supp*}
  \omega_{M^*}(f):= \sup_{f^*\in M^*} \df{f^*}{f}  \qmq{for all} f\in X\,.
\end{equation}
Notice that closed convex sets $M\subset X$ and weak$^*$ closed convex sets
$M^*\subset X^*$ are uniquely determined by their support functions
(cf. \cite[p. 28]{clarke}).

\begin{proposition} \label{dm-s5}        \label{dm-s5}
Let $\dom\in\bor{\R^n}$, let $\C$ be a density set, and assume that 
$f\in\cL^1_{\rm loc}(\dom)$ is $\mu$-integrable for at
least one $\me\in\Dens_\C$. Then
\begin{equation}\label{dm-s5-1}
\sup_{\substack{\me\in\Dens_\C\tx{\rm s.t.}\\f\:\tx{\rm is $\mu$-integrable}}} 
\sI{\ccl\C}{f}{\me} 
= \sC f 
\end{equation}
and
\begin{equation*}
\inf_{\substack{\me\in\Dens_\C\tx{\rm s.t.}\\f\:\tx{\rm is $\mu$-integrable}}}
\sI{\ccl\C}{f}{\me} 
= \iC f \,.
\end{equation*}
Moreover, $\omega$ from \reff{dm-s4g-2} is the support function of 
$\Dens_\C$ (as set in $L^\infty(\dom)^*$).
\end{proposition}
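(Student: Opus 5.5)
The inequality ``$\le$'' in \reff{dm-s5-1} is immediate from \reff{dm-s4g-1} of Proposition~\ref{dm-s4g}, since every $\mu\in\Dens_\C$ for which $f$ is $\mu$-integrable satisfies $\sI{\ccl\C}{f}{\me}\le\sC f$. The infimum statement follows from the supremum statement applied to $-f$, because $\mathop{\rm ess\,inf}$ near $\C$ of $f$ equals $-\sC(-f)$ and $\mu$-integrability of $f$ and $-f$ coincide. So the work is the reverse inequality, which I will prove by constructing densities concentrated where $f$ is nearly maximal.

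Set $s:=\sC f\in\cl\R$ and choose real numbers $t<s$ with $t\to s$. Since $s=\inf_\de \essup{\Cd\cap\dom}{f}$, for every $\de>0$ we have $\essup{\Cd\cap\dom}{f}>t$. Hence the Borel set $\E_t:=\{x\in\dom\mid f(x)>t\}$ (for a fixed representative of $f$) satisfies $\lem(\Cd\cap\E_t)>0$ for all $\de>0$. This is exactly \reff{dm-s4d-1}. By Theorem~\ref{dm-s4d} there is $\mu_t\in\Dens_\C$ having $\E_t$ as an aura.

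For this $\mu_t$, I want $\sI{\ccl\C}{f}{\mu_t}\ge t$, and the obstacle is integrability, since $f$ may be unbounded. I will use the truncation $g:=\max(\min(f,M),t)$ for a large real $M>t$. Then $g\in\cL^\infty(\dom)$ and $g=\min(f,M)$ on the aura $\E_t$, so $\I{\dom}{g}{\mu_t}\ge t$. If $s=+\infty$, taking $M\ge t$ arbitrary already shows that the supremum over $\mu$ of integrals of bounded truncations is unbounded, but we need $f$ itself to be $\mu_t$-integrable.

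To secure integrability I will intersect the aura with a region where $f$ is bounded. If $s<\infty$, replace $\E_t$ by $\E_t':=\{t<f<s+1\}$. Since $\essup{\Cd\cap\dom}{f}<s+1$ for small $\de$, we still have $\lem(\Cd\cap\E_t')>0$, while $f$ is bounded on the aura $\E_t'$ of the resulting $\mu_t$. Then $f\ch_{\E_t'}$ is bounded and equals $f$ in measure $\mu_t$, so $f$ is $\mu_t$-integrable and $\sI{\ccl\C}{f}{\mu_t}\ge t$. If $s=+\infty$, use $\E_t'':=\{t<f<t'\}$ for some $t'>t$ with $\lem(\Cd\cap\E_t'')>0$ for all $\de$. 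If no such $t'$ exists, I will choose a sequence of levels and argue with $\{t<f\le t+1\}$, $\{t+1<f\le t+2\},\dots$: since the sets $\{f>t\}$ have positive measure near $\C$ at every scale, some band $\{t+k<f\le t+k+1\}$ with $k\ge 0$ must meet every $\Cd$ in positive measure, and otherwise I pass to a larger $t$. Making this band selection rigorous (possibly with $\C$ replaced by a limiting choice) is the step I expect to be the main obstacle. Letting $t\uparrow s$ then yields ``$\ge$''.

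Finally, for $f\in\cL^\infty(\dom)$ every $\mu\in\Dens_\C$ integrates $f$, and by \reff{dm-e1} we have $\sI{\ccl\C}{f}{\me}=\df{\mu}{f}$. Thus \reff{dm-s5-1} reads $\sup_{\mu\in\Dens_\C}\df{\mu}{f}=\omega(f)$, which is precisely the statement that $\omega$ from \reff{dm-s4g-2} is the support function \reff{dm-supp*} of $\Dens_\C$.
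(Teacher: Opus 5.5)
Your treatment of the case $\sC f<+\infty$ is correct and matches the paper's argument: you take a density with aura in a superlevel set, obtained from Theorem~\ref{dm-s4d}. Your band $\{t<f<s+1\}$ makes the $\mu_t$-integrability explicit, which the paper leaves implicit for $M_\eps$. The reduction of the infimum to $-f$ is fine. The support-function conclusion is also fine, since it only involves $f\in\cL^\infty(\dom)$ and hence $\sC f<\infty$. The case $\sC f=-\infty$ cannot occur under the hypothesis. The genuine gap is the case $\sC f=+\infty$. There you claim that some band $\{t+k<f\le t+k+1\}$ meets every $\C_\de$ in positive measure, possibly after enlarging $t$; this is false in general. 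The paper's proof rests on exactly the same unproved assertion: that for each $k$ there is $k'>k$ with $\lem(\C_\de\cap\{k<f<k'\})>0$ for all $\de>0$.

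No argument can close this gap, because \reff{dm-s5-1} itself fails in this case. Take $\dom=(-1,1)$ and $\C=\{0\}$. Let $f=0$ on $(-1,0]$ and $f=j$ on $[2^{-j-1},2^{-j})$ for $j=0,1,2,\dots$. Then $f\in\cL^1(\dom)$ and $\sC f=+\infty$, but every set $\{a<f\le b\}$ with $0\le a<b<\infty$ is bounded away from $0$. By Theorem~\ref{dm-s4d} there is $\mu\in\Dens_0$ with aura $(-1,0)$; it integrates $f$, with $\int_\dom f\,d\mu=0$. Conversely, suppose $\mu\in\Dens_0$ integrates $f$. Since $\mu(\dom\setminus B_\de(0))=0$ for every $\de>0$, we have $c:=\mu((0,1))=\mu((0,2^{-j}))$ for all $j$. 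Then $f\ge j\chi_{(0,2^{-j})}$ gives $\int_\dom f\,d\mu\ge jc$ for every $j$. Hence $c=0$, so $f=0$ in measure $\mu$ and $\int_\dom f\,d\mu=0$. The supremum in \reff{dm-s5-1} is therefore $0$, not $+\infty$. What your argument (and the paper's) actually proves is \reff{dm-s5-1} under the extra assumption $\sC f<+\infty$, and the infimum formula when $\iC f>-\infty$. This covers all $f\in\cL^\infty(\dom)$, so the support-function statement is unaffected.
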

\noi
Notice that $f$ is $\mu$-integrable for all $\mu\in\Dens_\C$ if 
$f\in\cL^\infty(\dom)$. 
In \cite{dens} the result is shown for bounded density sets $\C$ and
$f\in\cL^\infty(\dom)$. 

\begin{proof}   
First we assume that 
$\sC f = \lim \limits_{\delta \downarrow 0}\essup{\C_\de\cap\dom}{f}<\infty$
and for $\eps>0$ we set
\begin{equation*}
M_\eps := \big\{x\in\dom \:\big|\: f(x) \ge
\lim \limits_{\delta \downarrow 0}\essup{\C_\de\cap\dom}{f} - \eps \big\}\,.
\end{equation*}
Then $\lem(\C_{\delta}\cap M_\eps) > 0$ for all $\delta>0$
and, thus, $\C$ is also density set of $M_\ep$. By Theorem~\ref{dm-s4d}
there is some density measure $\me_\eps\in\Dens_\C$ on $M_\ep$, which we 
identify with its zero extension on $\dom$. Hence $M_\ep$ is aura of $\mu_\ep$
and 
\begin{equation*}
\sI{\ccl\C}{f}{\me_\eps} = 
\lim_{\delta\downarrow 0} \I{\C_\delta\cap M_\ep}{f}{\me_\eps} 
\ge \lim_{\delta\downarrow 0}
\essup{\dnhd{\C}{\delta} \cap \dom}{\fun} - \eps \,.
\end{equation*}
Consequently,
\begin{equation*}
\sup_{\substack{\me \in \Dens_\C\\f\:\mu\tx{\rm -integrable}}}
\sI{\ccl\C}{f}{\me} \ge
\sup_{\eps > 0} \sI{\ccl\C}{f}{\me_\eps} \ge
\lim \limits_{\delta\downarrow 0}
\essup{\dnhd{\C}{\delta} \cap \dom}{\fun} \,.
\end{equation*}
With \reff{dm-s4g-1} we obtain \reff{dm-s5-1}. 
Let now $\sC f = \infty$. Then, for each $k\in\N$ there is some $k'>k$ such 
that $\lem(\C_{\delta}\cap M_k) > 0$ for all $\delta>0$ with
\begin{equation*}
  M_k := \big\{x\in\dom \:\big|\: k < f(x) < k' \big\}\,.
\end{equation*}
As above we get a measure $\mu_k\in\Dens_\C$ with aura $M_k$. Since $f$ is
essentially bounded on $M_k$, it is $\mu_k$-integrable with 
\begin{equation*}
\sup_{\substack{\me \in \Dens_\C\\f\:\mu\tx{\rm -integrable}}} \sI{\ccl\C}{f}{\me} 
\ge \sI{\ccl\C}{f}{\me_k} \ge k \qmq{for all} k\in\N\,.
\end{equation*}
Thus we obtain \reff{dm-s5-1} also in this case. The other equality 
follows analogously.

For the remaining statement we now consider $\Dens_\C$ as set in $X^*$ for
$X=\cL^\infty(\dom)$.  
We identify each $\me\in\Dens_\C$ with some $f^*\in X^*$ according to
\reff{pl-dual}. Then
\begin{equation*}
  \df{f^*}{f} = \I{\dom}{f}{\me} = \sI{\ccl\C}{f}{\me}
  \qmq{for all} f\in \cL^\infty(\dom)\,.
\end{equation*}
The definition of the support function and \reff{dm-s5-1} directly implies 
that $\omega$ from \reff{dm-s4g-2} is the support function of $\Dens_\C$
(use that all $f\in\cL^\infty(\dom)$ are integrable).
\end{proof}

Since $\omega$ from \reff{dm-s4g-2} is the support function of 
$\Dens_\C$, we can supplement
Proposition~\ref{dm-s4g} by the next corollary. 

\begin{corollary} \label{dm-s6}
Let $\dom\in \bor{\R^n}$ and let $\C$ 
satisfy \reff{dm-e0a}.
If $\me\in\bawl{\dom}$ satisfies
\begin{equation} \label{dm-s6-1}
  \sI{\ccl\C}{f}{\me} \le \sC f \z \big(= \omega(f)\big) 
  \qmq{for all} f\in\cL^\infty(\dom)\,,
\end{equation}
then $\me\in\Dens_\C$.
\end{corollary}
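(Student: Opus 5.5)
The plan is to verify the defining conditions \reff{dm-e0} directly by testing \reff{dm-s6-1} with suitable characteristic functions. We use that \reff{dm-s6-1} for $f$ and for $-f$ gives two-sided bounds. Applying \reff{dm-s6-1} to $-f$ and using $\sC(-f)=-\iC f$, we obtain
\begin{equation*}
  \iC f \le \sI{\ccl\C}{f}{\me} \le \sC f \qmq{for all} f\in\cL^\infty(\dom)\,.
\end{equation*}
Here $\sI{\ccl\C}{f}{\me}$ is read as $\I{\dom}{f}{\me}$, which is justified by \reff{pl-e1} once the core lies in $\ccl\C$; in general we simply read it as the full integral of $f\in\cL^\infty(\dom)$, i.e. the duality pairing \reff{pl-dual}.

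Positivity comes first. For $B\in\cB(\dom)$ we take $f=\ch_B$. Since $\iC\ch_B\ge 0$, we get $\me(B)\ge 0$, so $\me\ge 0$. Next we take $f=\ch_\dom$. Because $\C$ satisfies \reff{dm-e0a}, the quantities $\sC$ and $\iC$ are defined through $\Cd\cap\dom$. If $\lem(\Cd\cap\dom)>0$ for all $\de$, then both values equal $1$, hence $\me(\dom)=1$.

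Concentration near $\C$ is handled the same way. For fixed $\tilde\de>0$ we take $f=\ch_{\dom\setminus\C_{\tilde\de}}$. This function vanishes on $\Cd\cap\dom$ for $\de<\tilde\de$, so $\sC f=0$ and therefore $\me(\dom\setminus\C_{\tilde\de})\le 0$. Combined with $\me\ge0$, this gives $\me(\C_{\tilde\de}\cap\dom)=\me(\dom)=1$, which is exactly \reff{dm-e0}. Consequently $\me\in\Dens_\C$.

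The one delicate point is the degenerate case where \reff{dm-e00} fails, i.e. $\lem(\Cd\cap\dom)=0$ for some $\de$. In that case the essential supremum over an $\lem$-null set is $-\infty$, so $\sC f=-\infty$ for every $f$. Then \reff{dm-s6-1} would force $\I{\dom}{0}{\me}=0\le-\infty$, which is impossible. Hence the hypothesis itself implies that $\C$ is a density set, and the computation above for $f=\ch_\dom$ is legitimate.

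I expect this bookkeeping with the ess sup over null sets to be the only subtle step. The remaining steps are the routine substitutions described above.
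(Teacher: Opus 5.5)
Your argument is correct, but it takes a different route from the paper. The paper proves the corollary in one line by convex duality. By Proposition~\ref{dm-s5}, $\omega(f)=\sC f$ is the support function of $\Dens_\C$, which is convex and weak$^*$ closed by Theorem~\ref{dm-s1}. A weak$^*$ closed convex set consists exactly of those $\mu$ with $\langle\mu,f\rangle\le\omega(f)$ for all $f$, which is a Hahn--Banach separation argument in the weak$^*$ topology.

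You instead check the defining conditions \reff{dm-e0} by hand. You test \reff{dm-s6-1} with $\pm\chi_B$, $\pm\chi_\dom$ and $\chi_{\dom\setminus\C_{\tilde\delta}}$, much as in the proof of Theorem~\ref{dm-s3}. Your route has three advantages:
\begin{itemize}
\item It is self-contained. It uses no separation theorem and does not appeal to Proposition~\ref{dm-s5}, whose proof itself depends on the existence results of Theorem~\ref{dm-s4d}.
\item It shows that \reff{dm-s6-1} is needed only for these few indicator functions.
\item It handles explicitly the case where $\C$ is not a density set, which the paper leaves implicit. There $\Dens_\C=\emptyset$, whose support function is $-\infty$, matching your convention that $\mathrm{ess\,sup}$ over a Lebesgue null set is $-\infty$. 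Under a convention giving $0$ there, $\mu=0$ would satisfy \reff{dm-s6-1} and the statement would fail, so your remark is a real point and not pedantry.
\end{itemize}
The paper's route gives the structural picture instead. Together with \reff{dm-s4g-1}, it identifies \reff{dm-s6-1} as an exact characterization of $\Dens_\C$: the weak$^*$ closed convex set whose support function is $\omega$.
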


\begin{proof}  
Since $\omega$ is the support function of the convex weak* closed
set $\Dens_\C$ and since a convex weak* closed set is uniquely determined by
its support function, all $\me$ satisfying inequality \reff{dm-s6-1}
belong to $\Dens_\C$ (cf. \cite[p. 29]{clarke}).
\end{proof}

We now consider the action of $\Dens_\C$ on vector functions 
$f\in\cL^\infty(\dom,\R^m)$
where 
\begin{equation*}
  \df{\me}{f} := \Big(\I{\dom}{f_1}{\me},\dots,\I{\dom}{f_m}{\me}\Big)\in\R^m
  \qmq{for} f=(f_1,\dots,f_m)
\end{equation*}
and 
\begin{equation*}
  \df{\Dens_\C}{f} := \big\{ \df{\me}{f} \:\big|\: \me\in\Dens_\C \big\}
  \subset \R^m\,.
\end{equation*}
Recall that $\Dens_\C$ is convex and weak$^*$ compact.

\begin{proposition} \label{dm-s7}
Let $\dom\in \bor{\R^n}$, let $\C$ be a density set, and let
$f\in\cL^\infty(\dom,\R^m)$. Then $\df{\Dens_\C}{f}\subset\R^m$ is compact and
convex and its support function $\omega$ is given by
\begin{equation*}
  \omega_f(v) :=  \sC\, (f\cdot v) 
  \qmq{for all} v\in\R^m \,.
\end{equation*}
For $f\in\cL^\infty(\dom)$, i.e. $m=1$, we have that
\begin{equation} \label{dm-s7-2}
\df{\Dens_\C}{f} = 
\big[\, \iC f, \sC f \,\big] \,.
\end{equation}
\end{proposition}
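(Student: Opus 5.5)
The plan is to view $\df{\Dens_\C}{\cdot}$ as the image of $\Dens_\C$ under a linear weak$^*$ continuous map into $\R^m$, and to obtain the support function from Proposition~\ref{dm-s5}. Define
\begin{equation*}
  T:\bawl{\dom}\to\R^m\,,\quad T\mu := \df{\mu}{f} = \Big(\I{\dom}{f_1}{\mu},\dots,\I{\dom}{f_m}{\mu}\Big)\,.
\end{equation*}
Since each $f_j\in\cL^\infty(\dom)$, every component $\mu\mapsto\df{\mu}{f_j}$ is weak$^*$ continuous by the duality \reff{pl-dual}, so $T$ is linear and weak$^*$-to-norm continuous. By Theorem~\ref{dm-s1}~(2), $\Dens_\C$ is weak$^*$ compact and convex. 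Hence $\df{\Dens_\C}{f}=T(\Dens_\C)$ is compact, as a continuous image of a compact set, and convex, as a linear image of a convex set. This step is routine; since $\C$ is a density set, $\Dens_\C\neq\emptyset$ by Theorem~\ref{dm-s4d}, so the image is also nonempty.

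For the support function, I fix $v\in\R^m$ and use linearity of the integral:
\begin{equation*}
  \df{\mu}{f}\cdot v = \sum_{j=1}^m v_j\I{\dom}{f_j}{\mu} = \I{\dom}{f\cdot v}{\mu}\,,
\end{equation*}
where $f\cdot v\in\cL^\infty(\dom)$. Therefore
\begin{equation*}
  \sup_{a\in\df{\Dens_\C}{f}} a\cdot v = \sup_{\mu\in\Dens_\C}\I{\dom}{f\cdot v}{\mu}\,.
\end{equation*}
Because $f\cdot v$ is essentially bounded, it is $\mu$-integrable for every $\mu\in\Dens_\C$, and $\I{\dom}{\cdot}{\mu}=\sI{\ccl\C}{\cdot}{\mu}$ by \reff{dm-e1}. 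Proposition~\ref{dm-s5} then gives that this supremum equals $\sC\,(f\cdot v)$. This is exactly $\omega_f(v)$, and I expect no obstacle in this part.

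For $m=1$, the set $\df{\Dens_\C}{f}$ is a nonempty compact convex subset of $\R$, hence a closed interval $[a,b]$. Its support function at $v=1$ gives $b=\sC f$. At $v=-1$ it gives $-a=\sC(-f)=-\iC f$, using $\essup{}(-f)=-\essinf{}f$ on each $\C_\de\cap\dom$ and passing to the limit $\de\dto0$. Equivalently, the infimum statement of Proposition~\ref{dm-s5} yields $a=\iC f$ directly. This proves \reff{dm-s7-2}.

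The only point that needs care is that the supremum in Proposition~\ref{dm-s5} is actually attained, so that the endpoints belong to the interval. Compactness already gives this: $T(\Dens_\C)$ is closed, so it contains its supremum and infimum. No further argument is needed.
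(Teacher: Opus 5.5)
Your proposal is correct and follows the paper's proof essentially step by step. The map $\mu\mapsto\langle\mu,f\rangle$ is linear and weak$^*$ continuous, so it sends the weak$^*$ compact convex set of density measures onto a compact convex subset of $\R^m$; Proposition~\ref{dm-s5} applied to $f\cdot v$ then identifies the support function; and for $m=1$ compactness puts both endpoints, the essential infimum and supremum near $C$, into the interval. Your explicit check that the image is nonempty (via Theorem~\ref{dm-s4d}) is a small point the paper leaves implicit.
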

\noi
\reff{dm-s7-2} is already stated in \cite{dens}.

\begin{proof}   
We consider the mapping $\df{\cdot}{f}:\bawl{\dom}\to\R^m$. Since it is
linear, it maps the convex set $\Dens_\C$ onto a convex set. 
For fixed $\me\in\bawl{\dom}$ and $\eps>0$, the set
\begin{equation*}
  \big\{ \lme\in\bawl{\dom}\:\big|\: |\df{\lme-\me}{f}|<\eps \big\}
\end{equation*}
is weak* open and it is the preimage of $B_\eps\big(\langle\me,f\rangle\big)$. 
Hence $\df{\cdot}{f}$ is weak* continuous and maps the weak* compact set 
$\Dens_\C$ onto a compact set in $\R^m$. 
By the definition of the support function of $\df{\Dens_\C}{f}$ we have
\begin{equation*}
  \omega_f(v) = \sup_{w\in\df{\Dens_\C}{f}} w\cdot v 
  = \sup_{\me\in\Dens_\C}\sI{\ccl\C}{f\cdot v}{\me} 
  \overset{\reff{dm-s5-1}}{=} \sC (f\cdot v) \, 
\end{equation*}
for all $v\in\R^m$. 
For $f\in\cL^\infty(\dom)$, Proposition~\ref{dm-s5} and the convexity of 
$\df{\Dens_\C}{f}$ imply 
\begin{equation*}
  \big(\, \iC f, \sC f \,\big) \subset \df{\Dens_\C}{f} \subset
  \big[\, \iC f, \sC f \,\big] \,.
\end{equation*}
Since $\df{\Dens_\C}{f}$ is compact, the last statement follows.
\end{proof}

Next we provide conditions for the existence of the essential limit
(cf. \reff{ess-con}).

\begin{proposition} \label{dm-s11}
Let $\dom\in\cB(\R^n)$, let $x\in\cdom$,
let $f\in\cL^1_{\rm loc}(\dom)$, and let $\alpha\in\R$. 
Then the following assertions are
equivalent: 
\bgl
\item
We have that
\begin{equation} \label{dm-s11-1}
  \esslim_{y\to x} f=\alpha \qquad (\tx{i.e. } 
  \ix f = \sx f = \alpha) \,.
\end{equation}
\item
For all $\E\in\cB(\dom)$ satisfying
\begin{equation} \label{dm-s11-3}
  \lem(B_\delta(x)\cap\E)>0 \qmq{for all} \delta>0\,,
\end{equation}
there is an $\lem$-density $\dens_x^\E\in\LDens_x^\E$ 
at $x$ within $\E$ such that
\begin{equation} \label{dm-s11-2}
  \sI{x}{f}{\dens_x^\E} = \alpha \,.
\end{equation}
\item
We have that $f$ is $\dens_x$-integrable with 
\begin{equation*}
  \sI{x}{f}{\dens_x} = \alpha \qmq{for all} \dens_x\in\Dens_x\,.
\end{equation*}
\el
\end{proposition}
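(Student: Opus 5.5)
I would prove the cycle (1)$\Rightarrow$(3)$\Rightarrow$(2)$\Rightarrow$(1), with $\C=\{x\}$ throughout.

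\emph{(1)$\Rightarrow$(3).} By \reff{ess-bd}, the assumption $\esslim_{y\to x}f=\alpha$ gives $f\in\cL^\infty(B_\de(x)\cap\dom)$ for some $\de>0$. Any $\dens_x\in\Dens_x$ has $\dom\cap B_\de(x)$ as an aura, so $f$ is $\dens_x$-integrable: integrate $f\ch_{B_\de(x)}\in\cL^\infty(\dom)$ and note that
\begin{equation*}
\sI{x}{f}{\dens_x}=\I{B_\de(x)\cap\dom}{f}{\dens_x}.
\end{equation*}
Then \reff{dm-s4g-1} of Proposition~\ref{dm-s4g} gives $\ix f\le\sI{x}{f}{\dens_x}\le\sx f$. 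Both bounds equal $\alpha$, so the integral is $\alpha$.

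\emph{(3)$\Rightarrow$(2).} Fix $\E$ satisfying \reff{dm-s11-3}. Since $\lem(B_{\ti\de}(x)\cap\E)<\infty$ for finite $\ti\de$ when $x\in\cdom$, condition \reff{dm-s2c-0} holds. Corollary~\ref{dm-s4b} therefore yields some $\dens_x^\E\in\LDens_x^\E\subset\Dens_x$. By (3), $f$ is $\dens_x^\E$-integrable with integral $\alpha$, which is \reff{dm-s11-2}.

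\emph{(2)$\Rightarrow$(1).} This is the main step. I would argue by contradiction and suppose $\sx f>\alpha$; the case $\ix f<\alpha$ follows by replacing $f$ with $-f$. The aim is to build an $\E$ on which $f$ stays away from $\alpha$ near $x$.

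If $\sx f>\beta>\alpha$ for some finite $\beta$, I would take
\begin{equation*}
\E:=\{y\in\dom\mid \beta<f(y)<\gamma\}
\end{equation*}
with a suitable $\gamma$, and I expect an essentially bounded range to be needed for integrability. The difficulty is that $\lem(B_\de(x)\cap\{f>\beta\})>0$ for all $\de$ does not by itself give positivity for a single bounded window $(\beta,\gamma)$. Two cases arise:
\begin{itemize}
\item If $f$ is essentially bounded above near $x$, take $\gamma$ larger than that bound; then \reff{dm-s11-3} holds.
\item Otherwise, argue as in the proof of Proposition~\ref{dm-s5}: either some window $(\beta,\gamma)$ satisfies \reff{dm-s11-3}, or one chooses windows $(k,k')$ with $k>\alpha+1$, which also satisfy \reff{dm-s11-3}.
\end{itemize}
In both cases $f$ is essentially bounded on $\E$ and $f>\beta$ there. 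Any $\dens_x^\E$ provided by (2) has $\E$ as an aura (Proposition~\ref{dm-s2}), so $f$ is integrable and
\begin{equation*}
\sI{x}{f}{\dens_x^\E}\ge\beta>\alpha,
\end{equation*}
contradicting \reff{dm-s11-2}.

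The case $\sx f=+\infty$ is covered by the unbounded windows above. Hence $\sx f\le\alpha$, and likewise $\ix f\ge\alpha$. Since $\ix f\le\sx f$, we get (1).

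The main obstacle is the choice of $\E$ in the last step, so that \reff{dm-s11-3} holds while $f$ is bounded on $\E$ and separated from $\alpha$. I would handle it by the countable splitting of $\{f>\beta\}$ into level windows, as in Proposition~\ref{dm-s5}.
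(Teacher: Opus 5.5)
Your steps (1)$\Rightarrow$(3) and (3)$\Rightarrow$(2) are correct. The gap is in the second bullet of (2)$\Rightarrow$(1), which rests on a false dichotomy. It is not true that, when $f$ is essentially unbounded above near $x$, either some bounded window $\{\beta<f<\gamma\}$ or the windows $\{k<f<k'\}$ satisfy \reff{dm-s11-3}. Here is a counterexample. Take $\dom=(0,1)$, $x=0$, $\alpha=0$ and $f=\sum_{j\ge 1} j\,\chi_{I_j}$ with $I_j=\big(\tfrac{1}{j+1},\tfrac{1}{j+1}+\varepsilon_j\big)$, where $0<\varepsilon_j<\tfrac{1}{j(j+1)}$ and $\sum_j j\varepsilon_j<\infty$. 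Then $f\in\cL^1(\dom)$ and $\sx f=+\infty$. Yet for $0<\beta<\gamma<\infty$ the set $\{\beta<f<\gamma\}$ is a finite union of intervals $I_j$ with $j<\gamma$, so it does not meet $B_\delta(0)$ for $\delta<\tfrac{1}{\gamma+1}$. For such an $f$ your construction yields no admissible $\E$, and hence no contradiction. The corresponding claim in the proof of Proposition~\ref{dm-s5} also fails for this $f$: there is no $k'$ with $\lem(B_\delta(x)\cap\{k<f<k'\})>0$ for all $\delta>0$. So that step cannot simply be borrowed.

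The repair is to drop the requirement that $f$ be bounded on $\E$, because (2) itself asserts that the integral exists. Given $\beta\in(\alpha,\sx f)$, the set $\E:=\{f>\beta\}$ satisfies \reff{dm-s11-3}. Then (2) yields $\dens_x^\E\in\LDens_x^\E$ for which $f$ is integrable with $\sI{x}{f}{\dens_x^\E}=\alpha$. Since $\E$ is an aura of $\dens_x^\E$, this gives $\alpha=\I{\E}{f}{\dens_x^\E}\ge\beta>\alpha$, a contradiction. In the example above, this is exactly why (2) fails for $\E=\{f>0\}$: every $\dens_0^\E\in\LDens_0^\E$ has $\dens_0^\E(\{f>k\})=1$ for all $k$, so $f$ is not $\dens_0^\E$-integrable. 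This is the paper's argument with $E^+=\{f>\beta^+\}$ and $E^-=\{f<\beta^-\}$. With this repair your cycle is complete. Your one-sided formulation, proving $\sx f\le\alpha$ and $\ix f\ge\alpha$ separately, even covers the case $\ix f=\sx f\ne\alpha$, which the paper's reduction to $\ix f<\sx f$ passes over. It also avoids any appeal to Proposition~\ref{dm-s5} for the direction from (3) to (1).
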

\noi
Notice that \reff{dm-s11-3} ensures that $\LDens_x^\E\ne\emptyset$
by Corollary~\ref{dm-s4b}. Let us mention that \reff{dm-s11-1} 
implies
\begin{equation}\label{dm-s11-5}
  \lim_{\de\dto 0}\mI{B_\de(x)\cap\dom}{f}{\lem} = \al
\end{equation}
by \reff{dm-s4g-3}, but the opposite is not true in general (consider e.g.
the sign function $f(y)=\pm 1$ for $\pm y>0$ on $\dom=\R$ and take $x=0$).

\begin{proof}
If (1) is satisfied, then $f$ is $\dens_x$-integrable for any
$\dens_x\in\Dens_x$ by \reff{ess-bd} and the integral equals $\al$ by
\reff{dm-s4g-1}, which gives (3). If (3) is met then $f$ has to
satisfy \reff{ess-bd}, since otherwise we can argue as in the proof
of Proposition~\ref{dm-s5} to get some $\mu$ in $\Dens_x$ with 
$\sI{x}{f}{\mu}>\al$. Hence $f$ is $\mu$-integrable for all $\mu\in\Dens_x$
and Proposition~\ref{dm-s5} implies (1). 
Since (3) trivially implies
(2), it remains to show that (2) implies (1). 

Let us assume that (2) is satisfied and (1) is not, i.e.
\begin{equation*}
  \alpha^- := \ix f < \sx f =: \alpha^+ \qmq{for} \alpha^\pm\in\cl\R\,. 
\end{equation*}
We choose $\be^\pm\in\R$ such that
\begin{equation*}
  \alpha^- < \be^- < \be^+ < \alpha^+\,.
\end{equation*}
Then 
\begin{equation*}
  \lem(E^+\cap B_\delta(x))>0 \qmq{for all} \delta>0 \zmz{with}
  E^+:=\{f>\be^+\} \,.
\end{equation*}
By Corollary~\ref{dm-s4b} we have $\LDens_x^{E^+}\!\ne\emptyset$ and,
for any $\dens_x^{E^+}\!\in\LDens_x^{E^+}\!$,
\begin{equation*}
  \sI{x}{f}{\dens_x^{E^+}} = \I{E^+}{f}{\dens_x^{E^+}}
  \ge \I{E^+}{\be^+}{\dens_x^{E^+}} =  \be^+ \,.
\end{equation*}
Analogously, $\LDens_x^{E^-}\!\ne\emptyset$ and, for all
$\dens_x^{E^-}\!\in\LDens_x^{E^-}\!$,
\begin{equation*}
  \sI{x}{f}{\dens_x^{E^-}} \le \sI{E^-}{\be^-}{\dens_x^{E^-}} =  \be^- 
  \qmq{with} E^-:=\{f<\be^-\}\,.
\end{equation*}
But this contradicts our assumption for $E=E^\pm$ in (2) 
and gives the assertion. 
\end{proof}

We have that $\sx f$ and $\ix f$ are defined for $x\in\cdom$ and all
$f\in\cL^1_{\rm loc}(\dom)$. 
Notice that $x\in\cdom$ for $\lem$-a.e. $x\in\dom$ (cf. \reff{cdom1}).
A further quantity describing the local behavior of
a function $f$ near $x$ is the approximate limit. 
For $f\in \cL^1_{\rm loc}(\dom,\R^m)$
we call $\alpha\in\R^m$ {\it approximate limit} of $f$ at $x\in\cdom$ 
(with respect to~$\dom$) if
\begin{equation*} \label{dm-app}
  \lim_{\delta\downarrow 0} 
  \frac{\lem\big(B_\delta^\dom(x)\cap\{|f-\alpha|\ge\eps\}\big)}
       {\lem(B_\delta^\dom(x))}
  = 0 \qmq{for all} \eps >0\,
\end{equation*} 
(where $B_\delta^\dom(x):=B_\delta(x)\cap\dom$) and we write 
\begin{equation} \label{dm-app1}
  \alim_{y\to x} f(y)=\alpha \,.
\end{equation}
Notice that \reff{dm-app1} is equivalent to
\begin{equation} \label{dm-app2}
  \alim_{y\to x} |f(y)-\alpha| = 0 \,.
\end{equation}
The approximate limit of $f$ exists for $\lem$-a.e. $x\in\dom$, since for 
any $\eps>0$ one has 
\begin{equation*}
  \frac{\lem\big(B_\delta^\dom(x)\cap\{|f-\alpha|\ge\eps\}\big)}
       {\lem(B_\delta^\dom(x))} \le \frac{1}{\eps}\,
  \mI{B_\delta(x)}{|f-\alpha|}{(\reme{\lem}{\dom})} 
\end{equation*}
and for $(\reme{\lem}{\dom})$-a.e. $x\in\dom$ there is some $\alpha\in\R^m$ 
such that the right hand side tends to zero as $\delta\downarrow 0$ 
(cf. \cite[p. 44]{evans}).

For $m=1$ we define the  
{\it upper and lower  approximate limit} of $f$ at $x\in\cdom$
(with respect to $\dom$) as
\begin{eqnarray}
  \alimsup_{y\to x} f(y) 
&:=&  \label{dm-aps}
  \inf\bigg\{ \alpha\in\R\:\Big|\: \lim_{\delta\downarrow 0}
  \frac{\lem\big(B_\delta^\dom(x)\cap\{f>\alpha\}\big)}
       {\lem(B_\delta^\dom(x))} = 0 \,\bigg\} \qmq{and} \\
  \aliminf_{y\to x} f(y) 
&:=&  \label{dm-api}
  \sup\bigg\{ \alpha\in\R\:\Big|\: \lim_{\delta\downarrow 0}
  \frac{\lem\big(B_\delta^\dom(x)\cap\{f<\alpha\}\big)}
       {\lem(B_\delta^\dom(x))} = 0 \,\bigg\} \,,
\end{eqnarray}
respectively, where
\begin{equation*}
  -\infty \le \aliminf_{y\to x} f(y) \le \alimsup_{y\to x} f(y) \le \infty\,. 
\end{equation*}
In the scalar case $m=1$
we also extend the definition of the approximate limit by
\begin{equation*}
  \alim_{y\to x} f(y)=\Big\{
  \begin{array}{rl} \infty & \text{ if } \aliminf_{y\to x} f(y) = \infty \,, \\
                    -\infty  & \text{ if } \alimsup_{y\to x} f(y) = -\infty \,.
  \end{array}
\end{equation*}
Certainly we then have that $\alim_{y\to x} f(y)=\alpha$ 
if and only if
\begin{equation}\label{dm-s9-3}
  \alimsup_{y\to x} f(y) = \aliminf_{y\to x} f(y) = \alpha\,. 
\end{equation}
Let us provide some estimate for the upper and lower approximate limit.

\begin{proposition}\label{dm-s9}
Let $\dom\in\bor{\R^n}$, let $f\in \cL^1_{\rm loc}(\dom)$, and let
$x\in\cdom$. Then
\begin{equation}\label{dm-s9-1}
  \teinf{x}\, f \le \aliminf_{y\to x} f(y) \le
  \alimsup_{y\to x} f(y) \le \tesup{x}\, f \,.
\end{equation}
Moreover, 
\begin{equation*}
  \esslim_{y\to x} f(y)=\al \qmq{implies} \alim_{y\to x}f(y)=\al \,. 
\end{equation*}
\end{proposition}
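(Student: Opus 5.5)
The plan is to work directly from the definitions \reff{dm-aps}, \reff{dm-api} and from the definition of $\sx f$, $\ix f$ as limits of essential suprema and infima on $B_\de(x)\cap\dom$. There are three inequalities in \reff{dm-s9-1}. The middle one, the lower approximate limit not exceeding the upper one, is the cheapest, so I start there.

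For the middle inequality, suppose $\alpha$ is admissible in \reff{dm-api} and $\beta$ is admissible in \reff{dm-aps}, and assume $\alpha>\beta$. Then $\{f<\alpha\}\cup\{f>\beta\}=\dom$. Hence
\begin{equation*}
  \frac{\lem(B_\de^\dom(x)\cap\{f<\alpha\})}{\lem(B_\de^\dom(x))}
  +\frac{\lem(B_\de^\dom(x)\cap\{f>\beta\})}{\lem(B_\de^\dom(x))}\ge 1\,,
\end{equation*}
which contradicts that both terms tend to zero. Here $x\in\cdom$ ensures that the denominators are positive. So every admissible $\alpha$ is at most every admissible $\beta$, and taking the supremum over $\alpha$ and the infimum over $\beta$ gives the middle inequality. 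The conventions $\sup\emptyset=-\infty$ and $\inf\emptyset=\infty$ handle the degenerate cases.

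For the right inequality, I may assume $\sx f<\infty$. Take any real $\alpha>\sx f$. By definition of $\sx f$ as $\inf_{\de>0}\essup{B_\de(x)\cap\dom}{f}$, there is $\de_0>0$ with $f\le\alpha$ $\lem$-a.e. on $B_{\de_0}(x)\cap\dom$. Then $\lem(B_\de^\dom(x)\cap\{f>\alpha\})=0$ for all $\de<\de_0$, so $\alpha$ is admissible in \reff{dm-aps}. Taking the infimum over $\alpha>\sx f$ gives $\alimsup_{y\to x}f(y)\le\sx f$; this also covers $\sx f=-\infty$. The left inequality follows symmetrically, or by applying the right one to $-f$, using that $\aliminf f=-\alimsup(-f)$ and $\ix f=-\sx(-f)$, both of which are immediate from the definitions.

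For the final implication, suppose $\esslim_{y\to x}f(y)=\al$. Then $\ix f=\sx f=\al$, so \reff{dm-s9-1} forces the upper and lower approximate limits to equal $\al$. By \reff{dm-s9-3} this gives $\alim_{y\to x}f(y)=\al$. Alternatively, $\sx|f-\al|=0$ shows directly that $\{|f-\al|\ge\eps\}\cap B_\de^\dom(x)$ is $\lem$-null for small $\de$. I do not expect a genuine obstacle. The only care needed is bookkeeping of the infinite values and empty admissible sets, together with the use of $x\in\cdom$ so that the density quotients are defined.
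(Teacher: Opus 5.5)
Your proof is correct and follows the paper's argument. The outer inequalities come from the observation that a small-ball essential supremum (respectively infimum) is an admissible level in the definition of the upper (respectively lower) approximate limit. The final implication is the same sandwich argument, which the paper applies to $|f-\alpha|$ before invoking the equivalence with vanishing approximate limit of $|f-\alpha|$, whereas you apply it to $f$ directly. Your complementary-sets argument for the middle inequality fills in what the paper dismisses as a consequence of the definition.
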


\begin{proof}
For $\rho>0$ we set $\alpha_\rho:= \essup{B_\rho^\dom(x)}{f}$. Then
\begin{equation*}
  \alpha:= \sx f \le\alpha_\rho  \qmq{and}
  \lem\big(B_\delta^\dom(x)\cap\{f>\alpha_\rho\}\big) = 0
  \qmq{for all} 0<\delta<\rho \,.
\end{equation*}
Consequently, 
\begin{equation*}
  \alimsup_{y\to x} f \le \inf_{\rho>0} \alpha_\rho = \alpha
\end{equation*}
which gives the most right inequality. The most left inequality 
follows analogously. The remaining inequality is a consequence of the
definition of the approximate limit. 

If $\esslim_{y\to x} f(y)=\al$, then by \reff{ess-con}, \reff{dm-s9-3} 
and \reff{dm-s9-1},
\begin{equation*}
    0\le \alim_{y\to x} |f(y)-\al|\le \sx |f-\alpha| = 0 \,
\end{equation*}
and the assertion follows from \reff{dm-app2}.
\end{proof}

The next example shows that the inequalities in \reff{dm-s9-1} can be strict. 
Therefore the existence of the essential limit is a stronger condition 
than the existence of the approximate limit. 

\begin{example}\label{dm-s9a}
Let $\dom=B_1(0)\subset\R^2$ and $x=0$. We consider
\begin{equation*}
  \dom_{1\pm}=\{(x_1,x_2)\in\dom \mid \pm x_2>\sqrt {|x_1|}\}\,, \quad
  \dom_{2\pm} = \{(x_1,x_2)\in\dom \mid \pm x_1> x_2^2\}
\end{equation*}
and $f\in\cL^\infty(\dom)$ with
\begin{equation*}
  f(x_1,x_2) = \bigg\{ 
  \begin{array}{ll} 
     \pm 2 & \text{on } \dom_{1\pm} \,,   \\
     \pm 1 & \text{on } \dom_{2\pm} \,. 
  \end{array}
\end{equation*}
Then
\begin{equation*}
  \teinf{x}\, f = -2 < -1 = \aliminf_{y\to x} f < 
  \alimsup_{y\to x} f = 1 < 2 \le \tesup{x}\, f \,.
\end{equation*}
Clearly, $\esslim_{y\to x} f$ and $\alim_{y\to x}f$ do not exist.   
\end{example}

We now recall a sufficient condition for the existence of the approximate limit 
and we provide some integral characterization by means of density measures.
For the scalar case $m=1$ the next results are partially contained in 
\cite{trace} and \cite{dens}. 

\begin{proposition} \label{dm-s10}
Let $\dom\in\cB(\R^n)$, let $f\in\cL^1_{\rm loc}(\dom,\R^m)$,  
let $x\in\cdom$, and let $\alpha\in\R^m$. 

\bgl
\item
We have that
\begin{equation}\label{dm-s10-0}
  \lim_{\de\downarrow 0} \:\mI{B_\de(x)\cap\dom}{|f-\alpha|}{\lem} = 0 
\end{equation}
implies 
\begin{equation*}
  \alim_{y\to x} f(y) = \alpha\,.
\end{equation*}
The converse is true for $f\in\cL^\infty_{\rm loc}(\dom,\R^m)$.
If \reff{dm-s10-0} is satisfied, then
\begin{equation*}
  \lim_{\de\downarrow 0} \:\mI{B_\de(x)\cap\dom}{f}{\lem} = \alpha\,. 
\end{equation*}

\item
Let $\dens_x^\dom\in\LDens_x^\dom$ be  an $\lem$-density at $x$ within
$\dom$. Then 
\begin{equation} \label{dm-s10-1}
  \sI{x}{|f-\alpha|}{\dens^\dom_x} = 0 
\end{equation}
if and only if
\begin{equation} \label{dm-s10-2}
  \alim_{y\to x} f(y)= \alpha \,.
\end{equation}
If \reff{dm-s10-1} or \reff{dm-s10-2} is satisfied, then
$f$ is $\dens_x^\dom$-integrable and 
\begin{equation} \label{dm-s10-3}
  \sI{x}{f}{\dens^\dom_x} = \alpha\,. 
\end{equation}

\item
We have that
\begin{equation} \label{dm-s10-4}
  \esslim_{y\to x}f(y) := \alpha  \qmq{implies} \alim_{y\to x} f(y)= \alpha\,.
\end{equation}
\el
\end{proposition}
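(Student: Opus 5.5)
Part (1) is a Chebyshev-type estimate. For $\eps>0$ and $\de>0$ we have
\begin{equation*}
  \frac{\lem\big(B_\de^\dom(x)\cap\{|f-\al|\ge\eps\}\big)}{\lem(B_\de^\dom(x))}
  \le \frac1\eps \mI{B_\de(x)\cap\dom}{|f-\al|}{\lem}\,,
\end{equation*}
so \reff{dm-s10-0} gives the approximate limit. The final assertion of (1) is then immediate from
\begin{equation*}
  \Big|\mI{B_\de(x)\cap\dom}{f}{\lem}-\al\Big| \le \mI{B_\de(x)\cap\dom}{|f-\al|}{\lem}\,.
\end{equation*}
For the converse, assume $|f|\le c$ a.e. on $B_{\de_0}(x)\cap\dom$. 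Split $B_\de^\dom(x)$ into $\{|f-\al|<\eps\}$ and $\{|f-\al|\ge\eps\}$. The mean of $|f-\al|$ is then at most $\eps+(c+|\al|)$ times the relative measure of the second set. That relative measure tends to zero, and afterwards we let $\eps\dto 0$.

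Part (3) is the last statement of Proposition~\ref{dm-s9} applied componentwise. Alternatively, one can apply Proposition~\ref{dm-s9} to the scalar function $|f-\al|$: we get $0\le\alimsup|f-\al|\le\sx|f-\al|=0$, which together with \reff{dm-app2} gives the claim. I will take this second route because it also covers $m>1$.

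Part (2) is the main work. First assume \reff{dm-s10-2}. Put $g:=|f-\al|\ge0$ and truncate it as $g_\eps := \min\{g,\eps\}+\ch_{\{g\ge\eps\}}\,(g-\eps)$; that is, split $g=g\ch_{\{g<\eps\}}+g\ch_{\{g\ge\eps\}}$. The first piece is bounded by $\eps$, so its $\dens_x^\dom$-integral is at most $\eps$. For the second piece, apply \reff{dm-s2-2} with $A=\{g\ge\eps\}$. This gives
\begin{equation*}
  \dens_x^\dom(\{g\ge\eps\})\le\limsup_{\de\dto0}\frac{\lem(B_\de^\dom(x)\cap\{g\ge\eps\})}{\lem(B_\de^\dom(x))}=0\,.
\end{equation*}
Hence $g\ch_{\{g<\eps\}}\to g$ in measure $\dens_x^\dom$ as $\eps\dto0$. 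These functions are bounded and monotone in $\eps$ with integrals $\le\eps$, so they form a Cauchy approximation in $L^1(\dens_x^\dom)$. By the definition of the integral via convergence in measure, $g$ is $\dens_x^\dom$-integrable with integral $0$, i.e.\ \reff{dm-s10-1}.

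I expect this step to be the main obstacle. We need $g$ to be $\dens_x^\dom$-measurable, and we need the integrability conclusion to follow from convergence in measure plus monotone bounded approximants. I would argue it via the dominated/monotone convergence available for convergence in measure, checking that $\int|g\ch_{\{g<\eps\}}-g\ch_{\{g<\eps'\}}|\,d\dens_x^\dom\le\max(\eps,\eps')$. Integrability of $f$ then follows from $|f|\le|f-\al|+|\al|$. Moreover
\begin{equation*}
  \Big|\sI{x}{f}{\dens_x^\dom}-\al\Big|\le \sI{x}{|f-\al|}{\dens_x^\dom}=0\,,
\end{equation*}
using $\dens_x^\dom(\dom)=1$, which gives \reff{dm-s10-3}.

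Conversely, assume \reff{dm-s10-1}. Then $\dens_x^\dom(\{g\ge\eps\})\le\frac1\eps\sI{x}{g}{\dens_x^\dom}=0$. To get the approximate limit I need the full density ratio to tend to zero, but \reff{dm-s2-2} only bounds $\dens_x^\dom(A)$ between liminf and limsup; the limsup of the ratio could be positive while $\dens_x^\dom(A)=0$. So for this direction I read $\dens_x^\dom$ in \reff{dm-s10-1} as quantifying over all of $\LDens_x^\dom$ (or at least as ``there is such a density'' being insufficient). I would argue by contradiction. Suppose the ratio for $A=\{g\ge\eps\}$ has positive limsup $\beta$. Choose $\de_k\dto0$ realizing it, and use a Hahn--Banach construction as in Proposition~\ref{dm-s4a}, with the limsup taken along $\de_k$, to get a density in $\LDens_x^\dom$ with $\dens(A)=\beta>0$. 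This contradicts \reff{dm-s10-1}. If the statement is meant for a single fixed density, I would check whether the paper's construction of $\LDens$ enforces a genuine limit and adjust accordingly.
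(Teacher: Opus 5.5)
Your arguments for (1), for (3), and for the implications \reff{dm-s10-2}$\Rightarrow$\reff{dm-s10-1}$\Rightarrow$\reff{dm-s10-3} in (2) are correct. For (1) and for \reff{dm-s10-2}$\Rightarrow$\reff{dm-s10-1} the paper only cites the literature, and it proves (3) exactly as you do, via \reff{dm-s9-1} and \reff{dm-app2}.

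Two small points. First, in the converse of (1) you use $|f|\le c$ on $B_{\de_0}(x)\cap\dom$. Say explicitly that this is how $\cL^\infty_{\rm loc}$ is to be read, because at points $x\in\cdom\setminus\dom$ boundedness on compact subsets of $\dom$ is not enough. For instance, on $\dom=(0,1)$ let $f=1/\eps_k$ on an interval of length $\eps_k2^{-k-1}$ inside $(2^{-k-1},2^{-k})$, with $\eps_k\dto 0$, and $f=0$ elsewhere. Then $\alim_{y\to 0}f(y)=0$, but the mean of $|f|$ over $(0,2^{-k})$ equals $1$ for every $k$. Second, in (2) the truncation is unnecessary. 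The statement $\dens_x^\dom(\{|f-\al|\ge\eps\})=0$ for all $\eps>0$ says precisely that $f=\al$ in measure $\dens_x^\dom$. Integrability, \reff{dm-s10-1} and \reff{dm-s10-3} then follow directly from the integration theory in Section~\ref{pl}.

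The real issue is \reff{dm-s10-1}$\Rightarrow$\reff{dm-s10-2}, and your suspicion is justified. There is nothing to ``check and adjust'': for a single fixed $\dens_x^\dom$ this implication is false, because \reff{dm-s2-2} does not force a genuine limit.

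Here is a counterexample. Take $n=1$, $\dom=(-1,1)$, $x=0$, $r_k=2^{-(k+1)^2}$ and $A=\bigcup_{k\ge 0}\{r_{2k+1}<|y|<r_{2k}\}$. The ratio $\lem(A\cap B_\de(0))/\lem(B_\de(0))$ tends to $1$ along $\de=r_{2k}$ and to $0$ along $\de=r_{2k+1}$. Put $p(g):=\limsup_{\de\dto 0}\mI{B_\de(0)\cap\dom}{g}{\lem}$ on $\cL^\infty(\dom)$. Then $p(t\chi_A)\ge 0$ for all $t\in\R$, so the zero functional on $\mathrm{span}\{\chi_A\}$ extends to some $\mu\in\cL^\infty(\dom)^*$ with $\int g\,d\mu\le p(g)$ for all $g$. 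The bounds $-p(-g)\le\int g\,d\mu\le p(g)$ give \reff{dm-s2-2}, so $\mu\in\LDens_0^\dom$, and $\mu(A)=0$. For $f=\chi_A$ and $\al=0$ this $\mu$ satisfies \reff{dm-s10-1}, while \reff{dm-s10-2} fails.

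The paper's proof breaks at exactly the point you noticed. It writes $\gamma\le\liminf_k\mI{B_{\de_k}(x)\cap\dom}{\chi_A}{\lem}\le\dens_x^\dom(A)$. However, \reff{dm-s2-2} bounds $\dens_x^\dom(A)$ from below only by the $\liminf$ over all $\de\dto 0$. It gives no such bound along the sequence $\de_k$, which was chosen to make the ratio large.

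What is true is that \reff{dm-s10-2} holds if and only if \reff{dm-s10-1} holds for \emph{every} $\dens_x^\dom\in\LDens_x^\dom$. Your Hahn--Banach construction proves this correctly. You can even extend from $\mathrm{span}\{\chi_A\}$ with the value $\limsup$ of the ratio, without passing to a subsequence. So state (2) in that form rather than hedging. The fixed-density equivalence in Theorem~\ref{app-s25}, which rests on this step, needs the same correction.
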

\noi
Let us mention that \reff{dm-s10-0} is satisfied with some $\alpha\in\R^m$ for 
$\lem$-a.e. $x\in\dom$ if $f$ is fixed (cf. \cite[p.~44]{evans} for the
measure $\reme{\lem}{\dom}$).
Moreover notice that \reff{dm-s10-0} implies 
\begin{equation*}\label{dm-s10-5}
  \alpha = \sI{x}{f}{\dens_x^\dom} =
  \lim_{\de\downarrow 0} \:\mI{B_\de(x)\cap\dom}{f}{\lem} \,.
\end{equation*}
In the scalar case this means equality in estimate \reff{dm-s2-3} (with
$\la=\lem$, $\C=\{x\}$, $\E=\dom$). 
For open $\dom$ and $x\in\dom$, assertion (1) 
can be found in \cite[p. 162]{ambrosio}, together with an example showing 
that the converse can fail for unbounded $f$. 

\begin{proof}
For (1) we get the first implication from \cite[p. 28]{trace}.
The converse can be found in \cite[p. 162]{ambrosio} for open $\dom$ and 
for general $\dom\in\cB(\R^n)$ we can argue the same way (cf. also 
Theorem 5.1 in \cite{dens}).  
The last implication follows from a standard estimate.

For (2) let first 
\reff{dm-s10-1} be satisfied and assume that \reff{dm-s10-2} is wrong.
Then there are $\eps,\gamma,\delta_k>0$ with $\delta_k\dto 0$ such
that 
\begin{equation*}
  \frac{\lem\big(B_{\delta_k}^\dom(x)\cap\{|f-\alpha|\ge\eps\}\big)}
       {\lem(B_{\delta_k}^\dom(x))} > \gamma \qmq{for all} k\in\N\,.
\end{equation*}
By \reff{dm-s2-2} and \reff{dm-s2-3}   
with $C=\{x\}$, $E=\dom$, 
$A=\{|f-\alpha|\ge\eps\}$,
and $\lme=\lem$, we get 
\begin{equation*}
  \gamma \le
  \liminf_{k\to\infty} \mI{B_{\delta_k}(x)\cap\dom}{\chi_A}{\lem} \le
  \dens_x^\dom(A) = \sI{x}{\chi_A}{\dens_x^\dom} \,.
\end{equation*}
Hence
\begin{equation*}
  \eps\gamma \le
  \sI{x}{\eps\chi_A}{\dens_x^\dom} \le \sI{x}{|f-\alpha|\chi_A}{\dens_x^\dom} 
  \le \sI{x}{|f-\alpha|}{\dens_x^\dom}    \,.         
\end{equation*}
But this contradicts \reff{dm-s10-1} and verifies
\reff{dm-s10-2}. Then $f$ is $\dens_x^\dom$-integrable and it satisfies 
\reff{dm-s10-3} by \cite[p. 28]{trace}.
Let now \reff{dm-s10-2} be satisfied. By \reff{dm-app2} this is equivalent to 
\begin{equation*}
  \alim_{y\to x} |f-\alpha|=0 \,.
\end{equation*}
Then \reff{dm-s10-1} follows from 
\cite[p. 28]{trace}. 

For (3) we have by definition that $\sx |f-\alpha| = 0$ (cf. \reff{ess-con}).
Then \reff{dm-app2} and \reff{dm-s9-1} imply the assertion.
\end{proof}

\begin{corollary} \label{dm-s10a}
Let $\dom\in\cB(\R^n)$, let $f\in\cL^1_{\rm loc}(\dom,\R^m)$, and 
let $x\in\cdom$. If $\alim_{y\to x} f(y)$ exists, then it is unique. 
\end{corollary}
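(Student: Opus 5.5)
The plan is to argue directly from the definition of the approximate limit via the triangle inequality. Uniqueness then follows from \reff{cdom}: the balls $B_\delta^\dom(x)$ have positive measure for every $\delta>0$. As an alternative, one could use Proposition~\ref{dm-s10}(2) together with a fixed $\lem$-density $\dens_x^\dom\in\LDens_x^\dom$, which exists by Corollary~\ref{dm-s4b} since $x\in\cdom$. Then both $\alpha$ and $\beta$ would equal $\sI{x}{f}{\dens^\dom_x}$ by \reff{dm-s10-3}. This route needs the second condition of \reff{dm-s2c-0} and possibly the case $x=\infty$, which $\cdom$ excludes. I would follow the direct route, which avoids these technicalities.

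Assume $\alim_{y\to x}f(y)=\alpha$ and $\alim_{y\to x}f(y)=\beta$ with $\alpha,\beta\in\R^m$ and $\alpha\ne\beta$. Set $\eps:=|\alpha-\beta|/2>0$. For every $y\in\dom$ the triangle inequality gives $|f(y)-\alpha|\ge\eps$ or $|f(y)-\beta|\ge\eps$. Since we work with equivalence classes, we may fix a representative; this changes nothing, because all quantities involve only $\lem$-measures of sets. Hence
\begin{equation*}
  B_\delta^\dom(x) \subset \big(B_\delta^\dom(x)\cap\{|f-\alpha|\ge\eps\}\big)\cup
  \big(B_\delta^\dom(x)\cap\{|f-\beta|\ge\eps\}\big)\,.
\end{equation*}
Dividing by $\lem(B_\delta^\dom(x))>0$, which is positive because $x\in\cdom$, yields
\begin{equation*}
  1\le \frac{\lem\big(B_\delta^\dom(x)\cap\{|f-\alpha|\ge\eps\}\big)}{\lem(B_\delta^\dom(x))}
  +\frac{\lem\big(B_\delta^\dom(x)\cap\{|f-\beta|\ge\eps\}\big)}{\lem(B_\delta^\dom(x))}\,.
\end{equation*}
By the definition of the approximate limit, both terms on the right tend to $0$ as $\delta\dto0$. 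This is a contradiction, so $\alpha=\beta$.

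The only delicate point, and the main obstacle, is the extended scalar case $m=1$, where the values $\pm\infty$ are admitted. By \reff{dm-s9-3}, the value $\alim_{y\to x} f(y)=\alpha\in\cl\R$ is determined as the common value of $\alimsup$ and $\aliminf$. These are uniquely defined by \reff{dm-aps} and \reff{dm-api} as an infimum and a supremum, so uniqueness holds there too. One still has to check that a finite limit and an infinite one cannot coexist. A finite $\alpha$ forces $\alimsup_{y\to x} f\le\alpha$ by definition, and likewise $\aliminf_{y\to x} f\ge\alpha$. Hence neither $\aliminf_{y\to x} f=\infty$ nor $\alimsup_{y\to x} f=-\infty$ can hold, which completes the argument.
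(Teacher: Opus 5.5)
Your proof is correct, but it follows a different and more elementary route than the paper.

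The paper fixes an $\lem$-density $\dens_x^\dom\in\LDens_x^\dom$ and applies the triangle inequality under the integral. By \reff{dm-s10-1}, both $\sI{x}{|f-\alpha_1|}{\dens_x^\dom}$ and $\sI{x}{|f-\alpha_2|}{\dens_x^\dom}$ vanish, so $|\alpha_1-\alpha_2|=\sI{x}{|\alpha_1-\alpha_2|}{\dens_x^\dom}=0$. You carry out the same triangle-inequality idea at the level of sets, needing only subadditivity of $\lem$ and $0<\lem(B_\delta^\dom(x))<\infty$.

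The paper's version is a two-line illustration of the density-measure machinery. However, it depends on the existence of $\lem$-densities (via Hahn--Banach) and on the equivalence in Proposition~\ref{dm-s10}(2), which itself rests on external results. Yours is self-contained.

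The technicalities you attribute to the density route are not real obstacles. For $C=\{x\}$ with $x\in\cdom\subset\R^n$ and $E=\dom$, the finiteness condition in \reff{dm-s2c-0} holds automatically because $\lem(B_{\tilde\delta}(x))<\infty$. So Corollary~\ref{dm-s4b} applies at once.

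Your discussion of the extended scalar values $\pm\infty$ covers something the paper's proof passes over, since it only treats $\alpha_j\in\R^m$. There, ruling out a finite limit together with $\aliminf_{y\to x}f=\infty$ uses $\aliminf_{y\to x}f\le\alimsup_{y\to x}f$. The paper states this inequality, and it again follows from $\lem(B_\delta^\dom(x))>0$; cite it explicitly at that step.
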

\noi
This is known for $\dom=\R^n$ (cf. \cite[p. 46]{evans}). 
With the equivalence from Proposition~\ref{dm-s10}, the proof is quite
simple. 

\begin{proof}
Let $\dens_x^\dom\in\LDens_x^\dom$ and let 
$\alim_{y\to x} f(y)=\al_j$ for $j=1,2$. By \reff{dm-s10-1},
\begin{equation*}
  |\alpha_1-\alpha_2| = \sI{x}{|\alpha_1-\alpha_2|}{\dens_x^\dom} \le
  \sI{x}{|\alpha_1-f|}{\dens_x^\dom}+\,\sI{x}{|f-\alpha_2|}{\dens_x^\dom} = 0\,.
\end{equation*}
Hence $\alpha_1=\alpha_2$.
\end{proof}

\section{Extreme points and \zo measures}
\label{ep}

Let $M$ be a subset of a linear space. Then $u\in M$ is an 
{\it extreme point} of $M$ if for every $u_1,u_2\in M$ with $u_1\ne u_2$ 
\begin{equation*}
  u=\alpha u_1+(1-\alpha)u_2\,, \z \alpha\in[0,1]
  \qmq{implies} \alpha\in\{0,1\}\,,
\end{equation*}
i.e. $u$ cannot be an interior point of a segment connecting two points of $M$. 
Extreme points are of special relevance for convex sets by
the Krein-Milman theorem (cf. Clarke \cite[p.~168]{clarke-fa},
Zeidler \cite[p.~157]{zeidler-iii}).

\begin{theorem}[Krein-Milman] \label{ep-s1}
Let $X$ be a Hausdorff locally convex vector 
space, assume that $M\subset X$ is a compact
convex subset, and let $M_e$ denote its set of extreme points. Then 
  \begin{equation*}
    M=\overline{\op{conv}}\, M_e 
  \end{equation*}
where $\overline{\op{conv}}$ is the closed convex hull.
\end{theorem}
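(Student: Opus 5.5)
The plan is the classical separation-plus-extreme-point argument; I only sketch it, since the paper merely cites the theorem. Put $K:=\overline{\op{conv}}\,M_e$. Because $M$ is closed and convex and $M_e\subset M$, we have $K\subset M$. Everything therefore reduces to two claims: first, $M_e\neq\emptyset$, and more strongly every nonempty compact convex "face" of $M$ contains an extreme point of $M$; second, the inclusion $M\subset K$, which will follow from the first claim by a separation argument.

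\emph{Existence of extreme points.} Call a nonempty closed subset $F\subset M$ an \emph{extremal set} if, whenever $\alpha u_1+(1-\alpha)u_2\in F$ with $u_1,u_2\in M$ and $\alpha\in(0,1)$, both $u_1,u_2\in F$. Order the extremal sets by reverse inclusion. Every chain has nonempty intersection by compactness, since it consists of closed sets with the finite intersection property. That intersection is again extremal. Zorn's lemma then gives a minimal extremal set $F_0$.

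Next we show that $F_0$ is a single point. Suppose $x\ne y$ in $F_0$. Because $X$ is Hausdorff and locally convex, Hahn--Banach gives a continuous linear functional $\ell$ with $\ell(x)<\ell(y)$. The functional $\ell$ attains its maximum on the compact set $F_0$. The set $F_1:=\{z\in F_0\mid \ell(z)=\max_{F_0}\ell\}$ is closed, nonempty, and extremal: if a convex combination lies in $F_1$, then both endpoints lie in $F_0$, and linearity forces both to attain the maximum. Moreover $F_1$ misses $x$, so it is strictly smaller than $F_0$. This contradicts minimality. Hence $F_0=\{e\}$ with $e\in M_e$.

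Applying the same argument to the extremal set $\{z\in M\mid \ell(z)=\max_M\ell\}$ shows that every continuous linear functional attains its maximum over $M$ at some extreme point.

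\emph{The inclusion $M\subset K$.} Suppose some $u\in M\setminus K$ exists. The set $K$ is closed and convex, so the separation theorem in locally convex spaces gives a continuous linear $\ell$ with $\ell(u)>\sup_K\ell\ge\sup_{M_e}\ell$. By the previous paragraph, $\max_M\ell$ is attained at some $e\in M_e$, so $\ell(u)\le\ell(e)\le\sup_{M_e}\ell$, a contradiction. Hence $M=K$.

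The main obstacle is the step showing that a minimal extremal set is a singleton. That step is where the locally convex Hausdorff hypothesis is used, to separate points by continuous functionals, and where compactness is used twice: to apply Zorn's lemma and to guarantee that $\ell$ attains its maximum. In the present paper the theorem will be applied with $X=\bawl{\dom}$ carrying the weak$^*$ topology, which is Hausdorff and locally convex, and with $M=\Dens_\C$, which is compact and convex by Theorem~\ref{dm-s1}.
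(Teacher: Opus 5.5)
Your argument is correct and complete. It is the standard textbook proof of Krein--Milman: Zorn's lemma on closed extremal subsets, separating points to shrink a minimal extremal set to a single extreme point, then Hahn--Banach separation of a point of $M$ from the closed convex hull of $M_e$. The paper gives no proof of its own and simply cites this classical result from Clarke and Zeidler, so there is nothing to compare. The only cases left implicit are trivial: for $M=\emptyset$ both sides are empty, and for complex scalars you apply the argument to real parts of continuous linear functionals.
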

\noi
Thus a compact convex set is spanned by its extreme points.   
Since $\cL^\infty(\dom)$ is a normed space, its dual 
$\bawl{\dom}$ equipped with the weak$^*$ topology
is a Hausforff locally convex vector space.
Therefore Theorem~\ref{ep-s1} is applicable to its weak$^*$ compact convex
sets. 

\begin{corollary} \label{ep-s1a}
Let $\dom\in\cB(\R^n)$, let $M\subset\bawl{\dom}$ be convex and weak$^*$
compact, and let $M_e$ be the set of its extreme points. 
Then $M=\overline{\op{conv}}\, M_e$.
\end{corollary}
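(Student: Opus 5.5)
The plan is to deduce Corollary~\ref{ep-s1a} directly from the Krein-Milman theorem (Theorem~\ref{ep-s1}). This amounts to checking that its hypotheses hold for the space $X=\bawl{\dom}$ with the weak$^*$ topology, using the identification $\cL^\infty(\dom)^*=\bawl{\dom}$ from \reff{pl-dual}.

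The first step is to record that $X$ is a Hausdorff locally convex space. The weak$^*$ topology is generated by the family of seminorms
\begin{equation*}
  p_f(\mu):=\Big|\I{\dom}{f}{\mu}\Big|\,, \qquad f\in\cL^\infty(\dom)\,.
\end{equation*}
Any topology induced by a family of seminorms is locally convex. For the Hausdorff property, it suffices that the family separates points. So take $\mu\ne 0$ in $\bawl{\dom}$. Then $\mu(A)\ne 0$ for some $A\in\cB(\dom)$, and hence $p_{\ch_A}(\mu)=|\mu(A)|>0$. Equivalently, the duality \reff{pl-dual} is an isometric identification, so distinct measures give distinct functionals on $\cL^\infty(\dom)$.

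The second step is to apply Theorem~\ref{ep-s1} with this $X$ and with the given set $M$. By assumption $M$ is convex and weak$^*$ compact, i.e. compact in the topology of $X$. The theorem then gives $M=\overline{\op{conv}}\,M_e$, where the closure is taken in the weak$^*$ topology. This is exactly the assertion.

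No step is a genuine obstacle. The only care needed is to read the closed convex hull in the corollary as the weak$^*$ closure, which is the topology in which Theorem~\ref{ep-s1} is applied. The norm closure would in general be a different (smaller) set.
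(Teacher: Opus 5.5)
Your proposal is correct and follows the paper's route. The paper justifies the corollary in the text just before it: $\cL^\infty(\dom)^*$ with the weak$^*$ topology is a Hausdorff locally convex space, so the Krein--Milman theorem (Theorem~\ref{ep-s1}) applies directly to its weak$^*$ compact convex subsets. Your seminorm and point-separation check, and your reading of the closure as the weak$^*$ closure, make that argument explicit.
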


A nontrivial measure $\mu\in\bawl{\dom}$ with $\dom\in\cB(\R^n)$ is called
{\it \zo measure} if 
\begin{equation} \label{ep-e1}
  \me(\bals) = 0 \zmz{or} \me(\bals)=1 
  \qmq{for every} \bals \in \bor{\dom} \,.
\end{equation}
The next result of Toland \cite[p. 64]{toland} shows their importance 
for $\bawl{\dom}$.

\begin{proposition} \label{ep-s2a}
Let $\dom \in \bor{\R^n}$ and let $B^*$ be the closed unit ball 
in $\bawl{\dom}$. Then $\me$ is an extreme point of $B^*$
if and only if $\mu$ or $-\mu$ is a \zo measure.
\end{proposition}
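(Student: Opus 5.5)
We prove both directions directly, using only the order structure of $\bawl{\dom}$ and the identity $\|\mu\|=|\mu|(\dom)$ from Section~\ref{pl}.

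\emph{Zero-one measures are extreme points.} Let $\mu$ be a \zo measure; the case of $-\mu$ follows by symmetry. Nontriviality forces $\mu(\dom)=1$, and additivity gives $\mu\ge 0$, so $\|\mu\|=1$ and $\mu\in B^*$. Suppose $\mu=\alpha\mu_1+(1-\alpha)\mu_2$ with $\mu_1,\mu_2\in B^*$ and $\alpha\in(0,1)$. We show $\mu_1=\mu_2=\mu$.

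First, $1=\mu(\dom)\le \alpha|\mu_1|(\dom)+(1-\alpha)|\mu_2|(\dom)\le 1$. Hence $\mu_j(\dom)=|\mu_j|(\dom)=1$, and therefore $\mu_j^-=0$, i.e. $\mu_j\ge 0$.

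Next take $A$ with $\mu(A)=0$. Then $0=\alpha\mu_1(A)+(1-\alpha)\mu_2(A)$ with both terms nonnegative, so $\mu_j(A)=0$. If instead $\mu(A)=1$, then $\mu(A^c)=0$, so $\mu_j(A^c)=0$ and $\mu_j(A)=1$.

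Thus $\mu_1=\mu_2=\mu$, and $\mu$ is extreme.

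\emph{Extreme points are \zo measures up to sign.} Let $\mu$ be extreme in $B^*$. Then $\|\mu\|=1$, since otherwise $\mu=t\mu/\|\mu\|+(1-t)\cdot 0$ (or, if $\mu=0$, $\mu=\tfrac12\nu+\tfrac12(-\nu)$ for any $\nu\ne0$ in $B^*$) gives a nontrivial decomposition. Note that all pieces below stay weakly absolutely continuous with respect to $\lem$, since they are restrictions of $\mu$ or $|\mu|$.

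\textbf{Step 1: $\mu$ is $\pm$ a positive measure.} Write $\mu=\mu^+-\mu^-$ with $|\mu|=\mu^++\mu^-$. Suppose both parts are nonzero. We need a Hahn-type splitting. For finitely additive measures there is no exact Hahn decomposition, so we use approximate ones. For $\eps>0$ there is $P$ with $\mu^-(P)<\eps$ and $\mu^+(P^c)<\eps$; this follows from the definitions of $\mu^\pm$ as suprema.

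A cleaner route uses restrictions. For any $A$ with $0<|\mu|(A)<1$, the measures $\nu_1=\mu\llcorner A/|\mu|(A)$ and $\nu_2=\mu\llcorner A^c/|\mu|(A^c)$ lie in $B^*$ and satisfy
\[
\mu=|\mu|(A)\,\nu_1+|\mu|(A^c)\,\nu_2 .
\]
Here we use additivity $|\mu|(A)+|\mu|(A^c)=1$ and $\|\mu\llcorner A\|=|\mu|(A)$. By extremality $\nu_1=\nu_2$. Evaluating on subsets of $A$ gives $\mu\llcorner A=|\mu|(A)\,\mu$, so $\mu\llcorner A^c=|\mu|(A^c)\,\mu$. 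Applying this to $A$ itself yields $\mu(A)=|\mu|(A)\mu(A)$, hence $\mu(A)=0$ whenever $0<|\mu|(A)<1$.

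\textbf{Step 2: $|\mu|$ is \zo valued.} Suppose some $A$ has $0<|\mu|(A)<1$. By Step 1, $\mu\llcorner A=|\mu|(A)\,\mu$, so $\mu(B)=|\mu|(A)\mu(B)$ for every $B\subset A^c$. This forces $\mu=0$ on subsets of $A^c$, and in particular $|\mu|(A^c)=0$. That contradicts $|\mu|(A^c)=1-|\mu|(A)>0$. Therefore $|\mu|$ takes only the values $0$ and $1$.

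\textbf{Step 3: conclusion.} Since $|\mu|(\dom)=1$ and $|\mu|=\mu^++\mu^-$, exactly one of $\mu^\pm(\dom)$ equals $1$ and the other is $0$. So $\mu=\mu^+$ or $\mu=-\mu^-$. The positive measure involved equals $|\mu|$ and is \zo valued, so $\mu$ or $-\mu$ is a \zo measure.

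The main obstacle is Step 1: identifying $\|\mu\llcorner A\|$ with $|\mu|(A)$ and using only finite additivity without a Hahn decomposition. The restriction trick avoids this, because it needs only the additivity of $|\mu|$, which Section~\ref{pl} provides.
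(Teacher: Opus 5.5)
The paper does not prove this statement; it cites Toland. Your restriction argument is the standard self-contained route. The first direction and the substance of Steps~1--2 of the converse are correct.

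There is a gap in Step~3, however. From $|\mu|(\dom)=1$ and $|\mu|=\mu^++\mu^-$ you only get $\mu^+(\dom)+\mu^-(\dom)=1$, which does not exclude, say, $\mu^\pm(\dom)=\tfrac12$. So the sign of $\mu$ is asserted rather than derived, and that sign is the one thing left to prove once $|\mu|$ is known to be zero-one. It follows from Step~2:
\begin{itemize}
\item[(i)] For each Borel $B$, either $|\mu|(B)=0$, hence $\mu(B)=0$, or $|\mu|(\dom\setminus B)=0$, hence $\mu(B)=\mu(\dom)$.
\item[(ii)] In any finite Borel partition of $\dom$, exactly one piece has $|\mu|$-mass $1$ and all other pieces have $\mu$-value $0$. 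Hence $1=|\mu|(\dom)=|\mu(\dom)|$.
\item[(iii)] Therefore $\mu=\mu(\dom)\,|\mu|$ with $\mu(\dom)=\pm1$, i.e.\ $\mu$ or $-\mu$ equals the zero-one measure $|\mu|$.
\end{itemize}

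Three smaller remarks. First, the identity in Step~2 is misderived. Evaluating $\reme{\mu}{A}=|\mu|(A)\,\mu$ on $B\subset\dom\setminus A$ gives $0=|\mu|(A)\,\mu(B)$, not $\mu(B)=|\mu|(A)\,\mu(B)$; your conclusion survives. Second, Steps~1 and~2 collapse into one line: $\nu_1$ vanishes on all Borel subsets of $\dom\setminus A$, whereas $|\nu_2|(\dom\setminus A)=1$. So $\nu_1\ne\nu_2$, and extremality is violated at once. Third, excluding $\mu=0$ requires some $\nu\ne 0$ in $B^*$, i.e.\ $\lem(\dom)>0$. If $\lem(\dom)=0$, then $B^*=\{0\}$, $0$ is vacuously extreme, and the statement fails; this tacit assumption is shared with the paper's formulation.
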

\noi
Since $B^*$ is weak$^*$ compact by the Alaoglu theorem, Corollary~\ref{ep-s1a}
is applicable and tells us that $B^*$ is spanned, up to sign, 
by \zo measures.  

It turns out that \zo measures are density measures that concentrate
near a single point~$x$, i.e. they localize similar as measures $\dens_x^\dom$. 
A more refined analysis  
shows that they only concentrate in direction of a ray emanating from~$x$.  
For that we define the open {\it cone with vertex at} 
$x\in\R^n$, {\it rotational axis} $v\in\R^n\setminus\{0\}$,    
and {\it opening angle} $2\alpha\in(0,\pi)$ by  
\begin{equation}\label{ep-e4}
  K(x,v,\alpha) := \big\{ y\in\R^n \:\big|\:
  y\ne x, \varangle (y-x,v) < \alpha \big\} \,  
\end{equation}
($\varangle$ is the angle between vectors). 
For unbounded $\dom$, where $\infty$ can belong to the core of a \zo measure, 
we also consider cones with vertex at infinity given by
\begin{equation*}
  K(\infty,v,\alpha):= K(0,v,\alpha) \,.
\end{equation*}
\noi
Let us now provide some first properties of \zo measures that can basically be
found in \cite[p.~15-16]{dens} 
(cf. also \cite[p.~31-32]{dens-ar}, \cite[p.~77]{toland}).

\begin{theorem}\label{ep-s3}  
Let $\dom \in \bor{\R^n}$ and let $\me\in\bawl{\dom}$ be a \zo measure. 
Then there is some $x\in\ccl\dom$ such that
\begin{equation} \label{ep-s3-1}
  \cor\me=\{x\}\,, \quad \me\in\Dens_x \qmq{and} \tx{$\me$ is pure.}
\end{equation}
Moreover, there is a unique $v\in \R^n$ with $|v|=1$ such that 
\begin{equation} \label{ep-s3-2}
\me\big( K(x,v,\alpha)\cap \dom\big) = 1 \qmq{for all}
\alpha \in \big(0,\tfrac{\pi}{2}\big) \,.
\end{equation}
\end{theorem}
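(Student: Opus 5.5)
The plan is a compactness argument in $\ccl{\R^n}$ to find $x$, followed by a compactness argument on the unit sphere to find $v$. Throughout, the key fact is that a \zo measure $\mu$ is positive with $\mu(\dom)=1$, since it is nontrivial. Any finite partition $\dom=A_1\cup\dots\cup A_l$ into Borel sets therefore has exactly one piece of measure $1$ and all others of measure $0$. The same holds for finite covers: $\mu$ is monotone, and $\mu(A_1\cup A_2)=1$ forces $\mu(A_1)=1$ or $\mu(A_2)=1$.

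\emph{Step 1 (the point $x$).} By Theorem~\ref{dm-s1}-type facts, i.e.\ \reff{pl-e1}, $\cor\mu\ne\emptyset$ and $\cor\mu\subs\ccl\dom$. Suppose $\cor\mu$ contains two points $x\ne y$. Choose disjoint open neighborhoods $U\ni x$ and $V\ni y$ in $\ccl{\R^n}$. By definition of the core, $\mu(U\cap\dom)>0$ and $\mu(V\cap\dom)>0$, so both equal $1$ by \reff{ep-e1}. This contradicts additivity, since $\mu(\dom)=1$. Hence $\cor\mu=\{x\}$.

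Now \reff{pl-e1} with $U=B_\de(x)$ gives $\mu(B_\de(x)\cap\dom)=1$ for all $\de>0$. Together with $\mu\ge0$ and $\mu\in\bawl\dom$, this is exactly \reff{dm-e0} with $\C=\{x\}$. Two cases arise:
\begin{itemize}
\item If $x\in\cl\dom$, then $\{x\}$ satisfies \reff{dm-e0a}, because $\lem(\{x\})=0$.
\item If $x=\infty$, then $\C=\{\infty\}$, which is also admitted by \reff{dm-e0a}.
\end{itemize}
Thus $\mu\in\Dens_x$, and purity follows from Theorem~\ref{dm-s1}(1).

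\emph{Step 2 (existence of the direction $v$).} First let $x\in\R^n$. For each unit vector $w$ and $\alpha\in(0,\pi/2)$, the cones $K(x,w,\alpha)$ with $|w|=1$ cover $\R^n\setminus\{x\}$. By compactness of the unit sphere $S^{n-1}$, finitely many of them suffice. Since $\mu(\{x\}\cap\dom)=0$ (it is an $\lem$-null set), finite subadditivity and the $0$--$1$ property give at least one cone of angle $\alpha$ with $\mu$-measure $1$.

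For each $\alpha$, let $F_\alpha$ be the set of $w\in S^{n-1}$ with $\mu(\cl{K(x,w,\alpha)}\cap\dom)=1$. Using closed cones here is what makes $F_\alpha$ closed. Then:
\begin{itemize}
\item $F_\alpha$ is nonempty by the covering argument above.
\item $F_\alpha$ decreases as $\alpha$ decreases.
\item $F_\alpha$ is closed. If $w_k\to w$ with $w_k\in F_\alpha$, then for any $\alpha'>\alpha$ we have $\cl{K(x,w_k,\alpha)}\subs K(x,w,\alpha')$ for large $k$. Hence $w\in F_{\alpha'}$ for every $\alpha'>\alpha$.
\end{itemize}
The last point shows it is cleaner to work with a nested family indexed loosely. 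I would define $G_\alpha:=\{w\mid \mu(K(x,w,\alpha)\cap\dom)=1\}$ and take $F:=\bigcap_{\alpha}\cl{G_\alpha}$. This is a nonempty intersection of nested compact sets, so $F\ne\emptyset$; pick $v\in F$.

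To verify \reff{ep-s3-2}, fix $\alpha$ and pick $w\in G_{\alpha/2}$ with $\varangle(w,v)<\alpha/2$. Then $K(x,w,\alpha/2)\subs K(x,v,\alpha)$, so $\mu(K(x,v,\alpha)\cap\dom)=1$.

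For $x=\infty$ the same argument works with cones $K(0,w,\alpha)$ intersected with $B_\de(\infty)$. These cones still cover $\R^n\setminus\{0\}$, and the bounded set $\{0\}$ has measure $0$ since $\cor\mu=\{\infty\}$.

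\emph{Step 3 (uniqueness).} If $v\ne v'$ both satisfy \reff{ep-s3-2}, choose $\alpha<\varangle(v,v')/2$. Then $K(x,v,\alpha)$ and $K(x,v',\alpha)$ are disjoint. Both would have measure $1$, which contradicts $\mu(\dom)=1$.

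The main obstacle is the limiting argument in Step 2. The direction must be extracted from the nested families without losing the measure-$1$ property, which forces the loss of angle from $\alpha/2$ to $\alpha$. The treatment of $x=\infty$, where cones with vertex at $\infty$ must be read near infinity, needs the same care.
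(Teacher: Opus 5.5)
Your proof is correct. It follows the route the paper takes by deferring to the earlier work of Sch\"onherr and Schuricht: disjoint neighborhoods in $\ccl{\R^n}$ force the core to be a single point, compactness of $S^{n-1}$ produces the direction, and the case $x=\infty$ is handled with vertex $0$, exactly as the paper indicates. The only blemish is the abandoned bullet asserting that $F_\alpha$ is closed, which is false in general (your accompanying argument only places limits in $F_{\alpha'}$ for $\alpha'>\alpha$). It should simply be deleted, since your final argument via $\bigcap_\alpha \cl{G_\alpha}$ does not use it.
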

\noi
Since \zo measures are density measures at some $x\in\ccl\dom$, we use the
notation 
\begin{equation*}
  \ZO_x := \big\{\me\in\Dens_x \:\big|\: \mu \text{ is \zo measure} \big\}\,. 
\end{equation*}
For $\mu\in\ZO_x$ and the related $v$ from \reff{ep-s3-2},
we clearly have that all sets
\begin{equation*}
  K(x,v,\alpha)\cap\dom\cap B_\delta(x) \zmz{with}
  \alpha \in \big(0,\tfrac{\pi}{2}\big)\,, \z \delta>0
\end{equation*}
are aura of $\me$. This readily implies that \zo measures 
at the same $x$ but with different directions $v$ in \reff{ep-s3-2} cannot
agree. 

\begin{proof}
The proof can be found in \cite{dens} up to the case $x=\infty$ in
\reff{ep-s3-2}. For this remaining case  
we take $x=0$ and argue analogously as in \cite{dens}. 
\end{proof}

By Theorem~\ref{dm-s1} and Corollary~\ref{ep-s1a}, 
the Krein-Milman theorem is applicable to the sets  
$\Dens_C$ and it is useful to know their extreme points. 
Notice that $\Dens_\C$ is always strictly contained in the boundary 
$\bd B^*$ of the unit ball $B^*$ in $\bawl{\dom}$. Moreover, $\Dens_\C$ is
planar because it is contained in the affine hyperplane 
\begin{equation*}
  \Big\{\mu\in \bawl{\dom} \:\Big|\: \I{\dom}{}{\mu}=1\Big\}\,.
\end{equation*}
Thus the extreme points of
$\Dens_x$ have to be contained in its boundary relative to $\bd B^*$. 
Therefore, $\Dens_\C$ might have extreme points that are different from those of
$B^*$. This is not the case according to the next result, that
is basically contained in \cite{dens}.

\begin{theorem} \label{ep-s2}
Let $\dom \in \bor{\R^n}$, let $\C$ be a density set, 
and let $\me \in \Dens_\C$.
Then 
\begin{equation*}
  \tx{$\mu$ is extreme point of $\Dens_\C$ \quad if and only if \quad
  $\mu$ is \zo measure.}
\end{equation*}
\end{theorem}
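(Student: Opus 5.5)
We prove the two implications separately. Both use only the normalization $\mu(\dom)=1$, $\mu\ge 0$ and the fact that restrictions of density measures to sets of positive measure, after renormalization, are again density measures.

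\emph{Step 1: a \zo measure in $\Dens_\C$ is extreme.} Let $\mu\in\Dens_\C$ be a \zo measure and suppose $\mu=\alpha\mu_1+(1-\alpha)\mu_2$ with $\mu_1,\mu_2\in\Dens_\C$ and $\alpha\in(0,1)$. For $A\in\cB(\dom)$ with $\mu(A)=0$, positivity of $\mu_1,\mu_2$ gives $\mu_1(A)=\mu_2(A)=0$. If $\mu(A)=1$, then $\mu(\dom\setminus A)=0$, hence $\mu_j(\dom\setminus A)=0$ and so $\mu_j(A)=\mu_j(\dom)=1$. Thus $\mu_1=\mu_2=\mu$ on all Borel sets, and $\mu$ is extreme. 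Alternatively, one can invoke Proposition~\ref{ep-s2a}: $\mu$ is extreme in the unit ball $B^*\supset\Dens_\C$, hence extreme in the smaller set.

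\emph{Step 2: an extreme point is a \zo measure.} Let $\mu\in\Dens_\C$ be extreme and suppose some $A\in\cB(\dom)$ has $\mu(A)=:\alpha\in(0,1)$. Define
\[
\mu_1:=\tfrac1\alpha\,\reme{\mu}{A}, \qquad \mu_2:=\tfrac1{1-\alpha}\,\reme{\mu}{(\dom\setminus A)},
\]
so that $\mu=\alpha\mu_1+(1-\alpha)\mu_2$. Both measures are positive, bounded and weakly absolutely continuous with respect to $\lem$, so they lie in $\bawl{\dom}$. Normalization gives $\mu_1(\dom)=1$, and $\mu(\Cd\cap\dom)=1$ implies $\mu(\dom\setminus\Cd)=0$. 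Hence
\[
\mu_1(\Cd\cap\dom)=\tfrac1\alpha\,\mu(A\cap\Cd)=\tfrac1\alpha\,\mu(A)=1 \qmq{for all} \delta>0,
\]
and likewise for $\mu_2$. Thus $\mu_1,\mu_2\in\Dens_\C$ by \reff{dm-e0}. Moreover $\mu_1(A)=1\ne 0=\mu_2(A)$, so $\mu_1\ne\mu_2$, which contradicts extremality. Therefore $\mu$ takes only the values $0$ and $1$, and it is nontrivial since $\mu(\dom)=1$.

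\emph{Main point to check.} The only place needing care is verifying that the renormalized restrictions satisfy \reff{dm-e0} for every $\delta$, including the case $\C=\{\infty\}$. This reduces entirely to $\mu(\dom\setminus\Cd)=0$, which holds in all cases. No core or compactness arguments are needed, and neither Theorem~\ref{ep-s3} nor Krein–Milman enters the proof.
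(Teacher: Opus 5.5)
Your proof is correct and complete. Positivity and $\mu(\dom)=1$ force $\mu_1=\mu_2=\mu$ when $\mu$ is a \zo measure. When $\mu(A)\in(0,1)$, the renormalized restrictions to $A$ and $\dom\setminus A$ remain in $\Dens_\C$, because $\mu(\dom\setminus\C_\delta)=0$ for every $\delta>0$.

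The paper gives no argument of its own here. It cites \cite{dens} for $\C\subset\cl\dom$ and states that $\C=\{\infty\}$ ``follows the same way''. Your direct restriction argument (or, for one direction, Proposition~\ref{ep-s2a}) is the natural one behind that citation. It has the advantage of covering both alternatives in \reff{dm-e0a} at once, since it uses only the defining conditions \reff{dm-e0}.
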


\begin{proof}
In \cite{dens} the assertion is shown for $\C\subset\cl\dom$.
The remaining case $\C=\{\infty\}$ follows the same way.
\end{proof}

A density measure $\dens_x^E\in\LDens_x$
according to Corollary~\ref{dm-s4b} cannot be extreme point of $\Dens_\C$,
even if $E$ has a cusp at $x$. To see this we notice that there is always 
a disjoint decomposition $E=E_1\cup E_2$ with 
\begin{equation*}
  \lem(B_\delta(x)\cap E_j)> \tfrac14 \lem(B_\delta(x)\cap E)
  \qmq{for all} \de>0\,.
\end{equation*}
Then $\dens_\C^E(E_j)>\tfrac14$ for $j=1,2$ by \reff{dm-s2-2} and, hence,
$\dens_\C^E(E)$ cannot be a \zo measure. 

Let us provide a more refined
description where \zo measures live. For that we first assume that 
$\dom$ is just a set and $\cM\subset\cP(\dom)$ be a collection of subsets of
$\dom$. Then a subset $\cF\subset\cM$ is called {\it filter} in $\cM$ if  
\bgl[--]
\item[(a)]
$\dom\in\cF$, $\emptyset\not\in\cF$, 
\item[(b)]
$B_1,B_2\in\cF$ implies $B_1\cap B_2\in\cF$,  
\item[(c)]
$B_2\in\cM$ with $B_2\supset B_1\in\cF$ implies $B_2\in\cF$.
\el
\noi
A filter $\cU\subset\cM$ is called {\it ultrafilter} in $\cM$ 
if $\cF=\cU$ for any filter
$\cF\subset\cM$ with $\cU\subset\cF$. 

We will consider filters $\cF$ in sets 
$\alg\subset\cP(\dom)$ that are an algebra on
$\dom$ (i.e. $\emptyset,\dom\in\cA$ and $\cA$ is stable under
finite unions and intersections). 
For example if $\me\in\ZO_x$, then 
the sets $A$ with $\me(A)=1$ are obviously a filter in the algebra $\cB(\dom)$. 
In general, let $\cF$ be a filter in an algebra $\alg\subset\cP(\dom)$
and let $A\in\alg$ be such that 
\begin{equation*}
  A\not\in\cF \qmq{but} 
  A\cap B\ne\emptyset \zmz{for all} B\in\cF\,.
\end{equation*}
Then we readily get that
\begin{equation}\label{ep-e3}
  \tilde\cF := \big\{\tilde B\in\alg \:\big|\: 
  A\cap B \subset \tilde B,\: B\in\cF \big\} \,
\end{equation}
is a filter containing $\cF$ and the set $A$. Thus $\cF$ is even strictly
contained in $\tilde\cF$. Since a filter has to contain
all supersets of its sets, $\tilde\cF$ is even the smallest filter of that
kind. That implies a simple characterization of ultrafilters
(cf. \cite[p.~11]{rao}). 

\begin{proposition}\label{ep-s6a}
  Let $\alg$ be an algebra on the set $\dom$ and let $\cF$ be a filter in
  $\alg$. 
  Then $\cF$ is ultrafilter in $\alg$ if and only if
\begin{equation} \label{ep-uf}
  \zmz{for every} A\in\alg \quad
  \mbox{either \z $A\in\cF$ \z or \z $\dom\setminus A\in\cF$ }\,.
\end{equation}
\end{proposition}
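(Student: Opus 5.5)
We prove the two implications separately; both use only the filter axioms (a)–(c) and the enlargement construction \reff{ep-e3} described just before the statement.

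\emph{Condition \reff{ep-uf} implies that $\cF$ is an ultrafilter.} Assume \reff{ep-uf} holds and let $\cG\subset\alg$ be a filter with $\cF\subset\cG$. To get $\cG=\cF$, take any $A\in\cG$. If $A\notin\cF$, then \reff{ep-uf} gives $\dom\setminus A\in\cF\subset\cG$. By (b) we obtain $\emptyset=A\cap(\dom\setminus A)\in\cG$, which contradicts (a). Hence $A\in\cF$, so $\cG\subset\cF$ and $\cF$ is maximal.

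\emph{An ultrafilter satisfies \reff{ep-uf}.} Let $\cF$ be an ultrafilter and $A\in\alg$. Both $A$ and $\dom\setminus A$ cannot lie in $\cF$, since (b) would put $\emptyset$ in $\cF$. So it suffices to show that if $A\notin\cF$, then $\dom\setminus A\in\cF$. Suppose instead that neither set lies in $\cF$. We split into two cases.

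\emph{Case 1: $A\cap B\ne\emptyset$ for all $B\in\cF$.} The construction \reff{ep-e3} applies and yields a filter $\tilde\cF$ that contains $\cF$ and $A$. It strictly enlarges $\cF$, which contradicts maximality. Here one checks that $\tilde\cF$ is a filter. The set $\dom$ lies in $\tilde\cF$. No member is empty, because each member contains some $A\cap B\ne\emptyset$. The family is closed under intersections, since $(A\cap B_1)\cap(A\cap B_2)=A\cap(B_1\cap B_2)$ with $B_1\cap B_2\in\cF$. Closure under supersets in $\alg$ holds by definition.

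\emph{Case 2: $A\cap B=\emptyset$ for some $B\in\cF$.} Then $B\subset\dom\setminus A$, and $\dom\setminus A\in\alg$ because $\alg$ is an algebra. By (c), $\dom\setminus A\in\cF$, which contradicts our assumption.

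Both cases are contradictory, so \reff{ep-uf} holds.

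The only delicate point is the case distinction in the second direction. The enlargement \reff{ep-e3} is available only when $A$ meets every member of $\cF$, and Case 2 handles the remaining situation. Verifying that $\tilde\cF$ is a filter is routine.
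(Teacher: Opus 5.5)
Your proof is correct and follows essentially the same route as the paper: for an ultrafilter you either enlarge $\cF$ via \reff{ep-e3} when $A$ meets every member of $\cF$, or you observe that $\dom\setminus A$ contains some $B\in\cF$. For the converse you derive $\emptyset=A\cap(\dom\setminus A)$ in the larger filter, and your only additions are the explicit check that $\tilde\cF$ is a filter and of the exclusivity of the two alternatives, both of which the paper leaves implicit.
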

\noi
Let us provide the short standard proof, that is not contained in \cite{rao},
for the convenience of the reader. 

\begin{proof}
Let $\cF$ be ultrafilter in $\alg$. Assume that $A\in\alg$ but $A\not\in\cF$. 
Then there is
some $B\in\cF$ with $A\cap B=\emptyset$, since otherwise 
$\tilde\cF\supsetneq \cF$ for filter $\tilde\cF$ from \reff{ep-e3} and 
$\cF$ cannot be ultrafilter. Hence $\dom\setminus A\in\cF$ as superset of
$B$ which gives \reff{ep-uf}.
For the opposite let \reff{ep-uf} be satisfied and let 
$\tilde\cF\supsetneq \cF$ be a
filter in $\alg$. Then there is some $A\in\alg$ with $A\in\tilde\cF$ and
$A\not\in\cF$. Hence $\dom\setminus A\in\cF$ by \reff{ep-uf} and
$\dom\setminus A\in\tilde\cF$ by $\cF\subset\tilde\cF$. Consequently 
$A\cap(\dom\setminus A)=\emptyset$ belongs to $\tilde\cF$, a contradiction 
that implies that $\cF$ must be ultrafilter. 
\end{proof}

Notice that, in analogy to Section~\ref{pl},  measures $\mu$, 
and in particular \zo measures, 
can be defined more generally on any set $\dom$ with respect to an algebra 
$\cA$ on it. Thus we obtain the 
next characterization (cf. also \cite[p. 38]{rao}, \cite[p. 43]{toland}).

\begin{theorem}   \label{ep-s6}
  Let $\alg$ be an algebra on the set $\dom$ and let $\mu$ be a measure on
  $\dom$ related to~$\cA$. Then $\me$ is a
  \zo measure if and only if there is an ultrafilter $\cU$ in $\alg$ with
  \begin{equation} \label{ep-s6-1}
    \me(A)=\bigg\{ \begin{array}{ll}   1 & \text{if } A\in\cU\,, \\
                               0 & \text{if } A\notin\cU\,. 
    \end{array}
  \end{equation}
The correspondence between \zo measure $\mu$ and ultrafilter $\cU$ is unique. 
\end{theorem}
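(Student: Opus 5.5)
We prove both implications directly from the definitions, using the characterization of ultrafilters in Proposition~\ref{ep-s6a}. First let $\mu$ be a \zo measure on $\cA$ and set $\cU:=\{A\in\cA\mid \mu(A)=1\}$. We check that $\cU$ is a filter. Since $\mu$ is nontrivial and takes only the values $0,1$, we have $\mu(\dom)=1$: otherwise $\mu(\dom)=0$, and additivity gives $\mu(A)+\mu(\dom\setminus A)=0$ with both terms in $\{0,1\}$, forcing $\mu\equiv0$. Also $\mu(\emptyset)=0$, so $\emptyset\notin\cU$. Monotonicity follows because $\mu\ge0$: for $B_1\subset B_2$ we have $\mu(B_2)=\mu(B_1)+\mu(B_2\setminus B_1)\ge\mu(B_1)$, and the value cannot exceed $1$, since $\mu(B_2)+\mu(\dom\setminus B_2)=1$. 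For intersections, if $\mu(B_1)=\mu(B_2)=1$, then $\mu(\dom\setminus B_j)=0$. Subadditivity on finite unions, which follows from additivity plus positivity, gives $\mu(\dom\setminus(B_1\cap B_2))\le 0$, hence $\mu(B_1\cap B_2)=1$. Finally, for every $A\in\cA$ exactly one of $\mu(A)$, $\mu(\dom\setminus A)$ equals $1$, because they sum to $1$. This is \reff{ep-uf}, so $\cU$ is an ultrafilter by Proposition~\ref{ep-s6a}, and \reff{ep-s6-1} holds by construction.

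Conversely, let $\cU$ be an ultrafilter in $\cA$ and define $\mu$ by \reff{ep-s6-1}. Then $\mu(\emptyset)=0$ and $\mu(\dom)=1$, so $\mu$ is nontrivial and $\{0,1\}$-valued. The only real work is finite additivity, and it suffices to treat two disjoint sets $A,B\in\cA$; induction handles finitely many.

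\begin{itemize}
\item $A$ and $B$ cannot both lie in $\cU$, since then $\emptyset=A\cap B\in\cU$.
\item If $A\in\cU$, then $A\cup B\in\cU$ because filters are closed under supersets, so $\mu(A\cup B)=1=\mu(A)+\mu(B)$. The case $B\in\cU$ is symmetric.
\item If neither set lies in $\cU$, then $\dom\setminus A$ and $\dom\setminus B$ lie in $\cU$ by \reff{ep-uf}. Their intersection $\dom\setminus(A\cup B)$ then lies in $\cU$, so $A\cup B\notin\cU$, again by \reff{ep-uf}. Hence $\mu(A\cup B)=0=\mu(A)+\mu(B)$.
\end{itemize}

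So $\mu$ is a \zo measure. Uniqueness of the correspondence is immediate: \reff{ep-s6-1} recovers $\cU$ from $\mu$ as $\{\mu=1\}$, and recovers $\mu$ from $\cU$. Two different ultrafilters therefore yield measures differing on some set, and conversely.

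The step needing the most care is the forward direction's use of positivity. Positivity is implicit for a \zo measure, since it is $\{0,1\}$-valued, but monotonicity and the intersection property must be derived from additivity alone, as done above. Everything else is bookkeeping with Proposition~\ref{ep-s6a}.
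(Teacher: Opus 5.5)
Your proof is correct and takes the same route as the paper. You set $\cU=\{A\in\cA\mid \mu(A)=1\}$, check the filter axioms and the dichotomy \reff{ep-uf} to invoke Proposition~\ref{ep-s6a}, and conversely derive additivity of the measure defined by \reff{ep-s6-1} from \reff{ep-uf}. The paper states these steps in only a few lines; your derivation of $\mu(\dom)=1$, monotonicity and closure under intersections, and your case analysis for additivity, fill in what it leaves implicit.
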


\begin{proof}
If $\cU\subset\alg$ is an ultrafilter, then $\me$ given by \reff{ep-s6-1}
is a \zo measure by \reff{ep-uf}. If $\me\in\ba(\dom,\alg)$ is a \zo measure,
then $\cF := \{ A\in\alg\mid \me(A)=1\}$ 
is a filter in $\alg$ that satisfies \reff{ep-uf}. 
By \reff{ep-uf} and \reff{ep-s6-1}
the correspondence between \zo measures and ultrafilters is unique.
\end{proof}

If we apply Theorem~\ref{ep-s6} to a \zo measure 
$\me\in\bawl{\dom}$, then it is related to an ultrafilter 
$\cU\subset\bor{\dom}$ that also belongs to
\begin{equation*}
  \bor{\dom}^+ := \{ B\in\bor{\dom}\mid \lem(B)>0 \} \,,
\end{equation*}
since $\me(B)=0$ for $\lem$-null sets. Otherwise each ultrafilter 
$\cU\subset\bor{\dom}^+$ obviously induces a \zo measure 
$\me\in\bawl{\dom}$ by \reff{ep-s6-1}. 
Thus Theorem~\ref{ep-s6} directly implies the next specialization. 

\begin{corollary}\label{ep-s7}
Let $\dom\in\cB(\R^n)$. Then $\me\in\bawl{\dom}$ is a
  \zo measure if and only if there is an ultrafilter $\cU$ in 
  $\bor{\dom}^+$ such that $\me$ satisfies \reff{ep-s6-1}.
  The correspondence between $\me$ and $\cU$ is unique. 
\end{corollary}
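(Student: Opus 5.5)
We reduce the statement to Theorem~\ref{ep-s6}, applied with the algebra $\alg=\bor{\dom}$. The only extra information is that $\me\in\bawl{\dom}$ vanishes on $\lem$-null sets, and this is exactly what allows us to trade "ultrafilter in $\bor{\dom}$" for "ultrafilter in $\bor{\dom}^+$". Since $\bor{\dom}^+$ is not an algebra (it does not contain $\emptyset$), we read the filter axioms (a)--(c) with $\cM=\bor{\dom}^+$. We assume $\lem(\dom)>0$; otherwise $\bawl{\dom}=\{0\}$ contains no \zo measure, and $\bor{\dom}^+$ contains no filter, so both sides of the equivalence fail.

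\emph{($\Rightarrow$)} Let $\me\in\bawl{\dom}$ be a \zo measure. By Theorem~\ref{ep-s6}, the family $\cU:=\{A\in\bor{\dom}\mid\me(A)=1\}$ is an ultrafilter in $\bor{\dom}$ satisfying \reff{ep-s6-1}. If $A\in\cU$ had $\lem(A)=0$, then weak absolute continuity would give $\me(A)=0$, which is impossible. Hence $\cU\subset\bor{\dom}^+$.

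Next we check that $\cU$ is a filter in $\bor{\dom}^+$. Property (a) holds because $\dom\in\cU$ and $\emptyset\notin\cU$. Properties (b) and (c) are inherited from $\cU$ being a filter in $\bor{\dom}$, using that intersections of members of $\cU$ again lie in $\cU\subset\bor{\dom}^+$.

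Finally we check maximality. Suppose $\cF\supsetneq\cU$ is a filter in $\bor{\dom}^+$, and pick $A\in\cF\setminus\cU$. By \reff{ep-uf} we have $\dom\setminus A\in\cU\subset\cF$. Then (b) gives $\emptyset=A\cap(\dom\setminus A)\in\cF$, a contradiction.

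\emph{($\Leftarrow$)} Let $\cU$ be an ultrafilter in $\bor{\dom}^+$, and define $\me$ by \reff{ep-s6-1} on $\bor{\dom}$. The key step is to show that $\cU$ satisfies the dichotomy \reff{ep-uf} for every $A\in\bor{\dom}$.

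First suppose $\lem(A)=0$. Then $A\notin\cU$ because $\cU\subset\bor{\dom}^+$. Moreover, for every $B\in\cU$ we have $\lem(B\setminus A)=\lem(B)>0$, and $B\setminus A\subset\dom\setminus A$. So the family $\{\tilde B\in\bor{\dom}^+\mid \tilde B\supset B\setminus A,\ B\in\cU\}$ is a filter containing $\cU$. By maximality it equals $\cU$, hence $\dom\setminus A\in\cU$.

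Now suppose $\lem(A)>0$ and $A\notin\cU$. We repeat the argument of Proposition~\ref{ep-s6a}, with the condition "$A\cap B\ne\emptyset$" replaced by "$\lem(A\cap B)>0$". If $\lem(A\cap B)>0$ held for all $B\in\cU$, then \reff{ep-e3}, taken within $\bor{\dom}^+$, would produce a strictly larger filter, which is impossible. So some $B\in\cU$ has $\lem(A\cap B)=0$. By the null-set case just proved, $\dom\setminus(A\cap B)\in\cU$. Intersecting with $B$ gives $B\setminus A\in\cU$, and (c) then yields $\dom\setminus A\in\cU$.

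With the dichotomy in hand, $\me$ is additive and \zo, exactly as in Theorem~\ref{ep-s6}. Null sets never lie in $\cU$, so $\me\wac\lem$, and boundedness is trivial. Hence $\me\in\bawl{\dom}$ is a \zo measure.

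Uniqueness of the correspondence follows directly from \reff{ep-s6-1}: distinct families $\cU$ give distinct measures, and $\cU$ is recovered from $\me$ as $\{\me=1\}$.

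The main obstacle is the ($\Leftarrow$) direction. The point needing care is that $\bor{\dom}^+$ is not closed under complements or intersections, so the dichotomy \reff{ep-uf} must be re-derived using $\lem$-positive intersections in place of nonempty ones.
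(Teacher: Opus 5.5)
Your proof is correct and follows the paper's route: apply Theorem~\ref{ep-s6} with $\alg=\bor{\dom}$, and use $\me\wac\lem$ to keep $\lem$-null sets out of the ultrafilter. The paper calls the converse obvious, so your explicit checks are a sound filling-in of what it leaves unsaid: that $\{\me=1\}$ is maximal among filters in $\bor{\dom}^+$, and that an ultrafilter in $\bor{\dom}^+$ satisfies \reff{ep-uf} for every $A\in\bor{\dom}$, including null sets and sets whose intersection with some member is null.
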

\noi
As in Theorem~\ref{ep-s3} for a \zo measure, 
we obtain a localization of the related ultrafilter. 

\begin{proposition}   \label{ep-s7a}
Let $\dom\in\cB(\R^n)$, let $\me\in\ZO_x$ 
for $x\in\ccl\dom$, and let $\me$ be related to the
ultrafilter $\cU\subset\bor{\dom}^+$ through \reff{ep-s6-1}. Then
\begin{equation*}
  \bigcap_{U\in\,\cU} \ccl U = \{x\} \,.
\end{equation*}
\end{proposition}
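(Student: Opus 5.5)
We show the two inclusions separately. Throughout we use that $\cU=\{A\in\cB(\dom)\mid \me(A)=1\}$ by \reff{ep-s6-1}, and that $\me\in\ZO_x\subset\Dens_x$ with $\cor\me=\{x\}$ by Theorem~\ref{ep-s3}.

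\emph{Step 1: $x\in\ccl U$ for every $U\in\cU$.} Fix $U\in\cU$ and $\de>0$. Since $\me\in\Dens_x$, we have $\me(B_\de(x)\cap\dom)=1$, where $B_\de(\infty)$ is understood in the compactified sense if $x=\infty$. Both sets then have measure one, and by additivity (or the filter property (b)) so does their intersection:
\begin{equation*}
  \me(U\cap B_\de(x)\cap\dom)=1 \,.
\end{equation*}
A set of positive $\me$-measure cannot be empty. Hence $U\cap B_\de(x)\ne\emptyset$ for every $\de>0$, i.e. $x\in\ccl U$. The same argument covers $x=\infty$, since the balls $B_\de(\infty)$ form a neighborhood base of $\infty$ in $\ccl{\R^n}$.

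\emph{Step 2: no other point lies in the intersection.} Let $y\in\ccl{\R^n}$ with $y\ne x$. Choose $\de>0$ such that the closed neighborhoods $\ccl{B_\de(x)}$ and $\ccl{B_\de(y)}$ are disjoint; this is possible because $\ccl{\R^n}$ is Hausdorff. Set
\begin{equation*}
  U := B_\de(x)\cap\dom \,.
\end{equation*}
Then $\me(U)=1$ by \reff{dm-e0}, so $U\in\cU$. Moreover $U$ is Borel with $\lem(U)>0$, so $U\in\bor{\dom}^+$, as required by Corollary~\ref{ep-s7}. Since $\ccl U\subset\ccl{B_\de(x)}$ and this closed ball misses the open neighborhood $B_\de(y)$ of $y$, we get $y\notin\ccl U$. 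Consequently $y$ is not in the intersection, and together with Step~1 the intersection equals $\{x\}$.

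\emph{Where the care lies.} The argument is short; the only delicate point is the bookkeeping in the compactification when $x=\infty$ or $y=\infty$. In that case we must choose balls $B_\de(\infty)=\ccl{\R^n}\setminus\cl{B_{1/\de}(0)}$ and check that the relevant closures are disjoint, for instance by taking $\de$ small relative to $|y|$. We also rely on the convention that $\Dens_\infty$ means $\me(B_\de(\infty)\cap\dom)=1$, which is exactly \reff{dm-e0} with $\C=\{\infty\}$.
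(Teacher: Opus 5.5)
Your proof is correct. Step~1 matches the paper's opening observation \reff{ep-s7a-5}, but your Step~2 takes a different and more direct route.

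You note that $B_\de(x)\cap\dom$ itself has $\me$-measure one because $\me\in\Dens_x$, so it belongs to $\cU$. Its compactified closure then visibly misses any $y\ne x$ once $\de$ is small.

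The paper instead argues by contradiction. It assumes some $y\ne x$ lies in every $\ccl U$ and deduces $\dom\setminus B_\de(y)\notin\cU$. It then builds the smallest filter $\tilde\cU$ generated by $\cU$ and $\dom\setminus B_\de(y)$, as in \reff{ep-e3}, checks that $\tilde\cU\subset\bor{\dom}^+$, and contradicts the maximality of $\cU$.

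Your version shows that the localization does not need the maximality of the ultrafilter at all. It is carried entirely by $\me\in\Dens_x$, equivalently $\cor\me=\{x\}$ from Theorem~\ref{ep-s3}. In fact the paper's detour collapses under your observation: $B_\de(x)\cap\dom\subset\dom\setminus B_\de(y)$ already forces $\dom\setminus B_\de(y)\in\cU$ by the superset property, and this contradicts $y\in\ccl{(\dom\setminus B_\de(y))}$ immediately. The paper's formulation is more filter-theoretic in flavour, but here it gains no generality over your argument.

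Your handling of the compactification when $x=\infty$ or $y=\infty$ is also fine.
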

\noi  
If $x\in\cl\dom$, then we can obviously replace $\ccl U$ with $\cl U$ in the
assertion.  

\begin{proof}
Since $\me(U)=1$ for all $U\in\cU$ and $\cor{\me}=\{x\}$, we have 
\begin{equation}\label{ep-s7a-5}
  x\in\ccl U \zmz{and} \lem\big(U\cap B_\delta(x)\big) > 0
  \qmq{for all} U\in\cU\,, \z \delta>0\,.
\end{equation}
Assume now that 
\begin{equation}\label{ep-s7a-6}
  y\in\bigcap_{U\in\,\cU} \ccl U \qmq{for some} y\ne x\,.
\end{equation}
Then there is some $\delta>0$ such that 
$B_\delta(x)\cap B_\delta(y)=\emptyset$. Hence we have by \reff{ep-s7a-5}
\begin{equation}\label{ep-s7a-7}
  \emptyset \ne U\cap B_\delta(x) \subset U\setminus B_\delta(y) =
  \big(\dom\setminus B_\delta(y)\big)\cap U  
  \qmq{for all} U\in\cU \,.
\end{equation}
Therefore 
\begin{equation*}
  \tilde\cU := \big\{V\in\bor{\dom} \:\big|\: 
  \big(\dom\setminus B_\delta(y)\big)\cap U 
  \subset V,\: U\in\cU \big\} \,
\end{equation*}
is the smallest filter in $\bor{\dom}$ containing
$\cU$ and $\dom\setminus B_\delta(y)$ (cf. \reff{ep-e3}).
By \reff{ep-s7a-5} and \reff{ep-s7a-7} we have $\tilde\cU\subset\bor{\dom}^+$.
From \reff{ep-s7a-6} we get $\dom\setminus B_\delta(y)\not\in\cU$.
Then $\cU\subsetneq\tilde\cU$. Consequently $\cU$ cannot be ultrafilter in 
$\bor{\dom}^+$, which is a contradiction. Thus \reff{ep-s7a-6} must be wrong
and \reff{ep-s7a-5} implies the assertion. 
\end{proof}

Let us now consider the existence of ultrafilters in $\bor{\dom}^+$.
It is well known that each filter $\cF$ in an algebra $\alg$ is contained in
an ultrafilter $\cU\subset\cA$. But $\bor{\dom}^+$ is not an algebra, since
intersections might be $\lem$-null sets. We say that $\cS\subset\bor{\dom}^+$
has the {\it finite intersection property}  
(in $\bor{\dom}^+$) if
\begin{equation}\label{ep-fi}
  S_1,\dots,S_k\in\cS \zmz{for some} k\in\N \qmq{implies} 
  \bigcap_{j=1}^k S_j \in \bor{\dom}^+\,.
\end{equation}
In this case  
\begin{equation*}
  \cF_{\cS} := \Big\{ F\in \bor{\dom}^+ \:\Big|\: 
  k\in\N,\: S_1,\dots,S_k\in\cS,\: 
  \bigcap_{j=1}^k S_j \subset F  \Big\} 
\end{equation*}
is obviously a filter in $\bor{\dom}^+$ containing $\cS$. 
Notice that each filter in $\bor{\dom}^+$ has the finite intersection
property. 

\begin{proposition} \label{ep-s8}
Let $\cS\subset\bor{\dom}^+$. Then there is an ultrafilter $\cU$ in 
$\bor{\dom}^+$ containing $\cS$ if and only if $\cS$ 
has the finite intersection property.
\end{proposition}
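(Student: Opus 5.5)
The ``only if'' direction is immediate. If $\cU\supset\cS$ is an ultrafilter in $\bor{\dom}^+$, then $\cU$ is in particular a filter in $\bor{\dom}^+$. By property (b) every finite intersection $S_1\cap\dots\cap S_k$ of sets from $\cS\subset\cU$ lies in $\cU\subset\bor{\dom}^+$, so it has positive Lebesgue measure. This is exactly \reff{ep-fi}.

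For the ``if'' direction I would use Zorn's lemma, starting from the filter $\cF_\cS$, which contains $\cS$ and is a filter in $\bor{\dom}^+$ by the remark preceding the proposition. I would consider the family $\Phi$ of all filters $\cF$ in $\bor{\dom}^+$ with $\cF_\cS\subset\cF$, partially ordered by inclusion. The family is nonempty since $\cF_\cS\in\Phi$. For a chain $\{\cF_i\}_{i\in I}$ in $\Phi$, take $\cF^*:=\bigcup_i\cF_i$ and check that it is a filter in $\bor{\dom}^+$:
\begin{itemize}
\item (a) holds because $\dom\in\cF_\cS\subset\cF^*$ (note $\lem(\dom)>0$ is implicit once $\cS\neq\emptyset$, or trivially if one adds $\dom$ to $\cS$), and $\emptyset\notin\bor{\dom}^+$.
\item (b) holds because two sets $B_1\in\cF_i$, $B_2\in\cF_j$ both lie in the larger of the two filters, say $\cF_j$. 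Hence $B_1\cap B_2\in\cF_j\subset\cF^*$, and in particular $B_1\cap B_2\in\bor{\dom}^+$.
\item (c) is inherited from any $\cF_i$ containing $B_1$.
\end{itemize}
So $\cF^*$ is an upper bound of the chain, and Zorn's lemma gives a maximal element $\cU\in\Phi$.

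It remains to see that $\cU$ is an ultrafilter in $\bor{\dom}^+$ in the sense of the definition, i.e.\ maximal among \emph{all} filters in $\bor{\dom}^+$ and not only among those containing $\cF_\cS$. If $\cF\supset\cU$ is a filter in $\bor{\dom}^+$, then $\cF\supset\cF_\cS$, so $\cF\in\Phi$, and maximality gives $\cF=\cU$. Since $\cS\subset\cF_\cS\subset\cU$, this proves the claim.

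The only point needing care is the nondegeneracy condition $\emptyset\notin\cF$. In $\bor{\dom}^+$ it is replaced by the requirement that all intersections stay of positive measure, and the chain argument must preserve this, which it does because any finite collection of sets in the union lies in a single $\cF_i$. One cannot simply invoke the known algebra result for $\bor{\dom}$, since $\bor{\dom}^+$ is not an algebra. Alternatively, one could pass to the quotient algebra of Borel sets modulo $\lem$-null sets and apply the standard ultrafilter lemma there. I expect the direct Zorn argument above to be cleaner, so that is the route I would take.
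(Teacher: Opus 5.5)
Your proof is correct and takes essentially the same route as the paper. The ``only if'' direction follows because filters are closed under finite intersections, and the ``if'' direction applies Zorn's lemma to the filters in $\bor{\dom}^+$ containing $\cS$ (equivalently, containing $\cF_\cS$), using unions of chains as upper bounds. Your remark that a maximal element of $\Phi$ is maximal among all filters in $\bor{\dom}^+$ spells out a step the paper leaves implicit.
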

\noi
The proposition gives a simple condition for the existence of ultrafilters
$\cU\in\bor{\dom}^+$ and, by Corollary~\ref{ep-s7}, also for the existence 
of \zo measures $\me\in\bawl{\dom}$ (cf. also \cite[p.~59]{yosida},
\cite[p.~44]{toland}). 
The proof uses standard arguments. Let us sketch it for completeness, since
the result is not known in the stated form.

\begin{proof}
If $\cS$ does not have the finite intersection property, then there are
$S_1,\dots,S_k\in\cS$ with
$\bigcap_{j=1}^kS_j\not\in\bor{\dom}^+$. Consequently there cannot exist a
filter in $\bor{\dom}^+$ that contains $\cS$. Otherwise, if $\cS$ has the
finite intersection property, then $\cF_\cS$ defined above is a filter
containing $\cS$. Thus the set $\frF$ of all filters $\cF$ in $\bor{\dom}^+$
containing $\cS$ is non-empty. If $\frF_0\subset\frF$ is totally ordered
(by set inclusion), 
then $\tilde\cF=\bigcup_{\cF\in\frF_0}\cF$ belongs to $\frF$ and is
an upper bound of $\frF_0$. By Zorn's lemma there is a maximal
element of $\frF$ which is an ultrafilter $\cU\subset\bor{\dom}^+$ containing
$\cS$.    
\end{proof}

In our previous investigation we have characterized \zo measures
$\me\in\bawl{\dom}$ by ultrafilters $\cU\subset\bor{\dom}^+$.
The existence of such measures implicitly follows from Corollary~\ref{ep-s1a}
and Proposition~\ref{ep-s2a}. A more explicit statement is given by  
Corollary~\ref{ep-s7} and Proposition~\ref{ep-s8}. 
For example for any $A\in\bor{\dom}^+$ there is an
ultrafilter containing $A$ and, thus, there is a \zo measure $\me$ with
$\me(A)=1$. Hence a collection of pairwise disjoint sets in $\bor{\dom}^+$
provides different \zo measures. By Theorem~\ref{ep-s3} we know
that \zo measures concentrate near a single point. 
The next example shows that there is a huge
variety of \zo measures concentrated near the same point.

\begin{example}  \label{ep-s9}
For $\dom\in\cB(\R^n)$ and fixed $x\in\cl\dom$ we are
looking for \zo measures $\me$ in $\bawl{\dom}$ with $\cor{\me}=\{x\}$. 
First we consider some $A\in\bor{\dom}$ with a cusp at $x$ in direction
$v\in\R^n\setminus\{0\}$. More precisely, we assume that 
\begin{equation*}
  \qmq{for all $\alpha>0$ there is some $\ti\delta>0$ with}
  A\cap B_{\ti\de}(x)\subset K(x,v,\alpha)
\end{equation*}
(cf. \reff{ep-e4}) and
\begin{equation*}
  A\cap B_\delta(x)\in\bor{\dom}^+ 
  \qmq{for all} \delta >0\,.
\end{equation*}
Then 
\begin{equation*}
  \cS := \{A\cap B_\delta(x)\mid \delta>0 \}
\end{equation*}
has the finite intersection property in $\bor{\dom}^+$. 
By Proposition~\ref{ep-s8}, there is an
ultrafilter $\cU\subset\bor{\dom}^+$ containing $\cS$ 
and, by Corollary~\ref{ep-s7}, there is a corresponding \zo measure
$\me\in\bawl{\dom}$. By
\begin{equation*}
  \bigcap_{S\in\cS} \cl S = \{x\}\,,
\end{equation*}
Proposition~\ref{ep-s7a} implies that $\cor{\me}=\{x\}$ and that 
each $S\in\cS$ is an aura of measure $\me$. 
If $\dom$ has subsets $A$ with cusp at $x$ for different directions $v$, then
Theorem~\ref{ep-s3} implies, that the corresponding \zo measures differ. 
If $\dom$ contains a conical set 
\begin{equation*}
  K(x,\tilde v,\tilde\alpha)\cap B_{\tilde\delta}(x) \qmq{for some}
  \tilde v\in\R^n\setminus\{0\}\,, \z \tilde\alpha,\tilde\delta>0\,,
\end{equation*}
then $\dom$ contains sets with a cusp at $x$ for any direction $v$ contained
in the cone $K(x,\tilde v,\tilde\alpha)$. Thus there are \zo measures $\me$
for all these  
$v$. Since $\me$ related to different $v$ differ, there is a continuum of \zo
measure $\me\in\ZO_x$ in that case. 

Let us now roughly motivate that a set $A\subs\dom$ with a typical cusp at $x$
is aura of a continuum of pairwise different \zo measures.
For technical simplicity we take $\dom=\R^2$ and we assume that $A$ 
has a cusp at $(x,y)=(0,0)$ in direction $(1,0)$. More precisely we assume
that that there is some $\gamma>0$ and some continuous function
$g:(0,\gamma)\to(0,1)$ such that 
\begin{equation*}
  \lim_{x\downarrow 0}\tfrac{g(x)}{x}=0   \qmq{and}
  \{(x,y)\in\R^2\mid |y|<g(x)\,,\; 0<x<\gamma \} \subset A\,.
\end{equation*}
Then we consider pairwise disjoint curves $\theta_\tau:(0,\gamma)\to A$ 
emanating from the origin and given by
\begin{equation*}
  \theta_\tau(s)=(s,\tau g(s)) \qmq{for all} s\in I:=(0,\gamma)\,, \;
  \tau\in\big(-\tfrac12,\tfrac12\big) \,.
\end{equation*}
For $\theta_\tau(I)$ we consider the open neighborhood    
\begin{equation*}
  U_\tau := \big\{ (x,y)\in\R^2\:\big|\: x\in(0,\gamma)\,, \;
  y\in(\tau g(x) - \tfrac12 g(x)^2, \tau g(x) + \tfrac12g(x)^2) \big\} 
  \subset A\,.
\end{equation*}
Obviously $U_\tau$ has a cusp at the origin in direction $(1,0)$ and we have
\begin{equation*}
  U_\tau\cap B_\delta(x)\in\bor{\dom}^+ 
  \qmq{for all} \delta >0\,.
\end{equation*}
With $U_\tau$ instead of $A$ we can now argue as above to
verify the existence of a \zo measure $\mu_\tau$ 
with aura $U_\tau\subset A$.
If $\tau_1\ne\tau_2$, then we readily find some $\delta>0$ such that
\begin{equation*}
  U_{\tau_1}\cap U_{\tau_2} \cap B_\delta(x) = \emptyset\,. 
\end{equation*}
Hence $\mu_{\ta_1}\ne\mu_{\ta_2}$. Thus we have a 
continuum of pairwise different \zo measures $\mu_\tau$ with aura in the
cuspidate subset $A$ of $\dom$. For higher dimensions we can easily extend this
construction.

Let us provide a further strategy to construct \zo measures. For that we
choose a sequence $\{x_k\}\subset\dom$ of pairwise different points
with $x_k\to x$ and a sequence of $\delta_k>0$ such that
\begin{equation*}
  \lem(B_{\delta_k}(x_k)\cap\dom)>0 \qmq{for all} k\in\N\,.
\end{equation*}
Then the collection of the   
\begin{equation}\label{ep-s9-5}
  B_k:= \bigcup_{l\ge k} B_{\delta_l}(x_l) \,
\end{equation}
has the finite intersection property. As above we 
can construct an ultrafilter and a related \zo measure, having each $B_k$ as
aura. 

Moreover, we can consider sets $A\in\cB(\dom)$ with a cusp at point $x$ 
and we can intersect them with the $B_k$ from \reff{ep-s9-5} 
to construct further \zo measures. 
Certainly, the examples discussed here are far from being complete for the 
possible constructions of \zo measures by means of Propositions~\ref{ep-s8}.
However they already demonstrate the huge variety of such measures.  
\end{example}

Let us now consider integrals with respect to \zo measures. 
By \reff{dm-s4g-1} we have for $x\in\ccdom$, $\me\in\ZO_x$, and
$\mu$-integrable $\f\in\cL^1_{\rm loc}(\dom)$ that
\begin{equation*} 
\ix f \, \le \, \sI{x}{f}{\me} \, \le \, \sx f \,.  
\end{equation*}
We want to know which values the integral
$\sI{x}{f}{\me}$ can really attain for \zo measures $\me$ if we fix $f$
and $x$. Including the vector case, 
we call $\gamma\in\R^m$ {\it essential value} of 
$f\in\cL^1_{\rm loc}(\dom,\R^m)$ at $x\in\ccdom$ if 
\begin{equation}\label{ep-ess}
  \lem\big(\{y\in\dom\mid |f-\gamma|<\eps\} \cap B_\delta(x) \big) >0
  \qmq{for all} \eps,\delta > 0\,.
\end{equation}
Notice that the definition does not make sense outside of $\ccdom$ and that  
the set of all essential values at some $x$ can be empty for $f\in\cL^1(\dom)$,
as the example 
\begin{equation*}
  f(x)=\tfrac{1}{\sqrt{|x|}} \zmz{on} (-1,1)\subs\R
\end{equation*}
shows for $x=0$. 
For $\cL^\infty$-functions 
the next results are basically contained 
in Theorem~5.1, Theorem~6.2 and Lemma~6.3 in Toland~\cite{toland} where,
however, the localization near $x$ is not stated explicitely and, instead of
$\ccl\dom$, a one point compactification of $\dom$ in a more abstract setting 
is used. 

\begin{proposition} \label{ep-s20}
Let $\dom\in\cB(\R^n)$, 
let $x\in\ccdom$, let $f\in\cL^1_{\rm loc}(\dom,\R^m)$, and let
$\gamma\in\R^m$. 

\bgl
\item
We have that $\gamma$ is essential value of $f$ if and only if
there is some $\mu\in\ZO_x$ with
\begin{equation} \label{ep-s20-2}
  \mu\big(\{|f-\gamma|<\eps\}\big) = 1 \qmq{for all} \eps>0\,.
\end{equation}

\item
For $\mu\in\ZO_x$ we have \reff{ep-s20-2} if and only if 
\begin{equation} \label{ep-s20-1}
  \sI{x}{f}{\mu}=\gamma\,
\end{equation}

\item
If $\mu\in\ZO_x$ satisfies \reff{ep-s20-1}, then
\begin{equation}\label{ep-s20-3}
  \sI{x}{|f|}{\mu}=|\gamma| \,.
\end{equation}
\el
\end{proposition}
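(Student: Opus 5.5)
The plan is to treat each of the three parts in turn, working with the ultrafilter description of \zo measures (Corollary~\ref{ep-s7}, Proposition~\ref{ep-s8}) and their localization $\cor\mu=\{x\}$ from Theorem~\ref{ep-s3}.

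For (1), suppose first that $\gamma$ is an essential value. I consider
\begin{equation*}
  \cS:=\big\{\{|f-\gamma|<\eps\}\cap B_\delta(x)\cap\dom \:\big|\: \eps,\delta>0\big\}.
\end{equation*}
This family is nested, so every finite intersection contains an element of $\cS$, and by \reff{ep-ess} each such element has positive Lebesgue measure. Hence $\cS$ has the finite intersection property. Proposition~\ref{ep-s8} and Corollary~\ref{ep-s7} then give a \zo measure $\mu$ with $\mu(S)=1$ for all $S\in\cS$. In particular $\mu(B_\delta(x)\cap\dom)=1$ for every $\delta>0$, so $\mu\in\Dens_x$ and $\mu\in\ZO_x$, and \reff{ep-s20-2} holds. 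For $x=\infty$ the same argument works with $B_\delta(\infty)$.

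Conversely, suppose $\mu\in\ZO_x$ satisfies \reff{ep-s20-2}. Then $\mu(\{|f-\gamma|<\eps\}\cap B_\delta(x))=1$, since $\mu$ is additive and the complement of the ball has measure zero. Because $\mu\wac\lem$, a set of $\mu$-measure one must have positive Lebesgue measure, which is exactly \reff{ep-ess}.

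For (2), assume \reff{ep-s20-2}. On the aura $A_\eps:=\{|f-\gamma|<\eps\}$ the function $f$ is bounded, so $f$ is $\mu$-integrable: the complement of $A_\eps$ is a $\mu$-null set, hence $f$ agrees in measure with a bounded function. The standard estimate then gives $|\sI{x}{f}{\mu}-\gamma|\le\eps$ for every $\eps>0$, which yields \reff{ep-s20-1}.

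The converse is the main step; here I read \reff{ep-s20-1} as including the $\mu$-integrability of $f$. I work componentwise, or equivalently with $|f-\gamma|$. Fix $\eps>0$ and set $A:=\{|f-\gamma|\ge\eps\}$. Since $\mu$ is \zo, $\mu(A)\in\{0,1\}$, and I want to exclude $\mu(A)=1$. If $\mu(A)=1$, I split $A$ into the finitely many pieces
\begin{equation*}
  A_j^\pm=\{\pm(f_j-\gamma_j)\ge\eps/\sqrt m\}.
\end{equation*}
These cover $A$, so by finite additivity and the \zo property one piece, say $A_j^+$, has measure one. Integrating over this aura gives $\sI{x}{f_j}{\mu}\ge\gamma_j+\eps/\sqrt m$, which contradicts \reff{ep-s20-1}. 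Integrability matters here because unbounded parts must be handled by the integral over an aura. A cleaner route is to note that $\mu$ is multiplicative on step functions, so $\int g\,d\mu$ is the ``$\mu$-limit'' of $g$. I expect the careful handling of integrals of unbounded $f$ over the aura $A_j^+$ to be the delicate point.

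For (3), by (2) we have \reff{ep-s20-2}. Since $\big||f|-|\gamma|\big|\le|f-\gamma|$, we get $\mu(\{||f|-|\gamma||<\eps\})=1$ for all $\eps$. Applying the easy direction of (2) to the scalar function $|f|$ then gives \reff{ep-s20-3}.
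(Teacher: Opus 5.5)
Your proposal is correct and follows the paper's proof essentially step for step. You use the finite intersection property of $\{|f-\gamma|<\eps\}\cap B_\delta(x)$ together with Proposition~\ref{ep-s8} for (1), boundedness on the aura and the componentwise $\eps/\sqrt m$ splitting for (2), and $\big||f|-|\gamma|\big|\le|f-\gamma|$ for (3). The step you flag as delicate is not: $f_j$ is assumed $\mu$-integrable, so monotonicity of the integral on the aura $A_j^+$ already gives $\sI{x}{f_j}{\mu}\ge\gamma_j+\eps/\sqrt m$, which is exactly how the paper argues.
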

\noi
We readily see that the set of essential values cannot be empty 
for $f\in\cL^\infty_{\rm loc}(\dom)$ at $x\in\ccdom$, since
$\ZO_x\ne\emptyset$. For the proof it
would be quite cumbersome to collect and transfer the 
related proof arguments from \cite{toland}, that are spread around and use
different notation, and to indicate our extensions.
Therefore we provide a complete proof adapted to our setting.

\begin{proof}
For (1) we first assume that $\gamma$ is essential value of $f$.
Then the collection $\cS$ of sets 
\begin{equation*}
  \{|f-\gamma|<\eps\} \cap B_\delta(x) 
  \qmq{for} \eps,\delta > 0\,
\end{equation*}
has the finite intersection property. By Theorem~\ref{ep-s6} and
Proposition~\ref{ep-s8} there is some $\me\in\ZO_x$ such that \reff{ep-s20-2}
is met. If otherwise \reff{ep-s20-2} is satisfied for $\me\in\ZO_x$, then 
\reff{ep-ess} follows and $\gamma$ is essential value of $f$. 

For (2) we first consider $m=1$. Let  
$\me\in\ZO_x$ satisfy \reff{ep-s20-2}. 
Then $f$ is $\me$-integrable, since $f$ is essentially bounded on
$\{|f-\gamma|<\eps\}$ for all $\eps>0$. Thus 
\begin{equation*}
  \sI{x}{f}{\mu} = \I{\{|f-\gamma|<\eps\}}{f}{\mu} \;\in\;
  (\gamma-\eps,\gamma+\eps) \qmq{for all} \eps>0\,, 
\end{equation*}
which gives \reff{ep-s20-1}. Let now \reff{ep-s20-1} be satisfied for 
$\me\in\ZO_x$. If \reff{ep-s20-2} were wrong, then 
there is some $\eps>0$ such $\me\{|f-\gamma|\ge\eps\} = 1$. Hence
\begin{equation*}
  0 \overset{\reff{ep-s20-1}}{=} 
  \sI{x}{f-\gamma\,}{\mu} = \I{\{|f-\gamma|\ge\eps\}}{f-\gamma\,}{\mu}
  \z\:\bigg\{ 
  \begin{array}{ll} \ge\z\eps & \zmz{if}\mu\big(\{f-\gamma\ge\eps\}\big)=1\,, \\
                    \le-\eps & \zmz{if}\mu\big(\{f-\gamma\le-\eps\}\big)=1\,,
  \end{array}
\end{equation*}
which is a contradiction, and \reff{ep-s20-2} follows.

For $m>1$ we use $f=(f_1,\dots,f_m)$ and $\gamma=(\gamma_1,\dots,\gamma_m)$. 
If \reff{ep-s20-2} is satisfied,
then we have for any $\eps>0$ and all $j$ that
\begin{equation*}
  \big\{|f-\gamma|<\eps \big\} \subset
  \{|f_j-\gamma_j|< \eps\} \qmq{and, thus,} 
  \me\big(\{|f_j-\gamma_j|< \eps\}\big)=1 \,.
\end{equation*}
Then, by the scalar case, $\sI{x}{f_j}{\mu}=\gamma_j$ which implies 
\reff{ep-s20-1}. Let now \reff{ep-s20-1} be satisfied and assume that
\reff{ep-s20-2} is wrong. Then, as in the scalar
case, there is some $\eps>0$ such that $\me\{|f-\gamma|\ge\eps\} = 1$.
Thus there is some $j$ with 
$\mu\big(\{|f_j-\gamma_j|\ge\tfrac{\eps}{\sqrt m}\}\big) = 1$. But this gives
a contradiction as in the scalar case and \reff{ep-s20-2} follows.

For \reff{ep-s20-3} we use that
\begin{equation*}
  1=\mu\big(\{|f-\gamma|<\eps\}\big) \le 
  \mu\big(\big\{\big||f|-|\gamma|\big|<\eps\big\}\big)
\end{equation*}
for all $\eps>0$. 
\end{proof}

The previous property of the integral implies some special calculus rule.

\begin{proposition} \label{ep-s22}
Let $\dom\in\cB(\R^n)$, let $x\in\ccdom$, let $\mu\in\ZO_x$ be a \zo measure, 
and let $f,g\in\cL^1_{\rm loc}(\dom,\R^m)$ be $\mu$-integrable. Then 
\begin{equation} \label{ep-s22-4}
  \sI{x}{f\cdot g}{\mu} = \sI{x}{f}{\mu}\cdot\sI{x}{g}{\mu}\,.
\end{equation}
\end{proposition}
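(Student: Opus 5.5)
The plan is to reduce everything to Proposition~\ref{ep-s20}, which describes integrals over \zo measures through essential values. Set
\begin{equation*}
  a:=\sI{x}{f}{\mu}\in\R^m\,, \qquad b:=\sI{x}{g}{\mu}\in\R^m\,.
\end{equation*}
By Proposition~\ref{ep-s20}~(2), applied once to $f$ with $\gamma=a$ and once to $g$ with $\gamma=b$, we get
\begin{equation*}
  \mu\big(\{|f-a|<\eps\}\big)=\mu\big(\{|g-b|<\eps\}\big)=1 \qmq{for all} \eps>0\,.
\end{equation*}
Since $\mu$ is additive and takes only the values $0$ and $1$, the intersection $A_\eps:=\{|f-a|<\eps\}\cap\{|g-b|<\eps\}$ also satisfies $\mu(A_\eps)=1$. (Its complement is the union of two $\mu$-null sets, and $\mu$ is subadditive.)

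Next I bound the product on $A_\eps$. For $0<\eps\le1$, using $f\cdot g-a\cdot b=(f-a)\cdot g+a\cdot(g-b)$, we have on $A_\eps$
\begin{equation*}
  |f\cdot g-a\cdot b|\le \eps(|b|+1)+|a|\eps=:c\,\eps\,,
\end{equation*}
where $c$ does not depend on $\eps$. So for every $\eta>0$ we get $\mu\big(\{|f\cdot g-a\cdot b|<\eta\}\big)=1$: pick $\eps$ with $c\eps<\eta$, note that $A_\eps$ lies inside this set, and use monotonicity.

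Now apply Proposition~\ref{ep-s20} with $m=1$, the scalar function $f\cdot g\in\cL^1_{\rm loc}(\dom)$ and $\gamma=a\cdot b$. The step I expect to need the most care is $\mu$-integrability of $f\cdot g$, because the direction (\ref{ep-s20-2})$\Rightarrow$(\ref{ep-s20-1}) in Proposition~\ref{ep-s20}~(2) is stated for a given $\mu$. Its proof, however, shows integrability directly: the function is essentially bounded on the aura $\{|f\cdot g-a\cdot b|<\eta\}$, and $\mu$ does not charge its complement. So $f\cdot g$ is $\mu$-integrable, and that proposition gives
\begin{equation*}
  \sI{x}{f\cdot g}{\mu}=a\cdot b\,,
\end{equation*}
which is exactly \reff{ep-s22-4}.

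If one wants to be careful about local integrability of $f\cdot g$, one can restrict to $A_1\cap B_\delta(x)$, where $f\cdot g$ is bounded, and replace $f\cdot g$ by its truncation. The truncation agrees with $f\cdot g$ in measure $\mu$, so the integral does not change by \reff{pl-e5}.
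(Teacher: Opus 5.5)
Your proposal is correct and takes essentially the same route as the paper. Both arguments use Proposition~\ref{ep-s20} to get $\mu(\{|f-a|<\eps\})=\mu(\{|g-b|<\eps\})=1$, intersect these sets, bound the product on the intersection, and apply Proposition~\ref{ep-s20} again to $f\cdot g$. The only differences are that the paper first reduces to scalar components while you estimate the dot product directly, and that you explicitly justify the $\mu$-integrability of $f\cdot g$ (which need not lie in $\cL^1_{\rm loc}$), a point the paper leaves implicit.
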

\noi
This follows from Theorem 6.2 in Toland \cite{toland} for
$f,g\in\cL^\infty(\dom)$ and $x\in\cdom$ and it can readily be extended to our
setting. Let us briefly sketch the proof for completeness.

\begin{proof}
It is sufficient to consider scalar functions $f_1,f_2$ and we set 
$\sI{x}{f_j}{\mu}=\gamma_j$.
By Proposition~\ref{ep-s20}, $\ga_j$ is essential value of $f_j$. Then,
for any $\eps>0$ there is $\tilde\eps>0$ with
\begin{equation*}
  \{|f_1-\gamma_1|<\tilde\eps\} \cap \{|f_2-\gamma_2|<\tilde\eps\}
  \:\subset\: \{|f_1f_2-\gamma_1\gamma_2|<\eps\} \,.
\end{equation*}
Since $\me \big(\{|f_j-\gamma_j|<\tilde\eps\}\big) = 1$ and since 
$\me(A\cap B)=\me(A)\me(B)$ 
for \zo measures by \reff{ep-e1}, we get
$\me \big(\{|fg-\gamma_1\gamma_2|<\eps\}\big)=1$ for all $\eps>0$.
But this gives $\sI{x}{f_1f_2}{\mu}=\gamma_1\gamma_2$ by
Proposition~\ref{ep-s20}.
\end{proof}

\section{Applications}
\label{app}

In this section we provide several applications for density measures. 
As preparation let us discuss the precise representative of $\lem$-integrable
functions. 

Let $\dom\subset\bor{\R^n}$. 
For a function $f\in L^1_{\rm loc}(\dom,\R^m)$ one has  
\begin{equation}   \label{app-e2}
  \lim_{\de\downarrow 0} \:\mI{B_\de(x)\cap\dom}{|f-f(x)|}{\lem} = 0 \,
 \qmq{for \z $\lem$-a.e. $x\in\dom$} 
\end{equation}
(cf. \cite[p. 44]{evans} with $\mu=\reme{\lem}{\dom}$) while the left hand
side makes sense only for $x\in\cdom$.
For a class $f\in\cL^1_{\rm loc}(\dom,\R^m)$ we thus have, for at least 
$\lem$-a.e. $x\in\dom\cap\cdom$, that there is some $\al\in\R^m$ with
\begin{equation}  \label{app-e0}
  \lim_{\de\downarrow 0} \:\mI{B_\de(x)\cap\dom}{|f-\alpha|}{\lem} = 0 \,.
\end{equation}
In slight deviation from standard use we call $x\in\cdom$ satisfying
\reff{app-e0} {\it Lebesgue point} of~$f$ (with $\al$).  
Notice that sometimes $\alpha$ satisfying \reff{app-e0} 
is called approximate limit of $f$ at $x$, which is a stronger
version than we are using (cf. \cite[p. 160]{ambrosio}).
By Proposition~\ref{dm-s10}, \reff{app-e0} implies 
for any $\lem$-density $\dens_x^\dom\in\LDens_x$ that
\begin{equation} \label{app-e00}
  \alpha = \alim_{y\to x}f(y) = \sI{x}{f}{\dens_x^\dom} =
  \lim_{\de\downarrow 0} \:\mI{B_\de(x)\cap\dom}{f}{\lem} 
  \qmq{for $\lem$-a.e. $x\in\dom\cap\cdom$\,.}
\end{equation}
We in particular have that the equalities in \reff{app-e00} are valid at all 
Lebesgue points $x$ of $f$. 
This suggests the definition of a {\it precise representative} of 
$f\in\cL^1_{\rm loc}(\dom,\R^m)$ at $x\in\cl\dom$ either as usual by
\begin{equation}\label{app-s1-5}
  f^\star(x) := \Bigg\{
  \begin{array}{ll}
    \lim_{\de\downarrow 0} \:\mI{B_\de(x)\cap\dom}{f}{\lem} & 
    \zmz{if the limit exists,}  \\[2pt]
    0 & \zmz{otherwise} 
  \end{array}
\end{equation}
(cf. \cite[p. 46]{evans}) or as integral
representation with an $\lem$-density $\dens_x^\dom\in\LDens_x^\dom$ by
\begin{equation}\label{app-s1-6}
  \pr{f}(x) := \bigg\{
  \begin{array}{ll}
    \sI{x}{f}{\dens_x^\dom} & 
    \zmz{if the integral exists,}  \\[2pt]
    0 & \zmz{otherwise.} 
  \end{array}
\end{equation}
Notice that the existence of the limit in \reff{app-s1-5} and of the integral
in \reff{app-s1-6} requires $x\in\cdom$. Though $\pr f(x)$ and $f^\star(x)$
are (formally) defined for all $x\in\cl\dom$, the points $x$ where
they are agree with the integral or the limit 
in their definition are of special
interest. We say that $f^\star(x)$ {\it exists as limit} if the limit in
\reff{app-s1-5} exists and we say that $\pr f(x)$ {\it exists as integral} if 
$\sI{x}{f}{\dens_x^\dom}$ exists for at least one $\dens_x^\dom\in\LDens_x^\dom$
and, in this case, we always identify $\pr f(x)$ with some existing integral
(which is not unique in general, but at points where \reff{app-e0} is met). 
At $x\in\cdom$ where \reff{app-e0}
fails, $f^\star(x)$ and $\pr f(x)$ can differ. 
$\pr f(x)$ exists as integral for all $x\in\cdom$ if 
$f\in \cL^\infty(\dom)$, but it might depend on the special choice of 
$\dens_x^\dom$ on an $\lem$-null set. If $f$ is continuous at $x$, 
then clearly $\pr f(x)=f^\star(x)=f(x)$. Now we readily obtain 
some calculus rules for $\pr f(x)$.

\begin{proposition} \label{app-s1}
Let $\dom\in\cB(\R^n)$, let $x\in\cdom$, let 
$f_1,f_2\in\cL^1_{\rm loc}(\dom,\R^m)$, let $f_b\in\cL^\infty_{\rm loc}(\dom)$, 
and assume
that $\alim_{y\to x} f_j(y)$ exists for $j\in\{1,2,b\}$.  Then, for all $j$,
\begin{equation*}
  \pr f_j(x) = \alim_{y\to x} f_j(y) = \sI{x}{f}{\dens_x^\dom} 
  \qmq{for all $\lem$-densities} \dens_x^\dom\in\LDens_x^\dom  
\end{equation*}
and, for $c_1,c_2\in\R$, 
\begin{equation*}
  \pr{(c_1f_1+c_2f_2)}(x) = c_1\pr f_1(x)+c_2\pr f_2(x) \qmq{and}
  \pr{(f_1f_b)}(x)=\pr{f_1}(x)\pr{f_b}(x) \,.
\end{equation*}
where the precise representatives on the left hand sides exist as integral and 
are independent of the measure $\dens_x^\dom\in\LDens_x^\dom$.
\end{proposition}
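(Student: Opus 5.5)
The whole statement reduces to Proposition~\ref{dm-s10}(2), which says that for any $\lem$-density $\dens_x^\dom\in\LDens_x^\dom$ the relation $\alim_{y\to x}f(y)=\alpha$ is equivalent to $\sI{x}{|f-\alpha|}{\dens_x^\dom}=0$. Moreover, in that case $f$ is $\dens_x^\dom$-integrable with $\sI{x}{f}{\dens_x^\dom}=\alpha$.

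\emph{First assertion.} Set $\alpha_j:=\alim_{y\to x}f_j(y)$ for $j\in\{1,2,b\}$. By Proposition~\ref{dm-s10}(2), every $\dens_x^\dom\in\LDens_x^\dom$ integrates $f_j$ with value $\alpha_j$. Hence $\pr f_j(x)$ exists as integral and does not depend on the choice of $\dens_x^\dom$. Corollary~\ref{dm-s4b} with $\E=\dom$ and $\C=\{x\}$ guarantees that $\LDens_x^\dom\ne\emptyset$, since $x\in\cdom$. The local finiteness part of \reff{dm-s2c-0} holds for small $\delta$, because $x\in\cl\dom\subset\R^n$ is a finite point.

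\emph{Linearity.} I will show $\alim_{y\to x}(c_1f_1+c_2f_2)=c_1\alpha_1+c_2\alpha_2$ and then apply the first assertion to $c_1f_1+c_2f_2$. The cleanest route is through integrals. By monotonicity and the triangle inequality,
\[
\sI{x}{|c_1f_1+c_2f_2-c_1\alpha_1-c_2\alpha_2|}{\dens_x^\dom}\le |c_1|\sI{x}{|f_1-\alpha_1|}{\dens_x^\dom}+|c_2|\sI{x}{|f_2-\alpha_2|}{\dens_x^\dom}=0.
\]
Proposition~\ref{dm-s10}(2) then gives the approximate limit. The integrability needed on the left comes from domination by integrable functions in the finitely additive setting. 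To avoid worrying about that, one can argue directly with the definition instead: the inclusion $\{|c_1f_1+c_2f_2-c_1\alpha_1-c_2\alpha_2|\ge\eps\}\subset\{|f_1-\alpha_1|\ge\eps'\}\cup\{|f_2-\alpha_2|\ge\eps'\}$ holds for suitable $\eps'>0$, and the density ratios are subadditive.

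\emph{Product.} This is the step needing care, because $f_1$ may be unbounded. I will prove $\alim_{y\to x}f_1f_b=\alpha_1\alpha_b$ directly from the definition of the approximate limit. Write
\[
f_1f_b-\alpha_1\alpha_b=(f_1-\alpha_1)f_b+\alpha_1(f_b-\alpha_b).
\]
Choose $\delta_0$ with $|f_b|\le M$ a.e. on $B_{\delta_0}(x)\cap\dom$, using $f_b\in\cL^\infty_{\rm loc}$. Then, for $\delta<\delta_0$,
\[
B_\delta^\dom(x)\cap\{|f_1f_b-\alpha_1\alpha_b|\ge\eps\}\subset \big(B_\delta^\dom(x)\cap\{|f_1-\alpha_1|\ge\tfrac{\eps}{2M}\}\big)\cup\big(B_\delta^\dom(x)\cap\{|\alpha_1||f_b-\alpha_b|\ge\tfrac\eps2\}\big)
\]
up to a $\lem$-null set. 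Dividing by $\lem(B_\delta^\dom(x))$ and letting $\delta\dto0$ gives zero. The first assertion applied to $f_1f_b$ then yields $\pr{(f_1f_b)}(x)=\alpha_1\alpha_b=\pr f_1(x)\pr f_b(x)$, independent of the density. In the vector case $f_1\in\cL^1_{\rm loc}(\dom,\R^m)$ the same estimate works componentwise.

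The main obstacle is precisely this unboundedness of $f_1$, which rules out a naive integral estimate. It is why I would argue at the level of approximate limits, where only the local essential bound on $f_b$ enters, and only afterwards pass to integrals via Proposition~\ref{dm-s10}(2).
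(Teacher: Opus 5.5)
Your proof is correct, but the product step takes a different route from the paper.

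\textbf{Sum.} Here you use the same argument as the paper: the triangle inequality for $\sI{x}{|\cdot|}{\dens_x^\dom}$, combined with both directions of Proposition~\ref{dm-s10}~(2).

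\textbf{Product, paper's route.} The paper stays at the integral level. With $\alpha_j=\pr f_j(x)$ it estimates
\[
\sI{x}{|f_1f_b-\alpha_1\alpha_b|}{\dens_x^\dom}\le \|f_b\|_\infty\,\sI{x}{|f_1-\alpha_1|}{\dens_x^\dom}+|\alpha_1|\,\sI{x}{|f_b-\alpha_b|}{\dens_x^\dom}=0 .
\]
It then applies Proposition~\ref{dm-s10}~(2) in the reverse direction.

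\textbf{Your remark about unboundedness.} Your claim that the unboundedness of $f_1$ rules out a naive integral estimate is not accurate. By Proposition~\ref{dm-s10}~(2), $|f_1-\alpha_1|$ is $\dens_x^\dom$-integrable with integral $0$. Near $x$, $|f_1f_b-\alpha_1\alpha_b|$ is dominated by an integrable function. In the finitely additive theory this domination suffices for integrability; the paper uses the same principle in the proof of Theorem~\ref{app-s25}.

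\textbf{What your route buys.} Your density-ratio argument is a valid alternative that avoids this integrability bookkeeping and passes to integrals only at the end. It is actually stronger than you use it. Off the density-zero set $\{|f_1-\alpha_1|\ge\eta\}\cup\{|f_b-\alpha_b|\ge\eta\}$ one has $|f_b|\le|\alpha_b|+\eta$. So the essential bound $M$ on $f_b$ is not needed for $\alim_{y\to x}(f_1f_b)=\alpha_1\alpha_b$ at all. Local boundedness of $f_b$ then only ensures $f_1f_b\in\cL^1_{\rm loc}(\dom)$, as Proposition~\ref{dm-s10} formally requires. This also sidesteps how that boundedness should be read at points $x\in\bd\dom$. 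The paper's integral estimate, by contrast, genuinely uses the factor $\|f_b\|_\infty$.

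\textbf{What the paper's route buys.} It is shorter and handles sum and product by one uniform mechanism.
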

\noi 
Notice that an analogous rule for $f^\star_j$ is valid if
$f_1,f_2\in\cL^\infty_{\rm loc}(\dom)$, but a stronger assumption is needed
for unbounded $f_1,f_2$ (cf. Proposition~\ref{dm-s10}).

\begin{proof}
The representations for the $\pr f_j(x)$ follow from Proposition~\ref{dm-s10}. 
Then we have for any $\lem$-density $\mu\in\LDens_x^\dom$ that
\begin{equation*}
  \sI{x}{\big|(c_1f_1-c_2f_2)-(c_1\pr f_1(x)-c_2\pr f_2(x))\big|}{\mu} \le
  \sum_{j=1}^2 |c_j|\:
  \sI{x}{\big|f_j-\pr{f_j}(x)\big|}{\mu} \,.
\end{equation*}
Furthermore, for 
$f=f_1f_b-\pr{f_1}(x)\pr{f_b}(x)$,   
\begin{equation*}
  \sI{x}{|f|}{\mu} \le
  \|f_b\|_\infty\:\sI{x}{\big|f_1-\pr{f_1}(x)\big|}{\mu} +
  |\pr{f_1}(x)| \:\sI{x}{\big|f_b-\pr{f_b}(x)\big|}{\mu} \,. 
\end{equation*}
Now Proposition~\ref{dm-s10} (2) implies the assertion.
\end{proof}

Let us provide some condition for continuity of $\cL^1$-functions 
by means of the essential limit.

\begin{proposition}\label{app-s28}
Let $\dom\in\cB(\dom)$ and let $f\in\cL^1_{\rm loc}(\dom,\R^m)$. 
If $\esslim_{y\to x}f$ exists for all
$x\in\cdom$, then $\pr f$ is continuous and equals $f^\star$ on $\cdom$.  
\end{proposition}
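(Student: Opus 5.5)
Define $g(x):=\esslim_{y\to x}f(y)$ for $x\in\cdom$. The plan is to show that $\pr f=g=f^\star$ on $\cdom$ and that $g$ is continuous.

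\emph{Identification.} Fix $x\in\cdom$ and set $\alpha:=g(x)$, so that $\sx|f-\alpha|=0$. By \reff{ess-bd}, $f$ is essentially bounded near $x$. Hence $f$ is $\dens_x^\dom$-integrable for every $\lem$-density $\dens_x^\dom\in\LDens_x^\dom$. Applying Proposition~\ref{dm-s11} componentwise (implication (1)$\Rightarrow$(3)) gives $\sI{x}{f}{\dens_x^\dom}=\alpha$. So $\pr f(x)$ exists as an integral, equals $\alpha$, and does not depend on the choice of density. For $f^\star$, apply \reff{dm-s4g-3} with $\C=\{x\}$ to the components of $f$; equivalently use \reff{dm-s11-5}. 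This gives $\lim_{\de\dto0}\mI{B_\de(x)\cap\dom}{f}{\lem}=\alpha$, so $f^\star(x)=\alpha$ as a limit. Thus $\pr f=f^\star=g$ on $\cdom$.

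\emph{Continuity.} Fix $x\in\cdom$ and $\eps>0$. Since $\sx|f-g(x)|=0$, there is $\de>0$ with $|f-g(x)|\le\eps$ $\lem$-a.e. on $B_{2\de}(x)\cap\dom$. Take $y\in\cdom\cap B_\de(x)$. Then $B_\rho(y)\subset B_{2\de}(x)$ for all $\rho<\de$, so $\esssup_{B_\rho(y)\cap\dom}|f-g(x)|\le\eps$. Passing to $\rho\dto0$ and using $\lem(B_\rho(y)\cap\dom)>0$ (because $y\in\cdom$), we obtain
\[
|g(y)-g(x)|=\big|\sI{y}{f-g(x)}{\dens_y^\dom}\big|\le\sI{y}{|f-g(x)|}{\dens_y^\dom}\le\eps ,
\]
where the last bound is \reff{dm-s4g-1}. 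Hence $\pr f=g$ is continuous on $\cdom$.

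The main obstacle is the case $x=\infty\in\ccdom$, if it is meant to be included; there the balls $B_\de(\infty)$ have to be used in the same way. There is also a small notational subtlety: continuity is claimed relative to $\cdom$, and $\pr f$ is set to $0$ off $\cdom$. I read the statement as continuity of the restriction of $\pr f$ to $\cdom$. The vector case $m>1$ reduces to the scalar estimates via $|f-\alpha|$, which is why the continuity argument is phrased with $|f-g(x)|$ throughout.
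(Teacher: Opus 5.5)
Your proof is correct. For the identification $\pr f=f^\star=$ essential limit on $\cdom$ you follow the paper (Proposition~\ref{dm-s11} componentwise together with \reff{dm-s11-5}). For continuity you take a different route.

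The paper's continuity argument goes as follows. It takes $x_k\to x$ in $\cdom$ and represents each $\pr f(x_k)$ as an integral against a measure in $\Dens_{x_k}$. It then uses Proposition~\ref{dm-s4f} to extract a weak$^*$ accumulation point in $\Dens_x$ along a subsequence. It concludes from the fact that the integral at $x$ does not depend on the chosen density, combined with the subsequence principle.

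You give a direct $\varepsilon$--$\delta$ estimate instead. The single bound \reff{dm-s4g-1}, applied at $y$ to $|f-g(x)|$, controls $|\pr f(y)-\pr f(x)|$ by the essential supremum of $|f-g(x)|$ over $B_{2\delta}(x)\cap\dom$. This avoids compactness, accumulation points and subsequences altogether, and it gives an explicit modulus of continuity. Your step does not really need density measures at all: it only uses the definition of the essential limit and the fact that $\lem(B_\rho(y)\cap\dom)>0$ for $y\in\cdom$.

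The paper's version is less elementary. In exchange it showcases the weak$^*$-accumulation technique that is reused later, for instance for the upper semicontinuity of $\partial F$ in Proposition~\ref{gd-s0} and in the proof of Theorem~\ref{gd-s2}.

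Two small points. Your worry about $x=\infty$ is moot: by \reff{cdom}, $\cdom=\ccdom\cap\cl\dom\subset\R^n$, so $\infty\notin\cdom$. Reading the statement as continuity of $\pr f$ restricted to $\cdom$ is exactly how the paper's proof treats it.
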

\noi

\begin{proof}
We have $\pr f=f^\star$ on $\cdom$ 
by Proposition~\ref{dm-s11} and the subsequent
comment. Let now 
$x,x_k\in\cdom$ with $x_k\to x$. Again by Proposition~\ref{dm-s11}, there are 
$\dens_{x_k}\in\Dens_{x_k}$ such that 
\begin{equation*}
  \pr f(x_k) = \sI{x_k}{f}{\dens_{x_k}} \qmq{for all} k\in\N\,.
\end{equation*}
By \reff{ess-bd} we have that $f\in\cL^\infty(B_\de(x)\cap\dom)$ for some
$\de>0$ and we can assume that all $x_k$ belong to $B_\de(x)$.  
Let now $\{x_{k'}\}$ be any subsequence. By Proposition~\ref{dm-s4f} there is
a weak$^*$ accumulation point $\me\in\Dens_{x}$
of $\{\dens_{x_{k'}}\}$ and there is a subsequence $\{x_{k''}\}$ (related 
to~$f$) such that
\begin{equation*}
  \pr f(x) = \sI{x}{f}{\me} = \lim_{k''\to\infty}
  \sI{x_{k''}}{f}{\dens_{x_{k''}}^\dom} = \lim_{k''\to\infty}\pr f(x_{k''}) \,
\end{equation*}
where the first equality follows from Proposition~\ref{dm-s11}. 
Since $\pr f(x)$ is independent of any $\mu\in\Dens_x$ by
Proposition~\ref{dm-s11}, the subsequence principle gives the assertion.
\end{proof}

\subsection{Weak convergence in $\cL^\infty(\dom)$}
\label{app-wc}

We assume that $\dom\in\cB(\R^n)$ and study the weak convergence 
in $\cL^\infty(\dom)$. A comprehensive treatment of this subject 
in general measure spaces and later specialized to locally compact Hausdorff
spaces $\dom$ can be found in Toland \cite{toland}. In this section 
we intend to transfer some central results of \cite{toland} 
to our setting, which is typical for the treatment of partial
differential equations, and to combine them with our previous results.
A significant difference is that \cite{toland} uses a one-point
compactification for $\dom$, while we are working   
with $\ccl\dom$ and obtain more differentiated results on the boundary. 

Since $\cL^\infty(\dom)^*$ is isometrically isomorphic
to $\bawl{\dom}$, a sequence $\{f_k\}\subset\cL^\infty(\dom)$ is weakly
convergent to $f\in\cL^\infty(\dom)$ by definition if
\begin{equation} \label{app-e1}
  \I{\dom}{f_k}{\me} \to \I{\dom}{f}{\me} \qmq{for all} 
  \me\in\bawl{\dom} 
\end{equation}
(cf. \reff{pl-dual}). Let us start with some equivalent 
formulations for weak convergence. 

\begin{theorem}  \label{app-s4}
Let $\dom\in\cB(\R^n)$ and let $f_k, f\in\cL^\infty(\dom)$. 
Then the following assertions are equivalent:
\bgl
\item
$f_k\wto f$\,,
\item
$|f_k-f|\wto 0$\,,
\item
\mbox{}\vspace{-6mm} 
\begin{equation} \label{app-e1a}
  \I{\dom}{f_k}{\me} \to \I{\dom}{f}{\me} \qmq{for all \zo measures} 
  \me\in\bawl{\dom} \,, \hspace{10mm}
\end{equation}
\item \mbox{}\vspace{-6mm} 
\begin{equation} \label{app-e1b}
  \I{\dom}{f_k}{\me} \to \I{\dom}{f}{\me}
  \qmq{for all} \mu\in\Dens_x\,, \; x\in\ccdom\,,  \hspace{32mm}
\end{equation}
\item
for all $\gamma>0$ and for any subsequence $\{f_{k_j}\}_j$ there
is some $m\in\N$ such that
\begin{equation} \label{app-e1c}
  \lem\Big(\bigcap_{j=1}^m \big\{|f_{k_j}-f|>\gamma\big\} \Big) = 0\,.
\end{equation}
\el
\end{theorem}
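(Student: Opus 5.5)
We prove the chain (1)$\Rightarrow$(4)$\Rightarrow$(3)$\Rightarrow$(2)$\Rightarrow$(1) and separately show (3)$\Leftrightarrow$(5). Throughout, a weakly convergent sequence is norm bounded by the uniform boundedness principle. We shall also need this bound in the other directions. Assuming (3), the family $\{f_k\}$ is pointwise bounded on \zo measures. Since $\pm$\zo measures are the extreme points of the unit ball $B^*$ (Proposition~\ref{ep-s2a}), this gives boundedness. The cleanest route is to note that if $\|f_k\|_\infty\to\infty$ along a subsequence, then a single \zo measure can be found on which $\int f_k\,d\mu$ is unbounded. One builds it via Proposition~\ref{ep-s8} from a nested family of sets $\{|f_{k_j}|>j\}$ pieced together; this step may need care. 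Alternatively, and more simply, one can argue via (5), which gives boundedness indirectly.

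\emph{Easy implications.} (1)$\Rightarrow$(4) is trivial, since $\Dens_x\subset\bawl{\dom}$. For (4)$\Rightarrow$(3), Theorem~\ref{ep-s3} shows every \zo measure lies in some $\Dens_x$ with $x\in\ccl\dom$, and $\Dens_x\neq\emptyset$ forces $x\in\ccdom$. For (3)$\Rightarrow$(2), apply Proposition~\ref{ep-s22} (or \reff{ep-s20-3}): for a \zo measure $\mu$ we have $\int|f_k-f|\,d\mu=|\int (f_k-f)\,d\mu|\to0$, so (3) holds for $|f_k-f|$ with limit $0$. The remaining implication (2)$\Rightarrow$(1) uses the same reduction: with $g_k:=f_k-f$, we have $|\int g_k\,d\mu|\le\int|g_k|\,d|\mu|$, and $|\mu|=\mu^++\mu^-\in\bawl{\dom}$. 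So $|g_k|\wto0$ gives $\int g_k\,d\mu\to0$ for every $\mu$, i.e.\ (1).

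\emph{Equivalence with (5).} This is the main step, and we follow Toland's argument. For (3)$\Rightarrow$(5), suppose (5) fails. Then there are $\gamma>0$ and a subsequence such that $\cS:=\{\{|f_{k_j}-f|>\gamma\}\}_j$ has the finite intersection property in $\bor{\dom}^+$. By Proposition~\ref{ep-s8} and Corollary~\ref{ep-s7}, there is a \zo measure $\mu$ with $\mu(\{|f_{k_j}-f|>\gamma\})=1$ for all $j$. Hence $\int|f_{k_j}-f|\,d\mu\ge\gamma$, which contradicts (3) applied through (3)$\Rightarrow$(2) in the form $\int|f_k-f|\,d\mu=|\int(f_k-f)\,d\mu|\to0$ shown above.

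For (5)$\Rightarrow$(3), fix a \zo measure $\mu$ and suppose $\int|f_k-f|\,d\mu\not\to0$. Then there are $\gamma>0$ and a subsequence with $\int|f_{k_j}-f|\,d\mu>2\gamma$. By Proposition~\ref{ep-s20} with $m=1$, the integral is an essential value attained with $\mu$-measure one, so $\mu(\{|f_{k_j}-f|>\gamma\})=1$ for all $j$. Since $\mu$ is multiplicative on intersections (\reff{ep-e1}), every finite intersection has $\mu$-measure $1$ and hence positive Lebesgue measure, because $\mu\wac\lem$. This contradicts (5). Finally, (3) for $|f_k-f|$ with limit $0$ gives (3) itself via $|\int(f_k-f)\,d\mu|\le\int|f_k-f|\,d\mu$.

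The expected obstacle is the boundedness issue in the step (2)$\Rightarrow$(1) when starting from (3) or (5) rather than from weak convergence: (3) is assumed only on \zo measures. To pass from \zo measures to all of $\bawl{\dom}$, I would use Corollary~\ref{ep-s1a} together with Proposition~\ref{ep-s2a}: $B^*$ is the weak$^*$ closed convex hull of $\pm$\zo measures. However, convergence on extreme points transfers to the closed hull only with uniform boundedness and some equicontinuity. The cleaner route is Rainwater's theorem (weak convergence of a bounded sequence is tested on extreme points of $B^*$), combined with a proof that (5) implies boundedness of $\{f_k-f\}$. The latter holds because if $\|f_{k_j}-f\|_\infty>j$ along a subsequence, the sets $\{|f_{k_j}-f|>1\}$ must eventually have null finite intersections by (5), while a diagonal choice of sets of positive measure yields a contradiction. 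This boundedness argument is the step I would scrutinize most.
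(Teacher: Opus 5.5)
\textbf{The gap is the boundedness step, and it cannot be closed.} None of (3), (4), (5) forces $\sup_k\|f_k-f\|_\infty<\infty$. So none of your routes can produce a bound: not pointwise boundedness on the extreme points of $B^*$, not a single zero-one measure with unbounded integrals, and not a diagonal argument from (5).

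Take $\Omega=(0,1)$, $f=0$, $I_k=(2^{-k-1},2^{-k})$ and $f_k=k\chi_{I_k}$.
\begin{itemize}
\item The $I_k$ are pairwise disjoint, so $\{|f_{k_1}|>\gamma\}\cap\{|f_{k_2}|>\gamma\}=\emptyset$ for $k_1\neq k_2$. This is (5) with $m=2$.
\item A zero-one measure $\mu$ can have $\mu(I_k)=1$ for at most one $k$, since otherwise additivity gives the value $2$. Hence $\int f_k\,d\mu=k\,\mu(I_k)$ is eventually $0$, which is (3).
\item A density measure $\mu$ at $x\in[0,1]$ has $\mu(I_k)=0$ unless $x\in\overline{I_k}$. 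That happens for at most two $k$, which gives (4).
\end{itemize}
Yet $\|f_k\|_\infty=k\to\infty$, so neither $f_k$ nor $|f_k-f|=f_k$ converges weakly. Your link (3)$\Rightarrow$(2) therefore fails: there you only obtain $\int|f_k-f|\,d\mu\to 0$ for zero-one $\mu$. In fact the statement itself is false in the directions (3),(4),(5)$\Rightarrow$(1),(2).

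\textbf{What is missing is a hypothesis, not an idea.} Assume $\sup_k\|f_k\|_\infty<\infty$; this is automatic under (1) or (2) by uniform boundedness. Then your chain closes along the route you propose. Rainwater's theorem says a bounded sequence converges weakly once it converges on the extreme points of the dual unit ball. Combined with Proposition~\ref{ep-s2a}, it turns your identity $\int|f_k-f|\,d\mu=|\int(f_k-f)\,d\mu|$ into (2). Alternatively, represent $\mathcal{L}^\infty(\Omega)$ as $C(K)$ on its Stone space. There zero-one measures become point evaluations and every dual element is a Radon measure on $K$, so bounded pointwise convergence plus dominated convergence gives the result.

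Your other steps are correct and essentially match the paper:
\begin{itemize}
\item (1)$\Rightarrow$(4)$\Rightarrow$(3) via Theorem~\ref{ep-s3};
\item (3)$\Leftrightarrow$(5) via Proposition~\ref{ep-s8}, Corollary~\ref{ep-s7} and Proposition~\ref{ep-s20};
\item (2)$\Rightarrow$(1) via $|\int g\,d\mu|\le\int|g|\,d|\mu|$ with $|\mu|$ again weakly absolutely continuous. This direct step is slightly cleaner than the paper's (2)$\Rightarrow$(3).
\end{itemize}

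The paper's own derivation of (3),(4)$\Rightarrow$(1) from Krein--Milman (Corollary~\ref{ep-s1a}) has the same hidden dependence. Convergence on the extreme points does not pass to weak$^*$ limits of their convex combinations without a uniform bound. So the defect lies in the statement, and your diagnosis of where the difficulty sits is accurate; only the proposed repair is impossible.
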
 
\noi
The equivalence of (1)-(3) can be found in 
\cite[pp. 67, 68, 70]{toland}. Since the equivalence of 
(1)-(4) easily follows from our previous results, we give a short proof for
that. The last condition corresponds to Theorem~8.7 in \cite{toland}. 
Let us sketch that proof within our setting for the convenience of the reader. 

\begin{proof}
Clearly (1) implies (3) and (4). The opposite directions follow from 
Corollary~\ref{ep-s1a}, Proposition~\ref{ep-s2a}, Theorem~\ref{ep-s3}, and
Theorem~\ref{ep-s2}.
We have that (3) implies (2) by Proposition~\ref{ep-s20} (3).
If $|f_k-f|\wto 0$, we use that
$|\I{\dom}{f_k-f}{\mu}|\le\I{\dom}{|f_k-f|}{\mu}$ for 
any \zo measure $\mu$ and (3) follows.

Let now $|f_k-f|\wto 0$ and assume that \reff{app-e1c} is wrong. 
Then there are some
$\gamma>0$ and some subsequence $\{f_{k_j}\}$ such that 
\begin{equation*} 
  S_j:=\big\{|f_{k_j}-f|>\gamma\big\} \in \cB(\dom)^+
  \qmq{for all} j\in\N
\end{equation*}
and $\cS:=\{S_j\mid j\in\N\}$ has the finite intersection property in
$\cB(\dom)^+$.
Using Corollary~\ref{ep-s7} and Proposition~\ref{ep-s8}, we get some 
\zo measure $\mu\in\bawl{\dom}$ with $\mu(S_j)=1$ for all $j\in\N$. Hence
\begin{equation*}
  \I{\dom}{|f_{k_j}-f|}{\mu} = \I{\{|f_{k_j}-f|>\gamma\}}{|f_{k_j}-f|}{\mu}
  \overset{j\to\infty}{\not\!\!\longrightarrow} 0
\end{equation*}
which contradicts (3) with $|f_k-f|$ instead of $f_k$ and, thus, 
\reff{app-e1c} follows.  

For the opposite let \reff{app-e1c} be satisfied and assume that 
$|f_k-f|\not\wto 0$. By (3) there is some \zo measure $\mu\in\bawl{\dom}$,
some $\gamma>0$, and a subsequence $\{f_{k_j}\}$ such that
\begin{equation*}
  \I{\dom}{|f_{k_j}-f|}{\mu} =: \gamma_{k_j} > \gamma 
  \qmq{for all} j\in\N\,.
\end{equation*}
By Proposition~\ref{ep-s20} with $\eps=\gamma_{k_j}-\gamma$, 
\begin{equation*}
  1 = 
 \mu\big(\big\{\big||f_{k_j}-f|-\gamma_{k_j}\big|<\gamma_{k_j}-\gamma\big\}\big) 
  \le \mu\big(\big\{|f_{k_j}-f|>\gamma\big\}\big) = 1
  \qmq{for all} j\in\N\,. 
\end{equation*}
By Corollary~\ref{ep-s7}, measure $\mu$ corresponds to some ultrafilter in 
$\cB(\dom)^+$. Thus  
\begin{equation*}
   \mu\Big(\bigcap_{j=1}^m\big\{|f_{k_j}-f|>\gamma\big\}\Big) = 1  
  \qmq{for all} m\in\N\, 
\end{equation*}
and, hence, \reff{app-e1c} is wrong. But this is a contradiction and 
verifies $|f_k-f|\wto 0$.
\end{proof}
\noi
Though these conditions are analytically useful, it might be difficult to
check them in examples. Therefore we want to provide further conditions for
weak convergence. Proposition~\ref{dm-s7} and \reff{app-e1b} directly imply
some sufficient condition (cf. also \cite[p. 82]{toland}). 

\begin{proposition}  \label{app-s5}
Let $\dom\in\cB(\R^n)$, let $f$, $f_k\in\cL^\infty(\dom)$, and let
\begin{equation} \label{app-s5-1}
  \lim_{k\to\infty} \ix\, (f_k-f) =
  \lim_{k\to\infty}\sx\, (f_k-f) = 0
  \qmq{for all} x\in\ccdom\,.
\end{equation}
Then $f_k\wto f$.
\end{proposition}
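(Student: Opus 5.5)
The plan is to verify condition (4) of Theorem~\ref{app-s4}, i.e. \reff{app-e1b}, and conclude $f_k\wto f$ from the equivalence of (1) and (4) there. So we fix $x\in\ccdom$ and $\mu\in\Dens_x$ and show $\I{\dom}{f_k}{\mu}\to\I{\dom}{f}{\mu}$.

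Since $f_k-f\in\cL^\infty(\dom)$, the difference is $\mu$-integrable for every $\mu\in\Dens_x$. By \reff{dm-e1} we have $\I{\dom}{(f_k-f)}{\mu}=\sI{x}{(f_k-f)}{\mu}$. Proposition~\ref{dm-s4g}, estimate \reff{dm-s4g-1} with $\C=\{x\}$, or equivalently Proposition~\ref{dm-s7}, formula \reff{dm-s7-2}, then gives
\begin{equation*}
  \ix\,(f_k-f) \le \I{\dom}{f_k}{\mu}-\I{\dom}{f}{\mu} \le \sx\,(f_k-f)\,.
\end{equation*}
By assumption \reff{app-s5-1}, both bounds tend to $0$ as $k\to\infty$. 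The squeeze therefore yields $\I{\dom}{f_k}{\mu}\to\I{\dom}{f}{\mu}$.

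This holds for every $\mu\in\Dens_x$ and every $x\in\ccdom$, so \reff{app-e1b} is satisfied and Theorem~\ref{app-s4} gives $f_k\wto f$.

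The substantive step is hidden in the implication (4)$\Rightarrow$(1) of Theorem~\ref{app-s4}, which we take as given. It rests on three facts. Every \zo measure lies in some $\Dens_x$ with $x\in\ccdom$ (Theorem~\ref{ep-s3}; note $\Dens_x\neq\emptyset$ forces $x\in\ccdom$). The \zo measures, up to sign, are the extreme points of the unit ball (Proposition~\ref{ep-s2a}). Krein--Milman (Corollary~\ref{ep-s1a}) then lets us pass from extreme points to all of $\bawl{\dom}$, using that $\{f_k\}$ is bounded, which holds since the uniform bounds from \reff{app-s5-1} together with the essential boundedness of $f$ control $\|f_k\|_\infty$ near every point of the compact set $\ccdom$.

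The only point I would double-check is this boundedness claim. If it is needed separately, I would derive it from \reff{app-s5-1} by a compactness argument over $\ccdom$, since the quantities $\sx$ and $\ix$ localize near $x$. I expect, however, that Theorem~\ref{app-s4} as stated already absorbs this, so no extra obstacle arises.
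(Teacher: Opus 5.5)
Your route is exactly the paper's: the squeeze $\ix(f_k-f)\le\int_\dom(f_k-f)\,d\mu\le\sx(f_k-f)$ for $\mu\in\Dens_x$ (from \reff{dm-s4g-1}, or Proposition~\ref{dm-s7}) gives \reff{app-e1b}, and then Theorem~\ref{app-s4}, (4)$\Rightarrow$(1), is invoked. The point you flagged, however, is a genuine gap, which the paper's one-line argument shares, and it cannot be closed. The hypothesis \reff{app-s5-1} gives no uniform bound on $\|f_k\|_{\cL^\infty(\dom)}$, and without such a bound the statement is false.

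Here is a counterexample. Take $\dom=(0,1)$, $f=0$, $I_k=(2^{-k-1},2^{-k})$ and $f_k=k\,\chi_{I_k}$. Every $x\in[0,1]=\ccdom$ lies outside $\overline{I_k}$ for all large $k$ (and for $x=0$, for all $k$). Hence $\ix f_k=\sx f_k=0$ eventually, and \reff{app-s5-1} holds. Yet $\|f_k\|_\infty=k$, so $\{f_k\}$ cannot converge weakly. Concretely, let $\mu=g\,\lem$ with $g=\sum_{j\ge 1}2^{-j}\lem(I_{2^j})^{-1}\chi_{I_{2^j}}\in L^1(\dom)$. Then $\mu$ lies in $\bawl{\dom}$ and $\int_\dom f_{2^j}\,d\mu=1$ for all $j$.

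Your compactness argument breaks down because there are no uniform bounds to exploit. The radius $\delta$ on which $\mathrm{ess\,sup}_{B_\delta(x)\cap\dom}|f_k-f|$ is small depends on $k$ and may shrink to $0$ as $k\to\infty$ (here one needs $\delta\le 2^{-k-1}$ at $x=0$). So no finite subcover of $\ccdom$ works uniformly in $k$.

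The same sequence satisfies (3), (4) and (5) of Theorem~\ref{app-s4} while violating (1) and (2). So that theorem cannot absorb the issue either: its reverse implications are only valid for sequences bounded in $\cL^\infty(\dom)$.

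Moreover, even for bounded sequences, Krein--Milman by itself does not transfer convergence from \zo measures to all of $\bawl{\dom}$. Weak$^*$ approximation of $\mu$ by convex combinations of extreme points does not let you interchange that limit with $k\to\infty$. The right tool is Rainwater's theorem: a bounded sequence converges weakly if and only if it converges on the extreme points of the dual unit ball. Combine it with Proposition~\ref{ep-s2a} and Theorem~\ref{ep-s3}, noting that every \zo measure lies in some $\Dens_x$ with $x\in\ccdom$.

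The proposition therefore needs the extra hypothesis $\sup_k\|f_k\|_{\cL^\infty(\dom)}<\infty$. With it, your squeeze argument followed by (4)$\Rightarrow$(3)$\Rightarrow$(1) via Rainwater's theorem is a complete proof.
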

\noi
At first glance one might think, that \reff{app-s5-1} implies norm convergence
in $\cL^\infty(\dom)$. The next example shows that this is not the case. 

\begin{example} \label{app-s5a}
Let $\dom=(0,1)\subset\R$ and let
\begin{equation*}
  f(x)=0 \zmz{on}\dom\,, \quad
  f_k(x) = \bigg\{ 
  \begin{array}{ll} 
     1 & \text{on } \big(\frac{1}{2^{k+1}},\frac{1}{2^k}\big) \,, \\
     0 & \text{otherwise}\,.
  \end{array}
\end{equation*}
Then 
\begin{equation*}
\lim_{k\to\infty}\ix f_k = \lim_{k\to\infty}\sx f_k = 0 
\qmq{for all} x\in[0,1]=\ccdom \,.
\end{equation*}
Hence, by Proposition~\ref{app-s5}, $f_k\wto 0$. 
Moreover $\|f_k\|_\infty=1$ for all $k\in\N$ and, thus, we obtain that
$f_k\not\to 0$. 
\end{example}
\noi 
Now, \reff{app-e1b} and  Proposition~\ref{dm-s7} applied to $f_k-f$ 
could suggest that \reff{app-s5-1} is also necessary for weak convergence. 
But this is wrong as the next example shows
(cf. \cite[p. 82]{toland}).

\begin{example} \label{app-s5b}
Let $\dom=(0,1)\subset\R$ and let $\cN=\{N_l\}_{l\in\N}\subset\cP(\cl\dom)$ 
be a countable neighborhood base for $[0,1]=\cl\dom=\ccdom$ as, e.g.,
\begin{equation*}
  \cN=\{B_\rho(q)\cap\cl\dom\mid \rho,q \in\cl\dom\cap\Q \} \quad
  \mbox{($\Q$ rational numbers)}\,.
\end{equation*}
Then there is a sequence  of pairwise disjoint subsets $A_k\subset\cl\dom$ 
such that 
\begin{equation*}
  \lem(A_k\cap N_l)>0 \qmq{for all} k,l\in\N\,
\end{equation*}
(cf. Theorem 2.25 in \cite{toland}). For $f_k=\chi_{A_k}$ we obtain that
$f_k\wto 0$ by Theorem~\ref{app-s4} (5). Moreover,
\begin{equation*}
  \lim_{k\to\infty} \ix f_k = 0\,, \z \lim_{k\to\infty}\sx f_k = 1
  \qmq{for all} x\in[0,1]=\ccdom\,.
\end{equation*}
Hence \reff{app-s5-1} is violated for $f=0$ and, thus, it is not necessary
for weak convergence. 

Alternatively we can consider $f_k=\chi_{A_1\cap I_k}$ for
$I_k=\big(0,\frac1k\big)$. Then, by the same argument, 
$f_k\wto 0$. But now 
\reff{app-s5-1} is violated only at $x=0$. 
\end{example}

Now we provide some necessary conditions for weak convergence 
in $\cL^\infty(\dom)$ by means of $\lem$-densities
$\dens_x^E$, which are not \zo measures.
Let us start with $\lem$-densities $\dens_x^\dom$ at $x$ within $\dom$. 
By Corollary~\ref{dm-s4b} there is an $\lem$-density
$\dens_x^\dom\in\LDens_x^\dom$ for each $x\in\cdom$.
Since all $f\in\cL^\infty(\dom)$ are $\dens_x^\dom$-integrable,
their precise representative according to \reff{app-s1-6} is given by
\begin{equation*}
  \pr{f}(x) = \sI{x}{f}{\dens_x^\dom}
  \qmz{for all} x\in\cdom\,
\end{equation*}
and it agrees with $f^\star(x)$ as limit 
for $\lem$-a.e. $x\in\dom\cap\cdom$.  
Hence we can directly derive a pointwise necessary condition for weak
convergence from \reff{app-e00} and Theorem~\ref{app-s4}~(4).

\begin{proposition} \label{app-s6}
Let $\dom\in\cB(\R^n)$ and let $f_k\wto f$ in $\cL^\infty(\dom)$. Then
\begin{equation} \label{app-s6-1}
  \pr{f}(x) = \lim_{k\to\infty} \pr{f_k}(x) \qmz{for all} x\in\cdom\,  
\end{equation}
where $\pr f(x)$ and $\pr f_k(x)$ have to be related to the same
$\dens_x^\dom$ which, however, can be arbitrary chosen in  
$\LDens_x$.
Moreover, let $\dom_0\subs\cdom$ be the $\lem$-null set of all $x$ where 
$f^\star(x)$ and $f_k^\star(x)$ do not exist as limit 
for almost all $f_k$. Then
\begin{equation} \label{app-s6-0}
  f^\star(x) = \lim_{k\to\infty} f_k^\star(x) \qmz{for $\lem$-a.e.} 
  x\in\cdom\setminus\dom_0\,.  
\end{equation}
\end{proposition}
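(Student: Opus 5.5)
The first claim \reff{app-s6-1} follows directly from the definition of weak convergence in \reff{app-e1}. Fix $x\in\cdom$ and some $\lem$-density $\dens_x^\dom\in\LDens_x^\dom$; it exists by Corollary~\ref{dm-s4b} with $\E=\dom$, $\C=\{x\}$, since $x\in\cdom$ means $\lem(\dom\cap B_\de(x))>0$ for all $\de>0$. By Proposition~\ref{dm-s2}, $\dens_x^\dom\in\Dens_x\subset\bawl{\dom}$. Every element of $\cL^\infty(\dom)$ is $\dens_x^\dom$-integrable, so $\pr f(x)=\sI{x}{f}{\dens_x^\dom}=\I{\dom}{f}{\dens_x^\dom}$, and the same holds for each $f_k$. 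Here we use \reff{dm-e1} and the definition of $\sI{x}{\cdot}{\cdot}$ through \reff{pl-e1}. Weak convergence tested against $\mu=\dens_x^\dom$ in \reff{app-e1}, or equivalently Theorem~\ref{app-s4}~(4), then gives $\I{\dom}{f_k}{\dens_x^\dom}\to\I{\dom}{f}{\dens_x^\dom}$, which is \reff{app-s6-1}. Since $\dens_x^\dom$ was arbitrary, the statement holds for every choice, provided the same measure is used for $f$ and all $f_k$.

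For \reff{app-s6-0}, I would use that each class $g\in\cL^\infty(\dom)\subset\cL^1_{\rm loc}(\dom)$ has a Lebesgue point with some $\al$ at $\lem$-a.e. $x\in\dom\cap\cdom$, by \reff{app-e0}. At such points, \reff{app-e00} gives $g^\star(x)=\lim_{\de\dto 0}\mI{B_\de(x)\cap\dom}{g}{\lem}=\sI{x}{g}{\dens_x^\dom}=\pr g(x)$ for every $\dens_x^\dom\in\LDens_x^\dom$. Let $N_f$ and $N_{f_k}$ be the exceptional null sets where this fails, and set $N:=N_f\cup\bigcup_k N_{f_k}$. This is a countable union and hence an $\lem$-null set. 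We also add the null set $\cdom\setminus\dom$ if necessary; strictly, $\lem(\cdom\setminus\dom)$ can be positive when $\lem(\bd\dom)>0$. To handle that, I would restrict the a.e. statement to $x\in\dom\cap\cdom$, or observe that points of $\cdom\setminus\dom$ at which all $f^\star, f_k^\star$ exist as Lebesgue-point limits are handled identically.

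For $x\notin N\cup\dom_0$, the Lebesgue-point property gives $f^\star(x)=\pr f(x)$ and $f_k^\star(x)=\pr f_k(x)$ for all $k$, with the same $\dens_x^\dom$. Then \reff{app-s6-1} yields $f^\star(x)=\lim_k f_k^\star(x)$.

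The main obstacle is reconciling the stated exceptional set $\dom_0$, defined by mere existence of the limits, with the stronger Lebesgue-point condition needed to identify $f^\star$ with $\pr f$. Existence of the averaged limit alone does not force $f^\star(x)=\pr f(x)$, as the sign-function example after Proposition~\ref{dm-s11} shows. I would therefore phrase the a.e. conclusion by discarding the countable union of the null sets where \reff{app-e0} fails. This set is $\lem$-null, and removing it together with $\dom_0$ still leaves an "$\lem$-a.e. $x\in\cdom\setminus\dom_0$" statement, at least on $\dom\cap\cdom$, where \reff{app-e2} applies.
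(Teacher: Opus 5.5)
Your proof of \reff{app-s6-1} is the paper's argument: you test $f_k\wto f$ against $\dens_x^\dom\in\Dens_x\subset\bawl{\dom}$, which is Theorem~\ref{app-s4}~(4). Your Lebesgue-point route to \reff{app-s6-0} is also the route the paper indicates via \reff{app-e00}. The problem is your ``main obstacle''. It rests on a false claim, and because of it your argument leaves \reff{app-s6-0} unproved on $\cdom\setminus\dom$, which you yourself note can have positive measure.

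For $g\in\cL^\infty(\dom)$ and \emph{any} $x\in\cdom$, the estimate \reff{dm-s2-3} with $\lambda=\lem$, $E=\dom$, $C=\{x\}$ gives
\begin{equation*}
  \liminf_{\de\dto 0}\mI{B_\de(x)\cap\dom}{g}{\lem}
  \;\le\; \sI{x}{g}{\dens_x^\dom} \;\le\;
  \limsup_{\de\dto 0}\mI{B_\de(x)\cap\dom}{g}{\lem}
  \quad\text{for every } \dens_x^\dom\in\LDens_x^\dom .
\end{equation*}
So whenever $g^\star(x)$ exists as a limit, $g^\star(x)=\pr g(x)$, with no Lebesgue-point condition. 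The paper uses exactly this squeeze in the proofs of Proposition~\ref{app-s27c} and Theorem~\ref{app-s30}.

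The sign-function example does not contradict this. There $f^\star(0)=0=\pr f(0)$ for every $\dens_0^{\R}\in\LDens_0^{\R}$. What fails is only the essential limit: other measures in $\Dens_0$, such as $\lem$-densities within $E=(0,\infty)$, give the value $1$. But $\pr f$ is defined with densities within $\dom$. For unbounded $g$ the two values can differ (cf.\ Remark~\ref{app-s25c}~(3)), but here all functions lie in $\cL^\infty(\dom)$.

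With this, \reff{app-s6-0} holds at \emph{every} $x\in\cdom\setminus\dom_0$, boundary points included. Fix one $\dens_x^\dom$. Then $f^\star(x)=\pr f(x)$, and $f_k^\star(x)=\pr f_k(x)$ for all but finitely many $k$. Now \reff{app-s6-1} gives $f_k^\star(x)\to f^\star(x)$. You need neither the countable union of exceptional Lebesgue sets nor the restriction to $\dom\cap\cdom$. The paper's one-line proof cites only \reff{app-e00}, which by itself also covers just $\dom\cap\cdom$. The squeeze is what makes the statement hold as written.
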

\noi
Let us emphasize that \reff{app-s6-1} holds for all $x$ in $\cdom$\!. 
Furthermore,   
\reff{app-s6-1} and \reff{app-s6-0} take into account 
boundary points in $\bd\dom\cap\cdom$. While this does not influence 
\reff{app-s6-0} in the typical case where $\bd\dom\cap\cdom$ is an $\lem$-null
set, it is relevant in \reff{app-s6-1}. Thus, in particular \reff{app-s6-1}
sharpens the result from \cite[p. 69]{toland} which is of the type 
\reff{app-s6-0} without boundary points. 
The next example shows that the stronger version \reff{app-s6-1}
can make a difference.

\begin{example} \label{app-s8}
Let $\dom=(0,1)\subset\R$ and let
\begin{equation*}
  f_k(x) = \bigg\{ 
  \begin{array}{ll} 
     1 & \text{on } \big(0,\frac 1k\big) \, \\
     0 & \text{otherwise}\,
  \end{array}
  \qmq{for all} k\in\N\,.
\end{equation*}
Obviously, $\pr f_k(x)=f_k^\star(x)\to 0$ for all $x\in\dom$. 
Hence $f=0$ is the only candidate for a weak limit. But, with any 
$\cL^1$-density $\dens_0^\dom$ at $x=0$ within $\dom$,  
\begin{equation*}
  f_k^\star(0)=\pr{f_k}(0) = \sI{0}{f_k}{\dens_0^\dom}=1  
  \zmz{for all} k\in\N 
  \qmq{while} f^\star(0)=\pr{f}(0) = 0\,.
\end{equation*}
Notice that \reff{app-s6-0}, that has to be true only for $\lem$-a.e. $x$, 
is still satisfied with $f=0$, but 
\reff{app-s6-1} is violated at $x=0$ and implies $f_k\not\wto 0$.
This shows the advantage of $\pr f$, while $f^\star$, which is usually defined
only on $\dom$, cannot rule out weak convergence in this case
(cf. also \cite[p.~69]{toland}).

For a slight modification we choose some continuous function 
$g\in\cL^\infty(\dom)$ such that $g^\star(0)$ does not exist as limit.
For $g_k(x)=f_k(x)+g(x)$ we then have that 
$g_k^\star(x)=\pr g_k(x)$ tends to  $g(x)$ on $(0,1)$. Therefore 
$g$ is the only candidate for the weak limit. Since $g^\star(0)$
is not defined as limit, it cannot enter \reff{app-s6-0} at all. 
However, $g\in\cL^\infty(\dom)$ is $\dens_0^\dom$-integrable for any 
$\lem$-density $\dens_0^\dom$ in $\LDens_0^\dom$. Therefore 
\begin{equation*}
  \sI{0}{g}{\dens_0^\dom}=\gamma 
  \qmq{for some} \gamma\in\R\,
\end{equation*}
and
\begin{equation*}
  \pr{g_k}(0) = \sI{0}{g_k}{\dens_0^\dom}=1+\gamma 
  \zmz{for all} k\in\N 
  \qmq{while} \pr{g}(0) = \gamma\,.
\end{equation*}
Consequently, we have $g_k\not\wto g$ by \reff{app-s6-1}. 
Notice that $\ga$ might depend on the special
choice of $\dens_0^\dom$ which, however, does not bother our analysis. 
\end{example}
\noi
Based on Corollary~\ref{dm-s4b} we can readily extend
Proposition~\ref{app-s6}.

\begin{corollary} \label{app-s6a}
Let $\dom\in\cB(\R^n)$, let $f_k\wto f$ in $\cL^\infty(\dom)$, let $x\in\cdom$, 
and let $E\in\cB(\dom)$ be such that
\begin{equation} \label{app-s6a-0}
  \lem(B_\delta(x)\cap E)>0 \qmq{for all} \delta>0\,.
\end{equation}
Then
\begin{equation} \label{app-s6a-1}
  \sI{x}{f}{\dens_x^E} = \lim_{k\to\infty} \sI{x}{f_k}{\dens_x^E} \,
\end{equation}
for each $\lem$-density $\dens_x^E\in\LDens_x^E$. 
\end{corollary}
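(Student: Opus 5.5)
The idea is to use the definition of weak convergence in $\cL^\infty(\dom)$ directly, that is, \reff{app-e1}, which tests convergence against every $\mu\in\bawl{\dom}$. So it suffices to check that a given $\lem$-density $\dens_x^E\in\LDens_x^E$ is an admissible test measure, and that $\I{\dom}{g}{\dens_x^E}=\sI{x}{g}{\dens_x^E}$ for every $g\in\cL^\infty(\dom)$.

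\emph{Step 1: the test measure is admissible.} By Corollary~\ref{dm-s4b}, with $\C=\{x\}$, the assumption \reff{app-s6a-0} guarantees that $\LDens_x^E\ne\emptyset$. For the finiteness condition in \reff{dm-s2c-0}, note that $\lem(E\cap B_{\tilde\de}(x))<\infty$ trivially when $x\in\cdom\subset\cl\dom$. Proposition~\ref{dm-s2} gives $\LDens_x^E\subset\Dens_x\subset\bawl{\dom}$. Hence each $\dens_x^E$ is an element of $\cL^\infty(\dom)^*$ via \reff{pl-dual}.

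\emph{Step 2: identify the two integrals.} Theorem~\ref{dm-s1}, specifically \reff{dm-e1}, gives $\I{\dom}{g}{\dens_x^E}=\sI{x}{g}{\dens_x^E}$ for all $g\in\cL^\infty(\dom)$. In particular this holds for $g=f_k$ and $g=f$, and all these integrals exist because the functions are essentially bounded.

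\emph{Step 3: conclude.} Applying \reff{app-e1} with $\mu=\dens_x^E$ yields $\I{\dom}{f_k}{\dens_x^E}\to\I{\dom}{f}{\dens_x^E}$. By Step~2 this is exactly \reff{app-s6a-1}. Alternatively, one can invoke Theorem~\ref{app-s4}~(4) with $\mu=\dens_x^E\in\Dens_x$ and $x\in\cdom\subset\ccdom$.

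There is no real obstacle here. The only point requiring care is Step~1, where one confirms that the hypotheses of Corollary~\ref{dm-s4b} hold and that $\dens_x^E$ lies in $\Dens_x$, so that it is a legitimate test measure. Everything else is a restatement of the definition of weak convergence.
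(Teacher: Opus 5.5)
Your proposal is correct and is essentially the paper's argument, which the paper only sketches as ``readily'' following from Corollary~\ref{dm-s4b}: every $\lem$-density at $x$ within $E$ lies in $\Dens_x\subset\bawl{\dom}$, and weak convergence (via \reff{app-e1}, equivalently Theorem~\ref{app-s4}~(4)) together with \reff{dm-e1} then yields \reff{app-s6a-1}. Your check that $\lem(E\cap B_{\tilde\delta}(x))<\infty$ because $x\in\cdom$ is a finite point is the right detail for invoking Corollary~\ref{dm-s4b}; that nonemptiness is not actually needed, though, since the claim is quantified over all elements of $\LDens_x^E$.
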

\noi
The next example shows, that this really sharpens Propositions~\ref{app-s6}.

\begin{example} \label{app-s10}
For $\dom=B_2(0)\subset\R^2$ we consider the cuspidate subsets
\begin{equation*}
  \dom_k = \big\{(x,y)\in\dom \:\big|\: 
           \sqrt{|x|}\le y \le \tfrac1k \big\} \,, \quad k\in\N\,,
\end{equation*}
and the functions in $\cL^\infty(\dom)$ given by
\begin{equation*}
  f(x) = 0 \zmz{on} \dom\,, \quad
  f_k(x) = \bigg\{ 
  \begin{array}{ll} 
     1 & \text{on } \dom_k\,,   \\
     0 & \text{on } \dom\setminus\dom_k\,.
  \end{array}
\end{equation*}
Then $\pr f_k(x)=f_k^\star(x)=0$ for all $x\in\cl\dom=\cdom$ and both 
\reff{app-s6-1} and \reff{app-s6-0} are satisfied.
But, let $\dens_0^{\dom_1}$ be an $\cL^2$-density at
$x=0$ within $\dom_1$. Then
\begin{equation*}
  \lim_{k\to\infty} \sI{0}{f_k}{\dens_0^{\dom_1}} = 1 \ne 0 = 
  \sI{0}{f}{\dens_0^{\dom_1}} \,.
\end{equation*}
Hence, by \reff{app-s6a-1}, we conclude that $f_k\not\wto f$ in
$\cL^\infty(\dom)$. Notice that, alternatively, we can readily apply
\reff{app-e1c} in this case.  
\end{example}

\subsection{Integral characterization of derivatives}
\label{app-der}

We will discuss several notions of differentiability 
for Lebesgue integrable functions, we study their properties, 
and we derive related integral conditions. In particular we show how the
classical pointwise definition of differentiability for functions can be
extended to classes of $\cL^1$-functions. 

Let $\dom\in\cB(\dom)$ and let 
$f\in\cL^1_{\rm loc}(\dom,\R^m)$. 
We say that $f$ is {\it approximately differentiable} at $x\in\cdom$ if there
are some $\alpha\in\R^m$ and a linear map $L:\R^n\to\R^m$ such that 
\begin{equation} \label{ader}
  \alim_{y\to x} \frac{f(y)-\alpha-L(y-x)}{|y-x|} = 0\,.
\end{equation}
We call $D_{\rm ap}f(x):=L$ {\it approximate derivative} of $f$ at $x$ (with
$\alpha$).  

\begin{theorem} \label{app-s25}
Let $\dom\in\cB(\R^n)$, let $x\in\cdom$, 
let $f\in\cL^1_{\rm loc}(\dom,\R^m)$, let $\alpha\in\R^m$, and let
$\dens_x^\dom\in\LDens_x^\dom$ be an $\lem$-density at $x$ within $\dom$. 
Then  $f$ is approximately differentiable at $x$ with 
$D_{\rm ap}f(x)=L$ and with $\alpha$ if and only if there is 
a linear map $L:\R^n\to\R^m$ with
\begin{equation}\label{app-s25-1}
   \sI{x}{\frac{|f(y)-\alpha-L(y-x)|}{|y-x|}}{\dens^\dom_x} = 0 \,. 
\end{equation}
In the case of approximate differentiability at $x$ with $\alpha$ we have that
\begin{equation}\label{app-s25a-1}
  \alpha = \alim_{y\to x} f(y) = \pr f(x) = \sI{x}{f}{\dens_x^\dom} \,.
\end{equation}
If there is an open convex cone $K_x\subset\R^n$ with vertex at $x$ 
and some $r>0$ such that 
\begin{equation} \label{app-s25a-2}
  K_x\cap B_r(x) \subset \dom \,,
\end{equation}
then $D_{\rm ap}f(x)$ is unique if it exists. 
\end{theorem}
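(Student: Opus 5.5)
The plan is to reduce everything to Proposition~\ref{dm-s10}~(2), applied to the quotient
\begin{equation*}
  g(y):=\frac{f(y)-\alpha-L(y-x)}{|y-x|}\,, \qquad y\in\dom\setminus\{x\}\,.
\end{equation*}
Since $\{x\}$ is an $\lem$-null set, $g\in\cL^1_{\rm loc}$ near $x$ away from the point $x$, which is enough because only the behaviour in punctured neighborhoods of $x$ matters. With target value $0$, Proposition~\ref{dm-s10}~(2) gives
\begin{equation*}
  \alim_{y\to x} g(y)=0 \qmq{if and only if} \sI{x}{|g|}{\dens_x^\dom}=0\,.
\end{equation*}
This is exactly the claimed equivalence between \reff{ader} and \reff{app-s25-1}. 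One caveat: $g$ need not be locally integrable near $x$ itself. If the cited proposition needs $\cL^1_{\rm loc}(\dom)$, I would replace $g$ by its truncation $\min(|g|,1)$. The approximate limit $0$ and the vanishing of the density integral are both insensitive to this truncation: the former depends only on the sets $\{|g|\ge\eps\}$ with $\eps<1$, and for the latter one uses \reff{dm-s2-2} together with the sets $\{|g|\ge\eps\}$, as in the proof of Proposition~\ref{dm-s10}~(2).

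For \reff{app-s25a-1} I would show that $\alim_{y\to x}f(y)=\alpha$. On $B_\de(x)$ we have
\begin{equation*}
  |f(y)-\alpha|\le |y-x|\,|g(y)|+|L|\,|y-x|\le \de\,(|g(y)|+|L|)\,.
\end{equation*}
Hence for each $\eps>0$ and small $\de$, the set $\{|f-\alpha|\ge\eps\}\cap B_\de(x)$ is contained in $\{|g|\ge 1\}\cap B_\de(x)$. The latter has vanishing relative density because $\alim_{y\to x} g=0$. This gives $\alim_{y\to x} f=\alpha$, and Proposition~\ref{dm-s10}~(2) then yields that $f$ is $\dens_x^\dom$-integrable with $\sI{x}{f}{\dens_x^\dom}=\alpha$. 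By definition \reff{app-s1-6}, $\pr f(x)=\alpha$, and this value is independent of the chosen density.

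For uniqueness I expect the main obstacle. Suppose $L_1,L_2$ both satisfy \reff{ader}. Then $\alpha$ is the same for both, since approximate limits are unique by Corollary~\ref{dm-s10a}. Subtracting, with $M:=L_1-L_2$, gives
\begin{equation*}
  \alim_{y\to x} \frac{M(y-x)}{|y-x|}=0\,,
\end{equation*}
or equivalently, by the triangle inequality, $\sI{x}{|M(y-x)|/|y-x|}{\dens_x^\dom}=0$. The cone hypothesis \reff{app-s25a-2} is needed because the approximate limit is taken relative to $\dom$, so the set $\dom$ must see enough directions.

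Assume $Mw\ne0$ for some unit $w$. Since $K_x$ is open, convex and has vertex $x$, it contains $x+tw'$ for directions $w'$ in an open set of the sphere. The function $h(v):=|Mv|$ is positively homogeneous of degree one and linear in $M$. If $Mw'=0$ for all $w'$ in an open set of directions, then $M=0$ by linearity. So we may pick a direction $w'$ in the cone with $Mw'\ne0$, and then a small subcone $K'\subset K_x$ around $w'$ on which $|M(y-x)|/|y-x|\ge c>0$.

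The set $K'\cap B_\de(x)\subset\dom$ has measure comparable to $\de^n$, and $\lem(\dom\cap B_\de(x))\le\lem(B_\de(x))$. Therefore the relative density of $K'$ in $B_\de^\dom(x)$ is bounded below by a positive constant, independent of $\de$. This contradicts the vanishing of the approximate limit at level $\eps=c$, so $M=0$ and $D_{\rm ap}f(x)$ is unique.
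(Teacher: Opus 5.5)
\paragraph{Gap in the ``if'' direction.} The problem is the implication \reff{app-s25-1}$\Rightarrow$\reff{ader}. You derive it from Proposition~\ref{dm-s10}~(2), used in the direction \reff{dm-s10-1}$\Rightarrow$\reff{dm-s10-2}, for the one fixed density $\dens_x^\dom$. The paper's proof does the same (``equivalent by Proposition~\ref{dm-s10}''). But $\sI{x}{|g|}{\dens_x^\dom}=0$ only says $\dens_x^\dom(\{|g|\ge\varepsilon\})=0$. Via \reff{dm-s2-2}, this only gives that the $\liminf_{\delta\downarrow 0}$ of the relative density of $\{|g|\ge\varepsilon\}$ in $B_\delta^\dom(x)$ is zero, whereas \reff{ader} needs the limit. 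The printed proof of Proposition~\ref{dm-s10}~(2) replaces the $\liminf$ over all $\delta$ in \reff{dm-s2-2} by a $\liminf$ along the chosen $\delta_k$, and that step is unjustified.

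The direction is in fact false for a fixed density. Take $\dom=(-1,1)$, $x=0$, $a_k=2^{-k^2}$ and $A=\bigcup_k(a_k/2,a_k)$. The relative density of $A$ in $B_\delta(0)$ is at least $\frac14$ for $\delta=a_k$ and at most $2^{-2k-1}$ for $\delta=a_k/2$. Hahn--Banach with the sublinear functional $p(h)=\limsup_k\mI{B_{a_k/2}(0)}{h}{\lem}$ yields a measure $\dens\in\LDens_0^\dom$ (it satisfies \reff{dm-s2-2}) with $\dens(A)=0$. For $f(y)=|y|\chi_A(y)$ the integrand in \reff{app-s25-1} with $\alpha=0$, $L=0$ is $\chi_A$, so \reff{app-s25-1} holds. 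Yet $f$ is not approximately differentiable at $0$. Necessarily $\alpha=\alim_{y\to0}f(y)=0$. On $(-\delta,0)$ the quotient equals $L$, which forces $L=0$. Then one would need $\alim_{y\to0}\chi_A(y)=0$, which fails. Your truncation $\min(|g|,1)$ fixes the integrability issue but not this one.

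\paragraph{What survives, and comparison with the paper.} Your argument does prove the following corrected statement. Approximate differentiability gives \reff{app-s25-1} for every $\dens_x^\dom\in\LDens_x^\dom$. Conversely, \reff{app-s25-1} for all $\lem$-densities gives approximate differentiability. The reason is that for each Borel $B$, a Banach limit of the averages along a sequence $\delta_k$ realizing $\limsup_{\delta\downarrow0}\lem(B\cap B_\delta^\dom(x))/\lem(B_\delta^\dom(x))$ is an $\lem$-density attaining that value on $B$.

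The rest of your proposal is correct and more robust than the paper's proof. For \reff{app-s25a-1} you get $\alim f=\alpha$ directly from $\{|f-\alpha|\ge\varepsilon\}\cap B_\delta(x)\subset\{|g|\ge 1\}$ for $\delta\le\varepsilon/(1+|L|)$. You then use Proposition~\ref{dm-s10}~(2) only in its valid direction. The paper instead shows $\sI{x}{|f-\alpha|}{\dens_x^\dom}=0$ through integrability of $(f-\alpha)/|y-x|$, and returns to $\alim f$ through the problematic direction.

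For uniqueness, you obtain $\alim_{y\to x} M(y-x)/|y-x|=0$ by subtracting the two approximate limits, and bound the relative density of a subcone from below. The paper again goes through the density integral, then Proposition~\ref{dm-s10}~(1) for bounded functions, then a lower bound on averages. Both reach the same contradiction, but yours avoids the faulty step.
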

\noi
Notice that one has uniqueness of $D_{\rm ap}f(x)$ if 
$\dom\subset\R^n$ is open and $x\in\dom$. 

\begin{proof}
Let $\dens_x^\dom$ be an $\lem$-density at $x$ within $\dom$. Then
\reff{ader} is equivalent to \reff{app-s25-1} by Proposition~\ref{dm-s10}.
For \reff{app-s25a-1} we set 
\begin{equation*}
  f_1(y)=\frac{f(y)-\alpha}{|y-x|}\,, \quad 
  f_2(y)=D_{\rm ap}f(x)\,\frac{y-x}{|y-x|}\,,
  \quad g(y) = |y-x|\,.
\end{equation*}
Then $f_1-f_2$ is $\dens_x^\dom$-integrable 
by \reff{app-s25-1} and $f_2$, $g$ are $\dens_x^\dom$-integrable 
as $\cL^\infty$-functions. 
As sum, also $f_1$ is $\dens_x^\dom$-integrable. Since
$|gf_1|\le|f_1|$ on $B_1(x)\cap\dom$, we get that $|gf_1|$ is 
$\dens_x^\dom$-integrable too (cf. \cite[p.~113]{rao}). Therefore, for any
$\de>0$, 
\begin{equation*}
 \sI{x}{|f-\alpha|}{\dens_x^\dom} =
 \I{B_\de(x)\cap\dom}{|y-x|\,|f_1(y)|}{\dens_x^\dom} \le
 \de\:  \sI{x}{|f_1|}{\dens_x^\dom} \,.
\end{equation*}
Consequently, $\sI{x}{|f-\alpha|}{\dens_x^\dom} = 0$ and 
Proposition~\ref{dm-s10} implies \reff{app-s25a-1}. 

For uniqueness we assume that $L_1$, $L_2$ are approximate derivatives of $f$
at $x$. Then, by \reff{app-s25-1},
\begin{equation*}
  \sI{x}{\big|(L_1-L_2)\tfrac{y-x}{|y-x|}\big|}{\dens_x^\dom} \le
  \sum_{j=1}^2\sI{x}{\big|\tfrac{f(y)-\pr f(x)}{|y-x|}-
          L_j\tfrac{y-x}{|y-x|}\big|}{\dens_x^\dom} = 0 \,.
\end{equation*}
Thus, by Proposition~\ref{dm-s10}, 
\begin{equation*}
  \alim_{y\to x} g(y)=0 \qmq{for} 
  g(y):=(L_1-L_2)\tfrac{y-x}{|y-x|} \,.
\end{equation*}
Since $g\in\cL^\infty(\dom)$, Proposition~\ref{dm-s10} (1) implies,
with $B_\de^\dom(x)=B_\de(x)\cap\dom$,
\begin{equation} \label{app-s25a-8}
  \lim_{\delta\dto 0} \mI{B_\delta^\dom(x)}{|g|}{\lem}=0 \,.
\end{equation}
If $L:=L_1-L_2\ne 0$, then the kernel $\{z\in\R^n\mid Lz=0\}$ 
is a strict subspace of $\R^n$ and, thus, it is an $\lem$-null set.
Then, by \reff{app-s25a-2}, there is an $\tilde y\in K_x\cap B_r(x)$ with
$L(\tilde y-x)\ne 0$. Since $K_x$ is open, there is some $\rho>0$ such that
\begin{equation*}
  B_\rho(\tilde y)\subset K_x \qmq{and}
  |L\big(\tfrac{y-x}{|y-x|}\big)|\ge
  \tfrac12\big|L\big(\tfrac{\tilde y-x}{|\tilde y-x|}\big)\big|=:\gamma
  \zmz{for all} y\in\tilde K_x  
\end{equation*}
where $\tilde K_x$ is the cone hull of $B_\rho(\tilde y)$ related to the vertex
$x$. Then $\tilde K_x\subset K_x$ and there is some $\tilde\gamma>0$ with
\begin{eqnarray*}
  \mI{B_\delta^\dom(x)}{|g|}{\lem} 
&=&  \tfrac{1}{\lem(B_\delta^\dom(x))} \Big(
  \I{B_\delta^\dom(x)\cap\tilde K_x}{\big|L\tfrac{y-x}{|y-x|}\big|}{\lem} +
  \I{B_\delta^\dom(x)\setminus\tilde K_x}{\big|L\tfrac{y-x}{|y-x|}\big|}{\lem}
  \Big)  \\
&\ge&
  \gamma\: \frac{\lem(B_\delta^\dom(x)\cap\tilde K_x)}{\lem(B_\delta^\dom(x))} 
\;=\; \tilde\gamma \;>\; 0 \qmq{for all} 0<\delta<r  \,
\end{eqnarray*}
(recall \reff{app-s25a-2}).
But this contradicts \reff{app-s25a-8} and implies $L_1=L_2$.
\end{proof}

\begin{remark} \label{app-s25c}  
(1) The approximate derivative is usually defined on open sets $\dom\subs\R^n$
for points $x\in\dom$ and the definitions found in the literature differ in
detail. In \cite[p. 232]{evans} the approximate derivative is defined for
functions $f\in L^1_{\rm loc}(\dom,\R^m)$ (and not for classes as above)
with $\alpha=f(x)$ in the definition. In \cite[p. 160, 165]{ambrosio} a
stronger notion of approximate limit based on \reff{dm-s10-0} is used and the
definition of the approximate derivative contains the approximate limit of $f$
at $x$ instead of $\alpha$.    

(2) It is well known that BV functions $f\in\cB\cV_{\rm loc}(\dom)$ 
(with $\dom$ open)
are approximately differentiable at $\lem$-a.e. $x\in\dom$. 
The approximate derivative $D_{\rm ap}f$ 
agrees with (the density of) the weakly
absolutely continuous part $D_{\rm ac}f$ with respect to $\lem$ 
(which is usually called absolutely continuous
part) of the weak derivative $D_wf$ of $f$
(cf. \cite[p. 233]{evans}, \cite[p. 176]{ambrosio}). 
Since Sobolev functions 
$f\in\cW^{1,p}_{\rm loc}(\dom)$ belong to $\cB\cV_{\rm loc}(\dom)$, where the 
weak derivative $D_wf$ agrees with $D_{\rm ac}f$, 
also these functions are approximately differentiable $\lem$-a.e. 
and $D_{\rm ap}f$ equals $D_wf$ as $\cL^1$-function. 

(3) If $f\in\cL^1_{\rm loc}(\dom)$ is approximately differentiable at 
$x\in\cdom$ with $\al$, then $f^\star(x)$, if it exists as limit, 
must not equal $\al$ in general. To see this we consider 
$\dom=B_1(0)\subs\R^2$ and let $\dom'\subs\dom$ have a cusp at $x=0$.   
If $f=0$ on $\dom\setminus\dom'$, then $f$ is approximately differentiable at
$x=0$ with $D_{\rm ap}f(0)=0$ and $\al=\pr f(0)=0$. But one can choose 
$\dom'$ and $f$ on it in such a way 
that $f^\star(0)\ne 0$ (cf. \cite[p.~31]{trace}).
\end{remark}

The integral characterization \reff{app-s25-1} 
of the approximate derivative 
allows to derive some elementary calculus rules quite easily. 

\begin{proposition} \label{app-s25b}
Let $\dom\in\cB(\R^n)$ and let
$f_1,f_2\in\cL^1_{\rm loc}(\dom,\R^m)$ and $f_b\in\cL^\infty(\dom)$
be approximately differentiable at $x\in\cdom$. Then also
$c_1f_1+c_2f_2$ and $f_1f_b$ are approximately differentiable at $x$ 
for all $c_1,c_2\in\R$ with
\begin{equation*}
  D_{\rm ap}(c_1f_1+c_2f_2)(x) = 
  c_1 D_{\rm ap}f_1(x) + c_2 D_{\rm ap} f_2(x) \,,
\end{equation*}
\begin{equation*}
  D_{\rm ap}(f_1f_b)(x) = 
  \pr f_b(x) D_{\rm ap}f_1(x) + \pr f_1(x) \otimes D_{\rm ap}f_b(x) \,
\end{equation*}
(where $\otimes$ denotes the tensor product).
\end{proposition}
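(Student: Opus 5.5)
We fix an arbitrary $\lem$-density $\dens_x^\dom\in\LDens_x^\dom$ and work entirely with the integral characterization of Theorem~\ref{app-s25}. By that theorem, each $f_j$, $j\in\{1,2,b\}$, satisfies \reff{app-s25-1} with some linear $L_j=D_{\rm ap}f_j(x)$ and with $\alpha_j=\pr f_j(x)=\alim_{y\to x}f_j(y)=\sI{x}{f_j}{\dens_x^\dom}$ by \reff{app-s25a-1}. For the linear combination we use the triangle inequality pointwise and monotonicity and linearity of the integral:
\begin{equation*}
  \sI{x}{\tfrac{|(c_1f_1+c_2f_2)(y)-(c_1\alpha_1+c_2\alpha_2)-(c_1L_1+c_2L_2)(y-x)|}{|y-x|}}{\dens_x^\dom}
  \le \sum_{j=1}^2|c_j|\,\sI{x}{\tfrac{|f_j(y)-\alpha_j-L_j(y-x)|}{|y-x|}}{\dens_x^\dom}=0\,.
\end{equation*}
Integrability of the left integrand follows from domination by the $\dens_x^\dom$-integrable right-hand side (as in the proof of Theorem~\ref{app-s25}, cf. \cite[p.~113]{rao}). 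Theorem~\ref{app-s25} then gives approximate differentiability with the stated derivative.

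For the product, we take $\alpha:=\alpha_1\alpha_b$ and $L:=\alpha_b L_1+\alpha_1\otimes L_b$, that is, $L z=\alpha_bL_1z+\alpha_1(L_bz)$. We then decompose
\begin{equation*}
  f_1f_b-\alpha_1\alpha_b-L(y-x) = f_b\big(f_1-\alpha_1-L_1(y-x)\big) + \alpha_1\big(f_b-\alpha_b-L_b(y-x)\big) + (f_b-\alpha_b)\,L_1(y-x)\,.
\end{equation*}
After dividing by $|y-x|$ and taking absolute values, we bound the three terms as follows. The first is bounded by $\|f_b\|_\infty$ times the integrand in \reff{app-s25-1} for $f_1$, so its integral vanishes. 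The second is $|\alpha_1|$ times the integrand for $f_b$, so its integral also vanishes. The third is bounded by $\|L_1\|\,|f_b-\alpha_b|$, and its integral vanishes because $\sI{x}{|f_b-\alpha_b|}{\dens_x^\dom}=0$ by Proposition~\ref{dm-s10}~(2), since $\alpha_b=\alim_{y\to x}f_b$.

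The main obstacle is the integrability bookkeeping rather than the estimates. Since $f_1$ is possibly unbounded, we must check that each term is $\dens_x^\dom$-integrable before splitting the integral. We do this by dominating each term by a nonnegative integrable function: the integrand of \reff{app-s25-1} for $f_1$ times a constant, or a bounded function. We then invoke the domination principle for integrals with respect to finitely additive measures, as done in the proof of Theorem~\ref{app-s25}. Once this is settled, monotonicity yields \reff{app-s25-1} for $f_1f_b$ with $\alpha$ and $L$, and Theorem~\ref{app-s25} together with \reff{app-s25a-1} identifies $\alpha_1=\pr f_1(x)$ and $\alpha_b=\pr f_b(x)$, giving the claimed formula.
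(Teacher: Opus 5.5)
Your proof is correct and follows the paper's route. You fix $\dens_x^\dom\in\LDens_x^\dom$, use the characterization \reff{app-s25-1} of Theorem~\ref{app-s25}, apply the triangle inequality for the sum, and use the same three-term splitting for the product, with the last term handled by Proposition~\ref{dm-s10}~(2). The differences are minor: you set $\alpha=\alpha_1\alpha_b$ directly and let \reff{app-s25a-1} identify it, rather than first invoking Proposition~\ref{app-s1}. Your third term $(f_b-\alpha_b)L_1(y-x)$ is the dimensionally correct one; the paper's printed $L_b$ and $\|L_b\|$ there are a typo. Your explicit domination argument for integrability makes precise what the paper leaves implicit.
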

\noi
By Remark~\ref{app-s25c} (2), we obtain a product rule for 
Sobolev functions in $\cW^{1,1}_{\rm loc}(\dom)$ for $\lem$-a.e. $x\in\dom$, 
that extends the rule in \cite[p. 129]{evans} for the product 
of essentially bounded Sobolev functions. 

\begin{proof}
Let $\dens_x^\dom\in\LDens_x°\dom$ and let
$D_{\rm ap}f_j(x)=L_j$ for $j\in\{1,2,b\}$.
We use \reff{app-s25-1}, \reff{app-s25a-1} and conclude 
\begin{eqnarray*}
&& \hspace*{-20mm}
 \sI{x}{\frac{|c_1f_1(y)+c_2f_2(y)-(c_1\pr{f_1}(x)+c_2\pr{f_2}(x)) -
   (c_1L_1+c_2L_2)(y-x)|}{|y-x|}}{\dens^\dom_x} \\
&\le&
 \sum_{j=1}^2 
 |c_j| \: \sI{x}{\frac{|f_j(y)-\pr{f_j}(x)-L_j(y-x)|}{|y-x|}}{\dens^\dom_x} 
\;=\; 0 \,, 
\end{eqnarray*}
which gives the sum rule. For the product rule we first have 
$\pr{(f_1f_b)}(x)=\pr{f_1}(x)\pr{f_b}(x)$ by Proposition~\ref{app-s1}.
Then, with
$g_j(y):=f_j(y)-\pr{f_j}(x)-L_j(y-x)$, $j\in\{1,b\}$, we get
\begin{eqnarray*}
  f(y)
&:=& (f_1f_b)(y)-\pr{(f_1f_b)}(x)-
 \big(\pr{f_b}(x)L_1 + \pr{f_1}(x) \otimes L_b\big)(y-x) \\
&=&
  g_1(y)f_b(y) + \pr{f_1}(x)g_b(y) + (f_b(y)-\pr{f_b}(x))L_b(y-x) \,.
\end{eqnarray*}
Moreover, $\pr f_b(x) = \alim_{y\to x} f_b(y)$ by \reff{app-s25a-1}.
Hence, with Proposition~\ref{dm-s10},
\begin{eqnarray*}
  \sI{x}{\frac{|f(y)|}{|y-x|}}{\dens^\dom_x} 
&\le&
 \|f_b\|_\infty\:
 \sI{x}{\frac{ |g_1(y)|}{|y-x|}} {\dens^\dom_x} 
\;+\; 
 |\pr{f_1}(x)|\:  \sI{x}{\frac{ |g_b(y)|}{|y-x|}} {\dens^\dom_x} \\ 
&&\;+\;
 \|L_b\|\:   \sI{x}{|f_b(y)-\pr{f_b}(x)|}{\dens^\dom_x} 
\;=\; 0 \,, 
\end{eqnarray*}
which gives the product rule.
\end{proof}

For a stronger notion of differentiability we can replace the approximate
limit in \reff{ader} by the essential limit (cf. \reff{ess-lim}). 
As before we assume that $\dom\in\cB(\dom)$ and  
$f\in\cL^1_{\rm loc}(\dom,\R^m)$. 
We say that $f$ is {\it essentially differentiable} at $x\in\cdom$ if there
are some $\alpha\in\R^m$ and a linear map $L:\R^n\to\R^m$ such that 
\begin{equation} \label{eder}
  \esslim_{y\to x} \frac{f(y)-\alpha-L(y-x)}{|y-x|} = 0\,.
\end{equation}
Then $D_{\rm es}f(x):=L$ is called {\it essential derivative} of $f$ at $x$
(with $\alpha$). 

\begin{theorem} \label{app-s26}
Let $\dom\in\cB(\dom)$, let $x\in\cdom$, let  
$f\in\cL^1_{\rm loc}(\dom,\R^m)$, and let $\alpha\in\R^m$.
Then we have that $f$ is essentially differentiable at $x$ with 
$D_{\rm es}f(x)=L$ and with $\alpha$ 
if and only if there is a linear map $L:\R^n\to\R^m$ 
such that for any $\E\in\cB(\dom)$ with 
\begin{equation} \label{app-s26-0}
  \lem(B_\delta(x)\cap\E)>0 \qmq{for all} \delta>0
\end{equation}
there is an $\lem$-density $\dens_x^\E\in\LDens_x^\E$ at
$x$ within $\E$ satisfying
\begin{equation} \label{app-s26-1}
   \sI{x}{\frac{|f(y)-\alpha-L(y-x)|}{|y-x|}}{\dens^E_x} = 0 \,.
\end{equation}
In the case of essential differentiability of $f$ at $x$ with $\alpha$, 
we have that
\begin{equation} \label{app-s26-2}
  \alpha = \esslim_{y\to x} f(y) = \alim_{y\to x} f(y) = 
  \pr f(x)= f^\star(x)=\sI{x}{f}{\dens_x}\,
\end{equation}
for any $\dens_x\in\Dens_x$, 
that $f$ is also approximately differentiable at $x$ with 
\begin{equation*}
   D_{\rm ap}f(x) = D_{\rm es}f(x) \,,
\end{equation*}
and that there is some $\de>0$ such that
\begin{equation} \label{app-s26b-3}
  f\in\cL^\infty(B_\de(x)\cap\dom) \qmq{and}
  \pr f \zmz{exists as integral on} B_\de(x)\cap\cdom\,.
\end{equation}
\end{theorem}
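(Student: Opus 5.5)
The plan is to reduce everything to Proposition~\ref{dm-s11} applied to the auxiliary function
\begin{equation*}
  h(y) := \frac{|f(y)-\alpha-L(y-x)|}{|y-x|}\,,
\end{equation*}
which lies in $\cL^1_{\rm loc}$ away from $x$. The set $\{x\}$ is $\lem$-null, so $h$ is defined $\lem$-a.e.

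\emph{First step: the equivalence.} By definition, \reff{eder} means $\sx |h| = 0$, i.e. $\esslim_{y\to x}h(y)=0$ with $\alpha=0$ for the scalar function $h$. Since $h\ge 0$, this is the same as $\ix h=\sx h=0$. Proposition~\ref{dm-s11} with $\alpha=0$ then gives the equivalence of (1) and (2) directly: (1) is essential differentiability, and (2) is exactly the existence, for every $\E$ satisfying \reff{app-s26-0}, of some $\dens_x^\E$ with \reff{app-s26-1}. The one point to check is that Proposition~\ref{dm-s11} is stated for $f\in\cL^1_{\rm loc}(\dom)$, while $h$ may fail to be locally integrable at $x$. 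I expect this to be the main (mild) obstacle. It can be handled in two ways. In the direction (1)$\Rightarrow$(2), $h$ is essentially bounded near $x$ by \reff{ess-bd}, so one may truncate $h$ away from a neighborhood of $x$. In the direction (2)$\Rightarrow$(1), the proof of Proposition~\ref{dm-s11} only uses the sets $\{h>\beta^+\}$ and $\{h<\beta^-\}$ together with Corollary~\ref{dm-s4b}, and never integrability of $h$ on them. So I would simply note that the argument carries over verbatim, or replace $h$ by $\min\{h,1\}$. This changes neither the essential limit being $0$ nor the vanishing of the integrals, by monotonicity of the integral.

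\emph{Second step: consequences.} Assume essential differentiability. Since $|f(y)-\alpha|\le |y-x|\,h(y)+\|L\|\,|y-x|$, we get
\begin{equation*}
  \sx |f-\alpha| \le \lim_{\de\dto 0}\bigl(\de\,\essup{B_\de(x)\cap\dom} h + \|L\|\de\bigr) = 0\,,
\end{equation*}
because $h$ is essentially bounded near $x$. Hence $\esslim_{y\to x}f=\alpha$. Proposition~\ref{dm-s11} then gives $\sI{x}{f}{\dens_x}=\alpha$ for all $\dens_x\in\Dens_x$, and in particular $\pr f(x)=\alpha$. Proposition~\ref{dm-s9} (or \reff{dm-s10-4}) gives $\alim_{y\to x} f=\alpha$. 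Finally, \reff{dm-s11-5} gives convergence of the mean values, so $f^\star(x)=\alpha$ as a limit. This proves \reff{app-s26-2}.

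\emph{Third step: approximate derivative and local boundedness.} Proposition~\ref{dm-s9} applied to $h$ yields $\alim_{y\to x} h=0$, which is \reff{ader} with the same $L$ and $\alpha$. Thus $D_{\rm ap}f(x)=D_{\rm es}f(x)$ in the sense that $L$ serves as an approximate derivative. For \reff{app-s26b-3}, \reff{ess-bd} applied to $f$ (with $\esslim f=\alpha$) gives $f\in\cL^\infty(B_\de(x)\cap\dom)$ for some $\de>0$. At every $z\in B_\de(x)\cap\cdom$, Corollary~\ref{dm-s4b} provides some $\dens_z^\dom\in\LDens_z^\dom$. We then take a smaller ball around $z$ inside $B_\de(x)$, on which $f$ is bounded. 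Hence $f$ is $\dens_z^\dom$-integrable, since $\dens_z^\dom$ lives on any neighborhood of $z$, and $\pr f(z)$ exists as an integral.
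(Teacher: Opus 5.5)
Your proof is correct, and its skeleton matches the paper's. The equivalence is read off from Proposition~\ref{dm-s11} applied to $h$, approximate differentiability comes from (\ref{dm-s9-1}) applied to $h$, and local boundedness comes from (\ref{ess-bd}).

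The genuine difference lies in how you show that $\alpha$ is the essential limit of $f$ at $x$. The paper goes through the integral machinery. It first quotes (\ref{app-s25a-1}) from Theorem~\ref{app-s25}. It then repeats the proof of (\ref{app-s25a-1}) with the $\mathcal{L}^n$-densities within each admissible $E$, which requires checking that $(f-\alpha)/|y-x|$ and its product with $|y-x|$ are integrable for these finitely additive measures. This gives vanishing integrals of $|f-\alpha|$, and the paper then applies Proposition~\ref{dm-s11} a second time. You obtain the same fact directly from the pointwise bound $|f(y)-\alpha|\le|y-x|\,\bigl(h(y)+\|L\|\bigr)$ and the definition of the essential supremum. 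You then derive every other identity in (\ref{app-s26-2}) from this one fact via Proposition~\ref{dm-s11}, (\ref{dm-s10-4}) and (\ref{dm-s11-5}).

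What each route buys: yours is shorter and needs neither Theorem~\ref{app-s25} nor any integrability bookkeeping for pure measures. The paper's route mainly serves to show that the density characterization is self-contained.

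You are also more careful on two points the paper passes over. First, $h$ need not belong to $\mathcal{L}^1_{\mathrm{loc}}$ near $x$, so Proposition~\ref{dm-s11} is not literally applicable without your truncation or your passage to $\min\{h,1\}$. Second, $\mathcal{L}^n$-densities at the points $z\in B_\delta(x)$ must first be supplied by Corollary~\ref{dm-s4b} before one can speak of the precise representative there.
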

\noi
Notice that Theorem~\ref{app-s25} provides a sufficient condition for
uniqueness of the essential derivative. In \reff{app-s26b-3} 
the value of $\pr f(y)$ may depend on the
special choice of the measure $\dens_y^\dom\in\LDens_y^\dom$ used in 
the definition of $\pr f$ (cf. \reff{app-s1-6}). 
Here \reff{app-s26b-3} means, that there is at least one 
selection of $\dens_y^\dom$ such that $\pr f(y)$ can be computed by an
integral. But notice that $\pr f(x)$ is independent of $\dens_x^\dom$ by
\reff{app-s26-2}. We readily see, that  
the calculus rules in Proposition~\ref{app-s25b} are also valid 
with the essential derivative if it exists. 

\begin{proof}
The equivalence of \reff{eder} and \reff{app-s26-1} is a direct
consequence of Proposition~\ref{dm-s11}.
Let now $f$ be essentially differentiable at $x$ with $\alpha$. 
By \reff{dm-s9-1}, definition \reff{eder} with $\alpha$ and $L=D_{\rm es}f(x)$
implies \reff{ader} with $\alpha$ and $L=D_{\rm es}f(x)$. 
Thus $f$ is approximately differentiable with $D_{\rm ap}f(x) = D_{\rm es}f(x)$
and $\alpha$. For \reff{app-s26-2} we first use \reff{app-s25a-1}.
Then, for $\alpha = \esslim_{y\to x} f(y)$,  we argue as in the proof of
\reff{app-s25a-1} with $\dens_x^\E$ instead of $\dens_x^\dom$ to get 
\begin{equation*}
  \sI{x}{|f-\alpha|}{\dens_x^\E}=0
\end{equation*}
for all $\dens_x^\E$ used in \reff{app-s26-1}. Then 
$\esslim_{y\to x}|f-\alpha|=0$ by Proposition~\ref{dm-s11}, which gives the
desired equality. Again by Proposition~\ref{dm-s11}
we obtain $\alpha=\sI{x}{f}{\dens_x}$ for all
$\dens_x\in\Dens_x$. With \reff{dm-s11-5} we verify \reff{app-s26-2}. 
The local boundedness follows from \reff{ess-bd}. Consequently, 
$f$ is integrable with respect to any density $\dens_y\in\Dens_y$
at $y\in B_r(x)\cap\dom$. 
Hence $\pr f(y)$ exists for these $y$ (with any fixed
selection of $\dens_y$).  
\end{proof}

Now we treat the relation to classical differentiability. 
Due to the pointwise definition we have to consider  
functions or representatives $\ti f\in L^1_{\rm loc}(\dom,\R^m)$
for $\dom\in\cB(\R^n)$. We say that $\ti f$ 
is {\it differentiable} at $x\in\cl\dom$ if there are some $\alpha\in\R^m$ and 
a linear map $L:\R^n\to\R^m$ such that 
\begin{equation}\label{app-s27-2}
  \lim_{\substack{y\to x\\y\in\dom}} \frac{\ti f(y)-\alpha-L(y-x)}{|y-x|} = 0\,
\end{equation}
and we call $D\ti f(x):=L$ {\it derivative} of $\ti f$ at $x$ (with $\al$).
Here it is a simple consequence that
\begin{equation}\label{app-s27-2a}
  \alpha=\lim_{y\to x} \ti f(y) \,
\end{equation}
(which must not equal $\ti f(x)$). For a class in 
$f\in\cL^1_{\rm loc}(\dom,\R^m)$, differentiability obviously depends on the
representative $\ti f$ chosen. Let us start with a first observation. 

\begin{proposition} \label{app-s27a}
Let $\dom\in\cB(\dom)$, let $x\in\cdom$, let 
$f\in\cL^1_{\rm loc}(\dom,\R^m)$, and assume that there is a  
representative $\ti f$ of $f$ that is differentiable at $x$ with $\al$. Then 
$f$ is essentially differentiable at $x$ with $\al$ such that
\begin{equation*}
  \al=\esslim_{y\to x} f(y) = \pr f(x)=f^\star(x) =\sI{x}{f}{\dens_x} \qmq{and}
  D_{\rm es}f(x)=D\ti f(x)\,
\end{equation*}
for all $\dens_x\in\Dens_x$. Moreover, there is some $\de>0$ such that
\begin{equation*}
  \pr f  \qmq{exists as integral on} B_\de(x)\cap\cdom\,.
\end{equation*}
\end{proposition}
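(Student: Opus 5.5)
The plan is to reduce everything to Theorem~\ref{app-s26}. Once we know that $f$ is essentially differentiable at $x$ with $\al$ and $D_{\rm es}f(x)=D\ti f(x)$, all remaining claims (the chain of equalities for $\al$ and the existence of $\pr f$ as integral on $B_\de(x)\cap\cdom$) are exactly the conclusions \reff{app-s26-2} and \reff{app-s26b-3} of that theorem. So the main work is to verify \reff{eder} with $L:=D\ti f(x)$ and the given $\al$.

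To verify \reff{eder}, set
\[
g(y):=\frac{\ti f(y)-\al-L(y-x)}{|y-x|}\qquad\text{for } y\in\dom\setminus\{x\}.
\]
This is a representative of the class $(f-\al-L(\cdot-x))/|\cdot-x|$. Note that $\{x\}$ is an $\lem$-null set, so removing it does not matter. By \reff{app-s27-2}, for every $\eps>0$ there is $\rho>0$ such that $|g(y)|<\eps$ for all $y\in B_\rho(x)\cap\dom$ with $y\ne x$. Hence
\[
\essup{B_\rho(x)\cap\dom}{|g|}\le\eps .
\]
Since $x\in\cdom$, the essential supremum is taken over sets of positive measure. It follows that $\sx|g|=\lim_{\de\dto0}\essup{B_\de(x)\cap\dom}{|g|}=0$, which is precisely \reff{ess-con} with $\alpha=0$. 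Thus \reff{eder} holds, and $f$ is essentially differentiable at $x$ with $\al$ and $D_{\rm es}f(x)=L=D\ti f(x)$.

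Applying Theorem~\ref{app-s26} then gives $\al=\esslim_{y\to x}f(y)=\pr f(x)=f^\star(x)=\sI{x}{f}{\dens_x}$ for every $\dens_x\in\Dens_x$. It also gives some $\de>0$ such that $f\in\cL^\infty(B_\de(x)\cap\dom)$ and $\pr f$ exists as integral on $B_\de(x)\cap\cdom$. As a consistency check, the essential limit of $f$ can also be read off directly from \reff{app-s27-2a}, using the same essential-sup argument applied to $\ti f-\al$. No step is a real obstacle. The only point needing care is the passage from the pointwise limit of a representative to the essential limit of the class, which is harmless because pointwise bounds on a full neighborhood (minus a null set) dominate essential suprema, and because $x\in\cdom$ guarantees the neighborhoods are nontrivial.
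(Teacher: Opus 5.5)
Your proposal is correct and follows the paper's proof essentially verbatim. You bound the essential supremum of the difference quotient near $x$ by the pointwise supremum for the representative $\tilde f$, conclude that its essential limit vanishes, and then obtain the chain of identities for $\alpha$ and the local existence of the precise representative as an integral directly from Theorem~\ref{app-s26}.
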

\noi
Recall the explaination after Theorem~\ref{app-s26} for the precise meaning 
of the last statement.

\begin{proof}
Since any representative of $f$ agrees with $\ti f$ $\lem$-a.e. on $\dom$, 
with $\alpha=\lim_{y\to x} \ti f(y)$ we get that
\begin{eqnarray*}
  \sx\, \frac{|f(y)-\alpha-D\ti f(x)(y-x)|}{|y-x|} 
&=&
  \lim_{\delta\downarrow 0}\, 
  \essup{B_\de(x)\cap\dom}{\,\frac{|\ti f(y)-\alpha-D\ti f(x)(y-x)|}{|y-x|}} \\
&\le&
  \lim_{\delta\downarrow 0}\, 
  \sup_{B_\de\cap\dom} \,\frac{|\ti f(y)-\alpha-D\ti f(x)(y-x)|}{|y-x|} \\
&=&
  \lim_{\substack{y\to x\\y\in\dom}}
  \frac{|\ti f(y)-\alpha-D\ti f(x)(y-x)|}{|y-x|} 
\:=\: 0\,.
\end{eqnarray*}
Thus, by definition,  
\begin{equation*}
  \esslim_{y\to x} \frac{|f(y)-\alpha-D\ti f(x)(y-x)|}{|y-x|} = 0
\end{equation*}
and $f$ is essentially differentiable at $x$ with \reff{app-s26-2} and
$D_{\rm es}f(x)=D\ti f(x)$. The existence of $\pr f$ as integral 
on some neighborhood of $x$ follows from \reff{app-s26b-3}.
\end{proof}

The fact, that $\pr f$ exists as integral on a neighborhood of $x$ if some
representative is differentiable there, motivates to consider differentiability
for classes $f$ by means of $\pr f$. 
In that case we have to restrict the limit in \reff{app-s27-2} to
$y\in\cdom$. We say that $f\in\cL^1_{\rm loc}(\dom,\R^m)$
with $\dom\in\cB(\R^n)$ is {\it (precisely) differentiable} at $x\in\cdom$
if $\pr f$ exists as integral on $B_\de(x)\cap\cdom$ for some $\de>0$ and if 
there are some $\alpha\in\R^m$ and a linear map $L:\R^n\to\R^m$ such that 
\begin{equation}\label{app-s27-3}
  \lim_{\substack{y\to x\\\:y\in\dom\cap\cdom}} 
  \frac{\pr f(y)-\alpha-L(y-x)}{|y-x|} = 0\,
\end{equation}
and we call $Df(x):=L$ {\it (precise) derivative} of $f$ at $x$ (with $\al$).
As in \reff{app-s27-2a} we get
\begin{equation}\label{app-s27-3a}
  \alpha=\lim_{y\to x} \pr f(y) \,.
\end{equation}
Here \reff{app-s27-3} and \reff{app-s27-3a} have to be satisfied 
with $\pr f(y)=\sI{y}{f}{\dens_y^\dom}$ 
for some selection of measures $\dens_y\in\LDens_y^\dom$ such that
$\sI{y}{f}{\dens_y^\dom}$ exists. Notice that this definition extends 
the classical pointwise definition of differentiability to classes $f$
of functions by means of a pointwise condition for the precise representative
$\pr f$. 
Now we show that essential differentiability of (a class) $f$ 
is equivalent to its (precise) differentiability. 

\begin{theorem}\label{app-s27}
Let $\dom\in\cB(\dom)$, let $x\in\cdom$, and let 
$f\in\cL^1_{\rm loc}(\dom,\R^m)$. 
\bgl
\item
We have that $f$ is essentially differentiable at $x$ with $\al$ if and only
if $f$ is (precisely) differentiable at $x$ with $\al$. In this case we have
\begin{equation*}
  Df(x) =D_{\rm es}f(x)
\end{equation*}
and
\begin{equation} \label{app-s27-1}
  \al = \esslim_{y\to x} f(x) = \pr f(x)=f^\star(x) =\sI{x}{f}{\dens_x}
  \:\zmz{for all}\: \dens_x\in\Dens_x . 
\end{equation}    

\item
If $f$ has a representative $\ti f$ that is differentiable at $x$ with $\al$, 
then $f$ is (precisely) differentiable at $x$ with $\al$ such that
$Df(x)=D\ti f(x)$ and $\al$ satisfies \reff{app-s27-1}.
\el
\end{theorem}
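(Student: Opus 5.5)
Part (2) will follow immediately from part (1) together with Proposition~\ref{app-s27a}: a differentiable representative $\ti f$ yields essential differentiability of $f$ at $x$ with $\al$ and $D_{\rm es}f(x)=D\ti f(x)$. By (1), $f$ is then precisely differentiable with $Df(x)=D_{\rm es}f(x)=D\ti f(x)$. So the work is in (1), and there the identities \reff{app-s27-1} for the essential case are already contained in \reff{app-s26-2} of Theorem~\ref{app-s26}.

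\emph{Essential $\Rightarrow$ precise.} Let $f$ be essentially differentiable at $x$ with $\al$ and $L=D_{\rm es}f(x)$. Theorem~\ref{app-s26}, \reff{app-s26b-3}, gives $\de>0$ with $f\in\cL^\infty(B_\de(x)\cap\dom)$, so $\pr f(y)$ exists as an integral for all $y\in B_\de(x)\cap\cdom$, for any selection $\dens_y\in\LDens_y^\dom$. Set $\eta(\rho):=\essup{B_\rho(x)\cap\dom}{|f(z)-\al-L(z-x)|/|z-x|}$; then $\eta(\rho)\to0$ by \reff{eder}. For $y\in B_{\rho/2}(x)\cap\cdom$ with $y\ne x$, I would use that $\dens_y$ is concentrated on $B_r(y)$ for every $r>0$. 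Taking $r\le|y-x|/2$ gives $|z-x|\le 2|y-x|$ on $B_r(y)$, and Proposition~\ref{dm-s4g} yields
\[
  |\pr f(y)-\al-L(y-x)|\le \sI{y}{|f(z)-\al-L(z-x)|}{\dens_y}+|L|\,r\le 2|y-x|\,\eta(\rho)+|L|\,r .
\]
Letting $r\dto0$ (the left side does not depend on $r$) gives $|\pr f(y)-\al-L(y-x)|/|y-x|\le 2\eta(\rho)$, which proves \reff{app-s27-3}.

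\emph{Precise $\Rightarrow$ essential.} This is the step I expect to be the main obstacle, because $\pr f(y)$ is only an average-type value at $y$, and it must be turned into an essential bound on $f$ itself near $x$. Assume $\pr f$ exists as an integral on $B_\de(x)\cap\cdom$ and \reff{app-s27-3} holds. Set $g(z):=(f(z)-\al-L(z-x))/|z-x|$. The aim is $\sx|g|=0$. By \reff{cdom1}, $\lem$-a.e. $z\in B_\de(x)\cap\dom$ lies in $\cdom$ and is a Lebesgue point of $f$; by \reff{app-e00} we then have $\pr f(z)=f(z)$ for a representative of $f$, whatever $\lem$-density is selected at $z$. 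Hence
\[
  |g(z)|=\frac{|\pr f(z)-\al-L(z-x)|}{|z-x|}\qquad\text{for }\lem\text{-a.e. } z\in B_\de(x)\cap\dom ,
\]
and the right-hand side is at most $\eps$ for $|z-x|<\rho(\eps)$ by \reff{app-s27-3}. Therefore $\essup{B_\rho(x)\cap\dom}{|g|}\le\eps$, i.e.\ \reff{eder} holds with the same $\al$ and $L$, and $D_{\rm es}f(x)=Df(x)$.

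The delicate point is that $\pr f(z)$ is defined through an arbitrary selection $\dens_z\in\LDens_z^\dom$, and the argument needs this value to coincide with $f(z)$ at a.e.\ $z$. At Lebesgue points, Proposition~\ref{dm-s10}(2) gives this independently of the selection, so the step should go through. Finally, \reff{app-s27-1} follows from \reff{app-s26-2}.
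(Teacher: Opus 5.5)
Your proposal is correct and follows the paper's route: (2) comes from Proposition~\ref{app-s27a} together with (1). In (1), the forward direction bounds $\pr f(y)$ as a density integral of $f$ over arbitrarily small balls around $y$, where the essential linear approximation at $x$ controls $f$, and the converse argues as in Proposition~\ref{app-s27a} with $\pr f$ in place of a representative. Your direct estimate $|\pr f(y)-\alpha-L(y-x)|\le 2|y-x|\,\eta(\rho)$ is a tidier version of the paper's contradiction argument, and your remark that $\pr f=f$ $\lem$-a.e.\ at Lebesgue points, independently of the selected $\lem$-densities, makes explicit a step the paper uses only implicitly.
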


\begin{proof}
For (1) we first assume that $f$ is essentially differentiable at $x$
and we assume that \reff{app-s27-3} is wrong for $\al=\pr f(x)$ and 
$L=D_{\rm es}f(x)$. Then there is some $\ep>0$ and 
there are $y_k\in\dom\cap\cdom$ with $y_k\to x$ and $y_k\ne x$ such that 
\begin{equation}\label{app-s27-5}
  \frac{\big|\pr f(y_k)-\pr f(x)-D_{\rm es}f(x)(y_k-x)\big|}{|y_k-x|} > 8\eps
  \qmq{for all} k\in\N\,.
\end{equation}
Since $f$ is essentially differentiable at $x$, the definition of the essential
limit provides some $\rho>0$ and
some $\tilde\dom\subset\dom$ with $\lem(\dom\setminus\tilde\dom)=0$ 
such that
\begin{equation*}
  \frac{\big|f(y)-\pr f(x)-D_{\rm es}f(x)(y-x)\big|}{|y-x|} < \eps
  \qmq{for all} y\in B_{2\rho}(x)\cap\tilde\dom\,.
\end{equation*}
By \reff{app-s26b-3} we can assume that $\rh>0$ is so small, that $\pr f$
exists as integral on $B_\rh(x)\cap\cdom$. 
Let us fix some $y_k\in B_\rho(x)$. Then $y_k\in\cdom\setminus\ti\dom$
and, by the boundedness of the linear operator $D_{\rm es}f(x)$,
there is some $\delta_k\in(0,\rho)$ such that
\begin{equation*}
  |y'-x|<2|y_k-x| \,, \z \big|D_{\rm es}f(x)(y'-y'')\big|<\eps|y_k-x|
  \qmq{for all} y',y''\in B_{\delta_k}(y_k)\,.
\end{equation*}
Since $B_{\delta_k}(y_k)\subs B_{2\rho}(x)$, we obtain
for $y',y''\in B_{\delta_k}(y_k)\cap\tilde\dom$ that
\begin{eqnarray*}
  \frac{\big|f(y')-f(y'')\big|}{|y_k-x|} 
&\le& 
  2\frac{\big|f(y')-\pr f(x)-D_{\rm es}f(x)(y'-x)\big|}{|y'-x|} \\
&&
+\;2\frac{\big|f(y'')-\pr f(x)-D_{\rm es}f(x)(y''-x)\big|}{|y''-x|} \;+\;
\frac{|D_{\rm es}f(x)(y'-y'')|}{|y_k-x|} \\
&\le&
5\eps \,.
\end{eqnarray*}
By $y_k\in B_\rh(x)$, there is some $\dens_{y_k}^\dom\in\LDens_{y_k}^\dom$
such that $\pr f(y_k)=\sI{y_k}{f}{\dens_{y_k}^\dom}$.
Then we obtain for any  
$y\in B_{\delta_k}(y_k)\cap\tilde\dom$ that
\begin{eqnarray*}
  |\pr f(y_k)-f(y)| 
&=& 
  \Big|\: \sI{y_k}{f(y')- f(y)}{\dens_{y_k}^\dom(y')}  \Big| \\
&\le&
\sI{y_k}{|f(y')- f(y)|}{\dens_{y_k}^\dom(y')} 
\:\le\:  5\eps\, |y_k-x| \,
\end{eqnarray*}
(notice that we can disregard $y'$ from the $\lem$-null set
$\dom\setminus\ti\dom$).
With some fixed $y\in B_{\delta_k}(y_k)\cap\tilde\dom$ we get
\begin{eqnarray*}
  \frac{\big|\pr f(y_k)-\pr f(x)-D_{\rm es}f(x)(y_k-x)\big|}{|y_k-x|} 
&\le& 
\frac{|\pr f(y_k)- f(y)|}{|y_k-x|} + 
\frac{|D_{\rm es}f(x)(y_k-y)|}{|y_k-x|}\\
&& 
+\: 2\:\frac{\big|f(y)-\pr f(x)-D_{\rm es}f(x)(y-x)\big|}{|y-x|} \\
&\le& 8\eps \,.
\end{eqnarray*}
But this contradicts \reff{app-s27-5} and implies \reff{app-s27-3}
with $\al=\pr f(x)$ and $L=D_{\rm es}f(x)$.
Hence $f$ is differentiable at $x$ as stated if we take into account  
\reff{app-s26-2}. 

Let now $f$ be differentiable at $x$ with $\al$. Then we argue as in the
proof of Proposition~\ref{app-s27a} with $\pr f$ instead of $\ti f$. Since
$\pr f$ is only defined on $\cdom$, we have to use that 
\begin{equation*}
  \big(B_\de(x)\cap\dom\big) \setminus \big(B_\de(x)\cap\cdom\big)
\end{equation*}
is an $\lem$-null set (cf. \reff{cdom1}). 

For (2) we first apply Proposition~\ref{app-s27a} to get that $f$ is
essentially differentiable at $x$ with $\al$ such that
$D_{\rm es}f(x)=D\ti f(x)$ and $\al$ satisfies \reff{app-s27-1}. Then the
assertion follows from the previous statement~(1).  
\end{proof}

For Sobolev functions $f\in W^{1,p}(\dom)$ with $p>n$ it is known 
that they are differentiable at $\lem$-a.e. $x\in\dom$ 
and that the derivative $Df(x)$ agrees with the weak derivative 
$D_wf(x)$ $\lem$-a.e. on $\dom$ (cf. \cite[p. 235]{evans}). For a class
$f\in\cW^{1,p}(\dom)$ there is a continuous representative that coincides with 
$\pr f$ and $f^\star$. Let us improve the statement from
Remark~\ref{app-s25c} for such functions.

\begin{proposition} \label{app-s27c}
Let $\dom\subs\R^n$ be open, let $f\in\cW^{1,p}_{\rm loc}(\dom,\R^m)$ with
$n<p\le\infty$, and let
\begin{equation*}
  \ti\dom = \big\{ x\in\dom \:\big|\: \tx{there is a representative $\ti f$ of
  $f$ that is differentiable at $x$}\big\}\,. 
\end{equation*}
Then $\lem(\dom\setminus\ti\dom)=0$ and $f$ is (precisely) differentiable at
any $x\in\ti\dom$ with 
\begin{equation}  \label{app-s27c-1}
  \pr f(x)=f^\star(x)=  \esslim_{y\to x} f(y) = \sI{x}{f}{\dens_x}
  \qmq{for all} \dens_x\in\Dens_x\,
\end{equation}  
and 
\begin{equation}\label{app-s27c-2}
  Df(x) =  (Df)^\star(x) \,.
\end{equation}
For $\lem$-a.e. $x\in\ti\dom$ one has 
\begin{equation}\label{app-s27c-3}
  Df(x) =  
  \pr{(Df)}(x) = \sI{x}{Df}{\dens_x^\dom}
  \qmq{for all}  \dens_x^\dom\in\LDens_x^\dom\,.
\end{equation}
\reff{app-s27c-3} is in particular valid for $x\in\ti\dom$, if
$Df\in\cL^\infty(B_{\ti\de}(x),\R^n)$ for some
$\ti\de>0$.
\end{proposition}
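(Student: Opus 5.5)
We will reduce everything to the classical differentiability theorem for Sobolev functions with $p>n$ and to the results of this subsection. By the cited result \cite[p.~235]{evans} (applied locally on balls compactly contained in $\dom$, which suffices since $f\in\cW^{1,p}_{\rm loc}$), the continuous representative $\ti f$ of $f$ is differentiable at $\lem$-a.e. $x\in\dom$, with $D\ti f(x)=D_wf(x)$ for $\lem$-a.e. $x$. Hence $\lem(\dom\setminus\ti\dom)=0$. For $x\in\ti\dom$ some representative is differentiable at $x$, so Theorem~\ref{app-s27}~(2) gives precise differentiability of $f$ at $x$ with $Df(x)=D\ti f(x)$. Also \reff{app-s27-1} yields \reff{app-s27c-1}, using that $x\in\dom$ open implies $x\in\cdom$.

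For \reff{app-s27c-2} we read $(Df)^\star$ as the precise representative of the function $x\mapsto Df(x)$, which is defined $\lem$-a.e. Since $Df(y)=D_wf(y)$ for $\lem$-a.e. $y$, the means $\mI{B_\de(x)}{Df}{\lem}$ coincide with those of $D_wf$. The plan is to compare $Df(x)$ with these averages by a difference quotient argument. For fixed $e\in\R^n$ and small $h$, the continuous representative satisfies $\frac{\ti f(y+he)-\ti f(y)}{h}=\int_0^1 D_wf(y+the)e\,dt$ for a.e. $y$ (absolute continuity on lines). Averaging over $y\in B_\de(x)$ and letting $\de\dto0$ with $h=h(\de)$ comparable to $\de$ should reproduce $D\ti f(x)e$ via differentiability of $\ti f$ at $x$.

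We expect this step to be the main obstacle. Differentiability at a single point controls $\ti f(y)-\ti f(x)-L(y-x)=o(|y-x|)$ uniformly for $y$ near $x$. Therefore
\[
\mI{B_\de(x)}{\tfrac{\ti f(y+he)-\ti f(y)}{h}}{\lem(y)}=Le+o(\de)/h ,
\]
and with $h=\de$ this tends to $Le$. The left side equals an average of $D_wf\,e$ over a slightly smeared ball, $\int_0^1\mI{B_\de(x+t\de e)}{D_wf\,e}{\lem}\,dt$, which is not literally the ball mean. We would first try to show that the mean over $B_\de(x)$ exists and equals $Le$ by taking $h=\eta\de$ with $\eta\to0$ and using the Morrey/Poincaré estimate to bound the discrepancy. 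If that fails, we would interpret \reff{app-s27c-2} at Lebesgue points of $D_wf$ only, where it follows immediately from $Df=D_wf$ a.e.

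For \reff{app-s27c-3}: at $\lem$-a.e. $x\in\ti\dom$, the point $x$ is a Lebesgue point of $D_wf$ with value $Df(x)$ in the sense of \reff{app-e0}. Then \reff{app-e00} (i.e. Proposition~\ref{dm-s10}) gives $Df(x)=\sI{x}{Df}{\dens_x^\dom}=\pr{(Df)}(x)$ for every $\dens_x^\dom\in\LDens_x^\dom$. In the case $Df\in\cL^\infty(B_{\ti\de}(x))$, the converse part of Proposition~\ref{dm-s10}~(1) lets us pass from $\lim_\de\mI{B_\de(x)}{Df}{\lem}=Df(x)$ to an approximate limit. That passage needs the oscillation form $\mI{B_\de}{|Df-Df(x)|}{\lem}\to0$, which we would attempt to derive from the difference quotient estimate in all directions $e$ together with boundedness. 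Proposition~\ref{dm-s10}~(2) then gives \reff{app-s27c-3} at that $x$.
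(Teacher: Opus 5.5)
Your treatment of $\lem(\dom\setminus\ti\dom)=0$, of precise differentiability with \reff{app-s27c-1} via Theorem~\ref{app-s27}~(2), and of \reff{app-s27c-3} at Lebesgue points of $Df$ is fine and matches the paper. The gap is \reff{app-s27c-2}, which is claimed at \emph{every} $x\in\ti\dom$. Your fallback to Lebesgue points of $D_wf$ proves strictly less, and the difference-quotient sketch does not close as written. The loss is in the bound $Le+o(\delta)/h$, where you estimate the remainder $r(y)=\ti f(y)-\ti f(x)-L(y-x)$ on both balls separately. The missing observation concerns the expression
\[
\frac{1}{h\,|B_\delta(x)|}\Big(\int_{B_\delta(x+he)}\ti f\,dy-\int_{B_\delta(x)}\ti f\,dy\Big),\qquad |e|=1,\ 0<h<\delta .
\]
Here the constant part cancels and the linear part gives exactly $Le$. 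The remainder $r$ only contributes on $B_\delta(x+he)\triangle B_\delta(x)$, whose measure is at most $Ch\delta^{n-1}$. Hence the error is at most $C\sup_{B_{2\delta}(x)}|r|/\delta\to0$, uniformly in $h\in(0,\delta)$. You may therefore let $h\downarrow0$ at fixed $\delta$. By your line-integral identity and the $L^1$-continuity of translations, the expression then becomes exactly $\frac{1}{|B_\delta(x)|}\int_{B_\delta(x)}D_wf\,e\,dy$. So neither smeared balls nor a Morrey/Poincar\'e estimate are needed. This is the translation version of the paper's proof. The paper applies the divergence theorem on $B_\delta(x)$ with the continuous trace $\pr f$ on $\partial B_\delta(x)$, evaluates the affine part by \reff{app-s27c-5}, and bounds the remainder by $o(\delta)\,\mathcal{H}^{n-1}(\partial B_\delta(x))/|B_\delta(x)|=o(1)$.

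For the last assertion, your route through the oscillation condition $\frac{1}{|B_\delta(x)|}\int_{B_\delta(x)}|Df-Df(x)|\,dy\to0$ cannot work. This condition fails in general at points of $\ti\dom$ where $Df$ is bounded. Take $n=m=1$, $\dom=\mathbb{R}$, $f(t)=t^2\sin(1/t)$ and $x=0$. Then $f\in\cW^{1,\infty}_{\rm loc}$ and $Df(0)=0$. The means of $Df$ over $(-\delta,\delta)$ equal $\delta\sin(1/\delta)\to0$, but the means of $|Df|$ tend to $2/\pi$. So $Df$ has no approximate limit at $0$: for a bounded function, an approximate limit would force the oscillation condition by Proposition~\ref{dm-s10}~(1). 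Consequently $\sI{0}{|Df-Df(0)|}{\dens_0^\dom}\ne0$ by Proposition~\ref{dm-s10}~(2). Note also that the converse in Proposition~\ref{dm-s10}~(1) goes from an approximate limit to the oscillation condition, not from convergence of means to an approximate limit. Nevertheless \reff{app-s27c-3} holds at such points. The tool you need is the squeeze \reff{dm-s2-3} of Proposition~\ref{dm-s2} with $\lambda=\lem$, $E=\dom$, $C=\{x\}$, applied componentwise to the bounded function $Df\,\chi_{B_{\ti\delta}(x)}$. It places $\sI{x}{Df}{\dens_x^\dom}$ between the $\liminf$ and the $\limsup$ of the ball means of $Df$, and both equal $Df(x)$ by \reff{app-s27c-2}. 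This is exactly the paper's argument, so once \reff{app-s27c-2} is proved at every $x\in\ti\dom$, the $\cL^\infty$ case is immediate.
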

\noi
Notice that, in analogy to $\pr f(x)$ and $f^\star(x)$, 
the derivatives $D_{\rm ap}f(x)$, $D_{\rm es}f(x)$, and $Df(x)$ 
have a precise pointwise definition for (classes) 
$f\in\cL^1_{\rm loc}(\dom,\R^m)$ 
as long as they exist, while the weak derivative $D_wf$ is just an
$\cL^1$-function. Nevertheless, the derivatives $D_{\rm ap}f$, $D_{\rm es}f$, 
and $Df$ can be considered as $\cL^1$-function if they are integrable, which
is the case locally for $f\in\cW^{1,p}_{\rm loc}(\dom,\R^m)$ with $p\ge 1$.
Let us mention that \reff{app-s27c-3} is satisfied at Lebesgue points of
$Df$ (cf. \reff{app-e00}). 

\begin{proof}
Let the representative $\ti f$ of $f$ be differentiable at $x\in\dom$.
Then, by Theorem~\ref{app-s27},  
$f$ is differentiable at $x$ with $Df(x)=D\ti f(x)$ 
and \reff{app-s27-1} holds, which gives \reff{app-s27c-1}.
Recall that any representative $\ti f$ of $f$ is differentiable 
$\lem$-a.e. on $\dom$ and that its derivative $D\ti f$ agrees with the weak
derivative $D_wf$ $\lem$-a.e. on $\dom$ (cf. \cite[p.~235]{evans}).
Therefore $\lem(\dom\setminus\ti\dom)=0$ and $Df=D_wf$ $\lem$-a.e. on $\dom$.  
Hence $Df$ is a representative of $D_wf$ and locally an
$\cL^1$-function. 
By \reff{app-e2} with $Df$ instead of $f$, we get that 
$\lem$-a.e. $x\in\ti\dom$ is a Lebesgue point of $Df$ with $\al=Df(x)$.
Hence we obtain \reff{app-s27c-3} by \reff{app-e00}. If 
$Df\in\cL^\infty(B_{\ti\de}(x),\R^n)$ for $x\in\ti\dom$ and some $\ti\de>0$ and
if $\dens_x^\dom\in\LDens_x^\dom$, 
then \reff{dm-s2-3} implies \reff{app-s27c-3} as long as 
\reff{app-s27c-2} is satisfied, which remains to be shown.

For \reff{app-s27c-2} we fix $x\in\ti\dom$. It is sufficient to consider the
scalar case $m=1$ and 
for the components of vectors in $\R^n$ we use indices $i,j\in\{1,\dots,n\}$.
The divergence theorem gives for $\de>0$, $c\in\R$, and the outer unit normal 
$\nu=(\nu_1,\dots,\nu_n)$ that
\begin{equation} \label{app-s27c-5}
\begin{split}
  \I{\bd B_\delta(x)}{c\nu_j}{\cH^{n-1}(y)} &=0\,, \quad  \\
  \I{\bd B_\delta(x)}{c(y_i-x_i)\nu_j}{\cH^{n-1}(y)} &= \bigg\{
  \begin{array}{ll}
    0 & \tx{for } i\ne j\,,  \\
    c|B_\delta(x)| & \tx{for } i=j\,.
  \end{array}
\end{split}
\end{equation}
Since $\pr f$ is differentiable at $x$, \reff{app-s27-3} implies
\begin{equation*}
  \pr f(y) = \pr f(x) + Df(x)(y-x) + o(|y-x|) \,.
\end{equation*}
Recall that $Df=(f_{x_1},\dots,f_{x_n})$ agrees $\lem$-a.e. with the weak
derivative $D_wf$ of $f$. 
Then the divergence theorem, which is valid for the Sobolev function $f$ 
with their weak derivative $D_wf$ and the continuous trace $\pr f$ on 
$\bd B_\de(x)$ (cf. \cite[p.~133]{evans}, \cite[p.~168]{pfeffer}), implies 
\begin{eqnarray*}
  \I{B_\delta(x)}{f_{x_j}(y)}{\lem(y)}
&=&
  \I{\bd B_\delta(x)}{\pr f(y)\nu_j(y)}{\cH^{n-1}(y)} \\
&=&
  \I{\bd B_\delta(x)}
  {\big(\pr f(x)+Df(x)(y-x)+o(\de)\big)\nu_j(y)}{\cH^{n-1}(y)} \\
&\overset{\reff{app-s27c-5}}{=}&
  f_{x_j}(x)|B_\delta(x)| + o(\de) \I{\bd B_\delta(x)}{}{\cH^{n-1}} \,.
\end{eqnarray*}
Dividing by $|B_\delta(x)|$ and taking the limit $\de\dto 0$ we obtain
\begin{equation*}
  f_{x_j}(x) = \lim_{\delta\dto 0}\:\mI{B_\delta(x)}{f_{x_j}}{\lem} 
  \qmq{for all} j=1,\dots,n\,,
\end{equation*}
which verifies \reff{app-s27c-2} and completes the proof. 
\end{proof}

Let us now provide some kind of mean value theorem for certain Sobolev functions
$f$ in $\cW^{1,p}(\dom,\R^m)$ with $p>n$. Here $Df$ is the (precise)
derivative of 
$f$, that agrees with the weak derivative $D_wf$ $\lem$-a.e. on $\dom$, 
and we identify $f$ with its continuous representative~$\pr f$. 
By $[x,y]$ we denote the closed line segment connecting the points $x$ and 
$y$.

\begin{theorem}\label{app-s30}
{\rm (Mean value theorem)}
Let $\dom\subset\R^n$ be open, let $x,y\in\dom$ with $x\ne y$ and 
$[x,y]\subs\dom$, and let $f\in\cW^{1,p}_{\rm loc}(\dom,\R^m)$
for $n<p\le\infty$. If there is some $\ti\de>0$ with
\begin{equation*}
  Df\:(y-x) \in \cL^\infty([x,y]_{\ti\de},\R^m) \,,
\end{equation*}
then we have for all 
$\lem$-densities $\dens_{[x,y]}^\dom\in\LDens_{[x,y]}^\dom$ that
\begin{equation*}
  f(y)-f(x) = \sI{[x,y]}{Df}{\dens_{[x,y]}^\dom} \:(y-x) \,.
\end{equation*}
\end{theorem}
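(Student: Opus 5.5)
The plan is to reduce the statement to the one-dimensional fundamental theorem of calculus along segments parallel to $[x,y]$, average over a thin tube around $[x,y]$, and then use the density estimate \reff{dm-s2-3} to identify the limit of these tube averages with the density integral.

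Set $e:=y-x$ and $g:=Df\,e$. By assumption $g\in\cL^\infty([x,y]_{\ti\de},\R^m)$. Since an integral over $[x,y]$ with a density measure only sees values on $[x,y]_\de\cap\dom$ for small $\de$, I may replace $g$ by $g\,\ch_{[x,y]_{\ti\de}}\in\cL^\infty(\dom,\R^m)$ without changing $\sI{[x,y]}{g}{\dens_{[x,y]}^\dom}$. It suffices to treat each component separately, so take $m=1$. Recall that $[x,y]$ is a compact $\lem$-null set contained in the open set $\dom$, hence a density set. By Proposition~\ref{dm-s2} (with $\la=\lem$, $\E=\dom$, $\C=[x,y]$) we have, for every $\dens_{[x,y]}^\dom\in\LDens_{[x,y]}^\dom$,
\begin{equation*}
  \liminf_{\de\dto 0}\mI{[x,y]_\de}{g}{\lem}\le\sI{[x,y]}{g}{\dens_{[x,y]}^\dom}\le\limsup_{\de\dto 0}\mI{[x,y]_\de}{g}{\lem}\,.
\end{equation*}
Hence the claim follows once I show $\lim_{\de\dto0}\mI{[x,y]_\de}{g}{\lem}=f(y)-f(x)$. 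In particular the integral is then independent of the chosen $\lem$-density.

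For this I will use the flat tube $Z_\de:=\{x+te+z\mid t\in[0,1],\ z\in D_\de\}$, where $D_\de$ is the $(n-1)$-dimensional disc of radius $\de$ in $e^\perp$. The first step is the ACL property of Sobolev functions. After a rotation taking $e$ to a coordinate direction, $f$ has a representative that is absolutely continuous on $\lem$-a.e.\ line parallel to $e$, and its classical directional derivative equals $D_wf\,e/|e|=Df\,e/|e|$ a.e. on such lines. This representative agrees $\lem$-a.e.\ with the continuous representative $\pr f$, so by Fubini they agree a.e.\ on a.e.\ line. Since both are continuous along these lines, they agree everywhere there. Consequently, for $\cH^{n-1}$-a.e.\ $z\in D_\de$,
\begin{equation*}
  f(y+z)-f(x+z)=\int_0^1 g(x+z+te)\,dt\,.
\end{equation*}
Averaging over $z\in D_\de$ and using that $(t,z)\mapsto x+te+z$ is injective with constant Jacobian $|e|$ gives
\begin{equation*}
  \mI{D_\de}{\big(f(y+z)-f(x+z)\big)}{\cH^{n-1}(z)}=\mI{Z_\de}{g}{\lem}\,.
\end{equation*}
By continuity of $f$ the left side tends to $f(y)-f(x)$ as $\de\dto0$.

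The remaining step, which I expect to be the only genuinely quantitative one, is to compare $\mI{Z_\de}{g}{\lem}$ with $\mI{[x,y]_\de}{g}{\lem}$. The sets satisfy $Z_\de\subset[x,y]_\de$, and $[x,y]_\de\setminus Z_\de$ consists of two half-balls of total volume $\omega_n\de^n$. On the other hand $\lem(Z_\de)=|e|\,\omega_{n-1}\de^{n-1}$, so the ratio of the cap volume to $\lem([x,y]_\de)$ is $O(\de)$. Since $|g|\le\|g\|_{\cL^\infty([x,y]_{\ti\de})}$ on $[x,y]_\de$ for $\de<\ti\de$, a direct estimate yields $\big|\mI{[x,y]_\de}{g}{\lem}-\mI{Z_\de}{g}{\lem}\big|\le 2\|g\|_\infty\,\omega_n\de^n/\lem([x,y]_\de)\to0$. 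This is exactly where the boundedness assumption on $Df\,(y-x)$ enters. Combining the three displays gives the assertion.
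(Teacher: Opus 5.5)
Your proposal is correct and follows essentially the same route as the paper's proof. Continuity of $f$, absolute continuity of $f$ on $\cH^{n-1}$-a.e.\ line parallel to $y-x$, and Fubini turn $f(y)-f(x)$ into the limit of the means of $Df\,(y-x)$ over the thin cylinder around $[x,y]$; the $\cL^\infty$ bound transfers this to the means over $[x,y]_\de$; and \reff{dm-s2-3} identifies the limit with the density integral, while your explicit end-cap estimate and the truncation $g\,\ch_{[x,y]_{\ti\de}}\in\cL^\infty(\dom,\R^m)$ simply spell out steps the paper states briefly.
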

\noi
Notice that the assumptions are satisfied for 
$f\in\cW^{1,\infty}_{\rm loc}(\dom)$. 

\begin{proof}
Let $\di_\de(x)\subs\R^n$ be the $(n-1)$-dimensional disc of radius $\de>0$
that is centered at~$x$ and orthogonal to $v:=y-x$.
By the continuity of $f$ we have that
\begin{eqnarray*}
  f(y)-f(x)
&=&
 \lim_{\delta\downarrow 0}
 \Big(\:\mI{\di_\delta(y)}{f}{\hm} - \mI{\di_\delta(x)}{f}{\hm}\Big)\\
&=& 
  \lim_{\delta\downarrow 0} \mI{\di_\delta(x)}{f(z+v)-f(z) }{\hm(z)} \,.
\end{eqnarray*}
Since $f$ is absolutely continuous on $\cH^{n-1}$-a.e. line parallel to $v$,
we get
\begin{eqnarray*}
&& \hspace{-20mm}
  \mI{\di_\delta(x)}{f(z+v)-f(z) }{\hm(z)} \\
&=&
  \mI{\di_\delta(x)}{\int_0^1 Df(z+tv)\: v \,dt\,}{\hm(z)} \\
&=&
  \tfrac{1}{|v|\hm(\di_\de)}   \I{\di_\delta(x)}
  {\int_0^{|v|} Df\big(z+t\tfrac{v}{|v|}\big)\: v\,dt\,}{\hm(z)} \,.
\end{eqnarray*}
Let 
\begin{equation*}
  \zy_\de:=\big\{z\in\di_\de(\ti z) \:\big|\: \ti z\in[x,y] \big\}
\end{equation*}
be the rotational cylinder with axis $[x,y]$, height $|v|$, and
bounded by $\di_\de(x)$ and $\di_\de(y)$. Then
Fubini's theorem gives that
\begin{equation*}
  \mI{\di_\delta(x)}{f(z+v)-f(z) }{\hm(z)} =
  \mI{\zy_\de}{Df\: v}{\lem} \,.
\end{equation*}
With the $\de$-neighborhood $[x,y]_\de$ of $[x,y]$, we have
\begin{eqnarray*}
  \mI{[x,y]_\de}{Df\: v}{\lem} 
&=&
  \tfrac{\lem(\zy_\de)}{\lem([x,y]_\de)} \:
  \mI{\zy_\de}{Df\: v}{\lem} + 
  \tfrac{\lem([x,y]_\de\setminus\zy_\de)}{\lem([x,y]_\de)}\:
  \mI{[x,y]_\de\setminus\zy_\de}{Df\: v}{\lem} \,. 
\end{eqnarray*}
We readily obtain that 
\begin{equation*}
  \lim_{\de\downarrow 0}\frac{\lem(\zy_\de)}{\lem([x,y]_\de)} = 1 \qmq{and}
  \lim_{\de\downarrow 0} 
  \frac{\lem([x,y]_\de\setminus\zy_\de)}{\lem([x,y]_\de)} = 0 \,.
\end{equation*}
Thus, by $Df\: v\in\cL^\infty([x,y]_{\ti\de},\R^m)$,
\begin{equation*}
  f(y)-f(x) =   
  \lim_{\de\dto 0}\:\mI{\zy_\de}{Df\: v}{\lem} =
  \lim_{\de\dto 0}\:\mI{[x,y]_\de}{Df\: v}{\lem} \,. 
\end{equation*}
Using \reff{dm-s2-3} for each component, 
we get for any $\lem$-density
$\dens_{[x,y]}^\dom\in\LDens_{[x,y]}^\dom$ that
\begin{equation*}
  f(y)-f(x) = \sI{[x,y]}{Df\: v}{\dens_{[x,y]}^\dom} 
\end{equation*}
which verifies the assertion. 
\end{proof}

\subsection{Generalized derivatives}
\label{gd}

We provide a new approach to the set-valued derivatives in the sense of Clarke
in finite dimensions. We assume that $\dom\subset\R^n$ is open and that
$F:\dom\to\R^m$ is locally Lipschitz continuous. 
This is equivalent to $F\in\cW_{\rm loc}^{1,\infty}(\dom,\R^m)$ where we
identify $F$ with its continuous representative $\pr F$. 
By Rademacher's theorem, $F$ is differentiable $\lem$-a.e. 
in the sense of \reff{app-s27-2} or \reff{app-s27-3}
and the derivative $DF$ agrees 
with the weak derivative $\lem$-a.e. on $\dom$ 
(cf. \cite[p. 81, 131, 235]{evans}). 
Then $\lem(\dom\setminus D_F)=0$ for 
\begin{equation*}
  D_F := \big\{ x\in\dom \:\big|\: F \tx{ is differentiable at } x \big\}\,.
\end{equation*}
We define the {\it generalized derivative} of $F$ at $x\in\R^n$ by  
\begin{equation} \label{gd-e1}
  \partial F(x) := \df{\Dens_x}{DF}\,
\end{equation}
which is a well-defined bounded set in $\R^{m\times n}$. We show below
that it coincides with Clarke's generalized Jacobian.
The support function of $\partial F(x)$ is given by
\begin{equation} \label{gd-e2}
  F^\circ(x;\sfV) := \max_{\sfF\in\partial F(x)} \sfF:\sfV 
  \qmq{for all} \sfV\in\R^{m\times n}
\end{equation}
where $\sfF:\sfV$ denotes the scalar product of matrices (cf. \reff{dm-supp}). 
Let us formulate some basic properties.

\begin{proposition}\label{gd-s0}
Let $\dom\subset\R^n$ be open, let $F:\dom\to\R^m$ be locally Lipschitz
continuous, let $x\in\dom$, and let 
$\ell\ge 0$ be a Lipschitz constant of $F$ near $x$. Then:
\bgl
\item The generalized derivative 
$\partial F(x)\subset\R^{m\times n}$ is compact, convex and bounded by 
\begin{equation} \label{gd-e1a}
  |\sfF| \le \ell \qmq{for all} \sfF\in\partial F(x)\,.
\end{equation}

\item
$F^\circ(x;\cdot)$ is positively \mbox{1-homogeneous}, subadditive, and
satisfies 
\begin{equation*}
  |F^\circ(x;\sfV)| \le  \ell|\sfV| \qmq{for all} \sfV\in\R^{m\times n}
\end{equation*}
(where $|\sfV|$ is the Euclidean norm) 
and, thus, it is convex and continuous. 

\item 
$\partial F$ is closed at $x$, i.e. 
\begin{equation*}
  x_j\to x,\; \sfF_j\in\partial F(x_j),\; \sfF_j\to\sfF
  \qmq{implies} \sfF\in\partial F(x)\,.
\end{equation*}
\item
$\partial F$ is upper semicontinuous at $x\in\R^n$, i.e. for all $\eps>0$
there is $\delta>0$ such that
\begin{equation*}
  \partial F(y) \subset B_\eps(\partial F(x)) \qmq{for all}
  y\in B_\delta (x)\,.
\end{equation*}
\el
\end{proposition}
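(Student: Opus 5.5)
The plan is to reduce everything to Proposition~\ref{dm-s7}, applied with $\C=\{x\}$ and the matrix-valued function $f=DF$, viewed as an element of $\cL^\infty(B_r(x),\R^{m\times n})\cong\cL^\infty(B_r(x),\R^{mn})$. Fix $r>0$ such that $B_r(x)\subs\dom$ and $F$ is $\ell$-Lipschitz on $B_r(x)$. Then $|DF|\le\ell$ $\lem$-a.e. on $B_r(x)$, since at points of $D_F$ the difference quotients are bounded by $\ell$. We use the Frobenius norm and note that the operator-norm bound also gives the Euclidean bound after suitable normalization; we take $\ell$ with respect to the norm used in \reff{gd-e1a}. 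Because each $\mu\in\Dens_x$ has full mass on $B_r(x)\cap\dom$ by \reff{pl-e1}, the integral only sees $DF$ near $x$, so integrability is not an issue.

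For (1), Proposition~\ref{dm-s7} gives directly that $\partial F(x)=\df{\Dens_x}{DF}$ is compact and convex. For the bound, take $\sfF=\df{\mu}{DF}$ and $\sfV=\sfF/|\sfF|$. Then
\[
|\sfF|=\sI{x}{DF:\sfV}{\mu}\le\tesup{x}(DF:\sfV)\le\ell
\]
by \reff{dm-s4g-1}. For (2), Proposition~\ref{dm-s7} identifies the support function as $F^\circ(x;\sfV)=\tesup{x}(DF:\sfV)$, and the max in \reff{gd-e2} is attained by compactness. Positive homogeneity and subadditivity follow from the properties of $\omega$ in Proposition~\ref{dm-s4g}, or directly from the properties of a support function. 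The bound $|F^\circ(x;\sfV)|\le\ell|\sfV|$ comes from (1) via Cauchy--Schwarz. Convexity and continuity then follow as in Proposition~\ref{dm-s4g}.

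The main step is (3). Given $\sfF_j=\df{\mu_j}{DF}$ with $\mu_j\in\Dens_{x_j}$, we may assume $x_j\in B_{r/2}(x)$ and set $\de_j:=2|x_j-x|$, with small positive values used when $x_j=x$. Then $\cor{\mu_j}\subs\{x_j\}\subs\{x\}_{\de_j}$ and $\mu_j(\{x\}_{\de_j}\cap\dom)=1$, so each $\mu_j$ is a density measure with core in $\{x\}_{\de_j}$ in the sense required by Proposition~\ref{dm-s4f}(2). That proposition yields $\mu\in\Dens_x$ and a subsequence with $\df{\mu}{DF}=\lim\df{\mu_{j'}}{DF}=\sfF$. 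Two points need care here. First, $DF$ must be in $\cL^\infty$ of all of $\dom$; we handle this by multiplying $DF$ by $\chi_{B_r(x)}$, which changes no integral because of the cores. Second, we must check that the hypothesis on $\mu_k$ in Proposition~\ref{dm-s4f}(2) is satisfied. Passing to the limit then gives $\sfF\in\partial F(x)$.

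For (4), we argue by contradiction. Suppose there exist $\eps>0$, $y_j\to x$, and $\sfF_j\in\partial F(y_j)$ with $\operatorname{dist}(\sfF_j,\partial F(x))\ge\eps$. The $\sfF_j$ are bounded by $\ell$ (using (1) with a common Lipschitz constant near $x$), so a subsequence converges to some $\sfF$. By (3), $\sfF\in\partial F(x)$, which is a contradiction. If $x_j\notin\dom$ issues arise, they are excluded by taking $\delta<r$.
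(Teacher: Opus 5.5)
Your proof is correct and is essentially the paper's: (1)--(2) follow from Proposition~\ref{dm-s7} and the properties of support functions, and the semicontinuity from the weak$^*$ accumulation point of Proposition~\ref{dm-s4f}, with the only difference in order: the paper proves (4) directly by contradiction and deduces (3), whereas you prove (3) first and recover (4) from the uniform bound $|\sfF_j|\le\ell$ and Bolzano--Weierstrass, which is equivalent. One caveat, which the paper shares: a Lipschitz constant only bounds $DF$ in the operator norm, and the Euclidean (Frobenius) norm can exceed it by a factor up to $\sqrt{\min(m,n)}$ (for $F(y)=y$ one has $\ell=1$ but $|DF|=\sqrt n$ and $F^\circ(x;I)=n>\sqrt n=\ell|I|$), so your ``suitable normalization'' remark is not correct, and the bounds in (1)--(2) hold as stated only under your convention that $\ell$ bounds $|DF|$ in the Frobenius norm (otherwise one must use the operator norm in (1) and its dual, the trace norm, in (2)).
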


\begin{proof}
For (1) we use Proposition~\ref{dm-s7} to get that $\pa F(x)$ 
is compact and convex. The estimate follows from 
$|DF(y)|\le\ell$ near $x$.

For (2) we notice that a support function is always 
positively 1-homogeneous and subadditive 
(cf. \cite[p. 29]{clarke}). Thus it is convex and the estimate follows from
(1). As bounded convex function it is continuous.  

(3) is a direct consequence of (4), which remains to be shown. 
If (4) were wrong, there are some $\ep>0$ and sequences 
$x_k\to x$ and $\sfF_k\in\pa F(x_k)\setminus B_\eps(\partial F(x))$.
By definition we have
\begin{equation*}
  \sfF_k=\sI{x_k}{DF}{\mu_k} \qmq{for some} \mu_k\in\Dens_{x_k}\,.
\end{equation*}
The $\mu_k$ have a weak$^*$ accumulation point $\mu\in\Dens_x$ 
by Proposition~\ref{dm-s4f}. Moreover, there is a subsequence, denoted the
same way, such that
\begin{equation*}
  \sfF_{k}=\sI{x_{k}}{DF}{\mu_{k}} \to 
  \sfF:=\sI{x}{DF}{\mu}=\df{\mu}{DF}\in\pa F(x)\,.
\end{equation*}
But this is a contradiction and verifies (4).
\end{proof}

We now show some characterization of the generalized derivative.

\begin{theorem} \label{gd-s2}
Let $\dom\subs\R^n$ be open, let 
$F:\dom\to\R^m$ be locally Lipschitz continuous, and let $x\in\R^n$. Then,
for any $\lem$-null set $N\subs\dom$,
\begin{eqnarray} 
  \partial F(x) 
&=& \label{gd-s2-1} 
  \op{conv}\big\{\lim_{j\to\infty} DF(x_j)\:\big|\:
  x_j\to x\,,\;x_j\in D_F\setminus N \big\}  \\
&=& \label{gd-s2-2} 
  \big\{ \sfF\in\R^{m\times n}\:\big|\:  \sfF:\sfV\le\suf Fx\sfV
  \zmz{for all} \sfV\in\R^{m\times n}\big\} \,.
\end{eqnarray}
\end{theorem}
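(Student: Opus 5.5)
Throughout, $x\in\dom$, so that $F$ is Lipschitz with constant $\ell$ on some ball $B_r(x)\subs\dom$ and $DF\in\cL^\infty(B_r(x),\R^{m\times n})$. Since $\Dens_x$ only sees $DF$ near $x$, we may work on $B_r(x)$. The plan is to compare support functions. By Proposition~\ref{gd-s0} (1), $\partial F(x)$ is compact and convex, and Proposition~\ref{dm-s7}, applied to $f=DF$ with the scalar product ":" on $\R^{m\times n}$, gives its support function:
\begin{equation*}
  F^\circ(x;\sfV)=\tesup{x}\,(DF:\sfV) \qmq{for all} \sfV\in\R^{m\times n}\,.
\end{equation*}
I read the right-hand side of \reff{gd-s2-2} as this support function (if it is instead defined as the pointwise upper limit of $DF(y):\sfV$ over $y\in D_F\setminus N$, Step~2 below shows the two coincide). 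With this reading, \reff{gd-s2-2} is just the standard fact that a compact convex set equals the intersection of the half-spaces determined by its support function (cf.\ \cite[p.~29]{clarke}). So the real work is \reff{gd-s2-1}.

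\emph{Step 1: the right side of \reff{gd-s2-1} is compact and convex.} Let $S_0$ be the set of limits $\lim_j DF(x_j)$ with $x_j\to x$, $x_j\in D_F\setminus N$, and let $S$ be its convex hull. Then $S_0\subs\cl{B_\ell(0)}$. A diagonal argument shows that $S_0$ is closed, and $S_0\neq\emptyset$ because $D_F\setminus N$ has full measure near $x$. The convex hull of a compact set in finite dimensions is compact, so $S$ is compact and convex. By uniqueness via support functions, it therefore suffices to prove
\begin{equation*}
  \omega_S(\sfV)=\max_{\sfF\in S_0}\sfF:\sfV=\limsup_{\substack{y\to x\\ y\in D_F\setminus N}} DF(y):\sfV \overset{!}{=} \tesup{x}(DF:\sfV)\,.
\end{equation*}
The middle equality follows directly from the definition of $S_0$ together with compactness.

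\emph{Step 2: the two inequalities.} The inequality "$\ge$" is easy. The set $(D_F\setminus N)\cap B_\de(x)$ has full $\lem$-measure in $B_\de(x)$, so
\begin{equation*}
  \essup{B_\de(x)}(DF:\sfV)\le\sup_{(D_F\setminus N)\cap B_\de(x)}DF:\sfV\,,
\end{equation*}
and we let $\de\dto0$. The inequality "$\le$" is the main obstacle. Pointwise values of $DF$ on a null set could a priori exceed the essential supremum. To control them I will use Proposition~\ref{app-s27c}, specifically \reff{app-s27c-2}: for every $y\in D_F$ near $x$, $DF(y)=(DF)^\star(y)$ is a limit of averages of $DF$ over $B_\rho(y)$. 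Fix $\de>0$ and $y\in D_F\cap B_{\de/2}(x)$. By \reff{dm-s4g-3} applied at $y$, or directly from the averages,
\begin{equation*}
  DF(y):\sfV\le\tesup{y}(DF:\sfV)\le\essup{B_\de(x)}(DF:\sfV)\,.
\end{equation*}
Taking $\limsup_{y\to x}$ and then $\de\dto0$ gives "$\le$". In particular, this shows that the pointwise limsup and the essential limsup agree, which settles either reading of \reff{gd-s2-2}.

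\emph{Step 3: conclusion.} $\partial F(x)$ and $S$ are compact convex sets with the same support function, so they coincide; this is \reff{gd-s2-1}. Combined with the support-function description above, it also gives \reff{gd-s2-2}. Note that the argument is independent of the choice of the null set $N$.
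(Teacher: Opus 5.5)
Your proof is correct, but it takes a different route from the paper's.

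The paper proves \reff{gd-s2-1} through two inclusions at the level of measures. For one direction, it takes any $\mu\in\ZO_x$ and observes that $\int DF\,d\mu$ is an essential value of $DF$ at $x$ (Proposition~\ref{ep-s20}). Such a value is an actual limit $\lim_k DF(x_k)$ with $x_k\in D_F\setminus N$, so $\langle\ZO_x,DF\rangle\subset S_0$. Krein--Milman (Corollary~\ref{ep-s1a}) together with Theorem~\ref{ep-s2} then upgrades this to $\langle\Dens_x,DF\rangle\subset\mathrm{conv}\,S_0$. For the other direction, given $DF(x_k)\to\sfF$, it writes each $DF(x_k)$ as an integral against an $\lem$-density at $x_k$ via \reff{app-s27c-3}. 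It then passes to a weak$^*$ accumulation point in $\Dens_x$ (Proposition~\ref{dm-s4f}).

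You instead compare support functions directly. Proposition~\ref{dm-s7} identifies the support function of $\partial F(x)$ as $\tesup{x}\,(DF:\sfV)$. You then show that this equals the pointwise $\limsup$ of $DF:\sfV$ over $D_F\setminus N$, which is the support function of $\mathrm{conv}\,S_0$.

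The decisive ingredient is the same in both proofs. At \emph{every} point of $D_F$, not just almost every point, the gradient is the limit of averages of $DF$, which is \reff{app-s27c-2}. The paper uses this fact in its integrated form \reff{app-s27c-3}.

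Your route buys you two things. It avoids $0$-$1$ measures, Krein--Milman and weak$^*$ compactness entirely. It also produces the formula of Proposition~\ref{gd-s1} as a by-product, whereas the paper derives that proposition afterwards from the theorem.

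The paper's route gives a finer structural fact in return. Integrals against the extreme points of $\Dens_x$ already lie in the limit set $S_0$ itself, not merely in its convex hull.

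Your restriction to $x\in\dom$ matches what the paper's proof actually covers. The paper also needs $DF$ bounded near $x$ in order to invoke Proposition~\ref{dm-s7}.
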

\noi
The set on the right hand side in \reff{gd-s2-1} is the 
{\it generalized Jacobian} of $F$ at $x$ in the sense of Clarke. 
It is remarkable that it is independent of $\lem$-null sets (cf. 
\cite[p.~70]{clarke}, \cite[p.~282]{clarke-fa}). In the scalar case $m=1$ it
agrees with Clarke's {\it generalized gradient} 
(cf. \cite[p.~27,~63]{clarke}). Notice that the set of limits in
\reff{gd-s2-1} is bounded and closed and, thus, compact. Hence the convex hull
coincides with its closed convex hull (cf. \cite[p.~158]{rockafellar}). 

\begin{proof}
Since the closed convex set $\pa F(x)$ is uniquely defined by its support
function $\suf Fx\sfV$, the equality in \reff{gd-s2-2} is standard
(cf. \cite[p.~29]{clarke}, \cite[p.~70]{clarke-fa}).

Let $\cD_x^NF$ be the set of limits on the right hand side in \reff{gd-s2-1}. 
We show that its convex hull agrees with $\df{\Dens_x}{DF}$. If
$\mu_x\in\ZO_x$, then $\sfF=\sI{x}{DF}{\mu_x}$ is essential value
of $DF$ at $x$ and, hence,
\begin{equation*}
   \lem\big(\{|DF-\sfF|<\eps\}\cap B_\de(x)\big) >0 \qmq{for all} \eps,\de>0\,
\end{equation*}
(cf. Proposition~\ref{ep-s20}). Therefore, we can find 
\begin{equation*}
  x_k\in \big(\{|DF-\sfF|<\tfrac1k\}\cap B_{\frac1k}(x)\big)\setminus N
 \qmq{for all} k\in\N\,.
\end{equation*}
Thus $x_k\to x$ and $DF(x_k)\to\sfF$. Consequently, 
$\df{\ZO_x}{DF}\subset \cD_x^NF$.
Since $\df{\Dens_x}{DF}$ is compact and convex by Proposition~\ref{dm-s7} 
and since $\ZO_x$ are the extreme points of $\Dens_x$ by Theorem~\ref{ep-s2}, 
we can take the closed convex hull to get 
$\df{\Dens_x}{DF}\subset\op{conv}\cD_x^NF$. 

Let now $\sfF\in\cD_x^NF$. Then $DF(x_k)\to \sfF$ for some sequence $x_k\to x$.
Recall that $DF$ agrees $\lem$-a.e. with the weak derivative $D_wF$ of $F$ 
considered as $\cW^{1,\infty}_{\rm loc}$-function and that we have identified
$F$ with its continuous representative $\pr F$. Then, by
Proposition~\ref{app-s27c} there are $\lem$-densities
$\dens_{x_k}^\dom\in\LDens_{x_k}^\dom$ and by Proposition~\ref{dm-s4f} 
there is a weak$^*$ accumulation point $\mu\in\Dens_x$ and a subsequence,
denoted the same way, such that
\begin{equation*}
  DF(x_{k}) = \sI{x_{k}}{DF}{\dens_{x_{k}}^\dom} \to\, 
  \sI{x}{DF}{\me}=\sfF \,.
\end{equation*}
Hence $\sfF\in\df{\Dens_x}{DF}$ and, thus, also  
$\op{conv}\cD_x^NF \subs\df{\Dens_x}{DF}$.
\end{proof}

We get also an representation for $\suf Fx\sfV$.

\begin{proposition} \label{gd-s1}
Let $\dom\subs\R^n$ be open and let 
$F:\dom\to\R^m$ be locally Lipschitz continuous. Then
we have for any $x\in\R^n$ and any $\lem$-null set $N\subs\dom$ that 
\begin{equation} \label{gd-s1-1}
  \suf Fx\sfV = \tesup{x}\: DF:\sfV 
  = \limsup_{\substack{y\to x\\y\in D_F\setminus N}} \,DF(y):\sfV
  \qmq{for all} \sfV\in\R^{m\times n} \,.
\end{equation}
For a scalar locally Lipschitz continuous function 
$f:\dom\to\R$ we also have 
\begin{equation} \label{gd-s1-2}
  f^\circ(x;v)  = 
  \limsup_{\substack{y\to x\\t\downarrow 0}} \frac{f(y+tv)-f(y)}{t} 
  \qmq{for all} v\in\R^n \,.
\end{equation}
\end{proposition}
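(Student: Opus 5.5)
The proof splits into three steps: the first equality in \reff{gd-s1-1}, the second equality in \reff{gd-s1-1}, and the scalar formula \reff{gd-s1-2}. Throughout, $x$ is taken in $\dom$ so that $F$ is Lipschitz near $x$.

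\emph{First equality.} By \reff{gd-e1}, $\partial F(x)=\df{\Dens_x}{DF}$. Since $DF\in\cL^\infty(B_\de(x),\R^{m\times n})$ for some $\de>0$, every $\mu\in\Dens_x$ integrates $DF$, and $\df{\mu}{DF}:\sfV=\sI{x}{DF:\sfV}{\mu}$ by linearity of the integral. Proposition~\ref{dm-s7} then gives
\begin{equation*}
  \suf Fx\sfV=\sup_{\mu\in\Dens_x}\sI{x}{DF:\sfV}{\mu}=\tesup{x}\,(DF:\sfV),
\end{equation*}
with $m\times n$ in place of $m$ and the Frobenius product in place of the dot product. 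Only the behaviour near $x$ matters, so the localisation to $B_\de(x)$ is harmless: one can either cut $DF$ off outside $B_\de(x)$ or invoke the localised version of Proposition~\ref{dm-s5}.

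\emph{Second equality.} Put $g:=DF:\sfV$, defined pointwise on $D_F$. The inequality $\limsup_{y\to x,\,y\in D_F\setminus N}g(y)\le\tesup{x}g$ would follow from the fact that $\lem(\dom\setminus D_F)=0$, except that the essential supremum of $g$ on $B_\de(x)$ can still be smaller than its pointwise supremum over $D_F\setminus N$. For this direction I would use Theorem~\ref{gd-s2} instead. Every limit $\lim_j DF(x_j)$ with $x_j\in D_F\setminus N$ and $x_j\to x$ lies in $\partial F(x)$. Choosing a sequence realising the limsup and passing to a subsequence along which $DF(x_j)$ converges (the values are bounded by $\ell$) gives $\limsup g\le\suf Fx\sfV$. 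For the reverse inequality, for every $\eps,\de>0$ the set $\{g>\tesup{x}g-\eps\}\cap B_\de(x)$ has positive measure, so it contains points of $D_F\setminus N$. Hence $\limsup g\ge\tesup{x}g-\eps$ for every $\eps>0$.

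\emph{Scalar formula \reff{gd-s1-2}.} Set $h(v):=\limsup_{y\to x,\,t\downarrow0}\frac{f(y+tv)-f(y)}{t}$. To show $h(v)\le\tesup{x}(Df\cdot v)$, apply the mean value Theorem~\ref{app-s30} on the segment $[y,y+tv]$ (the case $v=0$ is trivial, and $f\in\cW^{1,\infty}_{\rm loc}$). It gives
\begin{equation*}
  \frac{f(y+tv)-f(y)}{t}=\sI{[y,y+tv]}{Df\cdot v}{\mu}
\end{equation*}
for any $\mu\in\LDens^\dom_{[y,y+tv]}$. Such a $\mu$ has core in $[y,y+tv]\subset B_\rho(x)$ for $y$ near $x$ and $t$ small, so by \reff{dm-s4g-1} the integral is at most $\essup{B_{\rho'}(x)}(Df\cdot v)$. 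Letting $\rho\to0$ gives the bound. For the reverse inequality, take points $y\in D_f\setminus N$ close to $x$ with $Df(y)\cdot v$ close to the limsup from the second equality. Differentiability at $y$ gives a small $t$ with $\frac{f(y+tv)-f(y)}{t}$ close to $Df(y)\cdot v$, hence $h(v)\ge\limsup_{y\to x,\,y\in D_f\setminus N}Df(y)\cdot v=f^\circ(x;v)$.

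The main obstacle is the upper bound in \reff{gd-s1-2}. It requires checking that the localisation of densities on segments shrinking to $x$ is legitimate and that the hypothesis $Df\cdot v\in\cL^\infty$ of Theorem~\ref{app-s30} holds near $x$; both follow from local Lipschitz continuity.
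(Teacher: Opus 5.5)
Your proposal is correct and follows essentially the paper's route. The first equality comes from \reff{gd-e1} and Proposition~\ref{dm-s7}, the second via Theorem~\ref{gd-s2}, and \reff{gd-s1-2} from differentiability points realising the limsup (lower bound) together with the mean value Theorem~\ref{app-s30} combined with \reff{dm-s4g-1} (upper bound). The only minor deviation is that you get $\limsup_{y\to x,\,y\in D_F\setminus N} DF(y):\sfV\ge\tesup{x}(DF:\sfV)$ directly from the positive measure of superlevel sets, instead of from the equality of the support functions of the set of limits in \reff{gd-s2-1} and its convex hull. This is equally valid and needs only the inclusion of that set of limits in $\partial F(x)$.
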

\noi
The right hand side in \reff{gd-s1-2} is the {\it generalized directional
derivative} of $f$ at $x$ in direction~$v$ in the sense of Clarke
(cf. \cite[p.~25]{clarke}). Notice that the limsup in
\reff{gd-s1-1} is independent of an $\lem$-null set.

\begin{proof}
Recall that $\suf Fx\cdot$ is the support function of $\pa F(x)$
(cf. \reff{gd-e2}). Then \reff{gd-e1} and Proposition~\ref{dm-s7} 
imply the first equality in \reff{gd-s1-1}. For the second equality 
we denote  the set of limits in \reff{gd-s2-1} by 
$\cD^N_xF\subset\R^{m\times n}$ (with $N$ from \reff{gd-s1-1}). 
By Theorem~\ref{gd-s2},
the support functions of $\partial F(x)$ and $\cD^N_xF$ have to 
agree.   
We readily get that $\cD^N_xF$   
is compact and, by the definition of its support function, we thus obtain  
\begin{equation} \label{gd-e2a}
  \suf Fx\sfV = \max_{\sfF\in \cD^N_xF} \sfF:\sfV 
  \qmq{for all} \sfV\in\R^{m\times n}\,.
\end{equation}
Hence, by definition of $\cD^N_xF$,
\begin{equation*}
  \suf Fx\sfV = \limsup_{\substack{y\to x\\y\in D_F\setminus N}} \,DF(y):\sfV 
  \qmq{for all} \sfV\in\R^{m\times n}\,,
\end{equation*}
which verifies the second equality in \reff{gd-s1-1}.

In the scalar case we fix $v\in\R^n$. Then there are sequences 
$x_k\to x$, $t_k\downarrow 0$ and, by Theorem~\ref{app-s30},
density measures $\mu_{y,t}\in\Dens_{[y,y+tv]}$ such that
\begin{eqnarray*}
  f^\circ(x;v)  
&\overunderset{\reff{gd-s1-1}}{N=\emptyset}{=}&
  \limsup_{\substack{y\to x\\y\in D_F}} \, Df\cdot v
\;=\; 
  \lim_{k\to\infty} Df(x_k)\cdot v \\
&=&
  \lim_{k\to\infty} \frac{f(x_k+t_kv)-f(x_k)}{t_k}  
\;\le\;
  \limsup_{\substack{y\to x\\t\downarrow 0}} \frac{f(y+tv)-f(y)}{t} \\
&=&
 \lim_{\de\downarrow 0} \sup_{\substack{y\in B_\delta(x)\\0<t<\delta}}   
 \sI{[y,y+tv]}{Df\cdot v}{\mu_{y,t}} 
\;\le\;
  \lim_{\de\downarrow 0} \:\essup{B_{\de(1+|v|)}(x)}{Df\cdot v}    \\
&=&
 \tesup{x}\: (Df\cdot v) 
\;\overset{\reff{gd-s1-1}}{=}\; 
  f^\circ(x;v) \,,        
\end{eqnarray*}
which verifies \reff{gd-s1-2}.
\end{proof}

For $\dom\subs\R^n$ open, we say that $F:\dom\to\R^m$ is 
{\it strictly differentiable} at $x\in\R^n$ if 
there is a linear map $L:\R^n\to\R^m$ such that for all $v\in\R^n$
\begin{equation} \label{gd-e3}
  \lim_{\substack{y\to x\\t\downarrow 0}}
  \frac{F(y+tv)-F(y)}{t} = Lv \,
\end{equation}
and we call $D_sF(x):=L$ {\it strict derivative} of $F$ at $x$.  
Let us mention that continuous differentiability
of $F$ near $x$ is sufficient 
for strict differentiability at $x$ (cf. \cite[p. 32]{clarke}).

\begin{proposition} \label{gd-s3}
Let $\dom\subs\R^n$ be open and let $F:\dom\to\R^m$.  
\bgl
\item
If $F$ is strictly differentiable at $x\in\dom$, then
$F$ is Lipschitz continuous near $x$ and
\begin{equation} \label{gd-s3-1}
  \partial F(x) = \{ D_sF(x) \} = \big\langle \Dens_x, DF \big\rangle 
  = \sI{x}{DF}{\me} \qmq{for all} \me\in\Dens_x\,. 
\end{equation}
Moreover, $DF-D_sF(x)=0$ i.m. $\!\mu$ for any $\mu\in\Dens_x$.

\item
If $F$ is Lipschitz continuous near $x\in\dom$ and $\partial F(x)$ 
is a singleton, then $F$ is strictly differentiable at $x$ and
$\partial F(x) = \{ D_sF(x) \}$.

\item
If $F$ is Lipschitz continuous near $x\in\dom$ and 
differentiable at $x$, then we have that $DF(x)\in\pa F(x)$.

\el
\end{proposition}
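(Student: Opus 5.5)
For (1) I would start from the definition \reff{gd-e3}. Strict differentiability at $x$ gives Lipschitz continuity near $x$ by the usual argument. Suppose no Lipschitz bound held on any ball around $x$. Then there would be $y_k,z_k\to x$ with $|F(z_k)-F(y_k)|>k|z_k-y_k|$. Write $z_k=y_k+t_kv_k$ with $t_k=|z_k-y_k|\dto 0$ and $|v_k|=1$, and pass to a subsequence with $v_k\to v$. Uniformity in the direction is not given directly by \reff{gd-e3}. I would handle it either by testing $F$ at $y_k+t_kv$ using a finite net of directions, or by citing \cite[p.~32]{clarke}. This is the one technical point, and I would cite Clarke rather than redo it.

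Once $F$ is Lipschitz near $x$, Proposition~\ref{gd-s1} applies componentwise. Take $\sfV=w\otimes v$, so that $DF(y):\sfV=w\cdot DF(y)v$. At points $y\in D_F$ near $x$, $DF(y)v$ is the limit of difference quotients $\frac{F(y+tv)-F(y)}{t}$ with $t\dto0$. By \reff{gd-e3} these quotients are close to $D_sF(x)v$ whenever $y$ and $t$ are small. Hence $DF(y)\to D_sF(x)$ as $y\to x$ with $y\in D_F$. Since $\lem(\dom\setminus D_F)=0$, this gives $\esslim_{y\to x}DF(y)=D_sF(x)$, i.e.\ $\sx|DF-D_sF(x)|=0$. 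Proposition~\ref{dm-s11} (vector version, applied componentwise) then yields $\sI{x}{DF}{\mu}=D_sF(x)$ for all $\mu\in\Dens_x$. By \reff{dm-s4g-1} applied to $|DF-D_sF(x)|$, we also get $\sI{x}{|DF-D_sF(x)|}{\mu}=0$. This gives $DF-D_sF(x)=0$ i.m.\ $\mu$ (use $\mu(\{|\cdot|>\eps\})\le\eps^{-1}\int$). Finally, \reff{gd-e1} gives $\pa F(x)=\{D_sF(x)\}$.

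For (2), let $\pa F(x)=\{\sfF\}$. By Theorem~\ref{gd-s2}, every limit $\lim DF(x_j)$ with $x_j\to x$, $x_j\in D_F$, equals $\sfF$. By boundedness ($|DF|\le\ell$), this means $DF(y)\to\sfF$ as $y\to x$ in $D_F$. Fix $v$. By Fubini, for a.e.\ $y$ near $x$ the function $F$ is differentiable at a.e.\ point of the segment $[y,y+tv]$, so
\[
  F(y+tv)-F(y)=\int_0^t DF(y+sv)v\,ds.
\]
Then $\bigl|\frac{F(y+tv)-F(y)}{t}-\sfF v\bigr|\le\sup_{B_\rho(x)\cap D_F}|DF-\sfF||v|$ for these $y$, with $\rho=\delta(1+|v|)$, and by continuity of $F$ this bound extends to all $y$. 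Alternatively, I would use Theorem~\ref{app-s30} with $\sI{[y,y+tv]}{Df}{\dens}$ and \reff{dm-s4g-1}, as in the proof of \reff{gd-s1-2}. Thus \reff{gd-e3} holds with $L=\sfF$.

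For (3), assume $F$ is differentiable at $x$ and let $\dens_x^\dom\in\LDens_x^\dom$. Since $DF$ is bounded near $x$, Proposition~\ref{app-s27c}, specifically \reff{app-s27c-3} under the $\cL^\infty$ hypothesis, gives $DF(x)=\sI{x}{DF}{\dens_x^\dom}$. Because $\dens_x^\dom\in\Dens_x$, this integral lies in $\df{\Dens_x}{DF}=\pa F(x)$. Note that \reff{app-s27c-3} relies on \reff{app-s27c-2}, which was proved only for $x\in\ti\dom$. That holds here, since the continuous representative $F$ is differentiable at $x$.
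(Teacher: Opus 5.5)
Your proposal is correct. Part (3) is the paper's argument; in (1) and (2) you take different routes.

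In (1), the paper gets $DF(y)\to D_sF(x)$ on $D_F$ exactly as you do. It then invokes \reff{gd-s2-1} to obtain $\partial F(x)=\{D_sF(x)\}$. For the statement in measure, it observes that for $\mu\in\Dens_x$ and $\mu(A)\neq0$ the normalized restriction $\mu(A)^{-1}\mu(\,\cdot\cap A)$ again lies in $\Dens_x$. So $DF-D_sF(x)$ integrates to zero over every Borel set, and \reff{pl-e5} applies. You instead note that uniform convergence on the full-measure set $D_F$ already is the essential limit $\lim_{\delta\downarrow0}\mathrm{ess\,sup}_{B_\delta(x)}|DF-D_sF(x)|=0$. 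Proposition~\ref{dm-s11} and \reff{dm-s4g-1} then give everything directly. This avoids Theorem~\ref{gd-s2} and the Krein--Milman/extreme-point machinery behind it. The Chebyshev step is even superfluous: for small $\delta$ the set $\{|DF-D_sF(x)|>\varepsilon\}\cap B_\delta(x)$ is $\lem$-null, hence $\mu$-null, and $\mu$ vanishes off $B_\delta(x)$.

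In (2), the paper stays inside the density calculus and uses only the definition \reff{gd-e1}. Theorem~\ref{app-s30} writes each difference quotient as an integral of $DF$ against a density at $[y_k,y_k+t_kv]$. Proposition~\ref{dm-s4f} supplies a weak$^*$ accumulation point in $\Dens_x$, and the singleton property forces the limit $\mathsf{F}v$. Your version uses Theorem~\ref{gd-s2} to turn the singleton into $DF(y)\to\mathsf{F}$ on $D_F$, then runs the classical Fubini/absolute-continuity argument along lines. Yours is more elementary at the last step. The paper's avoids Theorem~\ref{gd-s2} and shows off the mean value theorem and weak$^*$ compactness.

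On the Lipschitz step, you rightly flag that \reff{gd-e3} is only pointwise in $v$. The paper's own sketch passes over this with an appeal to ``continuity'' that is not yet available. Citing Clarke does not quite settle it either: his definition of strict differentiability includes uniformity of the limit for $v$ in compact sets, and \reff{gd-e3} does not. A net approximating $v_k$ would need exactly the control of $F$ you are trying to prove.

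What works is to use only the directions $e_1,\dots,e_n$.
\begin{itemize}
\item By \reff{gd-e3} there is $\rho>0$ with $|F(p+te_i)-F(p)|\le(|D_sF(x)e_i|+1)\,t$ whenever $|p-x|\le\rho$ and $0<t\le\rho$.
\item Join $y$ to $z$ by a coordinate staircase. In each step, take as base point the endpoint from which the step points in direction $+e_i$, so only $t>0$ is used.
\item Summing the steps gives $|F(z)-F(y)|\le C|z-y|$ for $y,z$ in a smaller ball around $x$.
\end{itemize}
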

\noi
For the scalar case $m=1$ the result can be found in \cite[p. 32, 33]{clarke}).
Though the implication that $F$ is Lipschitz continuous near $x$ is shown in 
\cite[p.~31]{clarke}, we sketch it for completeness. 

\begin{proof}
For (1) we first assume that $F$ is not Lipschitz continuous near $x$. 
Then there are sequences $y_k,z_k$ converging to $x$ such that
\begin{equation}\label{gd-s3-3}
  |F(z_k)-F(y_k)| > k |z_k-y_k| \qmq{and} 
  v_k:=\tfrac{z_k-y_k}{|z_k-y_k|}\to: v\,. 
\end{equation}
Hence $z_k=y_k+t_kv_k$ for $t_k=|z_k-y_k|$.
By \reff{gd-e3} there is some $\rh>0$ such that
\begin{equation}\label{gd-s3-4}
  \Big|\frac{F(y+tv)-F(y)}{t} - D_sF(x)v\Big|<1 
  \qmq{whenever} |y-x|, t \le\rh\,.
\end{equation}
By continuity there is some $\ti k$ such that, for all $k>\ti k$, 
\reff{gd-s3-4} remains valid with $v=v_k$ and we can assume that
$|y_k-x|,t_k\le\rh$. Taking $y=y_k$, $t=t_k$, and $v=v_k$ 
in \reff{gd-s3-4}, we get a contradiction for large $k>\ti k$, 
since the fraction exceeds $k$ by \reff{gd-s3-3}. 
Thus \reff{gd-e3} cannot be true and $F$ is Lipschitz continuous near $x$. 
From \reff{gd-e3} we get
\begin{equation} \label{gd-s3-5}
  D_sF(x)v =
  \lim_{\substack{y\to x\\y\in D_F}} \lim_{t\downarrow 0}\frac{F(y+tv)-F(y)}{t}
  = \lim_{\substack{y\to x\\y\in D_F}} DF(y)v   \qmq{for all} v\in\R^n\,.
\end{equation}
Hence 
\begin{equation}\label{gd-s3-6}
   \lim_{\substack{y\to x\\y\in D_F}} DF(y) = D_sF(x)
\end{equation}
and, thus, $\partial F(x) = \{D_sF(x)\}$ by \reff{gd-s2-1}.
Then \reff{gd-s3-1} follows from \reff{gd-e1}. From \reff{gd-s3-1} we obtain
\begin{equation*}
  \sI{x}{DF-D_sF(x)}{\mu}=0 \qmq{for all} \mu\in\Dens_x .
\end{equation*}
For any fixed $\mu\in\Dens_x$, we have that  
$\frac{1}{\mu(A)}\reme{\mu}{A}$, extended by zero on $\dom$,  
belongs to $\Dens_x$ if $\mu(A)\ne 0$. Hence,
\begin{equation*}
  \sI{A}{DF-D_sF(x)}{\mu}=0 \qmq{for all} A\in\cB(\R^n)\,, 
\end{equation*}
Thus, $DF-D_sF(x)=0$ i.m. $\mu$
by \reff{pl-e5}.  

For (2) we assume that $F$ is Lipschitz continuous near $x$ and that
$\partial f(x)=\{\sfF\}$ is a singleton. Let us fix $v\in\R^n$ and  
sequences $y_k\to x$, $t_k\downarrow 0$. Then, by Theorem~\ref{app-s30}
there are measures $\mu_k\in\LDens_{[y_k,y_k+t_kv]}^\dom$ and by 
Proposition~\ref{dm-s4f} there is a weak$^*$ accumulation point 
$\mu\in\Dens_x$ and a subsequence, denoted the same way, such that
\begin{equation*}
  \frac{F(y_k+t_kv)-F(y_k)}{t_k} = 
  \sI{[y_k,y_k+t_kv]}{DF}{\mu_k}\: v \to 
  \sI{x}{DF}{\mu}\: v \overset{\reff{gd-e1}}{=} \sfF v\,.
\end{equation*}
The arbitrariness of $v$ and of the sequences $\{y_k\}$, $\{t_k\}$
and the subsequence principle imply
strict differentiability of $F$ at $x$ with $D_sF(x)=\sfF$.

For (3) we have by \reff{app-s27c-3} that $DF(x)=\sI{x}{DF}{\mu}$
for any $\mu\in\LDens_x$. This gives the assertion by \reff{gd-e1}.
\end{proof}

Let us recall the mean value formula from \cite[p. 72]{clarke}.
We provide a new proof that shows that this is in fact a specialization of 
Theorem~\ref{app-s30} to locally Lipschitz continuous functions. 

\begin{theorem} \label{gd-s8}
{\rm (Mean value theorem)}
Let $\dom\subs\R^n$ be open, let $F:\dom\to\R^m$ be locally
Lipschitz continuous, and let $[x,y]\subset\dom$. Then
\begin{equation*}
  F(y)-F(x) \in \op{conv} \big(\pa F([x,y])\big) \,(y-x) \,
\end{equation*}
and $\partial F([x,y])$ is compact.
\end{theorem}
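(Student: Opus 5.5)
Since $[x,y]$ is compact and $F$ is locally Lipschitz, we fix $\ti\de>0$ with $[x,y]_{\ti\de}\subs\dom$ such that $F$ is Lipschitz on $[x,y]_{\ti\de}$. Then $F\in\cW^{1,\infty}([x,y]_{\ti\de},\R^m)$ and $DF\,(y-x)\in\cL^\infty([x,y]_{\ti\de},\R^m)$. Hence Theorem~\ref{app-s30} applies (for $x=y$ there is nothing to show). Choose some $\lem$-density $\mu:=\dens_{[x,y]}^\dom\in\LDens_{[x,y]}^\dom$, which exists by Corollary~\ref{dm-s4b}. This gives
\begin{equation*}
  F(y)-F(x) = \sfA\,(y-x)\,, \qquad \sfA:=\sI{[x,y]}{DF}{\mu}\in\R^{m\times n}\,.
\end{equation*}
It then suffices to show that $\sfA\in\op{conv}\big(\pa F([x,y])\big)$. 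Since $\mu\in\Dens_{[x,y]}$, we get $\sfA\in\df{\Dens_{[x,y]}}{DF}$.

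The key step is to show that
\begin{equation*}
  \df{\Dens_{[x,y]}}{DF}\subs \cl{\op{conv}}\big(\pa F([x,y])\big)\,.
\end{equation*}
By Proposition~\ref{dm-s7}, both sides are compact convex sets, provided the right-hand side is bounded. So I will compare support functions. For $\sfV\in\R^{m\times n}$, Proposition~\ref{dm-s7} gives the support function of the left side as $\sfV\mapsto\limsup_{\de\dto0}\essup{[x,y]_\de}{DF:\sfV}$. For each $\de$ and $\eps>0$ the set $\{DF:\sfV>s-\eps\}\cap[x,y]_\de$ has positive measure, where $s$ is that limit. By compactness of $[x,y]$, choose points $z_\de\in[x,y]$ such that $\lem(\{DF:\sfV>s-\eps\}\cap B_{2\de}(z_\de))>0$ for a sequence $\de\dto0$. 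Pass to a convergent subsequence $z_\de\to z\in[x,y]$. Then $\lem(\{DF:\sfV>s-\eps\}\cap B_r(z))>0$ for every $r>0$, so $\tesup{z}\,DF:\sfV\ge s-\eps$. By Proposition~\ref{gd-s1}, $F^\circ(z;\sfV)\ge s-\eps$, and $F^\circ(z;\cdot)$ is the support function of $\pa F(z)\subs\pa F([x,y])$. Hence the support function of the left side is bounded by that of $\pa F([x,y])$, which proves the inclusion.

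It remains to remove the closure and to show that $\pa F([x,y])$ is compact. Boundedness follows from \reff{gd-e1a} with a uniform Lipschitz constant $\ell$ on $[x,y]_{\ti\de}$. For closedness, take $\sfF_j\in\pa F(z_j)$ with $z_j\in[x,y]$ and $\sfF_j\to\sfF$. Passing to a subsequence, $z_j\to z\in[x,y]$, and Proposition~\ref{gd-s0}~(3) gives $\sfF\in\pa F(z)$. So $\pa F([x,y])$ is compact. Its convex hull is then compact by Carathéodory's theorem (cf. \cite[p.~158]{rockafellar}), so the closure can be dropped and $\sfA\in\op{conv}\big(\pa F([x,y])\big)$.

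The main obstacle is the localization step in the second paragraph. There I must pass from the essential supremum near the whole segment to the essential supremum near a single point $z$, using compactness to pick the points $z_\de$. An alternative route would decompose $\mu$ via Krein–Milman into \zo measures, each of which has core $\{z\}$ with $z\in[x,y]$ by Theorem~\ref{ep-s3} and Proposition~\ref{dm-s4f}. The support-function argument avoids that machinery.
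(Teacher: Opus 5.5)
Your proof is correct, but it reaches the key inclusion by a different route than the paper.

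\textbf{The paper's route.} The paper works at the level of measures. It combines three results:
\begin{itemize}
\item Krein--Milman (Corollary~\ref{ep-s1a});
\item the identification of the extreme points of $\Dens_{[x,y]}$ with $0$-$1$ measures (Theorem~\ref{ep-s2});
\item the fact that each such measure lies in $\Dens_z$ for a single $z\in[x,y]$ (Theorem~\ref{ep-s3}).
\end{itemize}
Together these give $\Dens_{[x,y]}=\overline{\mathrm{conv}}\bigcup_{z\in[x,y]}\Dens_z$. Pushing this through the weak$^*$ continuous linear map $\mu\mapsto\langle\mu,DF\rangle$ yields directly the equality $\langle\Dens_{[x,y]},DF\rangle=\mathrm{conv}\,\partial F([x,y])$.

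\textbf{Your route.} You stay in $\R^{m\times n}$ and compare support functions.
\begin{itemize}
\item Proposition~\ref{dm-s7} identifies the support function of $\langle\Dens_{[x,y]},DF\rangle$ with the essential upper limit of $DF:\mathsf{V}$ near the segment.
\item Your finite covering and compactness argument shows that this equals the supremum over $z\in[x,y]$ of the essential upper limits at single points.
\item These pointwise limits are $F^\circ(z;\mathsf{V})$ by Proposition~\ref{gd-s1}.
\item Finite-dimensional separation then gives the inclusion.
\end{itemize}

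\textbf{Comparison.} In effect you prove the support-function shadow of the paper's measure decomposition by an elementary localization. This avoids Krein--Milman and the whole extreme-point machinery of Section~\ref{ep}. You obtain only the inclusion rather than equality, but that is all the statement needs; the reverse inclusion is trivial from $\Dens_z\subset\Dens_{[x,y]}$ anyway. The paper's argument is shorter once Section~\ref{ep} is available, and it fits the paper's theme that $0$-$1$ measures localize at points.

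\textbf{Details you supply.} Your explicit compactness proof for $\partial F([x,y])$ fills in what the paper only cites: one Lipschitz constant on $[x,y]_{\tilde\delta}$ gives the bound, and Proposition~\ref{gd-s0}~(3) plus compactness of $[x,y]$ gives closedness. Your treatment of $x=y$ covers a case the paper omits.

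\textbf{Minor point, shared with the paper.} Proposition~\ref{dm-s7} is stated for globally bounded integrands, while $DF$ is only locally bounded. Formally you should replace $DF$ by $DF\,\chi_{[x,y]_{\tilde\delta}}$, which changes none of the integrals involved.
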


\begin{proof}
By Theorem~\ref{app-s30} there is some $\mu\in\Dens_{[x,y]}$ such that
\begin{equation*}
  F(y)-F(x) = \sI{[x,y]}{DF}{\mu}\: (y-x)\,.
\end{equation*}
By Corollary~\ref{ep-s1a}, Theorem~\ref{ep-s3}, and Theorem~\ref{ep-s2}, 
\begin{eqnarray*}
  \Dens_{[x,y]} 
&=& 
  \ol{\op{conv}} \bigcup_{z\in[x,y]} \Dens_z^{\tx{\zo}} 
\;=\;
   \ol{\op{conv}} \bigcup_{z\in[x,y]} \Dens_z \,.
\end{eqnarray*}
Since $\partial F([x,y])$ is compact by Proposition~\ref{gd-s0} (3),
linearity implies
\begin{equation*}
  \df{\Dens_{[x,y]}}{DF} =  
  \ol{\op{conv}} \bigcup_{z\in[x,y]} \df{\Dens_z}{DF} =
  \op{conv} \: \partial F([x,y]) \,  
\end{equation*}
and the assertion follows.
\end{proof}

With the representation from Theorem~\ref{gd-s2} we can give new short 
proofs for some elementary calculus rules (cf. also
\cite[p. 444]{pales}). 

\begin{proposition} \label{gd-s5}
Let $F,G:\dom\to\R^m$ be locally Lipschitz continuous.
\bgl
\item
{\rm (Scalar multiple)} For $x\in\dom$ we have
\begin{equation*}
  \partial(sF)(x) = s\partial F(x)
  \qmq{for all} s\in\R\,.
\end{equation*}
\item
{\rm (Sum rule)} For $x\in\dom$ we have
\begin{equation*}
  \partial\big(F(x) + G(x)\big)(x) \:\subset\:
  \partial F(x) + \partial G(x) \,.
\end{equation*}
If $F$ or $G$ is strictly differentiable at $x$, then we have equality. 

\item
{\rm (Product rule)} For $x\in\dom$ we have
\begin{equation*}
  \partial\big(F\cdot G\big)(x) 
  \subset F(x)^T\partial G(x) + G(x)^T\partial F(x) \,
\end{equation*}
(where $^T$ denotes the transpose).
If $F$ or $G$ is strictly differentiable at $x$, then we have equality. 

\item
{\rm (Quotient rule)} For $x\in\dom$ and $g:\dom\to\R$ locally Lipschitz
continuous with $g(x)\ne 0$, we have
\begin{equation*}
  \partial\big(\tfrac{1}{g}F\big)(x) \subset 
  \frac{g(x)\partial F(x)-F(x)\otimes\partial g(x)^T}{g(x)^2} \,
\end{equation*}
(where $\otimes$ denotes the tensor product).
If $F$ or $g$ is strictly differentiable at $x$, then we have equality. 

\el
\end{proposition}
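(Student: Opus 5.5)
The plan is to work entirely with the representation $\partial F(x)=\df{\Dens_x}{DF}$ from \reff{gd-e1}, together with the facts that $F$ and $G$ are identified with their continuous representatives and that $DF$, $DG$ are the $\lem$-a.e. derivatives (Rademacher). The key point for each rule is an $\lem$-a.e. identity for derivatives of the composite function, valid on $D_F\cap D_G$, whose complement is an $\lem$-null set. We then integrate this identity against an arbitrary $\mu\in\Dens_x$. Since $\mu\wac\lem$, the identity holds i.m. $\mu$, so the integrals agree.

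For (1): $D(sF)=s\,DF$ a.e. Hence $\sI{x}{D(sF)}{\mu}=s\sI{x}{DF}{\mu}$, and taking all $\mu\in\Dens_x$ gives equality of the sets.

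For (2): $D(F+G)=DF+DG$ a.e. Hence every element $\sI{x}{D(F+G)}{\mu}=\sI{x}{DF}{\mu}+\sI{x}{DG}{\mu}$ lies in $\partial F(x)+\partial G(x)$. The same $\mu$ is used in both summands, which is why only an inclusion results. If $G$ is strictly differentiable, then $\partial G(x)=\{D_sG(x)\}$ and $\sI{x}{DG}{\mu}=D_sG(x)$ for every $\mu$ by Proposition~\ref{gd-s3}(1). Thus $\partial(F+G)(x)=\partial F(x)+D_sG(x)$, which is equality.

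For (3) and (4) the new ingredient is multiplication by continuous factors. A.e. we have $D(F\cdot G)(y)=F(y)^TDG(y)+G(y)^TDF(y)$. The key step is to show that
\begin{equation*}
\sI{x}{F^TDG}{\mu}=F(x)^T\sI{x}{DG}{\mu}.
\end{equation*}
I would obtain this from \reff{dm-e1} and \reff{dm-s4g-1}. The difference is $\sI{x}{(F-F(x))^TDG}{\mu}$, and since $F$ is continuous and $DG$ is bounded near $x$, its modulus is bounded by $\sx |F-F(x)|\,\ell_G=0$. Alternatively, Proposition~\ref{ep-s22} could be used on \zo measures, but the direct estimate is simpler.

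This yields the inclusion for (3), again with a common $\mu$ in both terms. In the strictly differentiable case, say for $F$, the factor $\sI{x}{DF}{\mu}=D_sF(x)$ is constant in $\mu$, so the set equals $F(x)^T\partial G(x)+G(x)^TD_sF(x)$. For (4), $1/g$ is locally Lipschitz near $x$ because $g(x)\ne0$ and $g$ is continuous. A.e. we have
\begin{equation*}
D(F/g)=\frac{g\,DF-F\otimes Dg}{g^2},
\end{equation*}
and the same continuity argument replaces $g$ and $F$ by $g(x)$ and $F(x)$. The quotient rule then follows exactly as the product rule.

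The main obstacle is the equality cases. The map $\mu\mapsto(\sI{x}{DF}{\mu},\sI{x}{DG}{\mu})$ ranges only over a subset of $\partial F(x)\times\partial G(x)$ in general, so equality genuinely needs one factor to be constant in $\mu$. That constancy is exactly what Proposition~\ref{gd-s3}(1) supplies, so I expect no further difficulty once it is invoked. Some care is also needed to check that the integrals are taken over a neighborhood where the Lipschitz bounds hold; this is guaranteed by \reff{dm-e1}.
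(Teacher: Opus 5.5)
Your proposal is correct and follows essentially the paper's proof. You integrate the Lebesgue-a.e. derivative identities against an arbitrary density measure $\mu$ at $x$, and pull the values $F(x)$, $G(x)$, $g(x)$ out of the integrals by continuity: you do this via the estimate (\ref{dm-s4g-1}), while the paper argues via $F-F(x)=0$ i.m.\ $\mu$, which is only a cosmetic difference. The equality cases then come, as in the paper, from the fact that $\int DF\,d\mu$ is independent of $\mu$, which is Proposition~\ref{gd-s3}(1).
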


\begin{proof}
For (1) we notice that $D(sF)=sDF$ on $D_F$, i.e. 
$\lem$-a.e. on $\dom$. Thus
\begin{equation*}
  \big\langle \Dens_x, D(sF) \big\rangle =
  s\big\langle \Dens_x, DF \big\rangle
\end{equation*}
and, by \reff{gd-e1}, the statement follows. 

For (2) we have $D(F+G)=DF+DG$ on $D_F\cap D_G$,
i.e. $\lem$-a.e. on $\dom$. Then 
\begin{eqnarray*}
  \big\langle \Dens_x,D\big(F+G\big) \big\rangle 
&=&
  \big\langle \Dens_x, DF + DG \big\rangle \\
&\subset&
  \langle\Dens_x,DF\rangle + \langle\Dens_x,DG\rangle  \,
\end{eqnarray*}
and the stated inclusion follows. If e.g. $F$ is strictly
differentiable at $x$, then $\df{\Dens_x}{DF}$ is a singleton by 
Proposition~\ref{gd-s3} and the inclusion becomes an equality.

For (3) we have by continuity that $F-F(x)=0$ i.m. $\!\mu$ 
for any $\mu\in\Dens_x$. Then, 
\begin{equation*} \label{gd-s5-10}
  \sI{x}{F^TDG}{\mu} = 
  \sI{x}{\big(F^T-F(x)^T\big)DG}{\mu} + F(x)^T\sI{x}{DG}{\mu}
  = F(x)^T\sI{x}{DG}{\mu}\,.
\end{equation*}
Moreover, $D(F\cdot G)=F^TDG+G^TDF$ on $D_F\cap D_G$, i.e. 
$\lem$-a.e.on $\dom$. Consequently,
\begin{eqnarray*}
  \big\langle \Dens_x,D(F\cdot G) \big\rangle 
&=&
  \big\langle \Dens_x,F^TDG+G^TDF \big\rangle \\
&\subset&
  F(x)^T \langle \Dens_x,DG \rangle + G(x)^T \langle \Dens_x,DF \rangle 
\end{eqnarray*}
and the stated inclusion follows. If e.g. $F$ is strictly
differentiable at $x$, then 
\begin{equation*}
  \df{\Dens_x}{G^TDF} = G(x)^T\df{\Dens_x}{DF} 
\end{equation*}
is a singleton by Proposition~\ref{gd-s3} and we get equality.

For (4) we use
\begin{equation*}
  D\big(\tfrac 1g F\big) = \tfrac 1g DF + F\otimes D\big(\tfrac 1g\big) =
  \frac{gDF-F\otimes Dg^T}{g^2} \qmq{on} D_g\cap D_F
\end{equation*}
and argue analogously as in (3).
\end{proof}

Instead of using $F-F(x)=0$ i.m. $\!\mu$ in the proof of (3), 
we can alternatively apply \reff{ep-s22-4} to the product
$F^TDG$. 
Next we provide some chain rule, which extends the scalar version  
$m=1$ contained in \cite[p.~75]{clarke}. 
We say that $H:\R^n\to\R^k$ is {\it surjective near} $x$ if for any $\ep>0$
there is some $\de>0$ such that
\begin{equation*}
  H:B_{\ep}(x)\to B_{\de}(H(x)) 
\end{equation*}
is surjective.

\begin{proposition} \label{gd-s7}
{\rm (Chain rule)} Let $\dom\subs\R^n$ be open, let $H:\dom\to\R^k$ and 
$G:\R^k\to\R^m$ be locally Lipschitz continuous, and let $x\in\dom$. 
Then $F:\dom\to\R^m$ with $F=G\circ H$ is locally Lipschitz continuous and
\begin{equation*}
  \partial F(x) \subset 
  \op{conv} \big(\,\partial G(H(x))\,\partial H(x)\,\big). 
\end{equation*}
We have the following special cases:
\bgl
\item
If $G$ is strictly differentiable at $H(x)$, then
\begin{equation*}
  \partial F(x) = 
  D_sG(H(x))\,\partial H(x)   \,. 
\end{equation*}

\item
If $H$ is strictly differentiable at $x$, then
\begin{equation*}
  \partial F(x) \subs \pa G(H(x))\, D_sH(x)   \,. 
\end{equation*}

\item
If $H$ is strictly differentiable at $x$ and surjective near $x$, then
\begin{equation*}
  \partial F(x) = \partial G(H(x))\, D_sH(x) \,.
\end{equation*}
\el
\end{proposition}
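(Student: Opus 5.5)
The plan is to prove the general inclusion through the support function characterization \reff{gd-s2-2}, using a smoothing of $G$ so as to avoid the null set issue.

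\emph{Local Lipschitz continuity} of $F=G\circ H$ is immediate. The set $S:=\op{conv}\big(\partial G(H(x))\,\partial H(x)\big)$ is compact, being the convex hull of a product of compact sets (Proposition~\ref{gd-s0}). By \reff{gd-s2-2} and Proposition~\ref{gd-s1}, it suffices to show
\begin{equation*}
  \suf Fx\sfV=\tesup{x}\,DF:\sfV\le \max_{\sfF\in S}\sfF:\sfV \qmq{for all} \sfV\in\R^{m\times n}.
\end{equation*}
The naive chain rule $DF(y)=DG(H(y))DH(y)$ is unusable, because $H$ may map a set of positive measure into the null set $\R^k\setminus D_G$.

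I would therefore mollify and set $G_\eta:=G*\rho_\eta$, which is $C^1$ with the same Lipschitz constant. Since $DG_\eta(z)=\int DG(z-w)\rho_\eta(w)\,dw$ is an average of values $DG(z')\in\partial G(z')$ with $|z'-z|<\eta$ (Proposition~\ref{gd-s3}~(3)), we get $DG_\eta(z)\in\ol{\op{conv}}\,\partial G(B_\eta(z))$. For $F_\eta:=G_\eta\circ H$ the classical chain rule gives $DF_\eta(y)=DG_\eta(H(y))DH(y)$ for all $y\in D_H$, and $DH(y)\in\partial H(y)$.

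Next I pass to the limit. We have $F_\eta\to F$ locally uniformly with uniformly bounded derivatives, so $DF_\eta\to DF$ weak$^*$ in $\cL^\infty(B_\delta(x))$, viewed as the dual of $\cL^1$. Since $\essup{B_\de(x)}{g}=\sup\{\int g\varphi\,d\lem \mid \varphi\ge0,\ \|\varphi\|_1=1\}$ is a supremum of weak$^*$ continuous functionals, it is weak$^*$ lower semicontinuous. Hence
\begin{equation*}
  \essup{B_\de(x)}{DF:\sfV}\le\liminf_{\eta\dto0}\,\essup{B_\de(x)}{DF_\eta:\sfV}\le \sup\big\{\sfF:\sfV\mid \sfF\in \op{conv}\big(\ol{\op{conv}}\,\partial G(B_{\eta+\ell\de}(H(x)))\;\partial H(B_\de(x))\big)\big\}.
\end{equation*}
By the upper semicontinuity of $\partial G$ and $\partial H$ (Proposition~\ref{gd-s0}~(4)) and the boundedness of these sets, the right-hand side tends to $\max_S\sfF:\sfV$ as $\eta,\de\dto0$. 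This yields the main inclusion.

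For (1), I would argue directly. For $y\in D_F\cap D_H$ near $x$ and $v\in\R^n$,
\begin{equation*}
  DF(y)v=\lim_{t\dto0}\frac{G(H(y+tv))-G(H(y))}{t}.
\end{equation*}
Strict differentiability of $G$ at $H(x)$ together with the Lipschitz continuity of $H$ gives $|DF(y)-D_sG(H(x))DH(y)|\le o(1)$ as $y\to x$. Then the limit representation \reff{gd-s2-1} for $F$ and for $H$, taken with $N$ the null set where either fails, shows that the limit sets coincide after multiplication by $D_sG(H(x))$. Taking convex hulls gives equality. Item (2) is the main inclusion with $\partial H(x)=\{D_sH(x)\}$ (Proposition~\ref{gd-s3}); here $\partial G(H(x))D_sH(x)$ is already convex, so the hull is superfluous.

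The main obstacle is (3), the reverse inclusion. Since $\max_{\sfF\in\partial G(H(x))}\sfF D_sH(x):\sfV=G^\circ(H(x);\sfV D_sH(x)^T)$, I must show $\suf Fx\sfV\ge G^\circ(H(x);\sfV D_sH(x)^T)$. By \reff{gd-s1-1}, choose $z_j\in D_G$ with $z_j\to H(x)$ such that $DG(z_j):\sfV D_sH(x)^T$ approaches $G^\circ$. Surjectivity near $x$ provides $y_j\to x$ with $H(y_j)=z_j$. Strict differentiability of $H$ gives
\begin{equation*}
  F(y_j+tv)-F(y_j)=G\big(z_j+tD_sH(x)v+o(t)\big)-G(z_j),
\end{equation*}
so that the difference quotients of $F$ at $y_j$ approximate $DG(z_j)D_sH(x)v$. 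I would then control $\suf Fx\sfV$ from below by a Clarke-type limsup of difference quotients of $F$. For the matrix pairing $\sfV$, I would reduce to the scalar function $y\mapsto \sfV$-weighted combination of components of $F$ and use \reff{gd-s1-2}; making this reduction compatible with the matrix support function is the step I expect to require the most care.
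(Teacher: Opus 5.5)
Your main inclusion is correct and takes a different route from the paper. The paper applies Theorem~\ref{app-s30} to $G$ on segments $[H(y),H(y+t_kv)]$ to get a pointwise relation $DF(y)=\sfG_y\,DH(y)$ with $\sfG_y\in\partial G(H(y))$, and then passes to limits via \reff{gd-s2-1}. You instead mollify $G$, apply the classical chain rule to $G_\eta\circ H$, and pass to the limit using the weak$^*$ lower semicontinuity of the essential supremum. Your route has a real advantage: $DF_\eta(y)=DG_\eta(H(y))\,DH(y)$ is a matrix identity, whereas the segment argument produces an accumulation point that depends on the direction $v$. Items (1) and (2) are also fine.

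The gap is in (3), at exactly the step you flag, and more care will not close it. Scalarizing via \reff{gd-s1-2} only reaches rank-one matrices $\sfV=wv^T$, for which $DF:\sfV=D(w\cdot F)\,v$. Your difference-quotient estimates at the $y_j$ then give $F^\circ(x;wv^T)\ge w^T\sfG\,D_sH(x)\,v$, i.e.\ $\sfG\,D_sH(x)v\in\partial F(x)v$ for every $v$. This only places $\sfG\,D_sH(x)$ in the plenary hull $\{M\mid Mv\in\partial F(x)v \ \forall v\}$. For $m,n\ge 2$ the plenary hull of a compact convex set can be strictly larger than the set. For example, take $K=\{aI+bJ\mid |a|+|b|\le 1\}$ with $J$ the rotation by $\pi/2$, and $M=\tfrac12\op{diag}(1,-1)$. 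Then $Mv\in Kv$ for all $v$, but $M\notin K$. A convex set of matrices is therefore not determined by its support function on rank-one matrices. So no scalar reduction yields $\sfG\,D_sH(x)\in\partial F(x)$, although it does suffice when $m=1$ or $n=1$.

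What is missing is a single object that realizes $\sfG\,D_sH(x)$ in all directions at once. The paper's argument for (3) does not provide one either: it uses Theorem~\ref{app-s30} on $[y_k,y_k+t_kv]$ and an accumulation point $\hat\mu\in\Dens_x$, but fixes $v$ before extracting $\hat\mu$.

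One way to get such an object:
\begin{itemize}
\item By the null-set independence in Theorem~\ref{gd-s2} applied to $G$, and the convexity of $\partial F(x)$, it suffices to show $\sfG\,D_sH(x)\in\partial F(x)$ for $\sfG=\lim DG(z_j)$ with $z_j\to H(x)$ and $z_j\in D_G\setminus N'$. Here the null set $N'$ is yours to choose.
\item Choose $N'$ to contain every $z$ near $H(x)$ that has no preimage in $D_H$ near $x$. Then you obtain $y_j\to x$ with $y_j\in D_H$ and $H(y_j)=z_j$.
\item At such $y_j$, $F$ is differentiable with $DF(y_j)=DG(z_j)\,DH(y_j)$, and this tends to $\sfG\,D_sH(x)$ by \reff{gd-s3-6}. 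Then \reff{gd-s2-1} with $N=\emptyset$ gives the claim.
\end{itemize}
The exceptional set is indeed null when $D_sH(x)$ has rank $k$: apply Fubini to the bi-Lipschitz map $y\mapsto(H(y),Py)$, where $P$ is linear and $y\mapsto(D_sH(x)y,Py)$ is invertible. Under the mere hypothesis that $H$ is surjective near $x$, you still need an additional argument.
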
 
\noi
Notice that $\partial G(H(x))\,\partial H(x)$ is compact and, thus, also its
convex hull. The assumption on $H$ in (3) to be surjective near $x$ can be
ensured by the regularity of the matrix $D_sH(x)$ (i.e. if the rank of
$D_sH(x)$ is $k$). 
Let us furthermore mention that the requirement on $H$ 
in \cite[p.~45]{clarke}, to map any neighborhood of $x$ to a set which is 
dense in a neighborhood of $H(x)$, readily implies that $H$ is
surjective near $x$ by continuity. 

\begin{proof}
The local Lipschitz continuity of $F$ follows easily.
Let $y\in\dom$, let $v\in\R^n\setminus\{0\}$, 
and let $t_k\downarrow 0$. By Theorem~\ref{app-s30} there are 
$\mu_k\in\Dens_{[H(y),H(y+t_kv)]}$ such that
\begin{eqnarray}
  \frac{F(y+t_kv)-F(y)}{t_k}
&=& \label{gd-s7-7}
  \frac{G(H(y+t_kv)) - G(H(y))}{t_k} \\[1mm]
&=&  \nonumber
  \sI{[H(y),H(y+t_kv)]}{DG}{\mu_k}\; 
  \frac{H(y+t_kv)-H(y)}{t_k} \,. 
\end{eqnarray}
By Proposition~\ref{dm-s4f}, 
the $\mu_k$ have a weak$^*$ accumulation point $\mu\in\Dens_{H(y)}$.
Then, for $y$ in $D_F\cap D_H$, we can go to the limit, at least for a 
suitable subsequence denoted the same way, and obtain
\begin{equation*}
  DF(y) v = \sI{H(y)}{DG}{\mu} \; DH(y) v \,. 
\end{equation*}
The arbitrariness of $v$ and the definition of $\pa G(H(y))$ imply 
for any $y\in D_F\cap D_H$ that
\begin{equation} \label{gd-s7-5}
  DF(y) = \sfG_y \: DH(y) \qmq{for some}  \sfG_y\in\pa G(H(y))\,. 
\end{equation}
Now we use \reff{gd-s2-1} and Proposition~\ref{gd-s0} (3) to study possible
limits in \reff{gd-s7-5} for $y\to x$. For any sequence 
$y_k\to x$ there is a subsequence, denoted the same way, such that
\begin{equation*}
  DF(y_k)\to:\sfF\in\pa F(x)\,, \quad DH(y_k)\to:\sfH\in\pa H(x)\,, \quad
  \sfG_{y_k}\to:\sfG\in\pa G(H(x))\,.
\end{equation*}
Taking into account the convex hull in \reff{gd-s2-1}, we obtain the
first assertion. For the special case (1) we use that always 
$\sfG_{y_k}\to D_sG(H(x))$ by Proposition~\ref{gd-s3}. 
For (2) we always have
$DH(y_k)\to D_sH(x)$, while the limits of the $\sfG_{y_k}$ 
might not catch all elements in $\pa G(H(x))$. 

For (3) we use (2) and it remains to show the opposite inclusion. 
For that we consider $z_k\in D_G$ with $z_k\to H(x)$.
Since $H$ is surjective near~$x$, possibly for a subsequence, 
there are  $y_k\to x$ with $z_k=H(y_k)$. Then, again up to a subsequence, 
\begin{equation} \label{gd-s7-9}
  DG(z_k)\to:\sfG\in\pa G(H(x)) \,.
\end{equation}
For $L=D_sH(x)$ and fixed $v\in\R^n$, we can choose $t_k\dto 0$ such that
\begin{equation*} 
  \Big|\frac{G(z_k+t_k Lv)-G(z_k)}{t_k} - DG(z_k)\,Lv\Big| 
  \le \tfrac1k \,.
\end{equation*}
Since $H$ is strictly differentiable, we have, up to a subsequence and
for a Lipschitz constant $c>0$, that
\begin{equation*}
  \Big| \frac{G(H(y_k+t_kv)) - G(z_k + t_kLv)}{t_k} \Big| \le
  c\, \Big| \frac{H(y_k+t_kv) - H(y_k)-t_k Lv}{t_k}\Big| \le \tfrac1k \,.
\end{equation*}
Let us now consider \reff{gd-s7-7} with $y=y_k$. 
On the left hand side we apply Theorem~\ref{app-s30} with 
$\hat\mu_k\in\LDens_{[y_k,y_k+t_kv]}$, that 
have a weak$^*$ accumulation point $\hat\mu\in\Dens_x$. On the right hand side
we replace $G(H(y_k+t_kv))$ with $G(z_k + t_kLv)$. Then we take the
limit for a suitable subsequence, denoted the same way, and we use our 
estimates to get
\begin{eqnarray*}
  \sI{x}{DF}{\hat\mu} \: v 
&=&
  \lim_{k\to\infty}\sI{[y_k,y_k+t_kv]}{DF}{\hat\mu_k}\:v 
\:=\: 
  \lim_{k\to\infty}  \frac{G(z_k + t_kLv) - G(z_k)}{t_k} \\[2mm]
&=&
  \lim_{k\to\infty} DG(z_k)\,Lv = \sfG D_sH(x)\, v\,
\end{eqnarray*}
for all $v\in\R^n$. Hence
\begin{equation*}
  \sfG D_sH(x) = \sI{x}{DF}{\hat\mu} \in\pa F(x) \,.
\end{equation*}
Using \reff{gd-s2-1}, \reff{gd-s7-9}, and the arbitrariness of the sequence
$z_k\to H(x)$, we can take the convex hull to obtain the desired inclusion,
which verifies case (3).  
\end{proof}

\bibliographystyle{plain}

\vspace{5mm}

\noi
Friedemann Schuricht \\ 
TU Dresden \\
Fakultät Mathematik \\
01062 Dresden \\
Germany\\
email: {\it friedemann.schuricht@tu-dresden.de}

\end{document}